\documentclass[english,dvipsnames]{amsart}
\usepackage{preamble}
\title{Khovanov algebras for the periplectic Lie superalgebras}

\author{Jonas Nehme}
\address{J.N.: Max Planck Institute for Mathematics, Bonn, Germany}

\date{\today}

\keywords{Lie superalgebras, Khovanov arc algebras, quantum algebras}
\begin{document}
	\begin{abstract}
		The periplectic Lie superalgebra $\lie{p}(n)$ is one of the most mysterious and least understood simple classical Lie superalgebras with reductive even part.
		We approach the study of its finite dimensional representation theory in terms of Schur--Weyl duality.
		We provide an idempotent version of its centralizer, i.e.~the super Brauer algebra.
		We use this to describe explicitly the endomorphism ring of a projective generator for $\lie{p}(n)$ resembling the Khovanov algebra of \cite{BS11a}.
		We also give a diagrammatic description of the translation functors from \cite{BDE19} in terms of certain bimodules and study their effect on projective, standard, costandard and irreducible modules.
		These results will be used to classify irreducible summands in $V^{\otimes d}$, compute $\Ext^1$ between irreducible modules and show that $\lie{p}(n)\mmod$ does \emph{not} admit a Koszul grading.
	\end{abstract}
	\maketitle
	\section*{Introduction}
		Simple Lie superalgebras have been classified by Kac in \cite{Kac77} and fall into three groups: \emph{basic}, \emph{strange} and of \emph{Cartan type}.
		The first two groups are exactly those with reductive even part, thus being the only ones, where classical methods could be applied.
		The basic and strange Lie superalgebras consist out of four infinite families of Lie superalgebras: $\lie{gl}(m|n)$ (the analogue of $\lie{gl}(m)$), $\lie{osp}(r|2n)$ (the simultaneous analogue of $\lie{o}(r)$ and $\lie{sp}(2n)$), the queer Lie superalgebra $\lie{q}(n)$ and $\lie{p}(n)$.
		Except for $\lie{p}(n)$, all families have been widely studied and are fairly well understood.
		We aim at closing this gap for $\lie{p}(n)$.
		For an overview and a general introduction to Lie superalgebras, we refer the reader to \cite{Ser17}.

		The periplectic Lie superalgebra $\lie{p}(n)$ is the Lie supersubalgebra of $\lie{gl}(n|n)$ preserving a non-degenerate odd bilinear form on an $(n|n)$-dimensional vector superspace $V$.
		Fixing a basis for $V$, we can also describe $\lie{p}(n)$ explicitly in terms of matrices
		\begin{equation}\label{pninmatrices}
			\lie{p}(n)=\left\{\left(\begin{array}{c|c}
                A&B\\
                \hline
                C&-A^t
              \end{array}\right)\mid B=B^t,C=-C^t\right\}.
		\end{equation}
		We restrict ourselves to finite dimensional representations of $\lie{p}(n)$ which are integrable with respect to $GL(n)$, the algebraic group corresponding to $\lie{p}(n)_0\cong\lie{gl}(n)$,.
		And from now on a finite dimensional representations means finite dimensional \emph{integrable} representation.
		The main reason for this restriction is that we can endow the category of such representations with a highest weight structure (see \cite{Che15} and \cite{BDE19}).
		Namely, the supergrading on $\lie{p}(n)$ comes from a $\bbZ$-grading $\lie{p}(n)\cong\lie{g}_{-1}\oplus\lie{g}_0\oplus\lie{g}_1$ (where $\lie{g}_{-1}$ are the matrices with $A=B=0$ and $\lie{g}_1$ those with $A=C=0$), which allows to define \emph{thick} and \emph{thin Kac modules} via
		\begin{equation*}
			\Delta(\lambda)=\mathrm{Ind}_{\lie{g}_0\oplus\lie{g}_{-1}}^{\lie{p}(n)}\mathcal{L}^{\lie{g}_0}(\lambda)\qquad\qquad\qquad
          \nabla(\lambda)=\mathrm{Coind}_{\lie{g}_0\oplus\lie{g}_1}^{\lie{p(n)}}\mathcal{L}^{\lie{g}_0}(\lambda),
		\end{equation*}
		where $\mathcal{L}^{\lie{g}_0}(\lambda)$ denotes the irreducible $\lie{g}_0\cong\lie{gl}(n)$-module with ($\rho$-shifted) highest weight $\lambda=(\lambda_1>\lambda_2>\dots>\lambda_n)$ and $\lambda_i\in\bbZ$.
		These are called thick respectively thin as they have different dimensions (cf.~\eqref{pninmatrices}), and these give the standard and costandard modules for the highest weight structure.
		They have a unique irreducible head respectively socle $\mathcal{L}(\lambda)$ with highest weight $\lambda$ and this describes all the irreducible modules up to parity shift.
		Every $\mathcal{L}(\lambda)$ admits a projective cover $\mathcal{P}(\lambda)$ and every projective is also injective, see \cite{BDE19} for more details.

		\underline{Question:} Can we describe the category of finite dimensional representations explicitly, for instance by describing the endomorphism ring of a projective generator?
		
		To tackle this problem, we will study the representation theory via Schur--Weyl duality.
		For this recall from \cite{Moo03}, see also \cite{BDE19}, the super Brauer algebra $\sBr_d$ which provides exactly the counterpart in Schur--Weyl duality for $\lie{p}(n)$.
		In particular, we have a surjective algebra homomorphism 
		\begin{equation}\label{SchurWeylDuality}\tag{SW}
			\sBr_d\twoheadrightarrow\End_{\lie{p}(n)}(V^{\otimes d}),
		\end{equation}
		which is an isomorphism in case $n\gg0$, see e.g.~\cite{Cou18}*{Theorem 8.3.1}.
		The Deligne category of type $P$, denoted by $\Rep(P)$, i.e.~the Karoubian closure of $\sBr$ was introduced in \cite{KT17} and studied e.g.~in \cites{Cou18,CE21}, see also \cite{ES21b} for the perspective of an abelian envelope.

		Denoting by $\Cm$ the Karoubian closure of the category containing $V^{\otimes d}$ and \emph{all} morphisms, Schur--Weyl duality extends to a full monoidal functor 
		\begin{equation}
			\FunctorsBrtopn\colon\Rep(P)\to\Cm.
		\end{equation}
		The functor $\FunctorsBrtopn$ intertwines the functor $\_\otimes V$ with the functor of adding strands in the super Brauer algebra.
		A main insight of \cite{BDE19} was that $\_\otimes V$ can be refined to $\_\otimes V\cong\bigoplus_{i\in\bbZ}\Theta_i$ using a \emph{\enquote{fake} Casimir}. 
		The summand $\Theta_i$ corresponds to a refinement $i\mind$ on the super Brauer side.

		We want to use this refinement to understand $V^{\otimes d}$ and its decomposition into indecomposable summands.
		An abstract classification can be obtained via understanding indecomposable objects of $\Rep(P)$ (using primitive idempotents in $\sBr_d$).
		This was done in \cite{CE21} giving rise to a combinatorial bijection
		\begin{equation}
			\{\substack{\text{indec.~objects}\\\text{in $\Rep(P)$ up to iso.}}\}\overset{1:1}{\leftrightarrow}\{\text{partitions}\}.
		\end{equation}
		Moreover, the non-zero images of these indecomposable objects under $\FunctorsBrtopn$ give a complete classification of indecomposable summands in $V^{\otimes d}$, see \cite{CE21} for an explicit description of the corresponding partitions.
		Unfortunately, these classification results provide no further description of the structure of the indecomposable summands in $V^{\otimes d}$.
		To overcome this issue, we will define an idempotent version of $\Rep(P)$ called $\sGm$, which simplifies computations with indecomposable objects in $\Rep(P)$.
		This category $\sGm$ fits into a more general picture as follows. 
		\begin{equation*}
			\begin{tikzcd}[column sep = large]
				\univAlg\arrow[r, "\text{periplectic}", "\text{relations}"']&\sGalg\arrow[r, "\text{cyclotomic}", "\text{quotient}"']&\sG\arrow[d, "\cong"]\\
				\mathrm{R}&\mathrm{sR}\arrow[r, "\text{level 1 cycl.}", "\text{quotient}"']\arrow[l, rightsquigarrow, "\text{assoc.}"', "\text{graded}"]&\mathrm{sR}^{\mathrm{cycl}}
			\end{tikzcd}.
		\end{equation*}
		In a follow-up paper, we will define a filtered supercategory $\mathrm{sR}$ categorifying the quantum electrical algebra $U_q$ from \cite{BGKT23}. 
		Its associated graded is the KLR-algebra $R$ from \cite{KL09} categorifying the positive half of $U_q(\lie{gl}(n))$, which also describes natural transformations between translation functors for $\lie{gl}(n)$, see \cite{BK08}.
		The here defined category $\sGm$ arises as a level 1 cyclotomic quotient of $\mathrm{sR}$.
		Both, $\sGm$ and $R$, are defined diagrammatically with strands labelled by integers.
		
		A strand labelled by $i$ in $\sGm$ should be thought of as the functor $\Theta_i$ in the decomposition $\_\otimes V=\bigoplus_{i\in\bbZ}\Theta_i$ and diagrams as natural transformations between these functors.
		We can now try to construct an $R$-matrix for the $\Theta_i$ but it turns out that this is parameter dependent, so we do not obtain a braiding but rather a \emph{meromorphic} braiding in the sense of \cite{Soi99}.
		To encapsulate this meromorphic braiding, we will first define a universal $\bbC$-linear meromorphic braided rigid monoidal (super) category $\univAlg$ generated by one object $X$ with $X\otimes X\cong 0$ for $\eps\in\{\pm1\}$.
		Another auxiliary category $\sGalg$ arises from $\univAlg$ by adding two \emph{periplectic relations}, which come from the relations between translation functors in \cite{BDE19}.
		Then, $\sG$ is obtained as a cyclotomic quotient of $\sGalg$.
		
		The whole point of introducing $\sGm$ is to express the refinement of $\_\otimes V\cong\bigoplus_{i\in\bbZ}\Theta_i$ in Schur--Weyl duality, giving rise to the following commutative diagram:
		\begin{equation*}
			\begin{tikzcd}
				\left(\sGm\right)^{\oplus}\arrow[rr, "\FunctorKLRtosBr", "\sim"']\arrow[loop, out=195, in = 165, distance = 2em, "\theta_i", start anchor={[yshift=-1ex]west}, end anchor={[yshift=1ex]west}]\arrow[dr, "\FunctorKLRtopnm"']&&\Rep(P)\arrow[dl, "\FunctorsBrtopn"]\arrow[loop, in = 345, out = 15, distance = 2em, "i\mind", start anchor={[yshift=1ex]east}, end anchor={[yshift=-1ex]east}]\\
				&\Cm\arrow[loop, "\Theta_i", distance = 2em, out = 285, in = 255, start anchor={[xshift=1ex]south}, end anchor={[xshift=-1ex]south}, pos=0.12]
			\end{tikzcd},
		\end{equation*}
		where $\theta_i$ correspond to adding a strand labelled $i$ in $\sGm$.
		In \cref{KLRtosuperBraueriso}, we prove that $\FunctorKLRtosBr$ is an isomorphism of categories.
		On the way, we also show
		\begin{thmintro}\label{introbasissg}
			The category $\sG$ has a basis indexed by pairs of up-down-tableaux of the same shape and this endows $\sG$ with the structure of an upper finite based quasi-hereditary algebra in the sense of \cite{BS21}.
		\end{thmintro}

		If we denote by $\Cdg$ the category of direct sums of direct summands of $V^{\otimes d}$ with parity shifts and \emph{all} morphisms and by $\Ceven$ its restriction to even morphisms, we have the following situation:
		\begin{equation*}
			\begin{tikzcd}[column sep = small]
				&\begin{tabular}{c}
					$\Cdg$\\
					{\small(enriched over $\bbZ/2\bbZ$)}
				\end{tabular}\arrow[loop, out=105, in = 75, distance = 2em, "\Pi", start anchor={[xshift=-1.5ex]north}, end anchor={[xshift=1.5ex]north}]\arrow[dl, "\deg 0\text{ morphisms}"', squiggly]\arrow[rd, "\text{forget $\Pi$-action}", squiggly]\\
				\begin{tabular}{c}
					$\Ceven$\\
					{\small(underlying category)}
				\end{tabular}\arrow[loop, out=195, in = 165, distance = 2em, "\Pi", start anchor={[yshift=-1.5ex,xshift=1.5ex]west}, end anchor={[yshift=1.5ex,xshift=1.5ex]west}]
					&{\displaystyle\simeq}&\begin{tabular}{c}
						$\Cm$\\
						{\small(enriched over $\bbZ/2\bbZ$),}
					\end{tabular}
			\end{tikzcd}
		\end{equation*}
		where the lower two are essentially the same.
		Now, $\Cm$ arises via choosing representatives for the $\Pi$-action in $\Cdg$.
		A different choice, called $\Cp$, will be presented in \cref{action}, and we obtain a full functor
		\begin{equation*}
			\FunctorKLRtopnp\colon\sGp\to\Cp
		\end{equation*}
		that is essentially the same as $\FunctorKLRtopnm$.
		The main purpose for introducing $\Cp$ is that the graded homomorphism spaces of $\Cp$ (and $\sGp$) are all concentrated in even degree, which is more feasible for our purposes.

		Now, $V$ is a projective generator for $\lie{p}(n)\mmod$ (see e.g.~\cite{BDE19}), this means that every indecomposable projective module for $\lie{p}(n)$ appears as a direct summand in $V^{\otimes d}$ (or equivalently in $\Cp$).

		On the other hand, $\sGp$ was built exactly in the way that all these indecomposable projective modules arise as the image of some explicit object in $\sGp$ (and not as an abstract direct summand as for $\Rep(P)$).
		For our endeavor to describe explicitly the endomorphism ring of a projective generator, \cref{introbasissg} can be used to extract a basis of this endomorphism ring.
		Thus, it can be viewed as an analogue of the well-known arc algebras from \cites{BS11,ES16a,ES21} describing the finite dimensional representations of $GL(m|n)$ and $OSp(r|2n)$.
		But the multiplication of the basis elements in $\sGp$ is very complicated and there is no way to describe explicitly the multiplication using these basis elements. 

		Hence, we will introduce yet another combinatorial approach in \cref{Khovanovdefisection}.
		We will define the \emph{Khovanov algebra $\K$ of type $P$}.
		These algebras will have a distinguished basis and an explicit multiplication procedure for these basis elements.
		It is a locally unital locally finite dimensional algebra and the distinguished basis elements look like
		\begin{center}
			\begin{tikzpicture}[scale=0.5]
				\LINE{0}{1}
				\CAP{5}
				\CUP{-1}
				\CAP{-3}
				\CUP{1}
				\CAP{1}
				\CUP{3}
				\LINE{0}{1.5}
				\SETCOORD{1}{0}
				\LINE{0}{-2.5}
				\SETCOORD{1}{0}
				\LINE{0}{2.5}
				\node at (-1, 1) {$\cdots$};
				\node at (9, 1) {$\cdots$};
			\end{tikzpicture}.
		\end{center}
		For $\sGp$, we considered the endofunctor $\theta_i$ by adding an $i$-labelled strand.
		We want to mimic this for $\K$ and we introduce an endofunctor $\hat{\theta}_i$ of $\K\mmod$ given by tensoring with a certain $\K$-$\K$-bimodule $\hat{G}_i$.
		This gives all the ingredients for
		\begin{thmintro}\label{isosgkmod}
			There exists an isomorphism $\Phi$ from $\sGp$ to the full subcategory of $\K\mmod$ containing $\hat{\theta}_{i_k}\dots\hat{\theta}_{i_1}\hat{P}(\overline{\iota})$ intertwining $\hat{\theta}_i$ and the functor of adding a strand labelled $i$ in $\sGp$.
		\end{thmintro}
		The main difference between $\sGp$ and $\K$ is that $\sGp$ is defined via generators and relations whereas $\K$ comes with a distinguished basis and an explicit multiplication rule for this.
		This difference between an explicit basis and generators with relations appears very often in representations theory, most prominently in diagram algebras (like Temperley--Lieb, Brauer algebras and versions thereof).

		For our ultimate goal of an explicit description of the endomorphism ring of a projective generator for $\lie{p}(n)$, we define a quotient $\K_n$ of an idempotent truncation of $\K$ in \cref{kndefisection} (where the idempotents are labeled by dominant integral weights for $\lie{p}(n)$) and show that its finite dimensional representation category is upper finite highest weight.
		In \cref{geombimodknsection} we analyze the endofunctors $\theta_i$ of $\K_n\mmod$ induced by $\hat{\theta}_i$.
		We will show the following.
		\begin{thmintro}
			We have an adjunction $(\theta_i, \theta_{i-1})$.
		\end{thmintro}
		We will also study thoroughly the effect of $\theta_i$ on projective, standard, costandard and irreducible modules in \cref{actionofgeombimodonprojnuclear,geombimodonstandard,geombimodoncostandard,geombimodonirred}.

		In \cref{geombimodknsection}, we will show that $\K_n$ is isomorphic to the endomorphism ring of a projective generator of $\lie{p}(n)\mmod$.
		We construct a projective generator by applying translation functors to the trivial module.
		This gives us two different ways to associate a cup diagram to an indecomposable module, we can either use \cref{defidominttocupdiag} or realize this module as the image of an object in $\sGp$ and use \cref{isosgkmod}.
		\begin{equation}\label{combinatorialdiagram}
			\begin{tikzcd}
				&\{\text{indec.~proj.~$\lie{p}(n)$-modules}\}\arrow[dl, "\text{hw of its head}"']\\
				\{\lambda_1>\dots>\lambda_n\}\arrow[dr, "\text{\cref{defidominttocupdiag}}"']&&\{\text{partitions}\}\arrow[ul, "\text{Schur--Weyl duality}"']\arrow[dl, "\text{\cref{isosgkmod}}"]\\
				&\{\text{cap diagrams}\}
			\end{tikzcd}
		\end{equation}
		We will show in \cref{notionsofcupdiagramsagree} that this diagram in fact commutes.
		This gives us all the ingredients for our main theorem.
		\begin{thmintro}[Main theorem]
			There is an equivalence of categories 
				\begin{equation}
					\Psi\colon\K_n\mmod\to\lie{p}(n)\mmod
				\end{equation}
				identifying the highest weight structures and intertwining $\theta_i$ and $\Theta_i$.
		\end{thmintro}
		In particular, we obtain explicit results on the action of (iterated) translation functors on projective, standard, costandard and irreducible modules.
		
		\subsection*{Applications and consequences}
		\begin{itemize}[leftmargin=*]
			\item We show that the dual of an irreducible $\lie{p}(n)$-module can be computed by just rotating the cap diagram associated to its highest weight by $180^\circ$, providing a much easier formula than the combinatorial procedure from \cite{BDE19}.
			\item In \cite{BGS96}, it was shown, that category $\mathcal{O}$ of a semisimple Lie algebra admits a Koszul grading.
			This also holds for $\lie{gl}(m|n)$ by \cite{BS12} and is still conjectural for $\lie{osp}(r|2n)$ (see e.g.~\cite{ES21} or \cite{HNS23} and \cite{Neh21}).
			However, we will show that $\lie{p}(n)\mmod$ does not admit a Koszul grading, we even prove:
			\begin{thmintro}
				There does not exist a non-negative grading on $\K_n$ with semisimple degree $0$ part, that is generated in degree $1$ for $n\geq2$.
				In particular, $\lie{p}(n)\mmod$ does not admit a Koszul grading.
			\end{thmintro}
			\item There exist exactly $n$ irreducible summands of $V^{\otimes d}$, one for each block (except for the one containing $\mathcal{L}(n-2,n-4,\dots,-n)$).
			\item We will give a simple combinatorial criterion to compute extensions between irreducible $\lie{p}(n)$-modules.
			\begin{thmintro}
				The dimension of $\Ext^1$ is given by
				\begin{equation*}
					\dim\Ext^1_{\lie{p}(n)}(\mathcal{L}(\lambda), \mathcal{L}(\mu))=\begin{cases}
						1&\text{if $\underline{\mu}\overline{\lambda}$ satisfies Def.~\ref{deficircdiagprimitive},}\\
						0&\text{otherwise.}
					\end{cases}
				\end{equation*}
			\end{thmintro}
			\noindent We conclude by giving an explicit description of $\lie{p}(1)\mmod$ and $\lie{p}(2)\mmod$ as a quiver with relations, which also explicitly shows that $\lie{p}(2)$ does not admit a Koszul grading.
		\end{itemize}
	\subsection*{Acknowledgements}
	I would like to thank Catharina Stroppel for many fruitful discussions and helpful comments on earlier drafts.
	This project was supported by the Max Planck Institute for Mathematics (IMPRS Moduli Spaces) and
	the Hausdorff Center for Mathematics, which is funded by the Deutsche Forschungsgemeinschaft (DFG, German Research Foundation) under Germany's Excellence
	Strategy (EXC-2047/1 -- 390685813).
	\section{Definition of the idempotent version \texorpdfstring{$\sG$ of $\sBr$}{A of sBr}}\label{defiKLRsection}
	We begin by introducing a category $\univAlg$, which will be the universal monoidal $\bbC$-linear category generated by one object $X$ with $X\otimes X\cong 0$ and its iterated left and right duals with a meromorphic braiding.
	\begin{defi}\label{defiKLR}
		Define the \emph{universal category} $\univAlg$ to be the monoidal $\bbC$-linear supercategory with objects generated by $a\in\bbZ$ and morphisms generated as a monoidal $\bbC$-linear supercategory by 
		\begin{tikzpicture}[line width = \lw, myscale=0.7]
			\node at (0,0) (A){$a+1$}; \node at (1,0) (B) {$a$};\draw (A)..controls +(0.2,-0.8) and +(-0.2,-0.8)..(B);
	\end{tikzpicture}, \begin{tikzpicture}[line width = \lw, myscale=0.7]
	\node at (0,0) (A){$a$}; \node at (1,0) (B) {$a+1$};\draw (A)..controls +(0.2,0.8) and +(-0.2,0.8)..(B);
	\end{tikzpicture} and \begin{tikzpicture}[line width = \lw, myscale=0.7]
	\draw (0,0) -- (1,1) (1,0)--(0,1);\node[fill=white, anchor=north] at (0,0) {$a$}; \node[fill=white, anchor=north] at (1,0) {$b$};\node[fill=white, anchor=south] at (0,1) {$b$}; \node[fill=white, anchor=south] at (1,1) {$a$};
	\end{tikzpicture} for $a\notin\{b,b-1\}$ where the first two have degree $\eps$ and the third one is even, subject to the following relations:

	\begin{tabularx}{\textwidth}{l>{\centering\arraybackslash}Xl>{\centering\arraybackslash}X}
		\refstepcounter{equation}(\theequation)\label{twicezero}&$\begin{tikzpicture}[line width = \lw, myscale=0.7]
			\node at (0,0) (A) {$a$};\node at (1,0) (B) {$a$};
			\node at (0,1.5) (C) {$a$};\node at (1,1.5) (D) {$a$};
			\draw (A.north)--(C.south) (B.north) --(D.south);
		\end{tikzpicture}=0$&
	\refstepcounter{equation}(\theequation)\label{inverse}&$\begin{tikzpicture}[line width = \lw, myscale=0.7]
		\node at (0,0) (A) {$a$};\node at (1,0) (B) {$b$};
		\node at (0,1.5) (C) {$b$};\node at (1,1.5) (D) {$a$};
		\node at (0,3) (E) {$a$};\node at (1,3) (F) {$b$};
		\draw (A.north)--(D.south) (B.north) --(C.south) (D.north)--(E.south) (C.north)--(F.south);
	\end{tikzpicture}=
	\begin{tikzpicture}[line width = \lw, myscale=0.7]
		\node at (0,0) (A) {$a$};\node at (1,0) (B) {$b$};
		\node at (0,1.5) (C) {$a$};\node at (1,1.5) (D) {$b$};
		\draw (A.north)--(C.south) (B.north) --(D.south);
	\end{tikzpicture}$\\
	\refstepcounter{equation}(\theequation)\label{untwist}&
	$\begin{tikzpicture}[line width = \lw, myscale=0.7]
		\node at (0,0) (A) {$a+1$};\node at (1,0) (B) {$a$};
		\node at (0,1.5) (C) {$a$};\node at (1,1.5) (D) {$a+1$};
		\draw (A.north)--(D.south) (B.north) --(C.south);
		\draw (C)..controls +(0.2,0.8) and +(-0.2,0.8)..(D);
	\end{tikzpicture}=0$&
	\refstepcounter{equation}(\theequation)\label{snake}&
	$\begin{tikzpicture}[line width = \lw, myscale=0.7]
		\node at (0,0) (A){$a$};
		\node at (0,1) (B){$a$};
		\node at (1,1) (C){$a+1$};
		\node at (2,1) (D){$a$};
		\node at (2,2) (E){$a$};
		\draw (A.north)--(B.south) (B)..controls +(0.2,0.8) and +(-0.2,0.8)..(C)(C)..controls +(0.2,-0.8) and +(-0.2,-0.8)..(D) (D)--(E);
	\end{tikzpicture}=\begin{tikzpicture}[line width = \lw, myscale=0.7]
	\node at (0,0) (A) {$a$};	\node at (0,2) (B) {$a$};\draw (A) -- (B);
	\end{tikzpicture}=\eps
	\begin{tikzpicture}[line width = \lw, myscale=0.7]
	\node at (2,0) (A){$a$};
	\node at (2,1) (B){$a$};
	\node at (1,1) (C){$a-1$};
	\node at (0,1) (D){$a$};
	\node at (0,2) (E){$a$};
	\draw (A.north)--(B.south) (C)..controls +(0.2,0.8) and +(-0.2,0.8)..(B)(D)..controls +(0.2,-0.8) and +(-0.2,-0.8)..(C) (D)--(E);
	\end{tikzpicture}$\\
	\refstepcounter{equation}(\theequation)\label{tangleone}&
	$\begin{tikzpicture}[line width = \lw, myscale=0.7]	
		\node at (0,2.5) (A) {$a$};
		\node at (0,1.5) (B) {$a$};
		\node at (1,1.5) (C) {$b$};
		\node at (2,1.5) (D) {$b+1$};
		\node at (0,0) (E) {$b$};
		\node at (1,0) (F) {$a$};
		\node at (2,0) (G) {$b+1$};
		\draw (A)--(B) (B.south)--(F.north) (C.south)--(E.north) (G)--(D) (C)..controls +(0.2,0.8) and +(-0.2,0.8)..(D);
	\end{tikzpicture}=
	\begin{tikzpicture}[line width = \lw, myscale=0.7]	
		\node at (2,2.5) (A) {$a$};
		\node at (0,1.5) (B) {$b$};
	\node at (1,1.5) (C) {$b+1$};
		\node at (2,1.5) (D) {$a$};
		\node at (0,0) (E) {$b$};
		\node at (1,0) (F) {$a$};
		\node at (2,0) (G) {$b+1$};
		\draw (A)--(D) (D.south)--(F.north) (C.south)--(G.north) (E)--(B) (B)..controls +(0.2,0.8) and +(-0.2,0.8)..(C);
	\end{tikzpicture}$&
	\refstepcounter{equation}(\theequation)\label{braid}&
	$\begin{tikzpicture}[line width = \lw, myscale=0.7]
		\node at (0,0) (A) {$c$};
		\node at (1,0) (B) {$b$};
		\node at (2,0) (C) {$a$};
		\node at (0,1.5) (D) {$b$};
		\node at (1,1.5) (E) {$c$};
		\node at (2,1.5) (F) {$a$};
		\node at (0,3) (G) {$b$};
		\node at (1,3) (H) {$a$};
		\node at (2,3) (I) {$c$};
		\node at (0,4.5) (J) {$a$};
		\node at (1,4.5) (K) {$b$};
		\node at (2,4.5) (L) {$c$};
		\draw (A.north)--(E.south) (E.north)--(I.south) (I)--(L)(B.north)--(D.south) (D)--(G) (G.north) -- (K.south)(C)--(F) (F.north)--(H.south)(H.north)--(J.south);
	\end{tikzpicture}=\begin{tikzpicture}[line width = \lw, myscale=0.7]
	\node at (0,0) (A) {$c$};
	\node at (1,0) (B) {$b$};
	\node at (2,0) (C) {$a$};
	\node at (0,1.5) (D) {$c$};
	\node at (1,1.5) (E) {$a$};
	\node at (2,1.5) (F) {$b$};
	\node at (0,3) (G) {$a$};
	\node at (1,3) (H) {$c$};
	\node at (2,3) (I) {$b$};
	\node at (0,4.5) (J) {$a$};
	\node at (1,4.5) (K) {$b$};
	\node at (2,4.5) (L) {$c$};
	\draw (C.north)--(E.south) (E.north)--(G.south) (G)--(J)(B.north)--(F.south) (F)--(I) (I.north) -- (K.south)(A)--(D) (D.north)--(H.south)(H.north)--(L.south);
	\end{tikzpicture}$
\end{tabularx}
	\end{defi}
	\begin{rem}
		We can think of \begin{tikzpicture}[line width = \lw, myscale=0.7]
			\draw (0,0) -- (1,1) (1,0)--(0,1);\node[fill=white, anchor=north] at (0,0) {$a$}; \node[fill=white, anchor=north] at (1,0) {$b$};\node[fill=white, anchor=south] at (0,1) {$b$}; \node[fill=white, anchor=south] at (1,1) {$a$};
			\end{tikzpicture} as a meromorphic braiding with poles at $(a,a+1)$ and $(a,a)$, the only difference to the original definition of \cite{Soi99} is that the braiding for $(a,a+1)$ is not an isomorphism.
			
		Furthermore, $\univAlg$ is universal in the following sense.
		Given any object $X$ in a monoidal $\bbC$-linear category $\mathcal{C}$ with $X\otimes X\cong 0$ admitting infinitely many left and right duals with this meromorphic braiding, we obtain a monoidal functor $\univAlg\to\mathcal{C}$, $(0)\mapsto X$.
	\end{rem}
    \begin{defi}
	Define the monoidal $\bbC$-linear supercategory $\sGalg$ as the quotient of $\univAlg$ by the tensor ideal generated by the \emph{periplectic relations}

\begin{tabularx}{\textwidth}{l>{\centering\arraybackslash}Xl>{\centering\arraybackslash}X}
	\refstepcounter{equation}(\theequation)\label{idempotentone}&
	$\begin{tikzpicture}[line width = \lw, myscale=0.7]
		\node at (0,0) (A) {$a$};
		\node at (1,0) (B) {$a+1$};
		\node at (2,0) (C) {$a$};
		\node at (0,1.5) (D) {$a$};
		\node at (1,1.5) (E) {$a-1$};
		\node at (2,1.5) (F) {$a$};
		\node at (0,3) (G) {$a$};
		\node at (1,3) (H) {$a+1$};
		\node at (2,3) (I) {$a$};
		\draw (A)..controls +(0.2,0.8) and +(-0.2,0.8)..(B)(D)..controls +(0.2,-0.8) and +(-0.2,-0.8)..(E)(E)..controls +(0.2,0.8) and +(-0.2,0.8)..(F)(H)..controls +(0.2,-0.8) and +(-0.2,-0.8)..(I) (C)--(F) (D)--(G);
	\end{tikzpicture}=\eps\begin{tikzpicture}[line width = \lw, myscale=0.7]
	\node at (0,0) (A) {$a$};
	\node at (1,0) (B) {$a+1$};
	\node at (2,0) (C) {$a$};
	\node at (0,1.5) (D) {$a$};
	\node at (1,1.5) (E) {$a+1$};
	\node at (2,1.5) (F) {$a$};
	\draw (A)--(D)(B)--(E)(C)--(F);
\end{tikzpicture}$&
\refstepcounter{equation}(\theequation)\label{idempotenttwo}&
	$\begin{tikzpicture}[line width = \lw, myscale=0.7]
		\node at (0,0) (A) {$a$};
		\node at (1,0) (B) {$a-1$};
		\node at (2,0) (C) {$a$};
		\node at (0,1.5) (D) {$a$};
		\node at (1,1.5) (E) {$a+1$};
		\node at (2,1.5) (F) {$a$};
		\node at (0,3) (G) {$a$};
		\node at (1,3) (H) {$a-1$};
		\node at (2,3) (I) {$a$};
		\draw (B)..controls +(0.2,0.8) and +(-0.2,0.8)..(C)(E)..controls +(0.2,-0.8) and +(-0.2,-0.8)..(F)(D)..controls +(0.2,0.8) and +(-0.2,0.8)..(E)(G)..controls +(0.2,-0.8) and +(-0.2,-0.8)..(H) (A)--(D) (F)--(I);
	\end{tikzpicture}=\eps\begin{tikzpicture}[line width = \lw, myscale=0.7]
		\node at (0,0) (A) {$a$};
		\node at (1,0) (B) {$a-1$};
		\node at (2,0) (C) {$a$};
		\node at (0,1.5) (D) {$a$};
		\node at (1,1.5) (E) {$a-1$};
		\node at (2,1.5) (F) {$a$};
		\draw (A)--(D)(B)--(E)(C)--(F);
	\end{tikzpicture}$.
\end{tabularx}
\end{defi}
The next lemma is a direct consequence of \cref{snake} and holds for $\univAlg$ and $\sGalg$.
\begin{lem}\label{adjunctionaap1}
	We have a (super) adjunction $\left(\_\otimes (a+1),\_\otimes a\right)$ and the following implication holds.
	\begin{equation*}
		\begin{tikzpicture}[line width = \lw, baseline = (one),myscale=0.7]
			\draw (0,0) rectangle (2,1) (1,0.5) node {A} (1.8,1)..controls +(0.2,0.8) and +(-0.2,0.8)..(2.8,1)--(2.8,0);
			\node at (3.2,0.5) {=};
			\begin{scope}[yshift=-.8ex]
				\node (one) at (3.2,0.5) {};
			\end{scope}
			\draw (6.4,0) rectangle (4.4,1) (5.4,0.5) node {B} (4.6,1)..controls +(-0.2,0.8) and +(0.2,0.8)..(3.6,1)--(3.6,0);
		\end{tikzpicture}\implies
		\begin{tikzpicture}[line width = \lw, rotate=180, baseline=(one),myscale=0.7]
			\draw (3.6,0) rectangle (5.6,1) (4.6,0.5) node {A} (5.4,1)..controls +(0.2,0.8) and +(-0.2,0.8)..(6.4,1)--(6.4,0);
			\node at (3.2,0.5) {=};
			\begin{scope}[yshift=.8ex]
				\node (one) at (3.2,0.5) {};
			\end{scope}
			\draw (2.8,0) rectangle (0.8,1) (1.8,0.5) node {B} (1,1)..controls +(-0.2,0.8) and +(0.2,0.8)..(0,1)--(0,0);
		\end{tikzpicture}
	\end{equation*}
\end{lem}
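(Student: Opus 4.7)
The adjunction assertion should follow immediately from the snake relations \eqref{snake}: the cap and cup generators play the role of counit and unit, and the two identities in \eqref{snake} are precisely the two triangle equations for a super adjunction, with the sign $\eps$ in the second one encoding the super structure. No further input is needed beyond unpacking this observation.

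For the implication, my plan is to transform the hypothesized equation into its $180^\circ$-rotated form by attaching cups and caps and then applying \eqref{snake} to straighten. Concretely, given the equality in the hypothesis, I would post-compose both sides with a cap on top and pre-compose with a cup on the bottom, each tensored with the identity on the appropriate strand positioned on the \emph{opposite} side from the existing bent strand. On each side of the equation the newly added cap then combines with the bent strand already present to form a zig-zag that can be straightened by \eqref{snake}, while the newly added cup produces a bent strand on the opposite side of the box, which is exactly the $180^\circ$-rotated configuration called for. Carrying out the same procedure symmetrically on the $A$-side and the $B$-side yields the rotated identity.

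The main bookkeeping step, and the only point where care is required, is to check that the $\eps$ factors produced by the applications of \eqref{snake} cancel between the two sides of the resulting equation. Because both sides undergo the same number and the same type of snake straightenings (the strand labels and orientations on the bent strand agree by the form of the hypothesis), the accumulated sign is identical on both sides and hence cancels. I do not anticipate any deeper obstruction beyond this sign accounting, since the entire argument is purely a manipulation of the zig-zag identities that define the adjunction.
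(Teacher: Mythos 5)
Your overall strategy---read the two identities in \eqref{snake} as the triangle identities of a super adjunction with the cap and cup as counit and unit, and obtain the rotated identity by composing with cups and caps and straightening---is exactly what the paper intends; the paper offers nothing beyond the remark that the lemma is a direct consequence of \eqref{snake}, so your sketch is already more explicit than its proof. Two points do need tightening. First, the recipe ``attach the new cup/cap on the opposite side from the existing bent strand'' is not coherent as stated: the existing bends sit on \emph{opposite} sides of the two diagrams in the hypothesis, and whatever you pre- and post-compose with must be one and the same morphism applied to both sides of the equation. The clean formulation is that the hypothesis identifies the right mate of $A$ with the left mate of $B$, and one transports the bend by applying the (inverse) mate bijections coming from the adjunction $(\_\otimes(a+1),\_\otimes a)$ and its left-handed counterpart---each such step being a single, fixed pre/post-composition performed simultaneously on both sides, which straightens the bend on one side while creating the rotated bend on the other. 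Second, your sign argument is wrong as stated: the two bends have opposite chirality, so straightening one uses the unsigned zig-zag in \eqref{snake} while straightening the other uses the $\eps$-signed one---the ``type'' of straightening is not the same on the two sides. The signs do balance, but the accounting must combine that single factor of $\eps$ with the Koszul signs produced when the (odd, if $\eps=-1$) cups and caps slide past $A$ and $B$, using that $\lvert A\rvert=\lvert B\rvert$ because the two sides of the hypothesis are equal homogeneous morphisms. This is bookkeeping rather than an obstruction, but it should be carried out, not asserted.
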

\begin{cor} There exist mirror versions of \cref{tangleone} and \cref{untwist}.
	\begin{equation}\label{tangletwo}
		\begin{tikzpicture}[line width = \lw, yscale=-1,myscale=0.7]	
			\node at (2,2.5) (A) {$a$};
			\node at (0,1.5) (B) {$b$};
			\node at (1,1.5) (C) {$b-1$};
			\node at (2,1.5) (D) {$a$};
			\node at (0,0) (E) {$b$};
			\node at (1,0) (F) {$a$};
			\node at (2,0) (G) {$b-1$};
			\draw (A)--(D) (D.north)--(F.south) (C.north)--(G.south) (E)--(B) (B)..controls +(0.2,0.8) and +(-0.2,0.8)..(C);
		\end{tikzpicture}
		=
		\begin{tikzpicture}[line width = \lw, yscale=-1,myscale=0.7]	
			\node at (0,2.5) (A) {$a$};
			\node at (0,1.5) (B) {$a$};
			\node at (1,1.5) (C) {$b$};
			\node at (2,1.5) (D) {$b-1$};
			\node at (0,0) (E) {$b$};
			\node at (1,0) (F) {$a$};
			\node at (2,0) (G) {$b-1$};
			\draw (A)--(B) (B.north)--(F.south) (C.north)--(E.south) (G)--(D) (C)..controls +(0.2,0.8) and +(-0.2,0.8)..(D);
		\end{tikzpicture}\qquad\qquad
        \begin{tikzpicture}[line width = \lw,myscale=0.7]
            \node at (0,0) (A) {$a+1$};\node at (1,0) (B) {$a$};
            \node at (0,1.5) (C) {$a$};\node at (1,1.5) (D) {$a+1$};
            \draw (A.north)--(D.south) (B.north) --(C.south);
            \draw (A)..controls +(0.2,-0.8) and +(-0.2,-0.8)..(B);
        \end{tikzpicture}=0
	\end{equation}
\end{cor}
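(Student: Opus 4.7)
The plan is to derive both mirror equations from \cref{adjunctionaap1}, which implements a $180^\circ$-rotation principle on diagrammatic identities, converting ``cap-and-descending-line'' configurations on one side of an equation into ``cup-and-ascending-line'' configurations on the other.

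For the mirror of \cref{tangleone}, the original equation already fits the template of \cref{adjunctionaap1} upon setting $A:=X_{b,a}$ and $B:=X_{a,b+1}$: the LHS of \cref{tangleone} reads as ``$A$ with its right $b$-output bent via the generating cap $b\otimes(b+1)\to\mathbf{1}$ to an external descending $(b+1)$-strand'', and the RHS as ``$B$ with its left $(b+1)$-output bent via the same cap to an external descending $b$-strand''. Invoking \cref{adjunctionaap1} yields the $180^\circ$-rotated equation, in which the cap is replaced by a cup whose left and right legs are labelled $b+1$ and $b$ respectively; after relabelling the dummy variable by $b\mapsto b-1$, this is precisely the first equation of \cref{tangletwo}.

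For the mirror of \cref{untwist}, the cap in $[a\otimes(a+1)\to\mathbf{1}]\circ X_{a+1,a}=0$ joins two outputs of the same crossing, so the equation does not immediately fit the template of \cref{adjunctionaap1}. I would first apply \cref{snake} to one of the strands ending at the cap — concretely, bend the left $a$-output of the crossing outward past a cup-cap pair — so that the internal cap gets rewritten as a cap-and-descending-line configuration, matching the LHS template of \cref{adjunctionaap1} with $B=0$. Invoking the lemma then produces the rotated equation, and collapsing the preparatory snake via \cref{snake} once more yields $X_{a+1,a}\circ[\mathbf{1}\to(a+1)\otimes a]=0$, the second equation of \cref{tangletwo}.

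The main obstacle will be the bookkeeping in the \cref{untwist} case: namely verifying that the preparatory and cleanup snakes interact correctly with the rotation, and that the $\epsilon$-factor from the second clause of \cref{snake} plays no role here (which it does not, since the equation is homogeneous of the form $=0$). Once these routine checks are carried out, both mirrors follow essentially from a single invocation of \cref{adjunctionaap1}.
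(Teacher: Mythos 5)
Your argument for the first identity of \cref{tangletwo} is correct and coincides with the paper's: \cref{tangleone} is precisely the hypothesis of \cref{adjunctionaap1} with $A$, $B$ the two crossings, and its conclusion, after relabelling, is the first identity.

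For the second identity there is a genuine gap. The template of \cref{adjunctionaap1} requires the cap to join the extreme output of the box to a \emph{separate} strand running down to the boundary of the whole diagram, i.e.\ to an additional input of the composite. In \cref{untwist} the cap joins the two outputs of a single crossing, and inserting a zigzag via \cref{snake} on an output strand does not create such a boundary strand; it only produces further internal cup/cap pairs, so the massaged diagram still fails to match the template, with or without $B=0$. The structural reason is that \cref{adjunctionaap1} transposes one extreme strand at a time, while passing from \cref{untwist} to its mirror transposes \emph{both} legs of the cap; doing this in two stages forces the crossing to be slid past a cup or a cap at the intermediate stage, and that sliding is exactly the content of \cref{tangleone} and of the first identity of \cref{tangletwo}, not something \cref{snake} provides. (Concretely, the one-strand mate of \cref{untwist} is a statement about morphisms $(a+1)\to(a-1)$, not the asserted identity.) The paper closes this gap by using the first identity of \cref{tangletwo} on the cup side and \cref{tangleone} on the cap side to establish an intermediate identity whose left-hand side contains the configuration of \cref{untwist} tensored with a spectator strand, hence vanishes, and whose right-hand side is the desired mirror relation (for $a-1$) whiskered by an extra strand and capped off; \cref{snake} then strips the whiskering. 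Some such use of the tangle relations is unavoidable here, so the second half of your argument should be replaced by this (or an equivalent) derivation.
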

\begin{proof}
	This first one is immediate from \cref{adjunctionaap1}.
    In combination with \cref{tangleone} it implies $\begin{tikzpicture}[line width = \lw,myscale=0.58]
		\node at (0,0) (A) {$a+1$};\node at (1,0) (B) {$a$};\node at (2,0) (E) {$a-1$};
		\node at (0,1.5) (C) {$a$};\node at (1,1.5) (D) {$a+1$};\node at (2,1.5) (F) {$a-1$};
		\draw (A.north)--(D.south) (B.north) --(C.south);
		\draw (C)..controls +(0.2,0.8) and +(-0.2,0.8)..(D);
		\draw (B)..controls +(0.2,-0.8) and +(-0.2,-0.8)..(E)--(F);
	\end{tikzpicture}=\begin{tikzpicture}[line width = \lw,myscale=0.58]
		\node at (0,0) (A) {$a$};\node at (1,0) (B) {$a-1$};\node at (2,0) (E) {$a+1$};
		\node at (0,1.5) (C) {$a-1$};\node at (1,1.5) (D) {$a$};\node at (2,1.5) (F) {$a+1$};
		\draw (A.north)--(D.south) (B.north) --(C.south);
		\draw (D)..controls +(0.2,0.8) and +(-0.2,0.8)..(F)--(E);
		\draw (A)..controls +(0.2,-0.8) and +(-0.2,-0.8)..(B);
	\end{tikzpicture}$ and the claim follows from \cref{untwist,snake}.
\end{proof}
The next lemma is a technical result which will be used to describe gradings on $\univAlg$ and $\sGalg$.
\begin{defi}
	Let $\sgn\colon\bbZ\setminus\{0\}\to\bbZ\setminus\{0\}$, $x\mapsto\frac{x}{\abs{x}}$.
	A map $f\colon\bbZ\times\bbZ\setminus\{(a,a),(a,a+1)\mid a\in\bbZ\}\to\bbZ$ is a \emph{grading function} if there exists $k\in\bbZ$ such that $f(a,b)=\sgn(b-a)(-1)^{b+a}k$ whenever $b\neq a,a+1$.
\end{defi}
\begin{lem}\label{technicalhelpergrading}
	A map $f\colon\bbZ\times\bbZ\setminus\{(a,a),(a,a+1)\mid a\in\bbZ\}\to\bbZ$ is a grading function if and only if
	\begin{enumerate}
		\item\label{one} $f(a,b)=-f(b,a)$ for all $\abs{a-b}>1$ and
		\item \label{two} $f(a,b)=f(b,a+1)$.
	\end{enumerate}
\end{lem}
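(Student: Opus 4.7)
The proof will be a direct verification in both directions; the key observation is that condition \ref{two}, applied twice, yields translation-invariance of $f$, reducing the problem to a single-variable function.

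For the forward direction, I would substitute the formula $f(a,b)=\sgn(b-a)(-1)^{b+a}k$ into both conditions. Condition \ref{one} is immediate from $\sgn(a-b)=-\sgn(b-a)$. For condition \ref{two}, note that $(-1)^{a+1+b}=-(-1)^{a+b}$; and since $(a,b)$ lies in the domain, $b-a\notin\{0,1\}$, so either $b-a\geq 2$ or $b-a\leq-1$, and in both cases $\sgn(b-a)=\sgn(b-a-1)=-\sgn(a+1-b)$. The two sign flips cancel.

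For the converse, I would first apply \ref{two} twice to obtain
\begin{equation*}
    f(a,b)=f(b,a+1)=f(a+1,b+1),
\end{equation*}
so that $f(a,b)$ depends only on $n:=b-a$. Setting $g(n):=f(0,n)$ for $n\in\bbZ\setminus\{0,1\}$ and $k:=g(-1)$, conditions \ref{one} and \ref{two} translate to $g(n)=-g(-n)$ for $\abs{n}\geq 2$ and $g(n)=g(1-n)$ respectively (one must check that the required domain conditions on all substituted pairs are equivalent to $(a,b)$ being in the original domain, which is straightforward).

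An easy induction then pins down $g$: from $g(-1)=k$, condition \ref{two} gives $g(2)=g(-1)=k$, and then for $n\geq 2$ one has $g(n+1)=g(-n)=-g(n)$ by combining \ref{two} and \ref{one}, so $g(n)=(-1)^n k$ for $n\geq 2$. The values $g(n)$ for $n\leq -2$ follow from \ref{one}. A final sign check (using $(-1)^{a+b}=(-1)^{b-a}$) confirms $f(a,b)=g(b-a)=\sgn(b-a)(-1)^{a+b}k$ in every case. The main difficulty is purely bookkeeping of signs and verifying domain conditions; no deeper idea is required.
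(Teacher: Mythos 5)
Your proof is correct and follows essentially the same route as the paper's: a direct substitution check for the forward direction, and for the converse an induction on the difference $b-a$ driven by combining \cref{one} and \cref{two}, anchored at a single base value $k$. Your explicit reduction to a one-variable function $g$ via the translation invariance $f(a,b)=f(b,a+1)=f(a+1,b+1)$ is a slightly cleaner packaging of the same computation the paper performs directly on $f$ (its first step $f(a,a+2)=f(a+2,a+1)=f(a+1,a+3)$ is exactly this invariance in disguise).
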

\begin{proof}
	We clearly have $\sgn(b-a)(-1)^{a+b}k = -\sgn(a-b)(-1)^{b+a}k$ and $\sgn(b-a)(-1)^{a+b}k = \sgn(b-a-1)(-1)^{b+a+1}k$, and thus any grading function satisfies \cref{one} and \cref{two}.

	On the other hand if $f$ satisfies \cref{one} and \cref{two},$f(a,a+2)=f(a+2,a+1)=f(a+1,a+3)$ by \cref{two}
	and thus $f(a,a+2)=k=\sgn{a+2-a}(-1)^{a+a+2}k$ for some fixed $k$.
	Furthermore, for any $s\geq 2$ using $f(a,a+s+1)=f(a+s,a)=-f(a,a+s)$ we obtain $f(a,a+s)=(-1)^s k=\sgn{a+s-a}(-1)^{a+a+s}k$. 
	So far we have shown
	\begin{equation}\label{helper}
		f(a,b)=\sgn(b-a)(-1)^{a+b}k
	\end{equation}
	in case $b>a+1$.
    Using \cref{one} and that $\sgn(b-a)=-\sgn(a-b)$, we also conclude \cref{helper} in case $a-1>b$.

	Finally, for $b+1=a$ we have $f(a,a-1)=f(a-1,a+1)=k=\sgn(a-1-a)(-1)^{a-1+a}k$.
\end{proof}
The following is straightforward to check.
\begin{lem}
	Given a grading function $f$ and for every $a\in\bbZ$ an integer $l_a$, we can endow the supercategory $\sGalg$ with a grading by putting \begin{tikzpicture}[line width = \lw,myscale=0.7]
		\node at (0,0) (A){$a+1$}; \node at (1,0) (B) {$a$};\draw (A)..controls +(0.2,-0.8) and +(-0.2,-0.8)..(B);
	\end{tikzpicture} in degree $l_a$, \begin{tikzpicture}[line width = \lw,myscale=0.7]
		\node at (0,0) (A){$a$}; \node at (1,0) (B) {$a+1$};\draw (A)..controls +(0.2,0.8) and +(-0.2,0.8)..(B);
	\end{tikzpicture} in degree $-l_a$ and \begin{tikzpicture}[line width = \lw,myscale=0.7]
		\draw (0,0) -- (1,1) (1,0)--(0,1);\node[fill=white, anchor=north] at (0,0) {$a$}; \node[fill=white, anchor=north] at (1,0) {$b$};\node[fill=white, anchor=south] at (0,1) {$b$}; \node[fill=white, anchor=south] at (1,1) {$a$};
	\end{tikzpicture} in degree $f(a,b)$.
\end{lem}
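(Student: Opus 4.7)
My plan is to check that every defining relation of $\sGalg$ is homogeneous under the proposed degree assignment. Once that is done, the grading descends from the free monoidal $\bbC$-linear supercategory generated by the three families of morphisms to the quotient $\sGalg$, proving the lemma.

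First I would dispatch the cheap relations. The relations \cref{twicezero}, \cref{untwist} and its mirror in \cref{tangletwo} all state that a single monomial is zero, so they impose no constraint on the degrees. The zigzag relation \cref{snake} contributes one cup and one cap of matching indexing on each nontrivial side, giving $l_a + (-l_a) = 0$ on the left zigzag and $l_{a-1} + (-l_{a-1}) = 0$ on the right, matching the degree $0$ of the identity strand.

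The three relations that actually use the hypotheses on $f$ are \cref{inverse}, \cref{tangleone} and \cref{braid}. For \cref{inverse}, two stacked crossings on labels $(a,b)$ and $(b,a)$ have total degree $f(a,b) + f(b,a)$, which vanishes by condition \cref{one} of a grading function (applicable since the relation only makes sense when $\abs{a-b}>1$). For \cref{tangleone}, both sides contain a cup contributing $-l_b$, so equality of degrees reduces to $f(b,a) = f(a,b+1)$, which is condition \cref{two} after renaming. For the Reidemeister-III--type relation \cref{braid}, a tracing of strand labels shows that both sides involve exactly the crossings of the label pairs $(c,b)$, $(c,a)$, $(b,a)$ with the same orientation, so each side has total degree $f(c,b) + f(c,a) + f(b,a)$.

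Finally the periplectic relations \cref{idempotentone} and \cref{idempotenttwo} each equate a composition of four cups and caps with the identity on three strands; the indices pair up so the four contributions cancel: \cref{idempotentone} gives $(-l_a) + l_{a-1} + (-l_{a-1}) + l_a = 0$ and \cref{idempotenttwo} gives $(-l_{a-1}) + l_a + (-l_a) + l_{a-1} = 0$, matching the degree $0$ of the three identity strands on the right-hand side. The main (and entirely routine) obstacle is the bookkeeping of the two orientation conventions for the arc generators, i.e.~reading off from each tikz diagram which of the two cup/cap generators it represents and for which value of $a$, so as to know whether the contribution is $+l_a$ or $-l_a$; once that dictionary is fixed, every check is a one-line degree count and the scalar $\eps$ appearing in \cref{snake}, \cref{idempotentone} and \cref{idempotenttwo} is irrelevant since it does not affect degrees.
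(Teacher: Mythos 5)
Your proposal is correct and carries out exactly the verification the paper omits (the paper only remarks that the lemma is ``straightforward to check''): one confirms that each defining relation of $\univAlg$ and each periplectic relation is homogeneous for the proposed degrees, with \cref{inverse} using condition \cref{one}, \cref{tangleone} using condition \cref{two}, and the remaining relations being degree-$0$ cancellations or zero relations. Your degree bookkeeping for \cref{snake}, \cref{braid}, \cref{idempotentone} and \cref{idempotenttwo} matches what one gets from reading the diagrams, so nothing is missing.
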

\begin{defi}
	We define the supercategory $\sG$ as the quotient of $\sGalg$ by the additional relation.
	\begin{equation}\label{firstzero}
		\begin{tikzpicture}[line width = \lw, myscale=0.7]
			\node at (0,0) (A) {$a$};\node at (0,1.5) (C) {$a$};
			\draw (1,0) -- (1,0.25) (2.5,0) -- (2.5,0.25) (1,1.25) -- (1,1.5) (2.5,1.25)--(2.5,1.5) (0.75,0.25)--(2.75,0.25)--(2.75,1.25)--(0.75,1.25)--(0.75,0.25);
			\draw (A.north)--(C.south);
			\node at (1.75,0) {$\dots$};
			\node at (1.75,1.5) {$\dots$};
			\node at (1.75, 0.75) {something};
		\end{tikzpicture}=0\quad \text{if }a\neq0
	\end{equation}
	As \cref{firstzero} is homogeneous, $\sG$ is then a graded right module category over $\sGalg$, see e.g.~\cite{EGNO15} for an introduction to module categories.

	For $i\in\bbZ$, we write $\theta_i\colon\sG\to\sG$ for the image of $i$ under the action functor $\sGalg\to\End(\sG)$ coming from the $\sGalg$-module structure on $\sG$.
\end{defi}

\begin{rem}\label{dualityonKLR}
	We can endow all three categories $\univAlg$, $\sGalg$ and $\sG$ with a duality given by $a\mapsto -a$ on objects and horizontally flipping the diagrams describing morphisms.
\end{rem}
\subsection{Relation to the super Brauer category}\label{relationSuperBrauer}

This subsection provides an isomorphism between $\Rep(P)$ and $\sGm$.

The algebra $\sBr_d$ has a family of pairwise commuting Jucys--Murphy elements, \cite{Cou18a}*{Lemma 6.1.2}, defined as
\begin{equation}\label{defiJM}
    x_1=0\qquad\qquad x_{k+1}=s_kx_ks_k+s_k+e_k.
\end{equation}
Thus, there exists a system of orthogonal idempotents $e(i_1, \dots, i_d)$ in $\sBr_d$ projecting onto the generalized $i_k$-eigenspaces of $x_k$ for $1\leq k\leq d$, i.e.~for $N\gg0$ we have $(x_k-i_k)^Ne(i_1, \dots, i_d)=0$.

The following  is a remarkable difference to the ordinary Brauer algebra, where Jordan blocks of size $2$ can occur.
\begin{prop}\label{JMdiag}
    The $x_k$ act diagonalizably on $\sBr_d$.
\end{prop}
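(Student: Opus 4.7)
The strategy is to transfer diagonalizability from the representation side via Schur--Weyl duality. Since $\sBr_d$ is independent of $n$, it suffices to verify that $x_k$ acts diagonalizably on $V^{\otimes d}$ for some (equivalently any) $n\gg0$: for such $n$ the map \eqref{SchurWeylDuality} is an isomorphism, so the minimal polynomial of $x_k$ inside $\sBr_d$ coincides with that of $x_k$ viewed as an endomorphism of $V^{\otimes d}$, and the latter having simple roots forces the former to as well.

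First, I would identify the operator on $V^{\otimes d}$ corresponding to $x_k$ with the partial \emph{fake Casimir} from \cite{BDE19}. Recall that the fake Casimir $C$ gives a natural endomorphism of $M\otimes V$ for every $M\in\lie{p}(n)\mmod$, and the crucial input is precisely that $C$ acts semisimply with integer eigenvalues, yielding the decomposition $-\otimes V\cong\bigoplus_{i\in\bbZ}\Theta_i$. The defining recursion $x_{k+1}=s_kx_ks_k+s_k+e_k$ together with $x_1=0$ is exactly the standard Brauer identity that, in the classical setting, identifies $x_k$ with the sum over $j<k$ of partial Casimir operators acting on positions $j$ and $k$; the same inductive argument, with the fake Casimir replacing the ordinary Casimir and the $s_i,e_i$ acting on $V^{\otimes d}$ as the supersymmetric braiding and the cup--cap contraction, shows that $x_k$ acts as the sum of fake Casimirs between positions $j<k$ and position $k$.

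Second, iterating the decomposition $-\otimes V\cong\bigoplus_{i\in\bbZ}\Theta_i$ gives
\begin{equation*}
    V^{\otimes d}=\bigoplus_{(i_1,\dots,i_d)\in\bbZ^d}\Theta_{i_d}\cdots\Theta_{i_1}\mathbbm{1},
\end{equation*}
where $\mathbbm{1}$ is the trivial $\lie{p}(n)$-module. Under the identification of the previous paragraph, $x_k$ acts on the summand indexed by $(i_1,\dots,i_d)$ as the scalar $i_k$, hence is diagonalizable on $V^{\otimes d}$. Transferring along \eqref{SchurWeylDuality} for $n\gg0$ yields the statement for $\sBr_d$.

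The technical heart of the argument is the identification in the first paragraph: one must check carefully that the Jucys--Murphy recursion matches the recursion satisfied by the partial fake Casimir, keeping track of the sign $\eps$ appearing in \cref{snake} and of the fact that the fake Casimir is a natural transformation rather than a genuine element of $U(\lie{p}(n))\otimes U(\lie{p}(n))$. This is the only step that uses periplectic-specific input; once it is in place, the rest is formal and purely representation-theoretic.
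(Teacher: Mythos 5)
Your overall frame (transferring the question through \eqref{SchurWeylDuality} for $n\gg0$) agrees with the paper's, but the central step is circular. You quote as ``crucial input'' from \cite{BDE19} that the fake Casimir acts \emph{semisimply}, so that $x_k$ acts by the honest scalar $i_k$ on the summand $\Theta_{i_d}\cdots\Theta_{i_1}\bbC$ of $V^{\otimes d}$. What \cite{BDE19} actually provides is only a decomposition into \emph{generalized} eigenspaces --- exactly as in the construction of the idempotents $e(i_1,\dots,i_d)$ directly above the proposition, where one only knows $(x_k-i_k)^N e(\bm{i})=0$ for some $N\gg0$. Semisimplicity of the fake Casimir on $V^{\otimes d}$ is essentially equivalent, via \eqref{SchurWeylDuality}, to the statement being proved, so it cannot be cited as known; and the paper explicitly flags that the analogous claim fails for the ordinary Brauer algebra, where Jordan blocks of size $2$ do occur, so no soft or purely formal argument can suffice. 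Your careful matching of the Jucys--Murphy recursion with partial fake Casimirs, which you present as the technical heart, only ever delivers the generalized-eigenvalue statement.

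The missing idea is the one the paper uses to kill the nilpotent part: each indecomposable summand $M$ of $V^{\otimes d}$ lies in a single generalized eigenspace, so $x_k-i$ restricts to a nilpotent element of $\End_{\lie{p}(n)}(M)$; by \cite{BDE19}*{Prop.~8.1.1} this endomorphism algebra is one-dimensional, hence $x_k-i=0$ on $M$. That one-dimensionality is the genuinely periplectic-specific input. With this lemma inserted your argument closes; without it, it establishes nothing beyond what is already recorded before the proposition.
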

\begin{proof}
    First, using \cref{SchurWeylDuality}, $x_k-i$ acts nilpotently on any indecomposable $\lie{p}(n)$-module (for some $i$).
    By \cite{BDE19}*{Prop. 8.1.1} the endomorphism space of any indecomposable module is one dimensional and hence $x_k-i$ acts by $0$.
    As \cref{SchurWeylDuality} is an isomorphism for $n\gg0$, the $x_k$ act diagonalizably on $\sBr_d$.
\end{proof}
\begin{defi}\label{defiiindandthetaim}
    Let $\ind$ be the endofunctor on $\Rep(P)$ given by tensoring with the object $1$ (i.e.\ adding one strand to the right in every diagram).
    Using the Jucys--Murphy elements $x_k$, this decomposes into $\ind=\bigoplus_{i\in\bbZ}i\mind$ by projecting onto the $i$-eigenspace for this strand (see also \cite{ES21}*{Lemma 2.15} for a similar statement for the Brauer category).

	Similarly, we can refine $\Theta=\_\otimes V = \bigoplus_{i\in\bbZ}\Theta_i^{-}$ by projecting onto eigenspaces and $\FunctorsBrtopn$ intertwines $i\mind$ and $\Theta_i^{-}$ (see \cite{BDE19} for details). 
\end{defi}
Fix now a choice of scalars $\lambda_{a,b}$ for $a$,$b\in\bbZ$, $b\neq a,a-1$ such that 
\begin{enumerate}
    \item\label{scalarone} $\lambda_{a,b}\lambda_{b,a}=\frac{1}{1-(a-b)^2}$ for all $a,b\in\bbZ$ with $\abs{a-b}>1$ and
    \item\label{scalartwo} $\lambda_{b,a}(b-a)=\lambda_{a,b+1}(a-b-1)$ for all $a, b\in\bbZ$ such that $a\neq b, b+1$.
\end{enumerate}
The following has a straightforward proof given in  \cref{checkingrelationsklrtosbr}.
\begin{prop}\label{functorXiexplicit}
    The category $\Rep(P)^{op}$ is a module over $\sGalgm$ via $\ActionFunctorKLRtosBr\colon\sGalgm\to\End(\Rep(P)^{op})$ given by $i\mapsto i\mind$ on objects and
    \begin{itemize}
		\item $\begin{tikzpicture}[line width = \lw, myscale=0.7]
            \node at (0,0) (A){$a+1$}; \node at (1,0) (B) {$a$};\draw (A)..controls +(0.2,-0.8) and +(-0.2,-0.8)..(B);
       \end{tikzpicture}\mapsto \left((a+1)\mind\circ a\mind\overset{\incl}{\longrightarrow}\ind\circ\ind\overset{-\cap}{\longrightarrow}\id\right)$,
       \item $\begin{tikzpicture}[line width = \lw, myscale=0.7]
       \node at (0,0) (A){$a$}; \node at (1,0) (B) {$a+1$};\draw (A)..controls +(0.2,0.8) and +(-0.2,0.8)..(B);
   \end{tikzpicture}\mapsto \left(\id\overset{\cup}{\longrightarrow}\ind\circ\ind\overset{\pr}{\longrightarrow}a\mind\circ (a+1)\mind\right)$,
   \item $\begin{tikzpicture}[line width = \lw, myscale=0.7]
   \draw (0,0) -- (1,1) (1,0)--(0,1);\node[fill=white, anchor=north] at (0,0) {$a$}; \node[fill=white, anchor=north] at (1,0) {$b$};\node[fill=white, anchor=south] at (0,1) {$b$}; \node[fill=white, anchor=south] at (1,1) {$a$};
   \end{tikzpicture}\mapsto\left(b\mind\circ a\mind\overset{\incl}{\longrightarrow}\ind\circ\ind\overset{f}{\longrightarrow}\ind\circ\ind\right)$
    \end{itemize}
    on morphisms, where $f=\lambda_{b,a}((b-a)(\id\otimes s)+\id)$.
\end{prop}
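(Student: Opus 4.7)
The plan is to verify that $\ActionFunctorKLRtosBr$ respects each of the defining relations of $\sGalgm$, which splits into the universal relations \eqref{twicezero}--\eqref{braid} of $\univAlg$ together with the periplectic relations \eqref{idempotentone} and \eqref{idempotenttwo}. I would first unpack the assignment using \cref{JMdiag}: the decomposition $\ind=\bigoplus_i i\mind$ provides natural inclusions $\incl_i\colon i\mind\hookrightarrow\ind$ and projections $\pr_i\colon\ind\twoheadrightarrow i\mind$ for every $i\in\bbZ$, and the three generators of $\univAlg$ map under $\ActionFunctorKLRtosBr$ to the super Brauer cap, cup, and a linear combination of the identity and the super Brauer crossing $s$ (with coefficient $\lambda_{b,a}(b-a)$), pre- and post-composed with the appropriate $\incl_j$'s and $\pr_j$'s.

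For the universal relations I would proceed case by case. Relation \eqref{snake} is immediate from the super Brauer snake identities combined with naturality of $\incl_i$ and $\pr_i$. Relation \eqref{inverse} expands, after using $s^2=\id$, into $\lambda_{a,b}\lambda_{b,a}(1-(b-a)^2)\id=\id$ on the relevant eigenspace, which is exactly condition \eqref{scalarone}. Relation \eqref{twicezero} amounts to $a\mind\circ a\mind=0$; this is where a genuinely $\lie{p}(n)$-specific input enters, via the Jucys--Murphy recursion $x_{k+1}=s_kx_ks_k+s_k+e_k$ combined with the super Brauer relations on $s_k,e_k$, which force the joint $(a,a)$-eigenspace of $(x_k,x_{k+1})$ to be trivial. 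Relation \eqref{untwist} follows by a similar eigenvalue argument for the cap-cup composition restricted to the $(a,a+1)$-eigenspace. Relations \eqref{tangleone} and \eqref{braid} are treated by expanding crossings into their $\id$- and $s$-components and sliding past caps/cups using the tangle and braid relations of $\sBr$; matching coefficients yields \eqref{scalartwo} for \eqref{tangleone} and a combination of \eqref{scalarone} and \eqref{scalartwo} for \eqref{braid}.

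Finally, the periplectic relations \eqref{idempotentone} and \eqref{idempotenttwo} encode compositions $\Theta_a^-\Theta_{a\pm 1}^-\Theta_a^-\cong\eps\cdot\Theta_a^-$ at the level of natural transformations; these are direct translations of the translation-functor identities from \cite{BDE19} transported across $\FunctorsBrtopn$. I anticipate the main technical obstacle to be relation \eqref{braid}: three strands with three independent scalar parameters $\lambda_{a,b}$, $\lambda_{b,c}$, $\lambda_{a,c}$ must cooperate, and both \eqref{scalarone} and \eqref{scalartwo} need to be invoked simultaneously. The scalar conditions are engineered precisely for this verification to go through, so the computation must close; tracking signs and coefficients cleanly is where the real effort lies. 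A secondary point of care is \eqref{twicezero}, since it is the only relation whose verification genuinely uses the structure of $\lie{p}(n)$ rather than a purely super Brauer identity.
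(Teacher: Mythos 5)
Your overall strategy -- checking each defining relation of $\sGalgm$ one at a time, using \cref{scalarone} for \cref{inverse}, \cref{scalartwo} for \cref{tangleone}, both for \cref{braid}, and the super Brauer snake/braid identities elsewhere -- is exactly the route the paper takes, and your computations for \cref{inverse}, \cref{tangleone} and \cref{braid} match the ones in \cref{checkingrelationsklrtosbr}.

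The one step where your proposed mechanism does not work as described is \cref{twicezero}. You claim the Jucys--Murphy recursion $x_{k+1}=s_kx_ks_k+s_k+e_k$ together with the super Brauer relations on $s_k,e_k$ forces the joint $(a,a)$-eigenspace of $(x_k,x_{k+1})$ to vanish. No such internal computation is carried out in the paper, and it is not clear one exists: the needed fact is precisely $i\mind\circ i\mind=0$, which the paper obtains by transporting the question across Schur--Weyl duality to $\lie{p}(n)\mmod$ for $n\gg0$ (where $\FunctorsBrtopn$ is faithful on the relevant endomorphism spaces) and quoting $\Theta_k^{-}\Theta_k^{-}=0$ from \cite{BDE19}*{Theorem 4.5.1}. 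Your parenthetical remark that this relation needs genuinely $\lie{p}(n)$-specific input is correct, but it contradicts the purely super-Brauer-side argument you then sketch; you should replace that sketch by the Schur--Weyl transfer. A smaller but analogous point concerns the periplectic relations \cref{idempotentone,idempotenttwo}: they are not literally ``translation-functor identities from \cite{BDE19}''. The paper again passes to $\lie{p}(n)$ for $n\gg0$, observes that the left-hand side maps to an isomorphism by \cite{BDE19}*{Theorem 4.5.1}, uses \cref{snake} to show that $\eps\cdot\mathrm{LHS}$ is idempotent, and concludes it must be the identity since an idempotent isomorphism is the identity; without this last argument the relation does not follow. Also note that for \cref{untwist} the vanishing comes from \cite{Cou18a}*{Lemma 6.3.1} (the cap kills the $(j,l)$-eigenspace unless $l=j+1$) applied after $\cap\circ(1+s)=2\cap$, rather than a generic ``eigenvalue argument''.
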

\begin{rem}
	A priori, \begin{tikzpicture}[line width = \lw, myscale=0.7]
		\draw (0,0) -- (1,1) (1,0)--(0,1);\node[fill=white, anchor=north] at (0,0) {$a$}; \node[fill=white, anchor=north] at (1,0) {$b$};\node[fill=white, anchor=south] at (0,1) {$b$}; \node[fill=white, anchor=south] at (1,1) {$a$};
		\end{tikzpicture} is only assigned to a natural transformation $b\mind\circ a\mind\to\ind\circ\ind$, but in \cite{BDE19}*{Theorem 4.5.1} it was shown that the image of this morphism lies indeed in $a\mind\circ b\mind$.
		Therefore, this assignment is in fact well-defined.
\end{rem}
\begin{thm}\label{KLRtosuperBraueriso}
	Evaluating $\ActionFunctorKLRtosBr$ at the object $0\in\Rep(P)$ factors through $\sGm$ and provides a fully faithful homomorphism $\FunctorKLRtosBr\colon\sGm\to\Rep(P)$ of module categories over $\sGalgm$ with essential image indecomposable objects in $\Rep(P)$.
\end{thm}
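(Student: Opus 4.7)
The first task is to check that $\ActionFunctorKLRtosBr(-)(0)$ satisfies the defining relation \cref{firstzero} of $\sGm$. By \cref{defiJM}, $x_1 = 0$, so the generalized $a$-eigenspace of $x_1$ on $V = \ind(0)$ is zero for every $a \neq 0$. Consequently $a\mind(0) = 0$ in $\Rep(P)$ for $a \neq 0$. Any diagram in $\sGalgm$ whose leftmost boundary endpoint (at either the top or the bottom) carries the label $a \neq 0$ therefore evaluates at $0$ to a morphism with a zero source or target, hence is the zero morphism. This is precisely the content of \cref{firstzero}, so we obtain a well-defined functor of $\sGalgm$-module categories $\FunctorKLRtosBr\colon \sGm \to \Rep(P)$.

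\textbf{Step 2: Indecomposability of the image.} The unit object $\emptyset$ of $\sGm$ maps to $0 \in \Rep(P)$, which is indecomposable. Every other object of $\sGm$ has the form $\theta_{i_k}\cdots\theta_{i_1}(\emptyset)$ and maps to $i_k\mind\cdots i_1\mind(0)$. By \cref{introbasissg}, in the Karoubian envelope $(\sGm)^{\oplus}$ each such object decomposes into primitive idempotents indexed by up-down-tableaux, all of a single common shape $\lambda$. I would match this combinatorial datum with the partition-indexed classification of indecomposable objects in $\Rep(P)$ from \cite{CE21}, proving by induction on $k$ that $\FunctorKLRtosBr$ sends each object of $\sGm$ to the indecomposable object of $\Rep(P)$ labelled by the same partition. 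Applying $i\mind$ to an indecomposable object in $\Rep(P)$ preserves indecomposability for the specific sequences arising in $\sGm$, precisely because the cyclotomic quotient kills the extra summands.

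\textbf{Step 3: Full faithfulness by basis comparison.} To establish that $\FunctorKLRtosBr$ is fully faithful, I would compare bases on both sides. On the source, \cref{introbasissg} gives a basis of $\Hom_{\sGm}$ indexed by pairs of up-down-tableaux of the same shape. On the target, the analogous basis of $\Hom_{\Rep(P)}$ between two indecomposables is obtained from the cellular structure of $\sBr_d$ refined by the Jucys--Murphy idempotents $e(i_1, \dots, i_d)$, whose semisimple action is guaranteed by \cref{JMdiag}. The plan is then to verify that $\FunctorKLRtosBr$ carries each $\sGm$-basis element, via the explicit description of \cref{functorXiexplicit}, to the corresponding $\Rep(P)$-basis element up to a unit. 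The invertibility of this comparison is ensured by the nondegeneracy of the scalars $\lambda_{a,b}$, which follows from \cref{scalarone} and \cref{scalartwo}.

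\textbf{The main obstacle.} The delicate part is Step~3: unwinding a composition of $\sGm$-generators into inclusions, projections onto eigenspaces and the transposition $s$ weighted by the scalars $\lambda_{a,b}$, then recognising the result as a specific cellular basis vector. Sign issues from the super-structure and the intricate combinatorics of up-down-tableaux make a direct head-on computation unwieldy. I would instead equip the set of up-down-tableaux with a dominance-type partial order and prove triangularity: the image of the basis element indexed by $(S, T)$ equals the corresponding target basis element plus terms indexed by strictly smaller tableau pairs. The triangularity would be established by induction on the number of crossings, repeatedly applying the braid relation \cref{braid}, \cref{inverse}, \cref{snake} and the periplectic relations \cref{idempotentone}, \cref{idempotenttwo} to normalise the diagram, and exploiting \cref{JMdiag} to evaluate eigenspace projections unambiguously.
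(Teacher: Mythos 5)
Your Step 1 matches the paper's (one-line) argument, and Step 2, while routed through the classification of \cite{CE21} rather than the paper's transfer via Schur--Weyl duality to \cite{BDE19}*{Theorem 7.1.1}, is a plausible alternative -- though the assertion that applying $i\mind$ ``preserves indecomposability \dots because the cyclotomic quotient kills the extra summands'' is exactly the nontrivial point, and the only justification available is the one the paper uses: $\Theta_i^-$ applied to an indecomposable $\lie{p}(n)$-module is indecomposable or zero, transported back through \eqref{SchurWeylDuality} for $n\gg0$.

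The genuine gap is in Step 3, in two ways. First, there is a circularity: the linear-independence half of \cref{introbasissg} (i.e.\ \cref{spanningsetisbasis}) is itself proven in the paper by a dimension count that takes the \emph{fullness} of $\FunctorKLRtosBr$ as input; you cannot therefore invoke that basis theorem as a black box to establish full faithfulness. The logical order in the paper is: spanning set spans (\cref{spanningset}, purely diagrammatic) $\Rightarrow$ fullness (proven directly) $\Rightarrow$ dimension count $\Rightarrow$ basis theorem and faithfulness. Second, and more concretely, your triangularity scheme never confronts the actual content of fullness: one must show that the morphisms $e(i_1,\dots,i_{k-1},i,i+1,\dots)\,s_k\,e(i_1,\dots,i_{k-1},j,l,\dots)$ of $\Rep(P)$ lie in the image even when $(j,l)$ is such that the crossing generator of $\sGm$ is undefined (i.e.\ $j=l+1$ or $j=l$). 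For $l\neq i$ the paper rewrites $e_{i,i+1}s_ke_{j,l}$ via the Jucys--Murphy recursion \eqref{defiJM} as a multiple of $e_{i,i+1}(1+\eps_k)e_{j,l}$, which is manifestly in the image; for $(j,l)=(i+1,i)$ it shows the morphism is in fact \emph{zero}, using the adjunction $(\Theta_i,\Theta_{i-1})$ together with $\Theta_k\Theta_k=0$ and \eqref{SchurWeylDuality} for $n\gg0$. Without this vanishing statement your comparison of bases cannot be surjective onto $\Hom_{\Rep(P)}$, so fullness is not established by the proposal as written.
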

\begin{proof}
    The evaluation at $0$ factors through $\sGm$ as $x_1=0$ by definition.
    Hence, we get a homomorphism $\FunctorKLRtosBr\colon\sGm\to\Rep(P)$ of module categories over $\sGalgm$.

	The essential image is given by $\FunctorKLRtosBr(\bm{i})$ for all sequences $\bm{i}$.
	These are exactly the objects in $\Rep(P)$ corresponding to the idempotents $e(\bm{i})$.
	This idempotent is obtained from repeatedly applying $i\mind$.
	On the other side of Schur--Weyl duality this corresponds to repeatedly applying $\Theta_i^{-}$ to the trivial module $\bbC$.
	By \cite{BDE19}*{Theorem 7.1.1} this is either indecomposable or $0$.
	As \cref{SchurWeylDuality} is an isomorphism for $n\gg0$, we see that $e(\bm{i})$ is either primitive or $0$.
	Hence, $1=\sum_{\bm{i}\in\bbZ^d}e(\bm{i})$ is a decomposition of $1$ into primitive orthogonal idempotents, and thus every indecomposable object in $\Rep(P)$ arises via some $e(\bm{i})$.

	\underline{Claim:} $\FunctorKLRtosBr$ is full.

	\Cref{functorXiexplicit} implies that all cups and caps are in the image of $\FunctorKLRtosBr$ as well as $e(i_1, \dots i_{k-1}, i_k, i_{k+1}, i_{k+2}, \dots i_d)s_k$ for $i_{k+1}\neq i_k-1,i_k$.

	We have $\Theta_i^{-}\Theta_i^{-}=0$ by \cite{BDE19}*{Theorem 4.5.1}, so $e(i_1, \dots i_{k}, i_{k+1}, \dots i_d)=0$ if $i_k=i_{k+1}$ as \cref{SchurWeylDuality} is an isomorphism for $n\gg0$.

	With the abbreviation $e_{j,l}\coloneqq e(i_1, \dots i_{k-1}, j, l, i_{k+2}, \dots i_d)$ for $j$ and $l\in\bbZ$, it suffices to show that $e_{i,i+1}s_ke_{j,l}$ is in the image for all $i$, $j$, $l\in\bbZ$.
	
	For $l\neq i$, we obtain from \cref{defiJM} $e_{i,i+1}s_ke_{j,l}=e_{i,i+1}s_k\frac{x_{k+1}-i}{l-i}e_{j,l}=\frac{1}{l-i}e_{i,i+1}\big((x_k-i)s_k+1+\eps_k\big)e_{j,l}=\frac{1}{l-i}e_{i,i+1}\big(1+\eps_k\big)e_{j,l}\in\im\FunctorKLRtosBr$.
	Similar for $j\neq i+1$.

	The case $(j,l)=(i+1,i)$ is more involved.
	In the category of finite dimensional representations of $\lie{p}(n)$ we have by the adjunction $(\Theta_i,\Theta_{i-1})$ from \cite{BDE19}
	\begin{equation*}
		\Hom_{\lie{p(n)}}(\Theta_{i+1}\Theta_{i}M, \Theta_{i}\Theta_{i+1}M)=\Hom_{\lie{p(n)}}(\Theta_{i+1}\Theta_{i+1}\Theta_{i}M, \Theta_{i+1}M)=\{0\},
	\end{equation*}
	since $\Theta_k\Theta_k=0$ by \cite{BDE19}*{Theorem 4.5.1}.
	Similar to before, \cref{SchurWeylDuality} is an isomorphism for $n\gg0$, and thus $e_{i,i+1}s_ke_{i+1,i}=0$.
	Thus, the functor $\FunctorKLRtosBr$ is full.
	Faithfulness of $\FunctorKLRtosBr$ follows from \cref{spanningsetisbasis}.
\end{proof}

\subsection{\texorpdfstring{Action on $V^{\otimes d}$ for the periplectic Lie superalgebra $\lie{p}(n)$}{Action on V\textasciicircum d for the periplectic Lie superalgebra p(n)}}\label{action}

As $\FunctorsBrtopn$ intertwines $i\mind$ and $\Theta_i^{-}$, we get the structure of a module category over $\sGalgm$ on $\Cm$, and we obtain a homomorphism $\FunctorKLRtopnm\coloneqq\FunctorsBrtopn\circ\FunctorKLRtosBr\colon\sGm\to\Cm$ of module categories over $\sGalgm$.

The main disadvantage of $\Cm$ is that it contains odd morphisms.
But as we consider all morphisms, $\Cm$ depends on a choice of representatives under the parity switch.
We chose this representative such that it appears as a direct summand of $V^{\otimes d}$.
We proceed by fixing a new choice of representatives to obtain the category $\Cp$ and a functor $\FunctorKLRtopnp\colon\sGp\to\Cp$ where miraculously only even morphisms are left.

For this define the endofunctor $\Theta_k^{+}\coloneqq\Pi^k\Theta_k^{-}$ and the category $\Cp$ as the full additive subcategory of $\Rep(\lie{p}(n))$ generated by $\Theta_{i_d}^{+}\cdots\Theta_{i_1}^{+}\bbC$.
\begin{thm}\label{KLRtopnp}
    The category $\Cp$ is a module category over $\sGalgp$ via (the dotted diagrams correspond to the action of $\sGalgm$ on $\Cm$)
    \begin{equation*}\begin{aligned}
       &\begin{tikzpicture}[line width = \lw, myscale=0.7]
            \node at (0,0) (A){$a+1$}; \node at (1,0) (B) {$a$};\draw (A)..controls +(0.2,-0.8) and +(-0.2,-0.8)..(B);
        \end{tikzpicture}\mapsto&
        \begin{tikzpicture}[line width = \lw, myscale=0.7]	
            \draw (0,0.1) node[anchor=south] {$a+1$} -- (0,-0.35) node[rectangle, fill=white, draw] {$\Pi^{a+1}$} -- (0,-1)(1,-1) -- (1,-0.65) node[rectangle, fill=white, draw] {$\Pi^{a}$} --(1,0.1) node[anchor=south] {$a$};
            \draw[dashed] (0,-1)..controls +(0.2,-0.8) and +(-0.2,-0.8)..(1,-1);
        \end{tikzpicture}\\
        &\begin{tikzpicture}[line width = \lw, myscale=0.7]
            \node at (0,0) (A){$a$}; \node at (1,0) (B) {$a+1$};\draw (A)..controls +(0.2,0.8) and +(-0.2,0.8)..(B);
        \end{tikzpicture}\mapsto&
        \begin{tikzpicture}[line width = \lw, myscale=0.7]
            \draw (0,-0.1) node[anchor=north] {$a$}--(0,0.35) node[rectangle, fill=white, draw] {$\Pi^{a}$} -- (0,1)(1,1) -- (1,0.65) node[rectangle, fill=white, draw] {$\Pi^{a+1}$} --(1,-0.1) node[anchor=north] {$a+1$};
            \draw[dashed] (0,1)..controls +(0.2,0.8) and +(-0.2,0.8)..(1,1);
            \node at (-1,0.75) {$(-1)^{a}$};
        \end{tikzpicture}
    \end{aligned}\qquad\qquad
        \begin{tikzpicture}[line width = \lw, myscale=0.7]
            \draw (0,0) -- (1,1) (1,0)--(0,1);\node[fill=white, anchor=north] at (0,0) {$b$}; \node[fill=white, anchor=north] at (1,0) {$a$};\node[fill=white, anchor=south] at (0,1) {$a$}; \node[fill=white, anchor=south] at (1,1) {$b$};
        \end{tikzpicture}\mapsto
        \begin{tikzpicture}[line width = \lw, myscale=0.7]
            \draw (0,-0.1) node[anchor=north] {$b$} -- (0,0.35) node[rectangle, fill=white, draw] {$\Pi^b$}--(0,1)(1,2.5)--(1,2.85) node[rectangle, fill=white, draw] {$\Pi^b$} -- (1,3.6) node[anchor=south]{$b$};
            \draw (1,-0.1) node[anchor=north] {$a$} -- (1,0.65) node[rectangle, fill=white, draw] {$\Pi^a$}--(1,1)(0,2.5)--(0,3.15) node[rectangle, fill=white, draw] {$\Pi^a$} -- (0,3.6) node[anchor=south]{$a$};
            \node at (-0.5,1.75) {$(-1)^{ab}$};
            \draw[dashed] (0,1)--(1,2.5)(1,1)--(0,2.5);
        \end{tikzpicture}
    \end{equation*}
    if $b\neq a$,$a-1$ and $0$ otherwise.

    Furthermore, evaluation at $\bbC$ factors through $\sGp$, and we obtain the homomorphism $\FunctorKLRtopnp\colon\sGp\to\Cp$ of module categories over $\sGalgm$.
\end{thm}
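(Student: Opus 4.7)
The plan is to transport the $\sGalgm$-module structure on $\Cm$ obtained in Proposition~\ref{functorXiexplicit} to a $\sGalgp$-module structure on $\Cp$, using the natural isomorphism $\Theta_k^{+} \cong \Pi^k \Theta_k^{-}$. Under this transport, any composition $\Theta_{i_d}^{+} \cdots \Theta_{i_1}^{+}$ becomes $\Pi^{i_1+\cdots+i_d}\, \Theta_{i_d}^{-}\cdots \Theta_{i_1}^{-}$ after commuting $\Pi$'s past the $\Theta_k^-$ functors, so each generator of $\sGalgp$ must be redefined by conjugating its $\sGalgm$-image with the appropriate $\Pi$-shifts, picking up Koszul signs whenever a parity shift passes an odd morphism or another parity shift. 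The explicit $\Pi$-boxes and the scalar factors $(-1)^a$ and $(-1)^{ab}$ appearing in the statement record exactly these contributions.

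I would first check that the assignment defines a functor $\sGalgp\to\End(\Cp)$ by verifying each relation of $\sGalgp$. The strategy is to slide every $\Pi$-box outward to the boundary of each diagram; since $\Pi$ commutes with every $\Theta_i^{-}$ as a functor, the only cost is a sign for each $\Pi$ that passes an odd cup or cap of $\Cm$ (which carries degree $\eps=-1$) and a Koszul sign $(-1)^{ab}$ whenever two $\Pi$-boxes labelled by $a$ and $b$ are interchanged at a crossing. After extraction, each relation of $\sGalgp$ reduces to the corresponding relation of $\sGalgm$ in $\Cm$ multiplied by an overall sign. The key combinatorial lemma to verify is that this sign is trivial for the relations not involving $\eps$ (\eqref{twicezero}, \eqref{inverse}, \eqref{untwist}, \eqref{tangleone}, \eqref{braid}), and exactly flips $-1$ to $+1$ in the $\eps$-dependent relations \eqref{snake}, \eqref{idempotentone} and \eqref{idempotenttwo}, matching $\eps=+1$ in $\sGalgp$.

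For the second part, the factorization through $\sGp$ follows by the same reasoning as in Theorem~\ref{KLRtosuperBraueriso}: evaluating at $\bbC$ forces the first Jucys--Murphy element to act as zero, so the additional relation~\eqref{firstzero} holds automatically on the image. The main obstacle is the sign bookkeeping in the second step, especially for the snake and the two periplectic relations, where the parity shifts $\Pi^a$, $\Pi^{a+1}$, and the scalar $(-1)^a$ must conspire to produce precisely one extra minus sign relative to the $\Cm$-computation. Once this uniform cancellation is established, the remaining verifications are mechanical and inherited from Proposition~\ref{functorXiexplicit}, and the full functoriality of $\FunctorKLRtopnp$ follows.
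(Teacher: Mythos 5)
Your proposal is correct and follows essentially the same route as the paper: both arguments verify the $\sGalgp$-relations by sliding the parity-shift boxes $\Pi^a$ through cups, caps and crossings via the Koszul sign rule, reducing each relation to the corresponding $\sGalgm$-relation in $\Cm$ with the signs conspiring to convert $\eps=-1$ into $\eps=+1$. The factorization through $\sGp$ is likewise obtained in both cases from the vanishing of $\Theta_a^{+}\bbC$ for $a\neq 0$ (equivalently, $x_1=0$), which is exactly relation~\eqref{firstzero}.
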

\begin{proof}
    The following equalities regarding the parity shift hold (all follow from the Koszul sign rule for height moves in supercategories):
    \begin{equation*}
        \begin{tikzpicture}[line width = \lw, myscale=0.7]
            \draw (0,0) node[anchor=north] {$a$} -- (0,0.5) node[rectangle, draw, fill=white] {$\Pi^a$} -- (0,1)-- (0,1.5) node[rectangle, draw, fill=white] {$\Pi^a$} -- (0,2);
            \node at (0.75,1) {$=$};
            \draw (1.5,0) node[anchor=north] {$a$}  -- (1.5,2);
            \begin{scope}[xshift=4cm]
                \draw (0,0) node[anchor=north] {$a$} -- (0,0.66) node[rectangle, draw, fill=white] {$\Pi^a$} -- (0,2);
                \draw (1,0) node[anchor=north] {$b$} -- (1,1.33) node[rectangle, draw, fill=white] {$\Pi^b$} -- (1,2);
                \node at (1.75, 1) {$=$};
                \node at (2.5, 1) {$(-1)^{ab}$};
                \draw (3.75,0) node[anchor=north] {$a$} -- (3.75,1.33) node[rectangle, draw, fill=white] {$\Pi^a$} -- (3.75,2);
                \draw (4.75,0) node[anchor=north] {$b$} -- (4.75,0.66) node[rectangle, draw, fill=white] {$\Pi^b$} -- (4.75,2);
            \end{scope}
            \begin{scope}[xshift=11.25cm]
                \draw (0,0)node[anchor=north] {$b$}--(0,0.8);\draw[dashed](0,0.8)--(1,2.3);
                \draw (1,0)node[anchor=north] {$a$}--(1,0.8);\draw[dashed](1,0.8)--(0,2.3);
                \draw (2,0) node[anchor=north] {$c$} -- (2,0.45) node[rectangle, fill=white, draw] {$\Pi^{c}$} -- (2,2.3);
                \node at (2.75,1.15) {=};
                \draw[dashed] (3.5,0)node[anchor=north] {$b$}--(4.5,1.5);\draw (4.5,1.5)--(4.5,2.3);
                \draw[dashed] (4.5,0)node[anchor=north] {$a$}--(3.5,1.5);\draw (3.5,1.5)--(3.5,2.3);
                \draw (5.5,0) node[anchor=north] {$c$} -- (5.5,1.85) node[rectangle, fill=white, draw] {$\Pi^{c}$} -- (5.5,2.3);
            \end{scope}
            \begin{scope}[yshift=-3cm, xshift=3cm]
                \draw (0,0)node[anchor=north] {$a$}--(0,0.8);\draw[dashed](0,0.8)..controls +(0.2,0.8) and +(-0.2,0.8)..(1,0.8);\draw (1,0.8)--(1,0)node[anchor=north] {$a+1$};
                \draw (2,0) node[anchor=north] {$b$} -- (2,0.45) node[rectangle, fill=white, draw] {$\Pi^{b}$} -- (2,1.5);
                \node at (2.75,0.75) {=};
                \draw[dashed] (4,0)node[anchor=north] {$a$}..controls +(0.2,0.8) and +(-0.2,0.8)..(5,0)node[anchor=north] {$a+1$};
                \draw (6,0) node[anchor=north] {$b$} -- (6,1.05) node[rectangle, fill=white, draw] {$\Pi^{b}$} -- (6,1.5);
                \node at (3.5, 0.75) {$(-1)^b$};
                \begin{scope}[xshift=8.25cm, yshift=1.5cm]
                    \draw (0,0)node[anchor=south] {$a$}--(0,-0.8);\draw[dashed](0,-0.8)..controls +(0.2,-0.8) and +(-0.2,-0.8)..(1,-0.8);\draw(1,-0.8)--(1,0)node[anchor=south] {$a+1$};
                    \draw (2,0) node[anchor=south] {$b$} -- (2,-0.45) node[rectangle, fill=white, draw] {$\Pi^{b}$} -- (2,-1.5);
                    \node at (2.75,-0.75) {=};
                    \draw[dashed] (4,0)node[anchor=south] {$a$}..controls +(0.2,-0.8) and +(-0.2,-0.8)..(5,0)node[anchor=south] {$a+1$};
                    \draw (6,0) node[anchor=south] {$b$} -- (6,-1.05) node[rectangle, fill=white, draw] {$\Pi^{b}$} -- (6,-1.5);
                    \node at (3.5, -0.75) {$(-1)^b$};
                \end{scope}
            \end{scope}
        \end{tikzpicture}
    \end{equation*} 
    Using these, it is straightforward to verify the relations of $\sGalgp$ using the action of $\sGalgm$ on $\Cm$.

    Furthermore, we have $\Theta_i^+\bbC\neq 0$ if and only if $i=0$, and thus we obtain the homomorphism $\FunctorKLRtopnp$ of module categories as claimed.
\end{proof}
We conclude this section by stating some results about the functor of $\FunctorKLRtopn$.
All of these were already proven by Coulembier and Ehrig for the super Brauer category, and we translate their results to $\sG$.
For an introduction to up-down-tableaux and residue sequences (which are needed for the next statements) see \cref{sectionbasissg} below.
\begin{defi}
	We abbreviate the partition $(k,k-1,\dots, 1)$ by $\delta_k$.
\end{defi}
\begin{prop}\label{partitioncriterionforzero}
	Let $\bm{i}=(i_1, \dots, i_k)$ be the residue sequence of an up-tableau of shape $\lambda$.
	Then $F^{\eps}_{n}(\bm{i})=0$ if and only if $\delta_{n+1}\subseteq\lambda$.
\end{prop}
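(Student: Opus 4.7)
The plan is to deduce this from the corresponding classification result in the super Brauer category $\Rep(P)$ and transport it across the isomorphism $\FunctorKLRtosBr\colon\sGm\to\Rep(P)$ of \cref{KLRtosuperBraueriso}, since by construction $\FunctorKLRtopnm=\FunctorsBrtopn\circ\FunctorKLRtosBr$, and the two sign-versions agree up to parity shifts so that one vanishes iff the other does.

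The first step is to match the data. The object $\bm{i}=(i_1,\dots,i_k)$ in $\sGm$, under $\FunctorKLRtosBr$, corresponds to the image of the primitive idempotent $e(\bm{i})\in\sBr_k$ indexed by the residue sequence. Since $e(\bm{i})$ is defined via the generalized Jucys--Murphy eigenspaces, which (by \cref{JMdiag}) are honest eigenspaces, the standard combinatorial analysis of $\sBr_k$-cell modules identifies these primitive idempotents with the indecomposable objects of $\Rep(P)$ labeled by the partitions according to the bijection of \cite{CE21}; the up-tableau of shape $\lambda$ with residue sequence $\bm{i}$ records precisely the iterative box-addition procedure assembling $\lambda$. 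Thus the image of $\bm{i}$ is the indecomposable object indexed by $\lambda$.

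The second step is to invoke the Coulembier--Ehrig vanishing criterion: the image under $\FunctorsBrtopn$ of the indecomposable object of $\Rep(P)$ labelled by a partition $\lambda$ is nonzero precisely when $\delta_{n+1}\not\subseteq\lambda$ (this is the combinatorial condition that $\lambda$ fits inside the appropriate staircase-avoidance region determined by the rank $n$). Combined with the identification of step one, this immediately yields $F^{-}_n(\bm{i})=0\iff\delta_{n+1}\subseteq\lambda$, and then the $\eps=+$ case follows because $\Theta_i^{+}=\Pi^i\Theta_i^{-}$, so $F^{+}_n(\bm{i})$ and $F^{-}_n(\bm{i})$ differ only by a parity shift.

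The main obstacle I anticipate is not the invocation of the Coulembier--Ehrig result itself, but carefully checking that the partition attached to the indecomposable object of $\Rep(P)$ via the primitive idempotent $e(\bm{i})$ agrees on the nose with the shape $\lambda$ of the up-tableau whose residue sequence is $\bm{i}$. This requires unpacking the cellular/semisimple structure of $\sBr_k$ enough to see that the JM-eigenvalues on the cell module for $\lambda$ are exactly the residues of the up-tableaux of shape $\lambda$, and that the surjection from cellular to indecomposable labels lines up with the bijection of \cite{CE21}. A minor further check is that \cite{CE21}'s vanishing condition is literally $\delta_{n+1}\subseteq\lambda$ (and not some superficially different but equivalent staircase-condition requiring translation).
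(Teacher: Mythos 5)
Your proposal is correct and follows essentially the same route as the paper: identify the object $\bm{i}$ with the indecomposable object of $\Rep(P)$ labelled by $\lambda$ via the Jucys--Murphy eigenspace combinatorics (the paper cites \cref{relationSuperBrauer} and \cite{Cou18} for exactly the matching of labels you flag as the main thing to check), and then apply \cite{CE21}*{Theorem 6.2.1}. Your additional remark that the two sign-versions agree up to parity shift via $\Theta_i^{+}=\Pi^i\Theta_i^{-}$ is a point the paper leaves implicit but is harmless and correct.
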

\begin{proof}
	The object $\bm{i}$ gets mapped under $\FunctorKLRtosBr$ to the indecomposable object associated to $\lambda$ as this maps to a generalized eigenspace for the action of the Jucys--Murphy elements (see \cref{relationSuperBrauer} and \cite{Cou18}).
	The statement then follows from \cite{CE21}*{Theorem 6.2.1}.
\end{proof}
\begin{thm}\label{partitioncriterionforproj}
	Let $\bm{i}=(i_1, \dots, i_k)$ be the residue sequence of an up-tableau of shape $\lambda$.
	The module $F^{\eps}_{n}(\bm{i})$ is projective if and only if $\delta_n\subseteq\lambda$.
\end{thm}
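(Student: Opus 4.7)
The approach mirrors the proof of \cref{partitioncriterionforzero}: transport the statement across the chain of functors $\sGm \to \Rep(P) \to \Cm$ and then reduce to a combinatorial criterion on partitions. First I would identify $F^{\eps}_n(\bm{i})$ with the image under $\FunctorsBrtopn$ of the indecomposable object of $\Rep(P)$ labelled by the partition $\lambda$. This uses that the idempotent $e(\bm{i})\in\sBr_k$ is primitive (as shown in the proof of \cref{KLRtosuperBraueriso}) and projects onto the generalized simultaneous eigenspace of the Jucys--Murphy elements $x_1,\dots,x_k$ with eigenvalues $i_1,\dots,i_k$; the residue sequence of an up-tableau of shape $\lambda$ encodes precisely this eigenvalue datum under the bijection between indecomposable objects of $\Rep(P)$ and partitions recalled in the introduction.

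Once this identification is in place, the question becomes: for which $\lambda$ is the corresponding summand of $V^{\otimes k}$ a projective $\lie{p}(n)$-module? I would invoke the analogous combinatorial characterization of projective summands of $V^{\otimes d}$ given in \cite{CE21}, completely parallel to how \cite{CE21}*{Theorem 6.2.1} was used to establish \cref{partitioncriterionforzero}. This yields the criterion $\delta_n\subseteq\lambda$ directly.

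If a direct citation has to be patched together, the ``if'' direction also admits an intrinsic argument: the defining representation $V$ is projective (it corresponds to $\delta_1=(1)$), and the translation functors $\Theta_i^{\pm}$ preserve the class of projectives thanks to the biadjunction between $\Theta_i$ and $\Theta_{i-1}$ from \cite{BDE19}. Iterating these functors along the residue sequence of any up-tableau whose shape contains $\delta_n$ then produces $F^{\eps}_n(\bm{i})$ as a summand of a projective, hence projective. The ``only if'' direction is the more delicate one: one must rule out projectivity for $\lambda$ not containing $\delta_n$, which I would do by contradiction, combining \cref{partitioncriterionforzero} (to ensure the module is nonzero) with a comparison of composition factors against the known Kac modules $\Delta(\mu)$ and their projective covers, using that an indecomposable projective of $\lie{p}(n)$ has highest weight $\mu$ of length $n$, which via Schur--Weyl duality forces the staircase $\delta_n$ to fit inside $\lambda$.

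The main obstacle is therefore the ``only if'' direction: the ``if'' direction is essentially bookkeeping once one knows that $V$ is projective and that translation functors are exact and preserve projectivity, whereas ruling out projectivity for small $\lambda$ requires care with highest weights and the structure of projective covers on the Lie superalgebra side.
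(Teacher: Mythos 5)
Your primary route is exactly the paper's proof: after identifying $F^{\eps}_{n}(\bm{i})$ with the image under $\FunctorsBrtopn$ of the indecomposable object of $\Rep(P)$ labelled by $\lambda$ (just as in the proof of \cref{partitioncriterionforzero}), the statement is precisely \cite{CE21}*{Theorem 6.3.1}, which is all the paper cites. Your fallback ``intrinsic'' sketch is unnecessary and would not work as written, since the iteration of translation functors starts from the trivial module $\bbC$, which is not projective, so preservation of projectivity alone cannot yield the ``if'' direction.
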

\begin{proof}
	This is \cite{CE21}*{Theorem 6.3.1}.
\end{proof}
\section{A basis for \texorpdfstring{$\sG$}{A}}\label{sectionbasissg}
This section aims at providing a basis for $\sG$. 
We begin by recalling the combinatorics of up-down-tableaux.
An up-down-tableau $\ta{t}$ of length $k$ is a sequence $((\ta{t}_0, f_0), (\ta{t}_1, f_1), \dots, (\ta{t}_k, f_k))$ of pairs $(\ta{t}_i, f_i)$, where $f_i\in\bbZ_{\geq0}$ and $\ta{t}_i$ is a partition of $i-2f_i$, such that $\ta{t}_i$ is obtained from $\ta{t}_{i-1}$ via adding (if $f_i=f_{i-1}$) or removing (if $f_i=f_{i-1}+1$) one box of the corresponding Young diagram.
We can draw up-down-tableau via drawing the Young diagrams of the partitions and arrows between consecutive partitions.
Observe that any up-down-tableau $\ta{t}$ necessarily has $\ta{t}_0=\emptyset$.

We will identify up-down-tableaux with sequences $\alpha(\ta{t})=(\alpha_1, \dots \alpha_k)$ of signed boxes $\alpha_i$ from $\ta{t}_{i-1}$ respectively $\ta{t}_i$.
Namely, $\alpha_i=(r,c)$ means adding a box in row $r$ and column $c$ to $\ta{t}_{i-1}$ and $\alpha_i = -(r,c)$ means removing the box from $\ta{t}_{i-1}$.

We call $\ta{t}_k$ the \emph{shape $\mathrm{Shape}(\ta{t})$ of $\ta{t}$}.
By $\restr{\ta{t}}{l}$ for $l<k$ we denote the up-down-tableau $((\ta{t}_0, f_0), (\ta{t}_1, f_1), \dots, (\ta{t}_l, f_l))$.

For a node $\alpha$ we define the \emph{source residue $\res^l(\alpha)$} respectively \emph{target residue $\res^r(\alpha)$}
\begin{flalign*}
	\res^l(\alpha)&\coloneqq \begin{cases}
        c-r&\text{if $\alpha=(r,c)$,}\\
        c-r+1&\text{if $\alpha=-(r,c)$.}
    \end{cases}
	&\res^r(\alpha)&\coloneqq \begin{cases}
	c-r&\text{if $\alpha=(r,c)$,}\\
	c-r-1&\text{if $\alpha=-(r,c)$,}
    \end{cases}
\end{flalign*}
\begin{rem}
	Usually when writing a map $f\colon A\to B$, the source is on the left and the target on the right.
	This is why we use the superscripts $l$ and $r$ for the source and target residue.
\end{rem}
Fix $\ta{t}$ with $\alpha(\ta{t})=(\alpha_1, \dots, \alpha_k)$. 
We define the target residue sequence of $\ta{t}$ to be $\bm{i}_{\ta{t}}^r\coloneqq (\res^r(\alpha_1), \dots, \res^r(\alpha_k))$ and the source residue sequence to be $\bm{i}_{\ta{t}}^l\coloneqq (\res^l(\alpha_1), \dots, \res^l(\alpha_k))$ if $\ta{t}=(\alpha_1, \dots, \alpha_k)$.
For a partition $\lambda$ denote by $\Add(\lambda)$ the boxes $\alpha$ that can be added to $\lambda$ and by $\Rem(\lambda)$ those that can be removed.
Additionally, let $\Add_i(\lambda)\coloneqq\{\alpha\in\Add(\lambda)\mid\res^l(\alpha)=i\}$ and $\Rem_i(\lambda)\coloneqq\{\alpha\in\Rem(\lambda)\mid\res^l(\alpha)=i\}$.
In the definition of $\Add_i(\lambda)$ and $\Rem_i(\lambda)$ we have $\res^l(\alpha)=\res^r(\alpha)$.
For $\alpha\in\Add(\lambda)$, we write $\lambda\oplus\alpha$ for the partition obtained by adding $\alpha$ to $\lambda$.
Similarly, $\lambda\ominus\alpha$ for $\alpha\in\Rem(\lambda)$ denotes the partition $\lambda$ with $\alpha$ removed.

We define $\ta{t}s_l$ to be the sequence of nodes $(\alpha_1, \dots,\alpha_{l-1}, \alpha_{l+1}, \alpha_l, \dots, \alpha_k)$.
Observe that this does not need to be an up-down-tableau.

For each partition $\lambda$ there exists a special up-down-tableaux $\ta{t}^\lambda$ which is given by first adding all the boxes of the first row, then of the second row and so on.

Write $\Par$ for the set of all partitions.
We denote by $\Tud_n(\lambda)$ the set of all up-down-tableaux of shape $\lambda$ and length $n$.
We also write $\Tud$ (and $\Tud_n$, $\Tud(\lambda)$) for the set of up-down-tableaux (of fixed length and fixed shape respectively).
In particular, $\abs{\lambda}=n-2f$ for some $f\geq 0$.
We define an ordering on $\Par$ by $\lambda>\mu$ if either $\abs{\lambda}<\abs{\mu}$ or $\abs{\lambda}=\abs{\mu}$ and $\lambda\geq\mu$ in the dominance ordering.
Furthermore, for $\ta{t}\in\Tud_n(\lambda)$ and $\ta{s}\in\Tud_l(\mu)$ we say $\ta{t}\geq\ta{s}$ if $n<l$ or $n=l$ and $\ta{t}_k\leq\ta{s}_k$ for all $1\leq k\leq n$.

Define the category $\Tudcat$ to be the category with objects $\Tud\sqcup\Tud$ (objects denoted by $\ta{t}^l$ resp.\ $\ta{t}^r$) where nonempty homomorphism spaces $\Hom_{\Tudcat}(\ta{t}, \ta{s})$ only occur (other than identities) if $\ta{t}=\ta{t}^l$ (i.e.~from first copy of $\Tud$), $\ta{s}=\ta{s}^r$ (i.e.~from second copy of $\Tud$) and $\shape(\ta{t})=\shape(\ta{s})$, in which case it contains a unique morphism.
We obtain a functor 
\begin{align*}
    \Psi\colon\Tudcat&\to\sG \\\ta{t}^g&\mapsto\bm{i}_{\ta{t}}^g \qquad\text{for $g\in\{r,l\}$}\\
    (\ta{t}^l\to\ta{s}^r)&\mapsto\Psi_{\ta{t}\ta{s}},
\end{align*} where $\Psi_{\ta{t}\ta{s}}$ is defined as follows.
Consider the minimal $k$ such that $\ta{t}_k$ is obtained from $\ta{t}_{k-1}$ by removing a box.
Denote by $l < k$ the index where this box was added to $\ta{t}$.
Then we draw a cap from $i_l$ to $i_k$ in $\bm{i}_{\ta{t}}^l$.
Do this iteratively for all boxes that were removed and similarly with cups for $\ta{s}$ using $\bm{i}_{\ta{t}}^l$.
Afterwards we connect the remaining residues in $\bm{i}_{\ta{t}}^l$ to the remaining ones of $\bm{i}_{\ta{s}}^r$ using as few crossings as possible.

As cups and caps have odd degree if $\eps=-1$ it is important to keep track of the height of these.
For $\Psi_{\ta{t}\ta{s}}$ we assume that all caps are below all cups and if two caps connect the positions $(k,l)$ and $(k',l')$ with $l<l'$, then $(k,l)$ is lower than $(k',l')$.
And for two cups connecting $(k,l)$ and $(k',l')$ with $l<l'$, then $(k,l)$ is higher than $(k',l')$ (see also the example below).
Observe that $\Psi_{\ta{t}\ta{s}}$ is well-defined due to \cref{tangleone,braid,tangletwo}.
\begin{ex}\label{expsist}
	Let $\ta{t} = \emptyset\to\ydiagram{1}\to\ydiagram{2}\to\ydiagram{2,1}\to\ydiagram{1,1}$ and $\ta{s}=\emptyset\to\ydiagram{1}\to\ydiagram{2}\to\ydiagram{1}\to\emptyset\to\ydiagram{1}\to\ydiagram{1,1}$.
    Then $\bm{i}_{\ta{t}}^l=(0,1,-1,2)$, $\bm{i}_{\ta{t}}^r=(0,1,-1,0)$, $\bm{i}_{\ta{s}}^l=(0,1,2,1,0,-1)$ and $\bm{i}_{\ta{s}}^r=(0,1,0,-1,0,-1)$.
    Hence,
	\begin{equation*}
		\Psi_{\ta{t}\ta{s}}=\begin{tikzpicture}[line width = \lw, myscale=0.6]
			\draw (0,2)..controls +(0.2,-1.2) and +(-0.2,-1.2)..(3,2) (1,2)..controls +(0.2,-0.8) and +(-0.2,-0.8)..(2,2) (1,0)--(4,2) (2,0)..controls +(0.2,0.8) and +(-0.2,0.8)..(4,0) (3,0)--(5,2);
			\draw (0,2) node[anchor=south] {$0$}(1,2) node[anchor=south] {$1$}(2,2) node[anchor=south] {$0$}(3,2) node[anchor=south] {$-1$}(4,2) node[anchor=south] {$0$}(5,2) node[anchor=south] {$-1$};
			\draw (1,0) node[anchor=north] {$0$}(2,0) node[anchor=north] {$1$}(3,0) node[anchor=north] {$-1$}(4,0) node[anchor=north] {$2$};
		\end{tikzpicture}\quad\text{and}\quad\Psi_{\ta{s}\ta{t}}=
		\begin{tikzpicture}[line width = \lw, myscale=0.6]
			\draw (0,0)..controls +(0.2,1.2) and +(-0.2,1.2)..(3,0) (1,0)..controls +(0.2,0.8) and +(-0.2,0.8)..(2,0) (1,2)--(4,0) (2,2)..controls +(0.2,-0.8) and +(-0.2,-0.8)..(4,2) (3,2)--(5,0);
			\draw (0,0) node[anchor=north] {$0$}(1,0) node[anchor=north] {$1$}(2,0) node[anchor=north] {$2$}(3,0) node[anchor=north] {$1$}(4,0) node[anchor=north] {$0$}(5,0) node[anchor=north] {$-1$};
			\draw (1,2) node[anchor=south] {$0$}(2,2) node[anchor=south] {$1$}(3,2) node[anchor=south] {$-1$}(4,2) node[anchor=south] {$0$};
		\end{tikzpicture}.
	\end{equation*}
    Observe that $\Psi_{\ta{t}\ta{s}}$ and $\Psi_{\ta{s}\ta{t}}$ are not composable (in either direction) since $\bm{i}^r_{\ta{s}}\neq\bm{i}^l_{\ta{s}}$ and $\bm{i}^r_{\ta{t}}\neq\bm{i}^l_{\ta{t}}$.
\end{ex}
\begin{rem}\label{restrictiontotlambda}
	For an up-down-tableaux $\ta{t}$ of shape $\lambda$ we have the morphisms $\Psi_{\ta{t}\ta{t}^\lambda}$ and $\Psi_{\ta{t}^\lambda\ta{t}}$.
	Using \cref{inverse}, it is easy to see that $\Psi_{\ta{t}\ta{s}}=\Psi_{\ta{t}^\lambda\ta{s}}\cdot\Psi_{\ta{t}\ta{t}^\lambda}$ (see also \cref{expsisttlambda} below).
	
	But note that this works only in this very special case.
	In general, it is not easy to compute the composition of two basis vectors.
\end{rem}
\begin{ex}\label{expsisttlambda}
	Let $\ta{t}$ and $\ta{s}$ as in \cref{expsist}.
	Then $\ta{t}^\lambda = \emptyset\to\ydiagram{1}\to\ydiagram{1,1}$ and $\bm{i}_{\ta{t}^\lambda}^l=\bm{i}_{\ta{t}^\lambda}^r=(0,-1)$.
	We have 
	\begin{equation*}
		\Psi_{\ta{t}^\lambda\ta{s}}=\begin{tikzpicture}[line width = \lw, myscale=0.7]
			\draw (0,2)..controls +(0.2,-1.2) and +(-0.2,-1.2)..(3,2) (1,2)..controls +(0.2,-0.8) and +(-0.2,-0.8)..(2,2) (4,1)--(4,2) (5,1)--(5,2);
			\draw (0,2) node[anchor=south] {$0$}(1,2) node[anchor=south] {$1$}(2,2) node[anchor=south] {$0$}(3,2) node[anchor=south] {$-1$}(4,2) node[anchor=south] {$0$}(5,2) node[anchor=south] {$-1$};
			\draw (4,1) node[anchor=north] {$0$}(5,1) node[anchor=north] {$-1$};
		\end{tikzpicture}\quad\text{and}\quad\Psi_{\ta{t}\ta{t}^\lambda}=\begin{tikzpicture}[line width = \lw, myscale=0.7]
			\draw (0,0)--(0,1) (1,0)..controls +(0.2,0.8) and +(-0.2,0.8)..(3,0) (2,0)--(1,1);
			\draw (0,1) node[anchor=south] {$0$}(1,1) node[anchor=south] {$-1$};
			\draw (1,0) node[anchor=north] {$0$}(2,0) node[anchor=north] {$1$}(3,0) node[anchor=north] {$-1$}(4,0) node[anchor=north] {$2$};
		\end{tikzpicture}
	\end{equation*}
	and thus clearly $\Psi_{\ta{t}\ta{s}}=\Psi_{\ta{t}^\lambda\ta{s}}\cdot\Psi_{\ta{t}\ta{t}^\lambda}$.
\end{ex}
\begin{thm}\label{spanningsetisbasis}
	The set $\{\Psi_{\ta{t}\ta{s}}\mid \ta{t},\ta{s}\in\Tud(\lambda)\text{ for some }\lambda\}$ forms a basis of $\sG$.
\end{thm}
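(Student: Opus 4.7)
The plan is to establish spanning and linear independence separately. The spanning part is a diagrammatic normal-form argument, while linear independence follows from the fullness of $\FunctorKLRtosBr$ combined with a dimension count in the super Brauer algebra.

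For spanning, I would take an arbitrary diagram $D$ representing a morphism in $\sG$ and reduce it to a linear combination of elements $\Psi_{\ta{t}\ta{s}}$ by systematic application of the defining relations. First, use the snake relation \cref{snake} together with the adjunction \cref{adjunctionaap1} and the tangle relations \cref{tangleone,tangletwo} to isotope $D$ so that all caps sit at the bottom and all cups at the top, with a permutation of labeled strands in between. Next, use \cref{braid,inverse} to sort the middle permutation into a reduced expression. Finally, apply \cref{untwist}, its mirror from \cref{tangletwo}, \cref{twicezero,firstzero}, and the periplectic relations \cref{idempotentone,idempotenttwo} to kill or simplify degenerate configurations. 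After this reduction, the cap pattern at the bottom traces out a source up-down-tableau $\ta{t}$ (each straight strand corresponds to adding a box, each cap to removing a previously added box, with the residues prescribing which), and the cup pattern at the top similarly yields a target tableau $\ta{s}$; the compatibility of the middle permutation with the remaining strands forces $\shape(\ta{t}) = \shape(\ta{s})$.

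For linear independence, the functor $\FunctorKLRtosBr \colon \sGm \to \Rep(P)$ is the key tool. Its fullness was established in \cref{KLRtosuperBraueriso} using only \cref{functorXiexplicit} and not the present theorem (the faithfulness claim in that theorem is exactly what we are proving here). Composing with $\FunctorsBrtopn$ and using that Schur--Weyl duality \cref{SchurWeylDuality} is an isomorphism for $n \gg 0$ reduces the problem to showing that the images in $\sBr_d$ are linearly independent. This follows from the classical basis of $\sBr_d$: using the decomposition $1 = \sum_{\bm{i}} e(\bm{i})$ from \cref{JMdiag,defiJM}, the space $e(\bm{j}) \sBr_d e(\bm{i})$ carries a cellular-type basis indexed by pairs of up-down-tableaux of the same shape with matching residues, and the total count across all $\bm{i},\bm{j}$ reproduces $\dim \sBr_d = (2d-1)!!$. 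The image $\FunctorKLRtosBr(\Psi_{\ta{t}\ta{s}})$ agrees up to a nonzero scalar (controlled by the $\lambda_{a,b}$ from \cref{functorXiexplicit}) with the corresponding cellular basis element, giving linear independence downstairs and hence in $\sG$.

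The main obstacle will be the spanning step, specifically producing a confluent rewriting out of the interplay of topological (cup/cap isotopy) and algebraic (periplectic idempotent) relations. A Bergman--Diamond-style termination argument is needed to ensure a unique normal form. Signs must be tracked carefully when $\eps = -1$, where caps and cups carry odd degree, so that the height ordering on cups and caps built into the definition of $\Psi_{\ta{t}\ta{s}}$ is respected and does not introduce spurious discrepancies during reduction. A secondary subtlety is that the periplectic relations \cref{idempotentone,idempotenttwo} can effectively short-circuit distant parts of a diagram via triangular cup-cap patterns; these rewrites must be shown always to land in the proposed spanning set rather than produce new irreducible configurations.
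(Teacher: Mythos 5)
Your overall strategy---prove spanning diagrammatically, then get linear independence by pushing into the super Brauer algebra via the full functor $\FunctorKLRtosBr$ and counting---is the same as the paper's, but both halves differ in execution, and in each case the difference matters. For spanning, the paper does \emph{not} set up a rewriting system with normal forms. Instead it filters $\Tud$ by $|\lambda|$ and shows by induction that the linear span $\Rl{}$ of the proposed basis is a two-sided ideal: \cref{thetainotaddablebox,thetaipreservesspanningset,closedundercups,closedundercrossings,closedundercaps} verify, one generator at a time (distant/adjacent crossings, cups, caps, and adding a strand with no addable box of that residue), that left multiplication of a $\Psi_{\ta{t}\ta{s}}$ by a generator stays in the span, with the base of the induction handled by \cref{firstzero}. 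Combined with \cref{identitiesinspanningset} this gives \cref{twosidedideal,spanningset} without ever needing termination or confluence of a reduction procedure. The Bergman--Diamond-style argument you flag as ``the main obstacle'' is therefore not an obstacle you need to overcome---it is a sign that your organization of the spanning step is harder than necessary. As written, your spanning argument has a genuine gap precisely there: you have not shown that your reduction terminates, nor that the periplectic relations \cref{idempotentone,idempotenttwo} always return you to the claimed normal forms; the paper's case analysis in \cref{thetainotaddablebox} and \cref{closedundercups} is exactly the content needed, and it is nontrivial.

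For linear independence, your plan overshoots and in doing so introduces an unproved claim: you assert that $\FunctorKLRtosBr(\Psi_{\ta{t}\ta{s}})$ agrees \emph{up to a nonzero scalar} with a cellular basis element of $e(\bm{j})\sBr_d e(\bm{i})$. Establishing such an identification (even up to unitriangular change of basis) would require a separate computation that the paper never performs and that is not obviously easier than the theorem itself. The paper's argument is a pure dimension count: fullness of $\FunctorKLRtosBr$ (which, as you correctly note, comes from \cref{functorXiexplicit} and the proof of \cref{KLRtosuperBraueriso} without circularity) gives $\dim\Hom_{\sBr}(n,m)\leq\sum_{\bm{i},\bm{j}}\dim\Hom_{\sG}(\bm{i},\bm{j})$, the spanning result gives $\sum_{\bm{i},\bm{j}}\dim\Hom_{\sG}(\bm{i},\bm{j})\leq\#\{(\ta{t},\ta{s})\}$, and the classical count $\#\{(\ta{t},\ta{s})\}=\dim\Hom_{\sBr}(n,m)$ forces equality throughout, so a spanning set of the right cardinality is a basis. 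All the ingredients for this shortcut are already in your write-up; I recommend replacing the cellular-basis identification with it.
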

The proof is technical, relying on filtration arguments and will be given in \cref{proofofspanningsetisbasis}. 
We formulate now two important consequences.
\begin{thm}\label{sGisquher}
	Let $I=\bigcup_{m\in\bbZ_{\geq0}}\bbZ^m$ and consider $\Par\subseteq I$ via identifying $\lambda$ with $\bm{i}^l_{\ta{t}^\lambda}=\bm{i}^r_{\ta{t}^\lambda}$.
	Define $Y(\bm{i},\lambda) = \{\Psi_{\ta{t}^\lambda\ta{s}}\mid\bm{i}^r_{\ta{s}}=\bm{i}\}$ and $X(\lambda, \bm{i}) = \{\Psi_{\ta{s}\ta{t}^\lambda}\mid\bm{i}^l_{\ta{s}}=\bm{i}\}$.
	This data endows $A\coloneqq\bigoplus_{m,n\in\mathbb{N}_0}\bigoplus_{\underline{i}\in\bbZ^m,\underline{j}\in\bbZ^n}\Hom_{\sGalg}(\underline{i},\underline{j})$ with the structure of an upper finite based quasi-hereditary (super-)algebra in the sense of \cite{BS21}.
\end{thm}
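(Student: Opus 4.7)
The plan is to verify the axioms of an upper finite based quasi-hereditary algebra from \cite{BS21} using the basis from \cref{spanningsetisbasis} and the factorization from \cref{restrictiontotlambda}. The easy structural pieces come first. \Cref{spanningsetisbasis} supplies the distinguished basis. The distinguished idempotent attached to $\lambda \in \Par$ is $e_\lambda := \Psi_{\ta{t}^\lambda \ta{t}^\lambda}$: by the construction of $\Psi$ there are no caps or cups (since $\ta{t}^\lambda$ contains only additions) and the source and target residue sequences of $\ta{t}^\lambda$ coincide, so $\Psi_{\ta{t}^\lambda \ta{t}^\lambda} = \id_{\bm{i}^l_{\ta{t}^\lambda}}$. \Cref{restrictiontotlambda} writes every basis vector as $\Psi_{\ta{t}\ta{s}} = \Psi_{\ta{t}^\lambda \ta{s}} \cdot \Psi_{\ta{t}\ta{t}^\lambda}$, realizing it as a product of an element of $Y(\bm{i}^r_{\ta{s}}, \lambda)$ with an element of $X(\lambda, \bm{i}^l_{\ta{t}})$. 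Upper-finiteness of the poset is immediate from the definition of the ordering: $\mu \geq \lambda$ forces $|\mu| \leq |\lambda|$, and there are only finitely many partitions of bounded size. Local finiteness of each $\Hom_{\sGalg}(\underline{i},\underline{j})$ is automatic because up-down-tableaux with prescribed source and target residues form a finite set.

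The heart of the argument is the upper-triangular multiplication axiom: for every $a \in A$ and every basis vector $\Psi_{\ta{t}\ta{s}}$ of shape $\lambda$, I need
\begin{equation*}
a \cdot \Psi_{\ta{t}\ta{s}} \;\equiv\; \sum_{\ta{t}'} c_{\ta{t}'}(a)\,\Psi_{\ta{t}'\ta{s}} \pmod{A_{>\lambda}},
\end{equation*}
where $A_{>\lambda}$ is the span of basis vectors of shape strictly greater than $\lambda$ and $\ta{t}'$ ranges over up-down-tableaux of shape $\lambda$ whose source residue matches the target of $a$. The symmetric right-multiplication statement follows via the horizontal-flip duality of \cref{dualityonKLR}. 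Combined, these two imply that $A_{\geq \lambda}$ and $A_{>\lambda}$ are two-sided ideals spanned by the corresponding sub-bases, which is the content of the based quasi-hereditary axiom.

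My plan for the triangular claim is to reduce to $\ta{t} = \ta{t}^\lambda$ via \cref{restrictiontotlambda} and, by associativity and linearity, to the case where $a$ is one of the generators of $\sGalg$ (a cup, a cap, or a crossing). In each case I rewrite the composite $a \cdot \Psi_{\ta{t}^\lambda \ta{s}}$ into the canonical cap-cup normal form used in the definition of $\Psi$, applying the relations \cref{twicezero}--\cref{braid} together with \cref{idempotentone,idempotenttwo} and their mirror versions from \cref{tangletwo}. When $a$ merely rearranges through-strands or acts on existing cups in a compatible way, the result is a linear combination of basis vectors of the same shape $\lambda$. When $a$ forces the creation of an additional cap, the number of propagating strands in the normal form strictly decreases, which strictly decreases $|\lambda|$ and therefore strictly increases the shape in our ordering, placing these contributions in $A_{>\lambda}$.

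The main obstacle is the residue bookkeeping in the crossing case, where \cref{tangleone,tangletwo,braid} can generate several summands, some with extra caps and some without. The cleanest framework is a filtration of $A$ by the minimal number of caps appearing in a representative of a morphism; this filtration is preserved by composition on both sides, and induction on its degree reduces the triangular claim to the base case where the normal form is cap-free. That base case is the analogous (and much simpler) triangular structure for the KLR-type subalgebra generated by through-strands and crossings, which can be handled directly using the same filtration/reduction idea that proved $\Psi$ is well-defined in \cref{spanningsetisbasis}.
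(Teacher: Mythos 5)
Your structural pieces (the basis from \cref{spanningsetisbasis}, the factorization $\Psi_{\ta{t}\ta{s}}=\Psi_{\ta{t}^\lambda\ta{s}}\cdot\Psi_{\ta{t}\ta{t}^\lambda}$ from \cref{restrictiontotlambda}, the idempotents $e_\lambda=\Psi_{\ta{t}^\lambda\ta{t}^\lambda}$, and upper finiteness of the order) are exactly what the paper uses, and they are correct. The problem is where you direct the main effort afterwards. The definition of a based quasi-hereditary algebra in \cite{BS21} does not take triangularity of \emph{multiplication} as an axiom; its axioms are (i) the products $yx$ form a basis, (ii) $Y(\mu,\lambda)$ and $X(\lambda,\mu)$ are empty unless $\mu\leq\lambda$, and (iii) $Y(\lambda,\lambda)=X(\lambda,\lambda)=\{e_\lambda\}$. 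The statement you call the ``heart of the argument'' --- that $a\cdot\Psi_{\ta{t}\ta{s}}$ lies in $\sum c_{\ta{t}'}\Psi_{\ta{t}'\ta{s}}$ modulo $A_{>\lambda}$, so that $A_{\geq\lambda}$ and $A_{>\lambda}$ are ideals --- is a \emph{consequence} of these axioms proved in \cite{BS21} (the paper invokes exactly this, e.g.\ \cite{BS21}*{Lemma 5.5}, when filtering $\theta_t\Delta(\overline{\gamma})$). Reproving it by hand via reduction to generators and a filtration by the number of caps is redundant, and your sketch of that reduction is far from complete: it would amount to rerunning the entire spanning-set machinery of the appendix a second time.

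Meanwhile you never verify axiom (ii), which is the one genuinely order-theoretic check the proof needs: $Y(\mu,\lambda)$ is nonempty only if there is an up-down-tableau of shape $\lambda$ whose target residue sequence is $\bm{i}_{\ta{t}^\mu}$, which forces $\abs{\mu}\geq\abs{\lambda}$ (length of the tableau bounds the size of its shape) and hence $\mu=\lambda$ or $\mu<\lambda$ in the order of the paper; similarly for $X(\lambda,\mu)$. This is a purely combinatorial statement about single basis vectors, not about products, so it is not implied by your multiplication claim. Adding this one observation and deleting the triangular-multiplication section would turn your proposal into the paper's three-line proof.
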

\begin{proof}
	Writing $Y(\lambda)\coloneqq \bigcup_{\bm{i}\in I}Y(\bm{i},\lambda)$ and $X(\lambda)\coloneqq\bigcup_{\bm{i}\in I}X(\lambda, \bm{i})$, \cref{restrictiontotlambda} and \cref{spanningsetisbasis} show that $\bigcup_{\lambda\in\Par}Y(\lambda)\times X(\lambda)$ is a basis of $A$.
	The set $Y(\mu, \lambda)$ can only be nonempty if $\lambda=\mu$ or $\abs{\mu}>\abs{\lambda}$, and thus $\mu\leq\lambda$, similarly for $X(\lambda, \mu)$.
	It is also clear that $X(\lambda, \lambda) = Y(\lambda,\lambda) = \{e_\lambda\}$ for each $\lambda\in\Par$. 
\end{proof}
\begin{prop}\label{sequenceiseitheruptableauorreducible}
    Restricting $\Psi$ to the full subcategory generated by up-tableaux (i.e.\ no removal of boxes) is essentially surjective.
\end{prop}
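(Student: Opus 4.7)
The plan is to combine the basis theorem with Schur--Weyl duality to identify every nonzero object of $\sG$ with an up-tableau residue sequence; zero objects are automatic since any additive functor preserves them.

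First, by \cref{spanningsetisbasis}, the endomorphism space $\End_{\sG}(\bm{i})$ has a basis indexed by pairs of up-down-tableaux $\ta{t},\ta{s}$ of the same shape satisfying $\bm{i}^l_{\ta{t}} = \bm{i}^r_{\ta{s}} = \bm{i}$. Consequently, if $\bm{i}$ is nonzero such a pair exists, and in particular $\bm{i} = \bm{i}^l_{\ta{t}}$ for some up-down-tableau $\ta{t}$ of some shape $\lambda$. If no such pair exists, then $\End_{\sG}(\bm{i}) = 0$ and $\bm{i}$ is the zero object.

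Second, I claim $\bm{i}^l_{\ta{t}} \cong \bm{i}^l_{\ta{t}^\lambda}$ in $\sG$, which suffices because $\ta{t}^\lambda$ is an up-tableau. By \cref{KLRtosuperBraueriso}, the functor $\FunctorKLRtosBr\colon\sGm \to \Rep(P)$ is fully faithful with essential image the indecomposable objects of $\Rep(P)$. The Jucys--Murphy eigenvalue argument appearing in the proof of \cref{partitioncriterionforzero}---which extends naturally from up-tableaux to arbitrary up-down-tableaux of shape $\lambda$---shows that $\bm{i}^l_{\ta{t}}$ and $\bm{i}^l_{\ta{t}^\lambda}$ map to the same indecomposable in $\Rep(P)$ (namely the one assigned to $\lambda$ under the bijection of \cite{CE21}). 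Since $\FunctorKLRtosBr$ is fully faithful, this isomorphism lifts to $\sGm$; the analogous statement for $\sGp$ follows from the relationship between $\sGm$ and $\sGp$ established in \cref{action}.

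The main obstacle is the extension in the second step: verifying that the \cite{CE21} bijection assigns the partition $\shape(\ta{t})$ to the residue sequence $\bm{i}^l_{\ta{t}}$ for every up-down-tableau $\ta{t}$, not merely for up-tableaux. This should follow inductively by analyzing how each ``remove-a-box'' step in $\ta{t}$ interacts with the primitive idempotents of $\sBr_d$ and preserves the partition structure encoded in the generalized eigenspaces of the Jucys--Murphy elements; alternatively, one can construct the required isomorphism directly from the morphisms $\Psi_{\ta{t}\ta{t}^\lambda}$ and $\Psi_{\ta{t}^\lambda\ta{s}}$ (whose composition is $\Psi_{\ta{t}\ta{s}}$ by \cref{restrictiontotlambda}) and check invertibility cell-by-cell via the based quasi-hereditary structure of \cref{sGisquher}.
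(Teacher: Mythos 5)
Your central claim in the second step --- that $\bm{i}^l_{\ta{t}}\cong\bm{i}^l_{\ta{t}^\lambda}$ in $\sG$ for $\lambda=\shape(\ta{t})$ --- is false, and \cref{expsist} already contains a counterexample. For $\ta{t}=\emptyset\to(1)\to(2)\to(2,1)\to(1,1)$ one has $\bm{i}^l_{\ta{t}}=(0,1,-1,2)$ and $\shape(\ta{t})=(1,1)$, so $\bm{i}^l_{\ta{t}^\lambda}=(0,-1)$. But $(0,1,-1,2)$ is also the residue sequence of the \emph{up}-tableau $\emptyset\to(1)\to(2)\to(2,1)\to(3,1)$, so under $\FunctorKLRtosBr$ it is sent to the indecomposable object of $\Rep(P)$ labelled by $(3,1)$, while $(0,-1)$ is sent to the one labelled by $(1,1)$; these are not isomorphic, so full faithfulness of $\FunctorKLRtosBr$ (\cref{KLRtosuperBraueriso}) forces $(0,1,-1,2)\not\cong(0,-1)$ in $\sG$. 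The underlying issue is that the partition attached to $e(\bm{i})$ by the \cite{CE21} bijection depends only on the sequence $\bm{i}$, and the same sequence can arise from up-down-tableaux of different shapes; the identification ``residue sequence of $\ta{t}$ corresponds to the indecomposable labelled $\shape(\ta{t})$'' holds for up-tableaux but does \emph{not} extend to up-down-tableaux --- precisely the extension you flagged as the remaining obstacle. Your fallback via $\Psi_{\ta{t}\ta{t}^\lambda}$ and $\Psi_{\ta{t}^\lambda\ta{t}}$ cannot repair this: besides the fact that the desired isomorphism does not exist, these maps do not even compose to endomorphisms of the relevant objects, since $\Psi_{\ta{t}\ta{t}^\lambda}$ has source $\bm{i}^l_{\ta{t}}$ whereas $\Psi_{\ta{t}^\lambda\ta{t}}$ has target $\bm{i}^r_{\ta{t}}$, and these sequences differ whenever $\ta{t}$ removes a box (cf.\ the remark at the end of \cref{expsist}).

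What is true, and what the paper proves, is weaker: every residue sequence of an up-down-tableau is isomorphic in $\sG$ to the residue sequence of \emph{some} up-tableau, generally of a different and larger shape. The paper's argument is purely combinatorial: such a sequence either already is the residue sequence of an up-tableau, or it contains a subsequence of the form $(i,i\pm1,i)$ (up to commuting distant entries), which \cref{idempotentone} and \cref{idempotenttwo} allow one to contract to $(i)$; iterating terminates at an up-tableau. Your first step --- that by \cref{spanningsetisbasis} every object with nonzero identity is of the form $\bm{i}^l_{\ta{t}}$ for some up-down-tableau $\ta{t}$ --- is fine and matches the paper's starting point, but the passage from up-down-tableaux to up-tableaux must go through this combinatorics rather than through the shape of $\ta{t}$.
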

\begin{proof}
In virtue of \cref{spanningsetisbasis} we have to show that any source or target residue sequence of an up-down tableau is isomorphic in $\sG$ to a residue sequence of an up-tableau.
Without loss of generality we prove this only for the target residue sequence.
Denote the target residue sequence of $\ta{t}$ by $(i_1, \dots, i_n)$.
We claim that $(i_1, \dots i_n)$ is either a (target) residue sequence of an up-tableau or it contains a subsequence $(i,i\pm1,i)$.
Given such a subsequence, we can use \cref{idempotentone} and \cref{idempotenttwo} to construct an isomorphism to the altered sequence where $(i,i\pm1,i)$ is replaced by $i$.
Thus, it suffices to prove the claim.

Let $k$ be minimal such that $\ta{t}_k\oplus\{\alpha\}=\ta{t}_{k-1}$ for some $\alpha$.
Let $\res^r(\alpha)=i$ and $j$ be the index at which $\alpha$ was added to $\ta{t}$.
Then $i_k=i-1$ and $i_j=i$.
By assumptions on $k$ and $j$ we have $i_l\neq i,i-1$ for all $j<l<k$.
Thus, we may assume that $j=k-1$.

We claim that either $\ta{t}_{k-1}$ has an addable box of residue $i-1$ or $\ta{t}$ contains a subsequence $(i-1,i,i-1)$.
Suppose that $i_l\neq i-1$ for all $l<k-1$.
This means that $\alpha$ is added in the first column, and thus the box below $\alpha$ is addable and has residue $i-1$.
Otherwise, let $r<k-1$ be maximal with $i_r=i-1$.
By assumptions on $k$ and $r$ we have $i_l\neq i,i-1$ for all $r<l<k-1$.
If there is some index $r<l<k-1$ such that $i_l=i-2$ then the box to the left and to the bottom left of $\alpha$ is in part of $\ta{t}_{k-1}$, and thus it admits and addable box with residue $i-1$.
Otherwise, we find a subsequence $(i-1,i,i-1)$, and thus proving the claim.

Now assume that $\ta{t}_{k-1}$ has an addable box with residue $i-1$.
If there exists no index $l>k$ with $i_l=i$ then adding a box of residue $i-1$ instead of removing $\alpha$ gives actually an up-down-tableau with one less removal, and we can repeat this argument.
Otherwise, take the minimal $l>k$ with $i_l=i$.
As we are only interested in subsequences we may assume that $i_{k+1}$ is either $i+1$, $i-2$ or $i$.
In case it is $i$ we found the subsequence $(i,i-1,i)$.
The value $i-2$ cannot appear as we assumed that $\ta{t}_{k-1}$ admits an addable box of residue $i-1$.
So we may assume that $i_{k+1}=i+1$.
Now again we are only interested in subsequences, so we may assume that $i_{k+2}\in\{i+2,i\}$.
In case $i+2$ we may assume that $i_{k+3}\in\{i+3,i+1,i\}$.
If $i_{k+3}=i+1$ we found a subsequence $(i+1,i+2,i+1)$.
If $i_{k+3}=i$ we also may assume that $i_{k+2}=i$.
If $i_{k+3}=i+3$ we repeat this argument until we either find a subsequence $(i', i'+1,i')$ or that we may assume $i_{k+2}=i$.
So we are left with the case $(i,i-1,i+1,i)$.
As $i-1$ is removing a box of residue $i$, $i+1$ needs to remove a box of residue $i+2$ as it cannot be added.
In terms of Young diagrams it looks like $\ydiagram{2,1}\overset{i-1}{\to}\ydiagram{2}\overset{i+1}{\to}\ydiagram{1}$.
But we could also do $\ydiagram{2,1}\overset{i-1}{\to}\ydiagram{2,1,1}\overset{i+1}{\to}\ydiagram{2,2,1}\overset{i}{\to}\ydiagram{2,2,2}$ instead.
Now if no $j\in\{i+1,i,i-1\}$ appears this is also an up-down-tableau.
In each of the other cases we find a subsequence $(i', i'\pm1,i')$.
\end{proof}

\section{Khovanov algebra of type \texorpdfstring{$P$}{P}}\label{Khovanovdefisection}
In this chapter we aim at giving a different description of $\sGp$, similar to the Khovanov algebras of type $A$ or $B$.
\subsection{Cup and cap diagrams}
\begin{defi}
	A cup diagram $\underline{\lambda}$ is a partitioning $P$ of $\bbZ+\frac{1}{2}$ into subsets of size at most $2$ such that there are only finitely many size $2$ subsets.
	Furthermore, if $\{i,j\}\in P$ and $i<k<j$, then $k$ is part of a subset of size two $\{k,k'\}\in P$ with $i<k'<j$.

	We draw such a cup diagram (without crossings) by attaching a vertical ray to a one element subset and a cup with endpoints given by two element subsets.
	
	The set of all cup diagrams with $n$ cups is denoted by $\Lambda_n$ and by $\Lambda=\bigcup\Lambda_n$ we denote the set of all cup diagrams.
\end{defi}
The following is immediate from the definitions.
\begin{lem}\label{defidominttocupdiag}
	We have a bijection between $\Lambda_n$ and strictly decreasing sequences of integers $\lambda_1>\dots>\lambda_n$ by sending such a sequence to the unique cup diagram with endpoints $\{\lambda_1+\frac{3}{2}\}$.
	In particular, we can identify the set $\Lambda$ with strictly decreasing sequences of integers of arbitrary length.
\end{lem}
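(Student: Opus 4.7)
The plan is to exhibit the inverse map explicitly and verify by direct inspection that the two assignments compose to the identity, since the lemma is essentially a bookkeeping statement (as indicated by the remark ``The following is immediate from the definitions'' preceding it).

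First, I would make the forward map precise: given a strictly decreasing sequence $\lambda_1>\cdots>\lambda_n$ of integers, the values $\lambda_i+\frac{3}{2}$ give $n$ distinct positions in $\bbZ+\frac{1}{2}$, which I would use as the distinguished endpoints of the cup diagram. The uniqueness of the resulting cup diagram is then forced by the nesting clause in the definition of a cup diagram: once these $n$ positions are marked, any remaining position strictly between two cup-paired points must itself lie in a cup pair inside the same interval, so the entire non-crossing nested matching is rigidly determined inside-out, and all positions outside every cup are necessarily rays. This produces a well-defined element of $\Lambda_n$.

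For the inverse, I would take a cup diagram in $\Lambda_n$, read off its $n$ distinguished positions $p_1>\cdots>p_n\in\bbZ+\frac{1}{2}$, and set $\lambda_i\coloneqq p_i-\frac{3}{2}\in\bbZ$. The strictly decreasing order on the $p_i$ transfers directly to the $\lambda_i$, giving a strictly decreasing integer sequence of length $n$. The composition of the two maps in either direction is the identity by construction, via the elementary translation $p\leftrightarrow p-\frac{3}{2}$ between $\bbZ+\frac{1}{2}$ and $\bbZ$.

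The only point requiring a moment of attention is confirming that the convention for ``endpoints'' is applied consistently on both sides (so that the same $n$ positions are read off on the way back as were marked on the way in); no nontrivial obstacle arises, which is why the statement is flagged as immediate. The identification of $\Lambda=\bigsqcup_{n\geq 0}\Lambda_n$ with strictly decreasing integer sequences of arbitrary length claimed in the second part of the lemma then follows by taking the disjoint union of the bijections over $n$.
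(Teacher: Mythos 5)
Your proposal is correct and matches the paper, which offers no proof beyond declaring the statement immediate from the definitions: the shift $\lambda_i\mapsto\lambda_i+\frac{3}{2}$ prescribes the (right) endpoints of the cups, the innermost-first greedy pairing forced by the nesting clause gives existence and uniqueness of the resulting diagram, and reading off the endpoints and shifting back is the inverse. The only caution, which you already flag, is that ``endpoints'' here means the right endpoints of cups (cf.\ the convention used in Definition~\ref{defipartialordering} and Example~\ref{excupdiag}).
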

\begin{defi}\label{defipartialordering}
	Let $\underline{\lambda}$ and $\underline{\mu}$ be cup diagrams. 
	Denote by $\lambda_1>\lambda_2>\dots>\lambda_k$ the right endpoints of cups in $\underline{\lambda}$ and similar define $\mu_1>\dots>\mu_r$.
	We define a partial order on $\Lambda$ by declaring $\underline{\lambda}\leq\underline{\mu}$ if either $k>r$ or $k=r$ and $\lambda_i\geq\mu_i$ for $1\leq i\leq k$.
\end{defi}
\begin{ex}\label{excupdiag}
	The following are cup diagrams.
	\newcounter{count}
	\begin{gather*}
		\underline{\mu}=\begin{tikzpicture}[myscale=0.8]
			\foreach \x in {9,7,5,3,1} {
				\node[anchor=south] at (\value{count},0) {$-\frac{\x}{2}$};
				\stepcounter{count}
			}
			\foreach \x in {1,3,5,7,9} {
				\node[anchor=south] at (\value{count},0) {$\frac{\x}{2}$};
				\stepcounter{count}
			}
			\LINE{0}{-1}
			\SETCOORD{1}{1}
			\CUP{3}
			\SETCOORD{-2}{0}
			\CUP{1}
			\SETCOORD{2}{0}
			\LINE{0}{-1}
			\SETCOORD{1}{1}
			\LINE{0}{-1}
			\SETCOORD{1}{1}
			\CUP{1}
			\SETCOORD{1}{0}
			\LINE{0}{-1}
			\node at (-1,-0.5) {$\dots$};
			\node at (10,-0.5) {$\dots$};
		\end{tikzpicture}\\
		\underline{\lambda}=\begin{tikzpicture}[myscale=0.8]
			\setcounter{count}{0}
			\foreach \x in {9,7,5,3,1} {
				\node[anchor=south] at (\value{count},0) {$-\frac{\x}{2}$};
				\stepcounter{count}
			}
			\foreach \x in {1,3,5,7,9} {
				\node[anchor=south] at (\value{count},0) {$\frac{\x}{2}$};
				\stepcounter{count}
			}
			\LINE{0}{-1}
			\SETCOORD{1}{1}
			\LINE{0}{-1}
			\SETCOORD{1}{1}
			\CUP{1}
			\SETCOORD{1}{0}
			\CUP{1}
			\SETCOORD{1}{0}
			\LINE{0}{-1}
			\SETCOORD{1}{1}
			\CUP{1}
			\SETCOORD{1}{0}
			\LINE{0}{-1}
			\node at (-1,-0.5) {$\dots$};
			\node at (10,-0.5) {$\dots$};
		\end{tikzpicture}
	\end{gather*}
	The right endpoints of cups of $\underline{\lambda}$ are given by $\frac{7}{2}>\frac{1}{2}>-\frac{3}{2}$ and those of $\underline{\mu}$ by $\frac{7}{2}>-\frac{1}{2}>-\frac{3}{2}$, and thus $\underline{\lambda}<\underline{\mu}$.
\end{ex}
\begin{defi}
	A cap diagram $\overline{\lambda}=\underline{\lambda}^*$ is the horizontal mirror image of a cup diagram $\underline{\lambda}$ shifted one to the right, i.e.\ if $\underline{\lambda}$ has a cup connecting $a$ and $b$ then $\overline{\lambda}$ has a cap connecting $a+1$ and $b+1$.

	A circle diagram $\underline{\lambda}\overline{\mu}$ is a cup diagram $\underline{\lambda}$ drawn under a cap diagram $\overline{\mu}$.
\end{defi}
\begin{defi}\label{defiorientable}
	A circle diagram is called \emph{orientable} if it contains no circle and any non-propagating line has one endpoint to each side of $0$.

	To check this in practice, we first have to redraw cup and cap diagrams using the following rule.
	For a cup diagram with $k$ cups, we draw the bottom endpoints of rays in the unique way such that $-k-\frac{1}{2},-k+\frac{1}{2},\dots,k-\frac{3}{2}$ have no endpoints of rays.
	For a cap diagram with $k$ caps, we draw it in the unique way such that there are no top endpoints of rays at $-k+\frac{1}{2},-k+\frac{3}{2},\dots,k-\frac{1}{2}$.
	If we redraw the cup and the cap diagram in a circle diagram in this way, \emph{one endpoint to each side of $0$} means that, any non-propagating line ending at the top, is centered around $0$ and a non-propagating line ending at the bottom around $-1$.
\end{defi}
\begin{ex}
	For the cup diagram from \cref{excupdiag} we would think of the rays as the following.
	\begin{equation*}
		\begin{tikzpicture}[myscale=0.7]
			\setcounter{count}{0}
			\foreach \x in {9,7,5,3,1} {
				\node[anchor=north] at (\value{count},-1) {$-\frac{\x}{2}$};
				\stepcounter{count}
			}
			\foreach \x in {1,3,5,7,9} {
				\node[anchor=north] at (\value{count},-1) {$\frac{\x}{2}$};
				\stepcounter{count}
			}
			\LINE{0}{-1}
			\SETCOORD{1}{1}
			\CUP{3}
			\SETCOORD{-2}{0}
			\CUP{1}
			\SETCOORD{2}{0}
			\LINE{2}{-1}
			\SETCOORD{-1}{1}
			\LINE{2}{-1}
			\SETCOORD{-1}{1}
			\CUP{1}
			\SETCOORD{1}{0}
			\LINE{0}{-1}
			\node at (-1,-0.25) {$\dots$};
			\node at (10,-0.25) {$\dots$};
		\end{tikzpicture}
	\end{equation*}
\end{ex}
\begin{rem}
	We could have also chosen to define cup and cap diagrams with the skew rays, which would have made the orientability criterion easier but the definition of cup and cap diagrams more involved.
	We chose to do this differently as all these technicalities fade away, when we pass to representations of $\lie{p}(n)$.
\end{rem}
\subsection{Definition of \texorpdfstring{$\K$}{K}}
\begin{defi}
	We define the \emph{Khovanov algebra $\K$ of type $P$} to be the vector space with basis all orientable circle diagrams $\underline{\lambda}\overline{\mu}$.
	We define a multiplication by setting $\underline{\lambda}\overline{\mu}\cdot\underline{\mu'}\overline{\nu}$ to $0$ if $\mu\neq\mu'$, and otherwise we draw $\underline{\lambda}\overline{\mu}$ below $\underline{\mu'}\overline{\nu}$ and connect endpoints of rays from $\overline{\mu}$ to endpoints of rays of $\underline{\mu}$ in the following way:
	If this endpoint is not directly to the left of a cap we connect it with a straight line to the top. 
	If it is to the left of a cap we connect it to the first free endpoint right of the corresponding cup in $\underline{\mu}$.
	This gives us a middle section consisting of subpictures of the form 
	\begin{equation*}
		\begin{tikzpicture}[scale=0.5]
			\CUP{1}
			\SETCOORD{1}{0}
			\LINE{-2}{-1.5}
			\SETCOORD{1}{0}
			\CAP{1}
		\end{tikzpicture}
	\end{equation*}
	We then apply surgery procedures to remove this middle section.
	This is done by replacing these subpictures iteratively by 
	\begin{equation*}
		\tikz[baseline=(current bounding box.south), scale=0.5]{\draw (0,0)--(0,-1.5)(1,0)--(1,-1.5)(2,0)--(2,-1.5);}.
	\end{equation*}
	If any of these intermediate results is not orientable, we define the result to be $0$. 
	Otherwise, we set it to be $\underline{\lambda}\overline{\nu}$ (which is exactly the result of the surgery procedure).
	This turns $\K$ into an associative algebra by \cref{khovanovalgassoc} below.

	Furthermore, denote by $e_\lambda$ the basis vector $\underline{\lambda}\overline{\lambda}\in\K$ for all $\lambda\in\Lambda$.
\end{defi}
\begin{rem}
	We chose to call these replacements surgery procedures (even though they are, strictly speaking, not) as they play the same role as the surgery procedures from \cite{BS11} and \cite{ES16a} in the definition of the Khovanov algebra of types $A$ and $B$.
\end{rem}
\subsection{Detailed analysis of surgery procedures}\label{detailedanalysis}
Before we are going to prove associativity, we inspect the surgery procedure in more detail.
Every surgery procedure falls into one of the following three categories.
\subsubsection{Straightening}
This appears if we are in the situation of \cref{figureStraightening}.
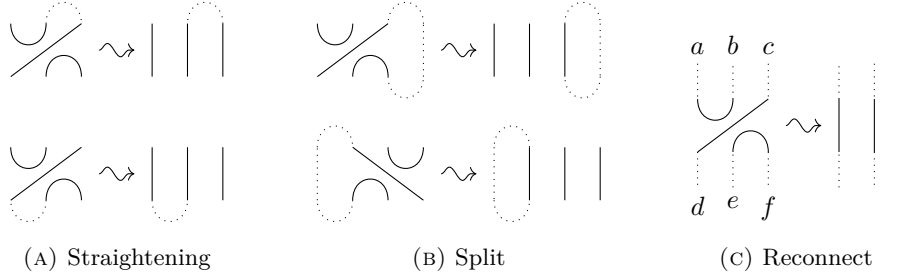
\begin{figure}
	\begin{subfigure}[B]{0.25\textwidth}\centering
		\begin{tikzpicture}[scale=0.47]
			\CUP{1}
			\CAP[dotted]{1}
			\LINE{-2}{-1.5}
			\SETCOORD{1}{0}
			\CAP{1}
			\SETCOORD{0}{-3.5}
			\CAP{-1}
			\CUP[dotted]{-1}
			\LINE{2}{1.5}
			\SETCOORD{-1}{0}
			\CUP{-1}
			\draw[->, decorate, decoration={snake}] (2.5, -0.75) -- (3.5, -0.75);
			\draw[->, decorate, decoration={snake}] (2.5, -4.25) -- (3.5, -4.25);
			\SETCOORD{4}{0}
			\LINE{0}{-1.5}
			\CUP[dotted]{1}
			\LINE{0}{1.5}
			\SETCOORD{1}{0}
			\LINE{0}{-1.5}
			\SETCOORD{0}{3.5}
			\LINE{0}{1.5}
			\CAP[dotted]{-1}
			\LINE{0}{-1.5}
			\SETCOORD{-1}{0}
			\LINE{0}{1.5}
		\end{tikzpicture}
		\subcaption{Straightening}
		\label{figureStraightening}
	\end{subfigure}
	\begin{subfigure}[B]{0.45\textwidth}\centering
		\begin{tikzpicture}[scale=0.47]
			\CUP{1}
			\SETCOORD{0}{-1.5}
			\CAP{1}
			\CUP[dotted]{1}
			\LINE[dotted]{0}{1.5}
			\CAP[dotted]{-1}
			\LINE{-2}{-1.5}
			\draw[->, decorate, decoration={snake}] (3.5, -0.75) -- (4.5, -0.75);
			\draw[->, decorate, decoration={snake}] (3.5, -4.25) -- (4.5, -4.25);
			\SETCOORD{5}{0}
			\LINE{0}{1.5}
			\SETCOORD{1}{0}
			\LINE{0}{-1.5}
			\SETCOORD{1}{0}
			\LINE{0}{1.5}
			\CAP[dotted]{1}
			\LINE[dotted]{0}{-1.5}
			\CUP[dotted]{-1}
			\SETCOORD{1}{-3.5}
			\LINE{0}{1.5}
			\SETCOORD{-1}{0}
			\LINE{0}{-1.5}
			\SETCOORD{-1}{0}
			\LINE{0}{1.5}
			\CAP[dotted]{-1}
			\LINE[dotted]{0}{-1.5}
			\CUP[dotted]{1}
			\SETCOORD{-3}{1.5}
			\CUP{-1}
			\SETCOORD{0}{-1.5}
			\CAP{-1}
			\CUP[dotted]{-1}
			\LINE[dotted]{0}{1.5}
			\CAP[dotted]{1}
			\LINE{2}{-1.5}
		\end{tikzpicture}
		\subcaption{Split}
		\label{figureSplit}
	\end{subfigure}
	\begin{subfigure}[B]{0.24\textwidth}\centering
		\begin{tikzpicture}[scale=0.47]
			\LINE[dotted]{0}{-1}
			\CUP{1}
			\LINE[dotted]{0}{1}
			\SETCOORD{1}{0}
			\LINE[dotted]{0}{-1}
			\LINE{-2}{-1.5}
			\LINE[dotted]{0}{-1}
			\SETCOORD{1}{0}
			\LINE[dotted]{0}{1}
			\CAP{1}
			\LINE[dotted]{0}{-1}
			\draw[->, decorate, decoration={snake}] (2.5, -1.75) -- (3.5, -1.75);
			\SETCOORD{2}{0}
			\LINE[dotted]{0}{1}
			\LINE{0}{1.5}
			\LINE[dotted]{0}{1}
			\SETCOORD{1}{0}
			\LINE[dotted]{0}{-1}
			\LINE{0}{-1.5}
			\LINE[dotted]{0}{-1}
			\SETCOORD{1}{0}
			\LINE[dotted]{0}{1}
			\LINE{0}{1.5}
			\LINE[dotted]{0}{1}
			\node[anchor=south] at (0,0) {$a$};
			\node[anchor=south] at (1,0) {$b$};
			\node[anchor=south] at (2,0) {$c$};
			\node[anchor=north] at (0,-3.5) {$d$};
			\node[anchor=north] at (1,-3.5) {$e$};
			\node[anchor=north] at (2,-3.5) {$f$};
		\end{tikzpicture}
		\caption{Reconnect}
		\label{figureReconnect}
	\end{subfigure}
\caption{Different cases of the surgery procedure}
\end{figure}
In this case the surgery procedure does not change orientability and just \enquote{straightens} the kink.
\subsubsection{Split}
In this situation we split off a circle from a line, and thus the surgery procedure gives $0$.
\Cref{figureSplit} shows this in detail.
\subsubsection{Reconnect}
The last case is that none of the endpoints of the surgery are connected with one another as in \cref{figureReconnect}.
In this situation we only reconnect three lines.

If all three lines are propagating, then the two right endpoints (as well as the two left endpoints) end either both on the top or both on the bottom.
If these endpoints lie on different sides of $0$, the surgery procedure is nonzero and produces two non-propagating lines.

In all other cases, we claim that the surgery procedure will produce $0$.
If all three lines are propagating, but some right (or left) endpoints do not lie on the same side of $0$, then the result is not orientable, and thus $0$.
Now suppose that there is at least one non-propagating line. 
We may assume without loss of generality that it corresponds to the endpoints $a$ and $b$ and without loss of generality these end on the top.
As our diagram is orientable, we know that $a$ and $b$ lie on the same side of $0$.
Thus, (if the result is also orientable) $d$ has to end on the bottom.
Furthermore, $c$ and $f$ cannot both end on the top and $e$, $f$ and $c$ cannot all end on the bottom by the same argument as for $a$, $b$ and $d$.
Hence, $a$, $b$ and $c$ end on the top and $d$, $e$ and $f$ on the bottom.
Thus, we have two non-propagating lines, one associated to the cap and one to the cup.

Then $e$ and $f$ have to be at positions $-k-\frac{3}{2}$ and $k-\frac{1}{2}$ where $k$ denotes the number of cups as the diagram is orientable (see also \cref{defiorientable}).
On the other hand observe that $a$ and $b$ have to be at positions $-k-\frac{1}{2}$ and $k+\frac{1}{2}$.
But this can only be achieved if there are more cups than caps to the left of this surgery procedure, which means that there has to be a non-propagating line ending at the top that is not nested with the one coming from the cup, thus this is not orientable.

\begin{thm}\label{khovanovalgassoc}
	The multiplication on $\K$ is well-defined and associative.
\end{thm}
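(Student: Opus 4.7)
The plan is to separate the statement into two parts: (i) well-definedness of a single product $\underline{\lambda}\overline{\mu}\cdot\underline{\mu}\overline{\nu}$, meaning the outcome does not depend on the order in which the surgery moves on the middle section are performed, and (ii) associativity of the triple product. Both rest on the local classification (straightening, split, reconnect) given in the detailed analysis above.

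For (i), I would argue that the middle section after stacking $\underline{\lambda}\overline{\mu}$ below $\underline{\mu}\overline{\nu}$ decomposes into finitely many \emph{surgery sites} of the cup-line-cap form, and I would show that any two such sites commute, in the strong sense that if one order yields a non-orientable intermediate (hence $0$), so does the other. The argument is combinatorial: a straightening move is purely local and does not affect any data relevant to orientability at a distant site; a split produces $0$ by definition, and one checks in a $3\times 3$ case split (straightening/split/reconnect against straightening/split/reconnect) that if the moves act at distinct sites, the final diagram including its vanishing verdict is independent of the order. Since adjacent-transposition commutations generate the symmetric group action on the set of surgery orders, the outcome of a product is unambiguous.

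For (ii), the key observation is that both $(\underline{\lambda}\overline{\mu}\cdot\underline{\mu}\overline{\nu})\cdot\underline{\nu}\overline{\kappa}$ and $\underline{\lambda}\overline{\mu}\cdot(\underline{\mu}\overline{\nu}\cdot\underline{\nu}\overline{\kappa})$ can be realized by stacking all six diagrams into one composite picture and performing all surgery moves from both middle sections. Moves in different middle sections lie in disjoint horizontal strips and therefore commute automatically, while moves within the same middle section commute by part (i). Thus both bracketings reduce to the same iterated surgery on the joint picture and yield the same element of $\K$, including agreement on whether the result vanishes.

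The main obstacle is (i), specifically controlling the interaction of a reconnect with other surgeries lying immediately adjacent to it, since orientability is a non-local condition depending on the alignment of endpoints with respect to $0$ and on the global balance of cups versus caps to each side (as spelled out in \cref{defiorientable}). I expect this to be handled by observing that two surgery sites in the middle section are always separated by at least one column and that the reconnect rewrites three lines in a way whose orientability effect can be read off from the side-of-$0$ positions of the six endpoints alone; then a commuting surgery to the left or right shifts none of these critical positions. Once this locality is established, both well-definedness and associativity follow from the case analysis with no further global argument needed.
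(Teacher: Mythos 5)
Your overall skeleton --- reduce both well-definedness and associativity to the statement that any two surgery moves commute --- is exactly the paper's strategy: \cref{khovanovalgassoc} is deduced from \cref{surgproccommute} by horizontal resp.\ vertical stacking of surgeries. However, two of your reductions do not hold as stated, and they sit precisely where the real work lies. First, surgeries in different middle sections do \emph{not} commute ``automatically'' because they lie in disjoint horizontal strips: orientability is a property of whole connected components, and a single strand typically threads through several middle sections, so a surgery at one height can create a circle or a badly placed non-propagating line whose component runs through the site of the other surgery. This is why the paper states and proves \cref{surgproccommute} for arbitrarily many stacked circle diagrams at once and makes no structural distinction between the horizontal and the vertical case.

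Second, the hard case in either setting is when one order of the two surgeries produces a non-orientable intermediate (hence $0$): one must show the other order also yields $0$. Your locality heuristic (the reconnect's effect is determined by the side-of-$0$ positions of its six endpoints, which a distant surgery does not shift) fails exactly when the cup or cap of one surgery lies on the same connected component as the lines entering the other surgery --- the generic interacting situation, and the one your own ``main obstacle'' paragraph gestures at without resolving. The paper handles it by first classifying in \cref{surgproczero} the two local configurations in which a surgery gives $0$, and then running an extended case analysis on how the offending cup connects into the second surgery (the cases in \cref{assocfirstdistinctionfigure,assocseconddistinctionfigure,assocthirddistinctionfigure}), verifying in each configuration that the final diagram is non-orientable or that the other order also splits off a circle. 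Without an argument of this kind your $3\times3$ case split cannot be closed, so the proposal is incomplete at its central step.
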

\begin{proof}
    That it is well-defined follows from \cref{surgproccommute} by horizontal stacking of surgery procedures and associativity by vertical stacking.
\end{proof} 
\subsection{Properties of \texorpdfstring{$\K$}{K}}
\begin{defi}
	For a decreasing sequence $\lambda=(\lambda_1>\dots>\lambda_n)$ of integers let $e_\lambda=\underline{\lambda}\overline{\lambda}$.
\end{defi}
\begin{lem}\label{multwithidempotents}
	The following hold true.
	\begin{equation*}
		e_\lambda\cdot\underline{\nu}\overline{\mu}=\delta_{\lambda\nu}\underline{\lambda}\overline{\mu}\qquad\qquad
		\underline{\lambda}\overline{\nu}\cdot e_\mu=\delta_{\nu\mu}\underline{\lambda}\overline{\mu}
	\end{equation*}
\end{lem}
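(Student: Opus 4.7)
The plan is to prove the first identity $e_\lambda\cdot\underline{\nu}\overline{\mu}=\delta_{\lambda\nu}\underline{\lambda}\overline{\mu}$; the second one follows by the same argument after horizontal reflection (cf.\ \cref{dualityonKLR}), or equivalently by rerunning the proof with the roles of cups and caps interchanged. The case $\lambda\neq\nu$ is immediate from the definition of multiplication: the cap diagram of $e_\lambda=\underline{\lambda}\overline{\lambda}$ is $\overline{\lambda}$, which does not coincide with $\underline{\nu}$, so the product vanishes.

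Assume now $\lambda=\nu$. Stacking $e_\lambda$ below $\underline{\lambda}\overline{\mu}$, the middle section consists of the cap diagram $\overline{\lambda}$ with the cup diagram $\underline{\lambda}$ directly above. By the defining shift-by-one relation $\overline{\lambda}=\underline{\lambda}^*$, every cup of $\underline{\lambda}$ with endpoints $a<b$ is matched exactly with the cap of $\overline{\lambda}$ with endpoints $a+1<b+1$, and every ray of $\underline{\lambda}$ at position $p$ pairs with the ray of $\overline{\lambda}$ at position $p+1$. A direct inspection of the connecting rule then shows that the middle section decomposes entirely into straight vertical lines joining matched rays together with, around each matched cup-cap pair, a kink of the Straightening type depicted in \cref{figureStraightening}.

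I would then induct on the number of matched cup-cap pairs, resolving an innermost one at each step. By the analysis in \cref{detailedanalysis}, each such Straightening surgery preserves orientability and simply replaces the local kink by three vertical segments; the remaining middle section is of the same matched form but with strictly fewer pairs, so the induction hypothesis applies. After all surgeries have been performed, the middle section has vanished, the bottom is $\underline{\lambda}$ (from $e_\lambda$) and the top is $\overline{\mu}$ (from the right factor), yielding exactly $\underline{\lambda}\overline{\mu}$.

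The main obstacle is verifying that no intermediate surgery is of the Split (\cref{figureSplit}) or Reconnect (\cref{figureReconnect}) type; this is exactly what the shift-by-one pairing is designed to rule out. Because each cup of $\underline{\lambda}$ has a unique cap partner in $\overline{\lambda}$ in exactly the matching nested position, each innermost surgery is genuinely local and involves only strands of the ambient orientable diagram $\underline{\lambda}\overline{\mu}$, so no closed loop can detach (ruling out Split) and no pairing across unmatched partners can arise (ruling out Reconnect). Making this bookkeeping precise — tracking positions, rays, and nesting depth through the induction — will be the only real technical content, but the structural picture is clear enough that the argument should go through cleanly.
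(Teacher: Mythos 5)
Your proposal is correct and follows essentially the same route as the paper: the case $\lambda\neq\nu$ is immediate from the definition, and for $\lambda=\nu$ one observes that the shift-by-one matching between $\underline{\lambda}$ and $\overline{\lambda}$ forces every surgery in the middle section to be a Straightening, which preserves orientability, so the product is $\underline{\lambda}\overline{\mu}$. Your additional induction on innermost cup--cap pairs is just a more explicit organization of the same local argument the paper gives in one picture.
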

\begin{proof}
	We only prove the first equation.
	If $\nu\neq\lambda$ the statement is clear by definition of the multiplication.
	Otherwise, as $e_\lambda=\underline{\lambda}\overline{\lambda}$, every surgery procedure is given locally by
	\begin{equation*}
		\begin{tikzpicture}[scale=0.4]
			\CUP{1}
			\SETCOORD{1}{0}
			\LINE{-2}{-1.5}
			\CUP{1}
			\CAP{1}
			\draw[->, decorate, decoration={snake}] (2.5,-0.75)--(3.5,-0.75);
			\SETCOORD{2}{1.5}
			\LINE{0}{-1.5}
			\CUP{1}
			\LINE{0}{1.5}
			\SETCOORD{1}{0}
			\LINE{0}{-1.5}
		\end{tikzpicture}
	\end{equation*}
	which clearly does not change orientability. 
	This means that any surgery procedure gives a non-zero result, and thus we obtain $\underline{\lambda}\overline{\mu}$ in the end.
\end{proof}
\begin{cor}\label{klocuntial}
	The algebra $\K$ is a locally unital locally finite dimensional algebra with $\K=\bigoplus_{\lambda,\mu\in\Lambda}e_\lambda\K e_\mu$.
\end{cor}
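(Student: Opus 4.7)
The plan is to derive this corollary directly from Lemma~\ref{multwithidempotents}, which already does all the combinatorial work. First, specializing the first identity of that lemma to $\nu = \mu = \lambda$ yields $e_\lambda^2 = e_\lambda$, and combining both identities at $\nu = \mu = \lambda$ with varying outer weights gives $e_\lambda e_{\lambda'} = \delta_{\lambda\lambda'} e_\lambda$. Hence $\{e_\lambda\}_{\lambda \in \Lambda}$ is a family of pairwise orthogonal idempotents, which is what one needs to even speak of a locally unital structure.

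Next, I would establish the direct sum decomposition. Any basis element $\underline{\lambda}\overline{\mu}$ satisfies $e_\lambda \cdot \underline{\lambda}\overline{\mu} \cdot e_\mu = \underline{\lambda}\overline{\mu}$ by Lemma~\ref{multwithidempotents}, hence lies in $e_\lambda \K e_\mu$. Conversely, for fixed $\lambda,\mu \in \Lambda$ there is at most one orientable circle diagram with cup part $\underline{\lambda}$ and cap part $\overline{\mu}$, so $e_\lambda \K e_\mu$ is either zero or one-dimensional, spanned by $\underline{\lambda}\overline{\mu}$. Since distinct pairs $(\lambda,\mu)$ index distinct basis vectors of $\K$, the subspaces $e_\lambda \K e_\mu$ are linearly independent, and their sum exhausts $\K$ because the orientable circle diagrams span it by definition. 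This simultaneously yields the direct sum decomposition and the local finite-dimensionality.

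Finally, for local unitality, any $x \in \K$ is a finite linear combination of basis diagrams, so there exist finite subsets $S_l, S_r \subseteq \Lambda$ with $x \in \sum_{\lambda \in S_l,\, \mu \in S_r} e_\lambda \K e_\mu$. Applying Lemma~\ref{multwithidempotents} one then has $x = \sum_{\lambda \in S_l} e_\lambda x = \sum_{\mu \in S_r} x e_\mu$, exhibiting $\K$ as locally unital with distinguished system of idempotents $\{e_\lambda\}_{\lambda \in \Lambda}$. There is no real obstacle in this proof; everything is an immediate bookkeeping consequence of the surgery-procedure analysis already encoded in Lemma~\ref{multwithidempotents} together with the uniqueness of the circle diagram $\underline{\lambda}\overline{\mu}$ given its cup and cap parts.
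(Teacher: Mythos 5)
Your proposal is correct and follows essentially the same route as the paper: orthogonality of the $e_\lambda$ from Lemma~\ref{multwithidempotents}, the direct sum decomposition from the fact that each basis diagram $\underline{\lambda}\overline{\mu}$ sits in $e_\lambda\K e_\mu$, and local finite-dimensionality from $e_\lambda\K e_\mu$ being at most one-dimensional (spanned by $\underline{\lambda}\overline{\mu}$ precisely when it is orientable). Your write-up is just a more explicit version of the same bookkeeping.
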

\begin{proof}
	By \cref{multwithidempotents} we know that the set of all $e_\lambda$ form a set of mutually orthogonal idempotents and the direct sum decomposition is then immediate from \cref{defidominttocupdiag}.
	It is locally finite dimensional as $e_\lambda\K e_\mu$ is non-zero if and only if $\underline{\lambda}\overline{\mu}$ is orientable in which case it gives a basis.
\end{proof}
\begin{defi}
	For each $\lambda\in\Lambda$ we have a one dimensional irreducible module $\hat{L}(\lambda)$ with action given by
	 \begin{equation*}
		\underline{\mu}\overline{\nu}\cdot v=\begin{cases}
			v&\text{if $\underline{\mu}\overline{\nu}=e_\lambda$,}\\
			0&\text{otherwise.}
		\end{cases}
	 \end{equation*}
	 Each of these has a locally finite dimensional projective cover denoted by $\hat{P}(\lambda)=\K e_\lambda$.
\end{defi}
\subsection{Geometric bimodules}\label{geombimodK}
\begin{defi}
	A \emph{crossingless matching} is a diagram $t$ given by drawing a cap diagram $c$ underneath a cup diagram $d$ and connecting the rays from $c$ to $d$ via an order preserving bijection such that outside some finite strip we only have straight vertical rays.
	For $i\in\bbZ$ we can consider special crossingless matchings $t^i$ given as
	\begin{equation*}
		\begin{tikzpicture}
			\node at (0,0.75) {$\dots$};
			\node at (5,0.75) {$\dots$};
			\node[anchor=north] at (1,0) {$i-\frac{3}{2}$};
			\node[anchor=north] at (2,0) {$i-\frac{1}{2}$};
			\node[anchor=north] at (3,0) {$i+\frac{1}{2}$};
			\node[anchor=north] at (4,0) {$i+\frac{3}{2}$};
			\draw (1,0)--(1,1.5) (2,0)..controls+(0.2,0.8) and +(-0.2,0.8)..(3,0) (2,1.5)..controls+(0.2,-0.8) and +(-0.2,-0.8)..(3,1.5) (4,0)--(4,1.5);
		\end{tikzpicture}
	\end{equation*} 
	A \emph{generalized crossingless matching} $\bm{t}$ is a sequence $t_k\cdots t_1$ of crossingless matchings.

	Given a cup diagram $a$ and a cap diagram $b$ we can form a \emph{generalized circle diagram} $a\bm{t}b$.
	This is called \emph{orientable} if it contains neither a circle nor a non-propagating such that both endpoints lie on the same side of $0$.
\end{defi}
\begin{defi} 
	Given a generalized crossingless matching $\bm{t}=t_k\cdots t_1$ we define $\hat{G}_{\bm{t}}$ to be the vector space with basis all orientable generalized circle diagrams $\underline{\lambda}\bm{t}\overline{\mu}$.

	Given another generalized crossingless matching $\bm{u}=u_l\cdots u_1$ we define $\bm{u}\bm{t}$ to be the generalized crossingless matching $u_l\cdots u_1 t_k\cdots t_1$.
	We define a map $m\colon \hat{G}_{\bm{u}}\otimes \hat{G}_{\bm{t}}\to \hat{G}_{\bm{u}\bm{t}}$ by defining the product $(a\bm{u}b)(c\bm{t}d)$ of two basis vectors as follows. If $c^*\neq b$, we declare $(a\bm{u}b)(c\bm{t}d)=0$. Otherwise, we draw $(a\bm{u}b)$ underneath $(c\bm{t}d)$ to create a new diagram.
	We now can use the surgery procedures two remove the middle section $c^*c$ to obtain a vector in $\hat{G}_{\bm{u}\bm{t}}$.
\end{defi}
Given a third generalized crossingless matching $\bm{s}$, the following diagram commutes (by \cref{surgproccommute}).
\begin{equation}\label{geombimodassoc}
	\begin{tikzcd}
		\hat{G}_{\bm{u}}\otimes \hat{G}_{\bm{t}}\otimes \hat{G}_{\bm{s}}\arrow[r, "1\otimes m"]\arrow[d, "m\otimes 1"]&\hat{G}_{\bm{u}}\otimes \hat{G}_{\bm{t}\bm{s}}\arrow[d, "m"]\\
		\hat{G}_{\bm{u}\bm{t}}\otimes \hat{G}_{\bm{s}}\arrow[r, "m"]&\hat{G}_{\bm{u}\bm{t}\bm{s}}
	\end{tikzcd}
\end{equation}
\begin{rem}
	Using \eqref{geombimodassoc} with $\bm{s}$ and $\bm{u}$ being the empty generalized crossingless matchings, we endow $\hat{G}_{\bm{t}}$ with the structure of a $\K$-$\K$-bimodule.
\end{rem}
\begin{defi}
	The \emph{reduction $\red(\bm{t})$} of a generalized crossingless matching $\bm{t}$ is the unique crossingless matching (i.e.\ only one layer) that is topologically equivalent.
	Similarly, given a crossingless matching $t$ and a cup diagram $a$ (resp.\ cap diagram $b$) we define the \emph{lower reduction $\lred(at)$} of $at$ (resp.\ \emph{upper reduction $\ured(tb)$} of $tb$) to be the unique cup (resp.\ cap) diagram that is topologically equivalent to $at$ (resp.\ $tb$).
\end{defi}
The following is clear from the definitions.
\begin{cor}\label{bimoduleisoreductiontosinglelayer}
	There exists a $\K$-$\K$-bimodule isomorphism $\hat{G}_{\bm{t}}\to \hat{G}_{\red(\bm{t})}$ via sending $\underline{\lambda}\bm{t}\overline{\mu}$ to $\underline{\lambda}\red(\bm{t})\overline{\mu}$.
\end{cor}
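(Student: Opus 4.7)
The plan is to first check that the prescribed rule $\phi\colon \hat{G}_{\bm{t}}\to \hat{G}_{\red(\bm{t})}$, $\underline{\lambda}\bm{t}\overline{\mu}\mapsto\underline{\lambda}\red(\bm{t})\overline{\mu}$, gives a well-defined linear bijection, and then to verify compatibility with left and right multiplication by $\K$.

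For the linear bijection, the key point is that orientability of a generalized circle diagram $\underline{\lambda}\bm{s}\overline{\mu}$ is a purely topological invariant of the underlying $1$-manifold together with the position of the endpoints of its non-propagating components: it asserts the absence of circles and that every non-propagating line has endpoints on opposite sides of $0$. Since by definition $\bm{t}$ and $\red(\bm{t})$ are topologically equivalent, the diagrams $\underline{\lambda}\bm{t}\overline{\mu}$ and $\underline{\lambda}\red(\bm{t})\overline{\mu}$ have the same set of circles and the same set of lines with identical endpoints. Hence one is orientable if and only if the other is, and $\phi$ sends basis to basis in both directions.

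For the bimodule compatibility, consider left multiplication by $\underline{\nu}\overline{\kappa}\in\K$ on $\underline{\lambda}\bm{t}\overline{\mu}$. By definition this vanishes unless $\kappa=\lambda$, and the same holds for $\underline{\nu}\overline{\kappa}\cdot\phi(\underline{\lambda}\bm{t}\overline{\mu})$. When $\kappa=\lambda$, both products are computed by stacking $\underline{\nu}\overline{\lambda}$ below the element and performing iteratively the surgery procedures on the middle section $\overline{\lambda}\underline{\lambda}$; crucially, this middle section is the same in both cases, and the surgeries act locally on it without touching the region containing $\bm{t}$ or $\red(\bm{t})$. Thus after each intermediate surgery, the two resulting diagrams differ only by replacing $\bm{t}$ with $\red(\bm{t})$ above an identical lower portion, and by the topological argument of the previous paragraph they are simultaneously orientable (so both intermediate steps return $0$ at the same moment, or neither does). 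After all surgeries, the outputs have the form $\underline{\nu}'\bm{t}\overline{\mu}$ and $\underline{\nu}'\red(\bm{t})\overline{\mu}$ for the same cup diagram $\underline{\nu}'$ determined by the middle surgeries, so they correspond under $\phi$. The argument for right multiplication is entirely analogous.

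Since $\phi$ is a linear bijection commuting with the left and right $\K$-actions, it is the desired $\K$-$\K$-bimodule isomorphism.
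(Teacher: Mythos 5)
Your proof is correct and matches the paper's intent: the paper simply declares this corollary ``clear from the definitions,'' and your argument spells out exactly the two points that make it so — orientability is a topological invariant so $\bm{t}$ and $\red(\bm{t})$ give the same basis, and the surgery procedures in the multiplication only touch the middle section and depend only on the topology, so the two computations are simultaneously zero or produce corresponding outputs.
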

\begin{rem}
	\Cref{bimoduleisoreductiontosinglelayer} allows us to reduce to crossingless matchings.
	In the following, we will consider everything only for the non-generalized version.
\end{rem}
\begin{lem}\label{multdescendstoiso}
	The map $m\colon \hat{G}_{u}\otimes \hat{G}_{t}\to \hat{G}_{ut}$ descends to an isomorphism $\overline{m}\colon \hat{G}_{u}\otimes_{\K} \hat{G}_{t}\to \hat{G}_{ut}$.
\end{lem}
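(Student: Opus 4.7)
The plan is to show that $m$ is $\K$-balanced so it descends to $\overline{m}$, then construct an explicit two-sided inverse $\psi$ by cutting each generalized circle diagram along the horizontal line separating $u$ from $t$, and finally check that both compositions are the identity.

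First, $\K$-balancedness of $m$ is immediate from the associativity square~\eqref{geombimodassoc}: specializing to $\bm{u}=u$, $\bm{t}=\emptyset$ (so $\hat{G}_\emptyset=\K$), and $\bm{s}=t$ yields $m(xk\otimes y)=m(x\otimes ky)$ for all $x\in\hat{G}_u$, $k\in\K$, $y\in\hat{G}_t$. Hence $m$ descends to a well-defined $\overline{m}\colon\hat{G}_u\otimes_\K\hat{G}_t\to\hat{G}_{ut}$.

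Next, given an orientable basis diagram $\underline{\lambda}(ut)\overline{\nu}$ of $\hat{G}_{ut}$, I will define $\psi(\underline{\lambda}(ut)\overline{\nu}):=[\underline{\lambda}u\overline{\mu}]\otimes[\underline{\mu}t\overline{\nu}]$, where $\overline{\mu}$ is the unique cap diagram on the horizontal interface $\ell$ separating $u$ and $t$ that records the topology there: a cap over each pair of interface endpoints joined by an arc descending through $u\underline{\lambda}$, and rays elsewhere. Both halves are orientable, since cutting an orientable diagram along $\ell$ cannot create a circle, and any new non-propagating line still has its endpoints on opposite sides of $0$. Extending linearly gives $\psi\colon\hat{G}_{ut}\to\hat{G}_u\otimes_\K\hat{G}_t$. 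The composition $\overline{m}\circ\psi$ is the identity because the middle section $\overline{\mu}\underline{\mu}$ is, by the choice of $\mu$, topologically compatible with the surrounding matchings and outer (co)cups, so by the same argument as in \cref{multwithidempotents} every surgery step is a straightening (\cref{figureStraightening}) and reconstructs $\underline{\lambda}(ut)\overline{\nu}$.

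To verify $\psi\circ\overline{m}=\mathrm{id}$, I reduce an arbitrary element of $\hat{G}_u\otimes_\K\hat{G}_t$ to a sum of pure tensors $[\underline{\lambda}u\overline{\mu'}]\otimes[\underline{\mu'}t\overline{\nu}]$ (both halves orientable), using the analog of \cref{multwithidempotents} for $\hat{G}_u$ and $\hat{G}_t$ (which follows from the same straightening argument). For such a tensor with $\mu'$ equal to the natural interface $\mu$ of the target, $\psi(\overline{m}(\cdot))$ returns the tensor unchanged. For $\mu'\neq\mu$, the case analysis of \cref{detailedanalysis} shows that at least one surgery in $\overline{\mu'}\underline{\mu'}$ must be a Split (\cref{figureSplit}) or a Reconnect (\cref{figureReconnect}) producing a non-orientable intermediate — the result of $\overline{m}$ is then zero, and a suitably chosen balanced relation $xk\otimes y=x\otimes ky$ with $k\in e_{\mu'}\K e_{\mu'}$ localized near the obstructive surgery region shows that the tensor itself vanishes in $\hat{G}_u\otimes_\K\hat{G}_t$.

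The main obstacle is the combinatorial analysis in the last step, ensuring that every non-natural middle vertex either gets identified with the natural interface or is forced to zero by the $\K$-balanced relations. This is analogous to the standard argument for the Khovanov algebras of type $A$ and $B$ in \cite{BS11} and \cite{ES16a}; it ultimately rests on the commutativity of surgery procedures together with the orientability criterion, which together guarantee that the reduction is unambiguous.
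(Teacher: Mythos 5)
Your first three paragraphs are essentially a repackaging of the paper's argument: balancedness follows from \eqref{geombimodassoc}, and your cutting map $\psi$ sends $\underline{\lambda}(ut)\overline{\nu}$ to $\underline{\lambda}u(\lred(\underline{\lambda}u))^{*}\otimes\lred(\underline{\lambda}u)t\overline{\nu}$, which is exactly the paper's distinguished generating set \eqref{gensettensorofbimod}; your verification of $\overline{m}\circ\psi=\mathrm{id}$ is the paper's observation that $au(\lred(au))^{*}\cdot\lred(au)b=aub$ (and the orientability of the two halves, which you justify in one line, really rests on the shift convention for cap diagrams, exactly as for $e_\lambda$ in \cref{multwithidempotents}).

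The genuine gap is in the last step, i.e.\ in the injectivity of $\overline{m}$. You claim that for a pure tensor $\underline{\lambda}u\overline{\mu'}\otimes\underline{\mu'}t\overline{\nu}$ with $\mu'$ different from the natural interface $\mu$ the multiplication must contain a split or reconnect, so that $\overline{m}$ of it is zero and the tensor itself can be killed by balanced relations. This is false: a mismatch between $\overline{\mu'}$ and $(\lred(\underline{\lambda}u))^{*}$ does not force any non-straightening surgery. Already for $u$ and $t$ the trivial matchings (so $\hat{G}_{u}=\hat{G}_{t}=\K$ and $\overline{m}$ is the multiplication of $\K$), the tensor $\underline{\lambda}\overline{\nu}\otimes\underline{\nu}\overline{\mu}$ with $\nu$ the orientation of $\underline{\lambda}\overline{\mu}$ and $\nu\neq\lambda,\mu$ satisfies $\overline{m}(\underline{\lambda}\overline{\nu}\otimes\underline{\nu}\overline{\mu})=\underline{\lambda}\overline{\mu}\neq0$ with all surgeries straightenings, by \cref{thmpropbasedquher}. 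So these ``non-natural'' tensors neither map to zero nor vanish in $\hat{G}_{u}\otimes_{\K}\hat{G}_{t}$, and your case analysis does not prove $\psi\circ\overline{m}=\mathrm{id}$. What actually happens --- and this is the paper's proof --- is that such a tensor is \emph{equal in the tensor product over $\K$} to one in normal form: factor $\underline{\lambda}u\overline{\mu'}=\underline{\lambda}u(\lred(\underline{\lambda}u))^{*}\cdot(\lred(\underline{\lambda}u)\overline{\mu'})$, where the second factor lies in $\K$, slide it across $\otimes_{\K}$ and absorb it into the right-hand factor. Every pure tensor thereby becomes zero or a generator $au(\lred(au))^{*}\otimes\lred(au)td$, and since these map bijectively onto the diagram basis of $\hat{G}_{ut}$, the map $\overline{m}$ is an isomorphism; no separate vanishing argument via splits and reconnects is needed, nor is one available.
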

\begin{proof}
	The map $m$ induces a map $\overline{m}\colon \hat{G}_{u}\otimes_{\K} \hat{G}_{t}\to \hat{G}_{ut}$ by the associativity \eqref{geombimodassoc}.
	Note that the left-hand side is generated by elements of the form $au b\otimes ct d$ where $a$ and $c$ are cup diagrams and $b$ and $d$ cap diagrams.
	By definition of the multiplication and reduction we have $au(\lred(au))^*\cdot\lred(au)b=au b$.
	But this means that the domain of $\overline{m}$ is in fact generated by 
	\begin{equation}\label{gensettensorofbimod}
		au(\lred{au})^*\otimes \lred(au)t d.
	\end{equation}
	Again by the definition of the multiplication, $\overline{m}(au(\lred{au})^*\otimes \lred(au)t d)=aut d$.
	Thus, the generating set \eqref{gensettensorofbimod} maps to a basis of $\hat{G}_{ut}$.
	Hence, $\overline{m}$ is an isomorphism.
\end{proof}
\begin{defi}
	Given a crossingless matching $t$, we define $\hat{\theta}_{t}$ to be the endofunctor of $\K\mmod$ given by tensoring with the $\K$-$\K$-bimodule $\hat{G}_{t}$ over $\K$.
	If $t=t^i$ we abbreviate $\hat{\theta}_i\coloneqq \hat{\theta}_{t^i}$.
\end{defi}
One should think of $\hat{\theta}_i$ as equivalents of the labelled strands in $\sGp$.
\begin{prop}\label{geombimodonproj}
	We have 
	\begin{equation*}
		\hat{\theta}_{t}\hat{P}(\overline{\gamma})=\begin{cases}
			0&\text{if $t\overline{\gamma}$ is not orientable,}\\
			\hat{P}(\nu)&\text{otherwise, where $\overline{\nu}=\ured(t\overline{\gamma})$.}
		\end{cases}
	\end{equation*}
\end{prop}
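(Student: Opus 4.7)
By definition $\hat\theta_t\hat P(\overline\gamma) = \hat G_t\otimes_\K \K e_\gamma$, and the multiplication map identifies this with the left $\K$-submodule $\hat G_t e_\gamma\subseteq \hat G_t$ (this is the one-sided analogue of \cref{multdescendstoiso}, read off from the associativity \cref{geombimodassoc} with one factor taken to be $\K$). To describe $\hat G_t e_\gamma$ I would repeat the argument of \cref{multwithidempotents}: right multiplication of an orientable basis vector $\underline\lambda t\overline\mu$ by $e_\gamma = \underline\gamma\,\overline\gamma$ vanishes unless $\mu=\gamma$, in which case every surgery on the middle $\overline\gamma\,\underline\gamma$ block is a straightening, which preserves orientability and returns the diagram unchanged. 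Hence $\hat G_t e_\gamma$ has basis the set of orientable diagrams $\underline\lambda t\overline\gamma$ as $\underline\lambda$ ranges over cup diagrams.

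The first case of the proposition is now immediate: if $t\overline\gamma$ is not orientable, the defect (a closed internal loop, or a non-propagating line with both top endpoints on the same side of $0$) sits entirely above $\underline\lambda$ and is independent of it, so every diagram $\underline\lambda t\overline\gamma$ is non-orientable and $\hat G_t e_\gamma=0$. Otherwise, set $\overline\nu = \ured(t\overline\gamma)$ and define $\Phi\colon \hat G_t e_\gamma\to \hat P(\nu)=\K e_\nu$ on basis vectors by $\underline\lambda t\overline\gamma\mapsto \underline\lambda\,\overline\nu$. Since the topological equivalence between $t\overline\gamma$ and $\overline\nu$ preserves both the collection of closed loops and the endpoint data of non-propagating lines, orientability of $\underline\lambda t\overline\gamma$ is equivalent to orientability of $\underline\lambda\,\overline\nu$, and $\Phi$ is a bijection between the natural bases, hence a vector-space isomorphism.

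It remains to check that $\Phi$ intertwines the left $\K$-action. Given $x=\underline\alpha\,\overline\beta\in\K$, both $x\cdot(\underline\lambda t\overline\gamma)$ and $x\cdot(\underline\lambda\,\overline\nu)$ vanish unless $\beta=\lambda$; otherwise they are computed by the same sequence of surgeries on the common middle $\overline\beta\,\underline\beta$ block, the only difference being that the region above this block is $t\overline\gamma$ in the first computation and $\overline\nu$ in the second. Since these two regions are topologically equivalent, every orientability test performed on an intermediate diagram yields the same verdict in the two settings---this is the point at which I would invoke \cref{surgproccommute} to slide the middle surgeries past the fixed $t$-layer---so the two outputs are either simultaneously zero, or of the form $\underline{\lambda'}\,t\,\overline\gamma$ and $\underline{\lambda'}\,\overline\nu$ for the same $\underline{\lambda'}$, which is exactly the compatibility needed. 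The main technical obstacle is precisely this last step: verifying that the global orientability decisions built into the \emph{split} and \emph{reconnect} cases of the surgery procedure (\cref{detailedanalysis}) are preserved by the topological identification $t\overline\gamma\sim\overline\nu$. Once this is in place, $\Phi$ is a left $\K$-module isomorphism and the proposition follows.
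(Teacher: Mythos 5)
Your proposal is correct and follows essentially the same route as the paper's proof: identify $\hat{\theta}_t\hat{P}(\overline{\gamma})$ with $\hat{G}_te_\gamma$, observe the zero case is immediate, and define the map $\underline{\lambda}t\overline{\gamma}\mapsto\underline{\lambda}\overline{\nu}$, which is a bijection because upper reduction does not change orientability and is $\K$-linear because the multiplication procedure depends only on the topology of the diagram. Your version simply spells out in more detail the steps the paper compresses into one or two sentences.
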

\begin{proof}
	We clearly have $\hat{\theta}_{t}\hat{P}(\overline{\gamma})=\hat{G}_{t}e_{\gamma}$.
	But now if $t\overline{\gamma}$ is not orientable, then the right-hand side is $0$ by definition.
	Otherwise, we denote by $\nu$ its upper reduction.
	Then we have a linear map
	\begin{equation*}
		f\colon\hat{\theta}_{t}\hat{P}(\overline{\gamma})\to \hat{P}(\nu),\qquad\underline{\lambda}t\overline{\gamma}\mapsto\underline{\lambda}\overline{\nu}.
	\end{equation*} 
	This is an isomorphism as the process of upper reduction does not change orientability.
	Furthermore, it is $\K$-linear because the multiplication procedure depends only topologically on the diagram.
\end{proof}
\begin{cor}\label{gtprojleftright}
The $\K$-$\K$-bimodule $\hat{G}_{t}$ is sweet, i.e.\ projective as a right and as a left $\K$-module.
\end{cor}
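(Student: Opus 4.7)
The plan is to reduce the claim to a direct sum decomposition via the local idempotents $e_\lambda$ and then invoke (a flipped version of) \cref{geombimodonproj}.

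First, I would show left projectivity. By \cref{klocuntial}, the family $\{e_\gamma\}_{\gamma\in\Lambda}$ is a complete system of orthogonal idempotents for $\K$, and so as a left $\K$-module we have the decomposition
\begin{equation*}
    \hat{G}_t \;=\; \bigoplus_{\gamma\in\Lambda} \hat{G}_t e_\gamma.
\end{equation*}
Under the identification $\hat{G}_t e_\gamma = \hat{G}_t\otimes_\K \K e_\gamma = \hat{\theta}_t \hat{P}(\overline{\gamma})$ (using \cref{multdescendstoiso} with $u$ trivial), \cref{geombimodonproj} tells us that each summand is either $0$ or of the form $\hat{P}(\nu)$ for some $\nu$, hence projective as a left $\K$-module. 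A direct sum of projective modules is projective, so $\hat{G}_t$ is projective as a left $\K$-module.

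For the right action I would prove the mirror version of \cref{geombimodonproj}, namely: for each cup diagram $\lambda$, the right $\K$-module $e_\lambda \hat{G}_t$ is either $0$ (if $\underline{\lambda}t$ is not orientable, i.e.\ contains a circle or a non-propagating line with both endpoints on the same side of $0$), or is isomorphic to $e_\mu\K$ as a right $\K$-module, where $\underline{\mu}=\lred(\underline{\lambda}t)$ is the lower reduction. The argument is verbatim the same as in \cref{geombimodonproj}, using the linear map
\begin{equation*}
    e_\lambda \hat{G}_t \longrightarrow e_\mu \K,\qquad \underline{\lambda}t\overline{\gamma}\longmapsto\underline{\mu}\overline{\gamma},
\end{equation*}
and noting that lower reduction preserves orientability, and that the right $\K$-action depends only on the topology of the diagram. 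Decomposing $\hat{G}_t=\bigoplus_{\lambda\in\Lambda} e_\lambda \hat{G}_t$ as a right $\K$-module and using that each summand is either $0$ or projective, we obtain projectivity as a right $\K$-module.

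The main (minor) obstacle is this mirror statement. One can either write it out directly as indicated, or observe that horizontal reflection provides a duality of $\K$ that interchanges cup and cap diagrams and intertwines the left and right module structures, sending $\hat{G}_t$ to $\hat{G}_{t^\vee}$ for the reflected crossingless matching $t^\vee$; applying \cref{geombimodonproj} to $\hat{G}_{t^\vee}$ then gives the right-module claim for $\hat{G}_t$ without further work.
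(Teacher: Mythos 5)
Your proposal is correct and follows essentially the same route as the paper: decompose $\hat{G}_t=\bigoplus_{\gamma}\hat{G}_t e_\gamma$ and identify each summand with $\hat{\theta}_t\hat{P}(\overline{\gamma})$, which is $0$ or indecomposable projective by \cref{geombimodonproj}, then invoke the mirror (right-module) analogue for the other side. The paper's proof is just a terser version of this, and your spelled-out mirror statement via lower reduction is exactly the intended "right-hand analogue".
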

\begin{proof}
	It is projective as left $\K$-module by \cref{geombimodonproj} as $\hat{G}_t=\bigoplus_{\lambda\in\Lambda}\hat{G}_t e_\lambda$.
	For projectivity as right $\K$-module use the right-hand analogue of \cref{geombimodonproj}.
\end{proof}
Our next goal is to prove an adjunction theorem between the $\hat{\theta}_{t}$.
\begin{defi}
	Given a crossingless matching $t$ we define $t^\ddagger$ to be the horizontal mirror image of $t$ shifted one to the right.

	Furthermore, we define $\phi\colon \hat{G}_{t^\ddagger}\otimes \hat{G}_{t}\to\K$ on basis vectors as 
	\begin{equation*}
		\phi(\underline{\lambda}t^\ddagger\overline{\nu}\otimes\underline{\nu'}t\overline{\mu})=\delta_{\nu\nu'}(\underline{\lambda}\ured(t^\ddagger\overline{\nu}))\cdot(\lred(\underline{\nu'}t)\overline{\mu}).
	\end{equation*}
\end{defi}
\begin{lem}\label{phiisbimodandbalanced}
	The map $\phi$ is a homogeneous $\K$-$\K$-bimodule homomorphism that is also $\K$-balanced.
\end{lem}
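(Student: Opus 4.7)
The plan is to unwind all three structures in play—the multiplication on $\K$, the left and right $\K$-actions on $\hat{G}_t$ and $\hat{G}_{t^\ddagger}$, and the evaluation of $\phi$—in terms of the same underlying combinatorial operation: vertical stacking of basis diagrams followed by surgery procedures in the resulting middle strips. In these terms, the topological reductions $\ured$ and $\lred$ appearing in the definition of $\phi$ are themselves shorthand for sequences of surgery moves, so $\phi$ admits a description as the composition
\begin{equation*}
\hat{G}_{t^\ddagger}\otimes\hat{G}_t\xrightarrow{\;m\;}\hat{G}_{t^\ddagger t}\xrightarrow{\;\cong\;}\hat{G}_{\red(t^\ddagger t)}=\K,
\end{equation*}
where $m$ is the bimodule multiplication of \eqref{geombimodassoc} and the second arrow is the bimodule isomorphism of \cref{bimoduleisoreductiontosinglelayer}. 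The final equality uses that $t^\ddagger$ is by construction the (shifted) vertical mirror of $t$, so $\red(t^\ddagger t)$ is the trivial matching, for which $\hat{G}_{\mathrm{id}}$ is canonically $\K$ itself.

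Once this identification is in place, the lemma is essentially formal. The multiplication $m$ is a $\K$-$\K$-bimodule homomorphism by the associativity square \eqref{geombimodassoc} applied with an outer factor equal to $\K$ on either side, and it descends to an isomorphism on the tensor product over $\K$ (i.e.\ is $\K$-balanced) by \cref{multdescendstoiso}. The reduction map of \cref{bimoduleisoreductiontosinglelayer} is by construction a homogeneous $\K$-$\K$-bimodule isomorphism. All three required properties of $\phi$—left $\K$-linearity, right $\K$-linearity, and $\K$-balancedness—therefore pass directly from these two factors, and homogeneity is by inspection, since the local surgery moves and topological reductions preserve whatever degree (or parity) bookkeeping has been set up on the basis diagrams.

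The main obstacle is justifying the identification of $\phi$ with the claimed composition. Explicitly, one must check that the two-step recipe in the definition (first reduce $t^\ddagger\overline{\nu}$ and $\underline{\nu}t$ topologically, then multiply the resulting elements of $\K$) yields the same output as first applying $m$ to $\underline{\lambda}t^\ddagger\overline{\nu}\otimes\underline{\nu}t\overline{\mu}$—a surgery in the central $\overline{\nu}\underline{\nu}$ strip—and then reducing topologically to a single-layer element of $\K$. Both orderings correspond to executing the same total collection of surgery moves on the same stacked diagram $\underline{\lambda}t^\ddagger\overline{\nu}\underline{\nu}t\overline{\mu}$, so this is an instance of \cref{surgproccommute}. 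Some care will be needed at the edge cases where an intermediate diagram might contain a closed circle: orientability of the final result is invariant under reordering of the disjoint surgeries, so if one ordering produces zero (via a loss of orientability) then so does the other, and the two answers always agree.
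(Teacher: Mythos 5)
Your strategy is the paper's, but the factorization you give is off by one map, and the missing map is where the content lives. The claim that $\red(t^\ddagger t)$ is the trivial matching is false: reduction is only topological equivalence rel boundary, so every cap of $t$ (both endpoints on one boundary of the strip) and every cup of $t^\ddagger$ (both endpoints on the other) survives into $\red(t^\ddagger t)$. Concretely, for $t=t^i$ one has $t^\ddagger=t^{i+1}$, and $\red(t^{i+1}t^i)$ still carries one cap, one cup and a zigzag propagating strand joining their neighbouring endpoints. Consequently $\hat{G}_{\red(t^\ddagger t)}\neq\K$ --- its basis vectors are generalized circle diagrams still containing the layer $\red(t^\ddagger t)$, not circle diagrams --- and your two-step composition does not land in $\K$ and does not compute $\phi$. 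The correct factorization is $\hat{G}_{t^\ddagger}\otimes\hat{G}_t\overset{m}{\to}\hat{G}_{t^\ddagger t}\overset{\cong}{\to}\hat{G}_{\red(t^\ddagger t)}\overset{\omega}{\to}\K$, where the extra map $\omega$ applies the surgery procedures eliminating the middle section $\red(t^\ddagger t)$; this is possible precisely because $t^\ddagger$ is the shifted mirror of $t$, so these surviving caps and cups pair up.

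This is not a cosmetic correction, because $\omega$ is the step that can return $0$ (e.g.\ $\hat\eta_i$, which is $\phi$ for $t=t^i$ after \cref{multdescendstoiso}, kills inputs with $\underline{\lambda}\overline{\mu}$ non-orientable), and its left- and right-$\K$-linearity is exactly what needs an argument: it is an instance of \cref{surgproccommute}, since the surgeries constituting $\omega$ must be shown to commute with the surgeries implementing the outer module actions. The linearity and balancedness of $m$ come from \eqref{geombimodassoc} and the reduction isomorphism from \cref{bimoduleisoreductiontosinglelayer}, as you say, but the assertion that the bimodule property ``passes directly from these two factors'' omits the factor that does the work. Your closing paragraph --- that both recipes execute the same total collection of surgeries on the stacked diagram $\underline{\lambda}t^\ddagger\overline{\nu}\,\underline{\nu}t\overline{\mu}$ and hence agree by \cref{surgproccommute} --- is the right idea and in fact already contains the justification for $\omega$, so the repair is local; but the proof as written has a gap at exactly this point.
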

\begin{proof}
	We can also realize $\phi$ as the following composition
	\begin{equation*}
		\hat{G}_{t^\ddagger}\otimes \hat{G}_{t}\overset{m}{\to} \hat{G}_{t^\ddagger t}\overset{\cong}{\to}\hat{G}_{\red(t^\ddagger t)}\overset{\omega}{\to} \K
	\end{equation*}
	where $\omega$ is given by applying the surgery procedures to eliminate the middle section $\red(t^\ddagger t)$.
	This is possible because $t^\ddagger$ was defined as the horizontal mirror image shifted one to the right.
	
	Note that this composition is a bimodule map as each of the composites is by \eqref{geombimodassoc}, \cref{bimoduleisoreductiontosinglelayer} and \cref{surgproccommute}.
	Furthermore, $m$ is balanced by \cref{geombimodassoc}, and thus $\phi$ is $\K$-balanced as well.
\end{proof}
\begin{thm}\label{adjunctionhelperthm}
	There is a homogeneous $\K$-$\K$-bimodule isomorphism.
	\begin{equation*}
		\hat{\phi}\colon \hat{G}_t\to \Hom_{\K}(\hat{G}_{t^\ddagger},\K)
	\end{equation*}
	given by sending $y\in \hat{G}_t$ to $\hat{\phi}(y)\colon \hat{G}_{t^\ddagger}\to\K, x\mapsto\phi(x\otimes y)$.
\end{thm}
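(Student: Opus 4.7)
The plan is to verify first that $\hat\phi$ is a well-defined homogeneous $\K$-$\K$-bimodule homomorphism, and then to prove bijectivity by a piece-by-piece check on the idempotent decomposition.

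Well-definedness amounts to the observation that, for $y\in\hat{G}_t$, the map $\hat\phi(y)\colon x\mapsto\phi(x\otimes y)$ is left $\K$-linear, which is immediate from the bimodule property of $\phi$ in \cref{phiisbimodandbalanced}. With the standard $\K$-$\K$-bimodule structure $(a\cdot f\cdot b)(x)=f(xa)\cdot b$ on $\Hom_\K(\hat{G}_{t^\ddagger},\K)$, the bimodule property of $\hat\phi$ unfolds to
\[
\hat\phi(ayb)(x)=\phi(x\otimes ayb)=\phi(xa\otimes y)\cdot b=(a\cdot\hat\phi(y)\cdot b)(x),
\]
using both the $\K$-balancedness and the right $\K$-linearity of $\phi$; homogeneity of $\hat\phi$ transfers directly from that of $\phi$.

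For bijectivity I would work piece by piece. The subspace $e_\lambda\hat{G}_t e_\mu$ is at most one-dimensional, spanned by the diagram $\underline\lambda t\overline\mu$ precisely when it is orientable. Using the right-handed analogue of \cref{geombimodonproj} (valid by \cref{gtprojleftright}), the right $\K$-module $\hat{G}_{t^\ddagger}e_\mu$ is either zero or a principal projective $\K e_\tau$ for a uniquely determined $\tau$, whence
\[
e_\lambda\Hom_\K(\hat{G}_{t^\ddagger},\K)e_\mu\;\cong\;e_\lambda\K e_\tau
\]
is likewise at most one-dimensional, and nonzero precisely under the analogous orientability condition for $t^\ddagger$.

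It therefore suffices to show that $\hat\phi(\underline\lambda t\overline\mu)$ is a nonzero element whenever $\underline\lambda t\overline\mu$ is orientable. Unwinding the definitions,
\[
\hat\phi(\underline\lambda t\overline\mu)(\underline\alpha t^\ddagger\overline\lambda)=\bigl(\underline\alpha\,\ured(t^\ddagger\overline\lambda)\bigr)\cdot\bigl(\lred(\underline\lambda t)\,\overline\mu\bigr),
\]
a product in $\K$, and the natural candidate is $\underline\alpha=\lred(\underline\lambda t)$. The main obstacle is the geometric verification that the mirror relation between $t$ and $t^\ddagger$ forces the cap $\ured(t^\ddagger\overline\lambda)$ and the cup $\lred(\underline\lambda t)$ to match as a cup--cap pair, and that the intermediate surgeries preserve orientability so that the product collapses to the nonzero basis element of $e_\lambda\K e_\tau$; both should follow from the classification of surgery procedures in \cref{detailedanalysis}.
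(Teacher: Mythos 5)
Your proposal follows essentially the same route as the paper: well-definedness and the bimodule property are read off from \cref{phiisbimodandbalanced}, and bijectivity is proved by restricting to idempotent pieces and using \cref{geombimodonproj} together with its right-hand analogue (via \cref{gtprojleftright}) to reduce to principal projectives, where the map becomes multiplication. Two small points: the \enquote{geometric verification} you defer at the end is precisely the paper's observation that $e_\lambda\hat{G}_t\cong e_\nu\K$ and $\hat{G}_{t^\ddagger}e_\lambda\cong\K e_\nu$ hold with the \emph{same} $\nu$, which follows directly from the convention that cap diagrams are the horizontal mirror images of cup diagrams shifted one to the right (matching the definition of $t^\ddagger$), not from the surgery analysis of \cref{detailedanalysis}; and your displayed isomorphism has its indices garbled --- with the bimodule structure $(afb)(x)=f(xa)b$ one has $e_\lambda\Hom_{\K}(\hat{G}_{t^\ddagger},\K)e_\mu\cong\Hom_{\K}(\hat{G}_{t^\ddagger}e_\lambda,\K e_\mu)\cong e_\tau\K e_\mu$ where $\hat{G}_{t^\ddagger}e_\lambda\cong\K e_\tau$, which is consistent with your subsequent evaluation at $\underline{\alpha}t^\ddagger\overline{\lambda}$.
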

\begin{proof}
	First, note that this is well-defined as $\phi(\_\otimes y)$ is a left $\K$-module homomorphism by \cref{phiisbimodandbalanced}.

	To show that this is a $\K$-$\K$-bimodule homomorphism let $u\in\K$, $x\in \hat{G}_{t^\ddagger}$ and $y\in \hat{G}_t$.
	Then we have
	\begin{align*}
		(u\hat{\phi}(y))(x)&=\hat{\phi}(y)(xu)=\phi(xu\otimes y)= \phi(x\otimes uy) = (\hat{\phi}(uy))(x),\\
		(\hat{\phi}(y)u)(x)&=(\hat{\phi}(y)(x))u=\phi(x\otimes y)u=\phi(x\otimes yu) = (\hat{\phi}(yu))(x),
	\end{align*}
	and thus $\hat{\phi}(uy) = u\hat{\phi}(y)$ and $\hat{\phi}(yu)=\hat{\phi}(y)u$.

	It remains to show that $\hat{\phi}$ is a vector space isomorphism.
	For this it suffices to show that the restriction
	\begin{equation*}
		\hat{\phi}\colon e_\lambda \hat{G}_t\to\Hom_{\K}(\hat{G}_{t^\ddagger}e_\lambda, \K)
	\end{equation*}
	is an isomorphism.
	Now as $t^\ddagger$ is the mirror image and shifted one to the right with respect to $t$ we see that $e_\lambda \hat{G}_t\neq 0$ if and only if $\hat{G}_{t^\ddagger}e_\lambda\neq 0$ (recall that cap diagrams arise by shifting the horizontal mirror image of cup diagrams one to the right).
	Thus, we may assume that $e_\lambda \hat{G}_t\neq 0$.
	In this case we have by the mirrored version of \cref{actionofgeombimodonprojnuclear} $e_\lambda \hat{G}_t\cong e_\nu\K$ and $\hat{G}_{t^\ddagger}e_\lambda=\K e_\nu$ for some $\nu$ (which is the same for both).
	Under this isomorphism $\phi$ translates to multiplication.
	Thus, we need to show that $e_\nu\K\to\Hom_{\K}(\K e_\nu,\K), y\mapsto (x\mapsto xy)$ is an isomorphism, but this is obvious.
\end{proof}
\begin{cor}\label{hatthetaadjunction}
	We have an adjunction $(\hat{\theta}_{t^\ddagger},\hat{\theta}_t)$.
	In particular, $\hat{\theta}_{i+1}$ is left adjoint to $\hat{\theta}_{i}$.
\end{cor}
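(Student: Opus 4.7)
The plan is to derive this from the standard tensor--hom adjunction together with \cref{adjunctionhelperthm} and the sweetness of the bimodules established in \cref{gtprojleftright}. The starting point is that for any left $\K$-modules $M$ and $N$, the usual tensor--hom adjunction provides a natural isomorphism
\begin{equation*}
\Hom_{\K}(\hat{G}_{t^\ddagger}\otimes_{\K}M,N)\xrightarrow{\sim}\Hom_{\K}(M,\Hom_{\K}(\hat{G}_{t^\ddagger},N)),
\end{equation*}
where $\Hom_{\K}(\hat{G}_{t^\ddagger},N)$ inherits its left $\K$-action from the right $\K$-action on the bimodule $\hat{G}_{t^\ddagger}$. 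Hence the task reduces to identifying $\Hom_{\K}(\hat{G}_{t^\ddagger},N)$ with $\hat{G}_{t}\otimes_{\K}N=\hat{\theta}_{t}N$ naturally in $N$.

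For this I would combine \cref{adjunctionhelperthm} with projectivity. The map
\begin{equation*}
\Hom_{\K}(\hat{G}_{t^\ddagger},\K)\otimes_{\K}N\longrightarrow\Hom_{\K}(\hat{G}_{t^\ddagger},N),\qquad f\otimes n\longmapsto (x\mapsto f(x)\cdot n),
\end{equation*}
is a natural transformation of functors in $N$ which is manifestly an isomorphism for $N=\K e_\lambda$, because $\hat{G}_{t^\ddagger}e_\lambda$ is (by the right-handed analogue of \cref{geombimodonproj}) either zero or of the form $\K e_\nu$. Both sides preserve arbitrary direct sums in $N$ (the left since $-\otimes_{\K}N$ does, and the right because $\hat{G}_{t^\ddagger}$ is projective and locally finitely generated as a left $\K$-module by \cref{gtprojleftright,geombimodonproj}); since $\K$ is locally unital with $\K=\bigoplus_{\mu}\K e_\mu$, every left $\K$-module is a quotient of a direct sum of the $\K e_\mu$, and a standard five-lemma argument upgrades the map to a natural isomorphism for all $N$. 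Composing with the bimodule isomorphism $\hat{\phi}\colon\hat{G}_{t}\xrightarrow{\sim}\Hom_{\K}(\hat{G}_{t^\ddagger},\K)$ of \cref{adjunctionhelperthm} yields the sought natural identification $\hat{\theta}_{t}N\cong\Hom_{\K}(\hat{G}_{t^\ddagger},N)$, and stringing everything together gives the adjunction $(\hat{\theta}_{t^\ddagger},\hat{\theta}_{t})$.

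For the special case of $t=t^i$, I would simply check that $(t^i)^\ddagger=t^{i+1}$: by definition $t^i$ has a cup and a cap both joining $i-\tfrac12$ and $i+\tfrac12$, flanked by vertical rays at $i-\tfrac32$ and $i+\tfrac32$; the horizontal mirror interchanges the cup and the cap but keeps the diagram otherwise unchanged, and the subsequent shift by one to the right places the resulting cup and cap between $i+\tfrac12$ and $i+\tfrac32$, giving exactly $t^{i+1}$. Therefore $\hat{\theta}_{i+1}=\hat{\theta}_{(t^i)^\ddagger}$ is left adjoint to $\hat{\theta}_{i}=\hat{\theta}_{t^i}$.

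The only real subtlety is the passage from ``isomorphism on projective generators'' to ``isomorphism on all modules'' in the Hom/tensor identification above, which is where local unitality of $\K$ and the finite-generation aspect of \cref{gtprojleftright,geombimodonproj} have to be used carefully; once that is in place, everything else is formal.
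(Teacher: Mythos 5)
Your argument is correct and follows the same route as the paper: the tensor--hom adjunction for $\hat{G}_{t^\ddagger}\otimes_{\K}\_$, the natural isomorphism $\Hom_{\K}(\hat{G}_{t^\ddagger},\K)\otimes_{\K}N\to\Hom_{\K}(\hat{G}_{t^\ddagger},N)$ coming from (local finite generation and) projectivity of $\hat{G}_{t^\ddagger}$ as a left module, and the bimodule isomorphism of \cref{adjunctionhelperthm}. You merely spell out the projectivity step and the identification $(t^i)^\ddagger=t^{i+1}$ in more detail than the paper does.
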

\begin{proof}
	We have the usual adjunction $(\hat{G}_{t^\ddagger}\otimes_{\K}\_, \Hom_{\K}(\hat{G}_{t^\ddagger},\_))$.
	As $\hat{G}_{t^\ddagger}$ is projective as a left $\K$-module by \cref{gtprojleftright}, there exists a natural isomorphism $\Hom_{\K}(\hat{G}_{t^\ddagger},\K)\otimes_{\K}M\to\Hom_{\K}(\hat{G}_{t^\ddagger},M)$.
	Precomposing this with the isomorphism from \cref{adjunctionhelperthm} gives the desired adjunction.
\end{proof}
\subsection{Relation to \texorpdfstring{$\sG$}{G}}
Next we define natural transformations $\eta_i\colon\hat{\theta}_{i+1}\hat{\theta}_i \to \K$, $\epsilon_{i}\colon\K\to \hat{\theta}_{i-1}\hat{\theta}_i$ and $\psi_{i,j}\colon\hat{\theta}_i\hat{\theta}_j\to \hat{\theta}_j\hat{\theta}_i$ that satisfy the same relations as $\sGp$.
This will be the first step in relating $\K$ with $\sGp$.

Instead of directly defining the natural transformations, we first define bimodule homomorphisms $\hat{\eta}_i\colon \hat{G}_{t^{i+1}t^i}\to\K$, $\hat{\epsilon}_i\colon\K\to \hat{G}_{t^{i-1}t^i}$ and $\hat{\psi}_{i,j}\colon \hat{G}_{t^it^j}\to \hat{G}_{t^jt^i}$.
Using \cref{multdescendstoiso} this induces then the natural transformations $\eta_i$, $\epsilon_i$ and $\psi_{i,j}$.
\begin{defi}\label{defibimodulemaps}
	We define $\hat{\eta}_i$, $\hat{\epsilon}_i$ and  $\hat{\psi}_{i,j}$ for $j\neq i$, $i\pm1$ to be the linear maps
	\begin{flalign*}
		\hat{\eta}_i\colon \hat{G}_{t^{i+1}t^i}&\to\K&
		\hat{\epsilon}_i\colon\K&\to \hat{G}_{t^{i-1}t^i}\\
		at^{i+1}t^ib&\mapsto\begin{cases}
			ab&\text{if $ab$ orient.,}\\
			0&\text{otherwise,}
		\end{cases}&
		ab&\mapsto\begin{cases}
			at^{i-1}t^ib&\text{if $at^{i-1}t^ib$ orient.,}\\
			0&\text{otherwise,}
		\end{cases}\\[0.5\baselineskip]
        \hat{\psi}_{i,j}\colon \hat{G}_{t^it^j}&\to \hat{G}_{t^jt^i}&
		\hat{\psi}_{i,i+1}\colon \hat{G}_{t^it^{i+1}}&\to \hat{G}_{t^{i+1}t^i}\\
        at^it^jb&\mapsto at^{j}t^ib&
		at^it^{i+1}b&\mapsto\begin{cases}
			at^it^{i+1}b&\text{if $at^it^{i+1}b$ orient.,}\\
			0&\text{otherwise.}
		\end{cases}\\
	\end{flalign*}
	See also \cref{figureformapsbetweenthetas} to see a pictorial description of these maps.
\end{defi}
\begin{figure}[ht]
	\centering	
	\begin{tabular}{*{2}{m{0.4\textwidth}}}
		\begin{tikzpicture}[scale=0.4]
			\CUP{1}
			\SETCOORD{1}{0}
			\LINE{0}{-1.5}
			\CUP{-1}
			\CAP{-1}
			\LINE{0}{-1.5}
			\SETCOORD{1}{0}
			\CAP{1}
			\draw[->] (2.5,-1.5)--(3.5,-1.5);
			\SETCOORD{2}{0}
			\LINE{0}{3}
			\SETCOORD{1}{0}
			\LINE{0}{-3}
			\SETCOORD{1}{0}
			\LINE{0}{3}
			\node at (3,-3.75) {$\hat{\eta}_i\colon \hat{G}_{t^{i+1}t^i}\to\K$};
			\useasboundingbox ([shift={(0mm,0mm)}]current bounding box.north east) rectangle ([shift={(0mm,-5mm)}]current bounding box.south west);
		\end{tikzpicture}&
		\begin{tikzpicture}[scale=0.4]
			\CUP{1}
			\SETCOORD{-2}{0}
			\LINE{0}{-1.5}
			\CUP{1}
			\CAP{1}
			\LINE{0}{-1.5}
			\SETCOORD{-1}{0}
			\CAP{-1}
			\draw[->] (1.5,-1.5)--(2.5,-1.5);
			\SETCOORD{4}{3}
			\CUP{1}
			\SETCOORD{1}{0}
			\LINE{0}{-1.5}
			\CUP{-1}
			\CAP{-1}
			\LINE{0}{-1.5}
			\SETCOORD{1}{0}
			\CAP{1}
			\node at (2,-3.75) {$\hat{\psi}_{i,i+1}\colon \hat{G}_{t^it^{i+1}}\to \hat{G}_{t^{i+1}t^i}$};
		\end{tikzpicture}\\
		\begin{tikzpicture}[scale=0.4]
			\LINE{0}{3}
			\SETCOORD{1}{0}
			\LINE{0}{-3}
			\SETCOORD{1}{0}
			\LINE{0}{3}
			\draw[->] (2.5,1.5)--(3.5,1.5);
			\SETCOORD{3}{0}
			\CUP{1}
			\SETCOORD{-2}{0}
			\LINE{0}{-1.5}
			\CUP{1}
			\CAP{1}
			\LINE{0}{-1.5}
			\SETCOORD{-1}{0}
			\CAP{-1}
			\node at (3,-0.75) {$\hat{\epsilon}_i\colon\K\to \hat{G}_{t^{i-1}t^i}$};
		\end{tikzpicture}&
		\begin{tikzpicture}[scale=0.4]
			\CUP{1}
			\SETCOORD{1}{0}
			\LINE{0}{-1.5}
			\CUP{1}
			\LINE{0}{1.5}
			\SETCOORD{0}{-3}
			\CAP{-1}
			\SETCOORD{-1}{0}
			\LINE{0}{1.5}
			\CAP{-1}
			\LINE{0}{-1.5}
			\draw[->] (3.5,-1.5)--(4.5,-1.5);
			\SETCOORD{5}{0}
			\CAP{1}
			\SETCOORD{1}{0}
			\LINE{0}{1.5}
			\CAP{1}
			\LINE{0}{-1.5}
			\SETCOORD{0}{3}
			\CUP{-1}
			\SETCOORD{-1}{0}
			\LINE{0}{-1.5}
			\CUP{-1}
			\LINE{0}{1.5}
			\node at (4,-3.75) {$\hat{\psi}_{i,j}\colon \hat{G}_{t^it^j}\to \hat{G}_{t^jt^i}$};
		\end{tikzpicture}
	\end{tabular}
	\caption{Pictorial description of the maps $\hat{\eta}_i$, $\hat{\epsilon}_i$ and $\hat{\psi}_{i,j}$}\label{figureformapsbetweenthetas}
\end{figure}
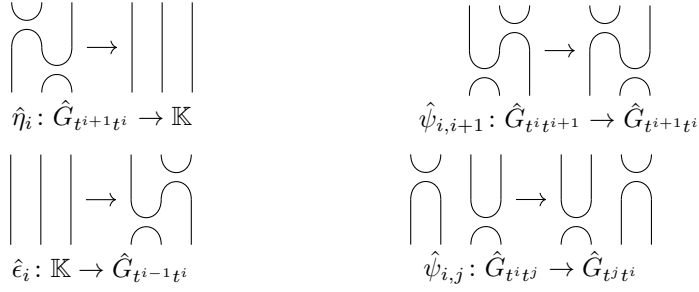
\begin{rem}
	Observe that the order of the matchings $t^i$ is reversed in comparison to the strands in $\sGp$, as we tensor from the left and add strands from the right.
\end{rem}
\begin{lem}
	The maps from \cref{defibimodulemaps} are $\K$-$\K$-bimodule homomorphisms.
\end{lem}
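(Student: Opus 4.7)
The plan is to verify, case by case for each of the four types of map in \cref{defibimodulemaps}, that it commutes with left and right multiplication by $\K$. By linearity, it suffices to check this on basis elements, and by the symmetry between left and right actions it suffices to treat one side.

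The central tool will be \cref{surgproccommute}, which asserts that surgery procedures performed in disjoint regions of a diagram commute. This is the very principle underlying the associativity of $\K$ (\cref{khovanovalgassoc}) and of the bimodule multiplications (\cref{geombimodassoc}). In each of the four maps, $f$ performs topological modifications concentrated in the middle of a diagram (in the region containing the matchings $t^i$, $t^{i\pm 1}$, $t^j$), whereas left or right multiplication by $\K$ modifies the extreme top or bottom of the diagram via surgery procedures there. Since the two regions are disjoint, the two operations commute.

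For the three easier cases my plan is the following. For $\hat\psi_{i,j}$ with $|i-j|\geq 2$, the matchings $t^i$ and $t^j$ lie in disjoint vertical strips, hence $a t^i t^j b$ and $a t^j t^i b$ are topologically identical generalized circle diagrams and orientability is preserved under the swap; the map is therefore an identity on the underlying topology and trivially intertwines the bimodule actions. For $\hat\eta_i$, I will factor the map as $\hat{G}_{t^{i+1}t^i}\xrightarrow{\sim}\hat{G}_{\red(t^{i+1}t^i)}=\hat{G}_{\emptyset}= \K$, invoking \cref{bimoduleisoreductiontosinglelayer} once I establish --- from the pictorial description in \cref{figureformapsbetweenthetas} --- that $\red(t^{i+1}t^i)$ is the trivial (vertical-strands-only) matching. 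The orientability condition in the definition then matches the condition that this reduction map is nonzero, so setting non-orientable basis vectors to zero is automatic. The map $\hat\epsilon_i$ is handled by the same factorisation read in reverse.

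The hard part will be $\hat\psi_{i,i+1}$, where $t^i$ and $t^{i+1}$ share the vertex $i+\tfrac{1}{2}$ and the map genuinely depends on an orientability check rather than being a purely topological identification. Here I plan to compute both sides of the bimodule compatibility explicitly on a basis element $a t^i t^{i+1} b$, using that the underlying topological data of $a t^i t^{i+1} b$ and $a t^{i+1} t^i b$ coincide, so orientability of one is equivalent to orientability of the other. The surgery procedures invoked by multiplication happen at the top or bottom of the diagram, disjoint from the middle region where $\hat\psi_{i,i+1}$ acts, so \cref{surgproccommute} again applies. The main obstacle throughout is to track the orientability condition uniformly between the source and target bimodules --- both sides of the intertwiner equation must vanish simultaneously whenever any involved generalized circle diagram is non-orientable --- and this will be handled using the case analysis of surgery procedures in \cref{detailedanalysis}.
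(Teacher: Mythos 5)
There is a genuine gap, concentrated in your treatment of $\hat{\eta}_i$, $\hat{\epsilon}_i$ and $\hat{\psi}_{i,i+1}$. Your proposed factorisation of $\hat{\eta}_i$ through \cref{bimoduleisoreductiontosinglelayer} fails because $\red(t^{i+1}t^i)$ is \emph{not} the trivial matching: tracing the strands, the composite $t^{i+1}t^i$ has a turnback at the bottom in positions $i\pm\tfrac12$, a turnback at the top in positions $i+\tfrac12,i+\tfrac32$, and a zigzag through-strand joining the remaining two endpoints, and none of this can be removed by isotopy. (One can also see the factorisation is impossible because $\hat{\eta}_i$ is not injective --- it sends $at^{i+1}t^ib$ to $0$ whenever $ab$ fails to be orientable --- whereas the reduction map of \cref{bimoduleisoreductiontosinglelayer} is an isomorphism.) Similarly, your claim that $at^it^{i+1}b$ and $at^{i+1}t^ib$ have the same underlying topology is false: the reductions of $t^it^{i+1}$ and $t^{i+1}t^i$ have their turnbacks in different positions, which is exactly why the definition of $\hat{\psi}_{i,i+1}$ carries an orientability condition that the distant case $\hat{\psi}_{i,j}$ does not. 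So for three of the four maps the step "the map is a topological identity, hence trivially intertwines the actions" does not go through.

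The missing idea --- which is the actual content of the paper's proof --- is that $\hat{\eta}_i$, $\hat{\epsilon}_i$ and $\hat{\psi}_{i,i+1}$ are themselves surgery procedures, rotated by $90^\circ$ (this is the point of \cref{figuremapsarerotationsofsurgproc}); once this is observed, \cref{surgproccommute} applies verbatim to show they commute with the surgeries defining left and right multiplication. Your appeal to "disjoint regions" is not a substitute: strands emanating from the middle region where these maps act do connect to the cup and cap diagrams at the top and bottom where the multiplication surgeries happen, and it is precisely these interactions (affecting orientability of intermediate diagrams, hence whether either side vanishes) that the case analysis behind \cref{surgproccommute} is designed to control. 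Your handling of the distant crossings $\hat{\psi}_{i,j}$ with $j\neq i,i\pm1$ is correct and agrees with the paper.
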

\begin{proof}
	For $\hat{\psi}_{i,j}$ with $j\neq i,i\pm 1$ this is clear from the definition and \cref{figureformapsbetweenthetas}	as it does not change anything regarding the orientability.
	Regarding $\hat{\eta}_i$, $\hat{\epsilon}_i$ and $\hat{\psi}_{i,i+1}$, we remark that we can interpret all these maps as a rotated surgery procedure, see \cref{figuremapsarerotationsofsurgproc} for details.
	But using \cref{surgproccommute} we see that these are actually bimodule homomorphism.
\end{proof}
\begin{figure}[h]
	\centering	
	\begin{tabular}{*{2}{m{0.4\textwidth}}}
		\begin{tikzpicture}[scale=0.4]
			\CUP{1}
			\SETCOORD{1}{0}
			\LINE{-2}{-3}
			\SETCOORD{1}{0}
			\CAP{1}
			\draw[decorate, decoration={snake},->] (2.5,-1.5)--(3.5,-1.5);
			\SETCOORD{2}{0}
			\LINE{0}{3}
			\SETCOORD{1}{0}
			\LINE{0}{-3}
			\SETCOORD{1}{0}
			\LINE{0}{3}
			\node[anchor = south] at (0,0) {$1$};
			\node[anchor = south] at (1,0) {$2$};
			\node[anchor = south] at (2,0) {$3$};
			\node[anchor = north] at (0,-3) {$6$};
			\node[anchor = north] at (1,-3) {$5$};
			\node[anchor = north] at (2,-3) {$4$};
			\node[anchor = south] at (4,0) {$1$};
			\node[anchor = south] at (5,0) {$2$};
			\node[anchor = south] at (6,0) {$3$};
			\node[anchor = north] at (4,-3) {$6$};
			\node[anchor = north] at (5,-3) {$5$};
			\node[anchor = north] at (6,-3) {$4$};
			\node[anchor = north] at (3,-4) {Normal surgery procedure};
		\end{tikzpicture}&
		\begin{tikzpicture}[scale=0.4]
			\CUP{1}
			\SETCOORD{1}{0}
			\LINE{0}{-1.5}
			\CUP{-1}
			\CAP{-1}
			\LINE{0}{-1.5}
			\SETCOORD{1}{0}
			\CAP{1}
			\draw[->] (2.5,-1.5)--(3.5,-1.5);
			\SETCOORD{2}{0}
			\LINE{0}{3}
			\SETCOORD{1}{0}
			\LINE{0}{-3}
			\SETCOORD{1}{0}
			\LINE{0}{3}
			\node[anchor = south] at (0,0) {$1$};
			\node[anchor = south] at (1,0) {$2$};
			\node[anchor = south] at (2,0) {$3$};
			\node[anchor = north] at (0,-3) {$6$};
			\node[anchor = north] at (1,-3) {$5$};
			\node[anchor = north] at (2,-3) {$4$};
			\node[anchor = south] at (4,0) {$1$};
			\node[anchor = south] at (5,0) {$2$};
			\node[anchor = south] at (6,0) {$3$};
			\node[anchor = north] at (4,-3) {$6$};
			\node[anchor = north] at (5,-3) {$5$};
			\node[anchor = north] at (6,-3) {$4$};
			\node[anchor = north] at (3,-4) {$\hat{\eta}_i\colon \hat{G}_{t^{i+1}t^i}\to\K$};
			\useasboundingbox ([shift={(0mm,0mm)}]current bounding box.north east) rectangle ([shift={(0mm,-5mm)}]current bounding box.south west);
		\end{tikzpicture}\\
		\begin{tikzpicture}[scale=0.4]
			\CUP{1}
			\SETCOORD{-2}{0}
			\LINE{0}{-1.5}
			\CUP{1}
			\CAP{1}
			\LINE{0}{-1.5}
			\SETCOORD{-1}{0}
			\CAP{-1}
			\draw[->] (1.5,-1.5)--(2.5,-1.5);
			\SETCOORD{4}{3}
			\CUP{1}
			\SETCOORD{1}{0}
			\LINE{0}{-1.5}
			\CUP{-1}
			\CAP{-1}
			\LINE{0}{-1.5}
			\SETCOORD{1}{0}
			\CAP{1}
			\begin{scope}[xshift=-1cm]
			\node[anchor = south] at (0,0) {$6$};
			\node[anchor = south] at (1,0) {$1$};
			\node[anchor = south] at (2,0) {$2$};
			\node[anchor = north] at (0,-3) {$5$};
			\node[anchor = north] at (1,-3) {$4$};
			\node[anchor = north] at (2,-3) {$3$};
			\node[anchor = south] at (4,0) {$6$};
			\node[anchor = south] at (5,0) {$1$};
			\node[anchor = south] at (6,0) {$2$};
			\node[anchor = north] at (4,-3) {$5$};
			\node[anchor = north] at (5,-3) {$4$};
			\node[anchor = north] at (6,-3) {$3$};
			\end{scope}
			\node[anchor = north] at (2,-4) {$\hat{\psi}_{i,i+1}\colon \hat{G}_{t^it^{i+1}}\to \hat{G}_{t^{i+1}t^i}$};
		\end{tikzpicture}&
		\begin{tikzpicture}[scale=0.4]
			\LINE{0}{3}
			\SETCOORD{1}{0}
			\LINE{0}{-3}
			\SETCOORD{1}{0}
			\LINE{0}{3}
			\draw[->] (2.5,1.5)--(3.5,1.5);
			\SETCOORD{3}{0}
			\CUP{1}
			\SETCOORD{-2}{0}
			\LINE{0}{-1.5}
			\CUP{1}
			\CAP{1}
			\LINE{0}{-1.5}
			\SETCOORD{-1}{0}
			\CAP{-1}
			\begin{scope}[yshift=3cm]
			\node[anchor = south] at (0,0) {$5$};
			\node[anchor = south] at (1,0) {$6$};
			\node[anchor = south] at (2,0) {$1$};
			\node[anchor = north] at (0,-3) {$4$};
			\node[anchor = north] at (1,-3) {$3$};
			\node[anchor = north] at (2,-3) {$2$};
			\node[anchor = south] at (4,0) {$5$};
			\node[anchor = south] at (5,0) {$6$};
			\node[anchor = south] at (6,0) {$1$};
			\node[anchor = north] at (4,-3) {$4$};
			\node[anchor = north] at (5,-3) {$3$};
			\node[anchor = north] at (6,-3) {$2$};
			\end{scope}
			\node[anchor = north] at (3,-1) {$\hat{\epsilon}_i\colon\K\to \hat{G}_{t^{i-1}t^i}$};
		\end{tikzpicture}
	\end{tabular}
	\caption{How to interpret $\hat{\eta}_i$, $\hat{\epsilon}_i$ and $\hat{\psi}_{i,i+1}$ as rotated surgery procedures}\label{figuremapsarerotationsofsurgproc}
\end{figure}
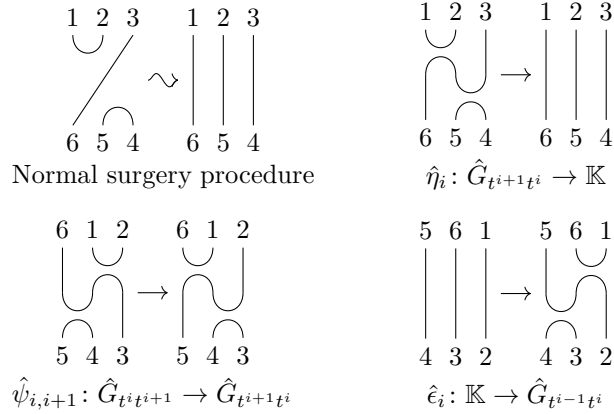
\begin{defi}
	A \emph{stretched cup diagram} is a sequence $a \bm{t}$, where $a$ is a cup diagram and $\bm{t}=t_k\cdots t_1$ is a generalized crossingless matching such that $t_j=t^{i_j}$ are special crossingless matchings for all $j$.
	A \emph{stretched cap diagram} $\bm{t}'a'$ is the horizontal mirror image of a stretched cup diagram $a\bm{t}$ shifted one to the right.
	A \emph{stretched circle diagram} $a\bm{u}\wr\bm{t}'b'$ is a stretched cup diagram $a\bm{u}$ glued underneath a stretched cap diagram $\bm{t}'b'$.
	A \emph{stretched circle diagram} $a\bm{u}\wr\bm{t}'b'$ is called orientable if it contains no circle and no non-propagating line such that both endpoints are on the same side of $0$.
\end{defi}

\begin{defi}
	Denote by $\overline{\iota}$ the unique cap diagram that has no caps.

	Let $\mathcal{F}$ be the full subcategory of $\K$-mod with objects $\hat{\theta}_{i_k}\dots\hat{\theta}_{i_1}\hat{P}(\overline{\iota})$.
	We can consider this category also as a locally finite dimensional locally unital algebra 
	\begin{equation}\label{locfindimalgebraaofK}
		A\coloneqq \bigoplus_{\bm{i}\in\bbZ^k, \bm{j}\in\bbZ^l}\Hom_{\K}(\hat{\theta}_{\bm{i}}\hat{P}(\overline{\iota}), \hat{\theta}_{\bm{j}}\hat{P}(\overline{\iota}))
	\end{equation}
	By \cref{multdescendstoiso} and \cref{hatthetaadjunction} the definition of $\hat{\theta}_i$ respectively $\hat{G}_{\bm{t}}$, the algebra $A$ has a basis given by all orientable stretched circle diagrams of the form $\underline{\iota}\bm{u}\wr\bm{t}'\overline{\iota}$, where $\bm{u}$ and $\bm{t}$ are generalized crossingless matchings build from the special ones.
	And the composition $\underline{\iota}\bm{u}\wr\bm{t}'\overline{\iota}\cdot\underline{\iota}\bm{r}\wr\bm{s}'\overline{\iota}$ is given by $0$ if $\bm{t}\neq\bm{r}$, and otherwise we draw $\underline{\iota}\bm{u}\wr\ured(\bm{t}'\overline{\iota})$ underneath $\lred(\underline{\iota}\bm{t})\wr\bm{s}'\overline{\iota}$ and apply the surgery procedure to eliminate the middle section $\ured(\bm{t}'\overline{\iota})\lred(\underline{\iota}\bm{t})$.
	\begin{defi}
		We define a functor $\Phi\colon\sGp\to\mathcal{F}$ as follows.
		\begin{gather*}
			(i_1, \dots i_k)\mapsto\hat{\theta}_{i_k}\dots\hat{\theta}_{i_1}\hat{P}(\overline{\iota})\\
			\begin{tikzpicture}[line width = \lw]
				\node at (0,0) (A){$a$}; \node at (1,0) (B) {$a-1$};\draw (A)..controls +(0.2,-0.8) and +(-0.2,-0.8)..(B);
			\end{tikzpicture}\mapsto\epsilon_a\qquad\quad
			\begin{tikzpicture}[line width = \lw]
				\node at (0,0) (A){$a$}; \node at (1,0) (B) {$a+1$};\draw (A)..controls +(0.2,0.8) and +(-0.2,0.8)..(B);
			\end{tikzpicture}\mapsto\eta_a\qquad\quad
			\begin{tikzpicture}[line width = \lw]
				\draw (0,0) -- (1,1) (1,0)--(0,1);\node[fill=white, anchor=north] at (0,0) {$a$}; \node[fill=white, anchor=north] at (1,0) {$b$};\node[fill=white, anchor=south] at (0,1) {$b$}; \node[fill=white, anchor=south] at (1,1) {$a$};
			\end{tikzpicture}\mapsto\psi_{b,a}.
		\end{gather*}
	\end{defi}
	The functor $\Phi$ intertwines $\theta_i$ and $\hat{\theta}_i$ by definition.
	The subsequent subsections will prove that this is functor is well-defined.
	Note also that adding a strand with label $i$ in $\sGp$ translates to $\hat{\theta}_i$ under $\Phi$.
\end{defi}
\begin{prop}
	The functor $\Phi$ is well-defined.
\end{prop}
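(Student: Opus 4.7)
The plan is to verify, by direct calculation on stretched circle diagrams, that each defining relation of $\sGp$ is preserved by the candidate bimodule maps $\hat\eta_i$, $\hat\epsilon_i$ and $\hat\psi_{i,j}$. By \cref{multdescendstoiso} each composite of generators of $\sGp$ corresponds to applying $m$ to a tensor product of copies of the $\hat G_{t^?}$'s, so it is enough to check each relation on basis vectors given by stretched circle diagrams $\underline\iota\bm u\wr\bm t'\overline\iota$. Several relations are immediate: the cyclotomic relation \cref{firstzero} holds because, for $a\neq0$, the cap in $t^a\overline\iota$ connects $a-\tfrac12$ and $a+\tfrac12$ on the same side of $0$, so $\hat\theta_a\hat P(\overline\iota)=0$ by \cref{geombimodonproj} and every morphism through such an object vanishes; relation \cref{twicezero} holds because $t^it^i$ creates a closed circle and hence a non-orientable diagram; and the relations involving only the crossings $\hat\psi_{i,j}$ with $j\neq i,i\pm1$ (namely \cref{inverse}, \cref{braid}, and the instances of \cref{tangleone}, \cref{tangletwo} using only such crossings) hold because these maps simply permute the layers of the stretched circle diagram.

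For the remaining relations \cref{untwist}, \cref{snake}, the mixed cases of \cref{tangleone} and \cref{tangletwo}, and the periplectic relations \cref{idempotentone} and \cref{idempotenttwo}, I would exploit the observation of \cref{figuremapsarerotationsofsurgproc} that $\hat\eta_i$, $\hat\epsilon_i$ and $\hat\psi_{i,i+1}$ are rotated surgery procedures on stretched circle diagrams. Each side of such a relation then becomes a composition of surgery procedures acting on adjacent portions of the same stretched circle diagram; by \cref{surgproccommute} these surgeries commute, so the relation reduces to a topological comparison of the diagrams obtained from the two sides. The trichotomy from \cref{detailedanalysis} (straighten, split, reconnect) classifies the outcomes: either the composition straightens to reproduce the right-hand side identity term, or it produces a closed loop, or a non-propagating line with both endpoints on the same side of $0$, and in both latter cases the result is zero, matching the right-hand side.

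The main obstacle I foresee is the periplectic relations \cref{idempotentone,idempotenttwo}, whose LHS composes cap, crossing and cup on three adjacent strands with labels $(a,a\pm1,a)$. One must verify, on every orientable input stretched circle diagram, that the three consecutive surgeries collapse to exactly the identity term with coefficient $\eps=+1$, and that no spurious zero is introduced by an intermediate non-orientable configuration. This will reduce to a finite case analysis of the reconnect case of \cref{detailedanalysis}, carefully tracking the positions of the three central strands relative to $0$ and of the cap pattern inherited from $\hat P(\overline\iota)$.
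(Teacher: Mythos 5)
Your overall strategy --- verifying each relation on stretched circle diagrams, disposing of \cref{firstzero,twicezero} by orientability, treating distant crossings as permutations of layers, and interpreting $\hat{\eta}_i$, $\hat{\epsilon}_i$, $\hat{\psi}_{i,i+1}$ as rotated surgery procedures --- matches the paper's (which delegates to \cref{phirespectsrest,phirespectstangle,phirespectsbraid}). But there is a genuine gap: you omit the non-distant instances of the braid relation \cref{braid}. You place \cref{braid} among the relations that hold because the maps \enquote{simply permute the layers}, and your list of remaining relations includes the mixed cases of \cref{tangleone,tangletwo} but not of \cref{braid}. However, for $c=b+1=a+2$ all three crossings in \cref{braid} are defined and two of them are the adjacent-label maps $\hat{\psi}_{i,i+1}$, which are not layer permutations but orientability-testing projections. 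This is exactly the case that occupies most of \cref{phirespectsbraid}: one must show, for every orientable input diagram, that whenever one of the two compositions hits a non-orientable intermediate (as classified by \cref{surgproczero}) the other composition also returns zero, and this requires a multi-level case distinction on how the local endpoints attach to the ambient diagram, ending in parity arguments about how many strands can terminate on each side of $0$. Nothing in your sketch produces this.

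A secondary misplacement: you identify the periplectic relations \cref{idempotentone,idempotenttwo} as the main obstacle, but they are comparatively easy --- the composites $\hat{\theta}_a\eta_a$, $\eta_a\hat{\theta}_{a+1}$, $\hat{\theta}_a\epsilon_a$ and $\epsilon_a\hat{\theta}_{a-1}$ visibly preserve orientability and hence act as isomorphisms on basis vectors, from which \cref{snake,idempotentone,idempotenttwo} follow at once. The real danger, which you mention only for the periplectic relations, pervades \cref{untwist}, \cref{tangleone} and \cref{braid}: two topologically equivalent compositions can differ because one passes through a non-orientable intermediate. \Cref{surgproccommute} does not settle this on its own, since the two sides of such a relation are not reorderings of one fixed set of surgeries on a single stacked diagram; each case needs the explicit endpoint-tracking arguments of the appendix (e.g.\ the six-ray counting argument for \cref{untwist} and the case trees of \cref{phirespectstangle,phirespectsbraid}).
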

\begin{proof}
	This is \cref{phirespectsrest,phirespectstangle,phirespectsbraid} in the appendix.
\end{proof}
\subsubsection{The functor $\Phi$ is faithful}
In order to show that $\Phi$ is faithful, we need the following technical result.
\begin{prop}\label{udtableauxIFForientablecirclediag}
	The sequence $(i_1, \dots, i_k)$ is a target residue sequence of an up-down tableau of shape $\emptyset$ if and only if $\underline{\iota}t^{i_k}\cdots t^{i_1}\overline{\iota}$ is an orientable generalized circle diagram.
\end{prop}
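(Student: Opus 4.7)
The plan is to prove the proposition by induction on $k$, after strengthening the statement so that the inductive step has somewhere to go. Specifically, for any partition $\mu$ let $\nu_\mu$ denote the cap diagram associated to $\mu$ via iteratively applying $\ured$ to the layers corresponding to the row-reading residue sequence of the up-tableau $\ta{t}^\mu$ (so that $\nu_\emptyset=\overline{\iota}$). I claim that $(i_1,\dots,i_k)$ is a target residue sequence of an up-down tableau starting at $\mu$ and ending at $\emptyset$ if and only if $\underline{\iota}\,t^{i_k}\cdots t^{i_1}\nu_\mu$ is orientable; the original proposition is recovered as the special case $\mu=\emptyset$.

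The base case $k=0$ asserts that $\underline{\iota}\nu_\mu$ is orientable if and only if $\mu=\emptyset$. For $\mu=\emptyset$ this is clear, since $\underline{\iota}\,\overline{\iota}$ consists only of straight propagating rays. For $\mu\neq\emptyset$, the cap diagram $\nu_\mu$ carries at least one cap structure that, combined with rays of $\underline{\iota}$, produces a bottom non-propagating line whose endpoints fail to lie on opposite sides of $-1$, destroying orientability. For the inductive step I peel off the topmost layer $t^{i_1}$ and analyze how its cup and cap interact with $\nu_\mu$ at the positions $i_1\pm\frac{1}{2}$. Depending on whether these two positions are both ray endpoints, both endpoints of the same cap, endpoints of two different caps, or one of each, there are four cases corresponding to the four basic tableau moves: adding a box of residue $i_1$, creating a closed circle (which kills orientability), merging via a pair of caps, or removing a box. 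In each orientability-preserving case, the upper reduction $\ured(t^{i_1}\nu_\mu)$ equals $\nu_{\mu'}$ for the partition $\mu'$ produced by a valid tableau step $\mu\to\mu'$ of residue $i_1$, reducing the problem to the shorter sequence $(i_2,\dots,i_k)$ with new starting partition $\mu'$. Applying the inductive hypothesis then completes the argument.

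The main obstacle is the key lemma underlying the inductive step: the explicit identification $\ured(t^{i_1}\nu_\mu)=\nu_{\mu'}$ in each orientable case, together with the claim that failure of orientability coincides with the absence of a valid tableau step of residue $i_1$ from $\mu$. Establishing this requires a careful description of the cap and ray (including skew-ray, after the redrawing convention) structure of $\nu_\mu$, and a check of how this structure transforms under $t^{i_1}$. The argument relies crucially on \cref{geombimodonproj}, which characterizes when the bimodule action of $\hat{G}_{t^{i_1}}$ yields a nonzero projective, together with the combinatorics of addable and removable boxes of residue $i_1$ in $\mu$.
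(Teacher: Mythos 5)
Your strengthened claim is false, and the failure is exactly at the two places you identify as the load-bearing steps. First, the base case: $\underline{\iota}\,\nu_\mu$ can be orientable for $\mu\neq\emptyset$. Take $\mu=(1,1)$, so $\nu_{(1,1)}=\ured(t^{-1}t^{0}\overline{\iota})$ is the cap diagram with a single cap joining $-\tfrac32$ and $-\tfrac12$. Gluing $\underline{\iota}$ underneath produces one non-propagating line ending at the bottom with endpoints $-\tfrac32,-\tfrac12$, which is centered around $-1$ and hence orientable. (This must be so: $e_\iota\hat{\K}e_{\nu_{(1,1)}}\cong\Hom_{\sGp}((0),(0,-1))\neq 0$, corresponding to the cup morphism in $\sGp$; more generally $\Hom_{\K}(\hat{P}(\overline{\iota}),\hat{P}(\overline{\nu}))$ is nonzero for many $\overline{\nu}\neq\overline{\iota}$.) Second, the key lemma $\ured(t^{i_1}\nu_\mu)=\nu_{\mu'}$ fails for removal steps. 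From $\mu=(1)$ the step of residue $-1$ can either add the box $(2,1)$ (giving $(1,1)$) or remove $(1,1)$ (giving $\emptyset$), so the tableau side branches while the diagram side is deterministic, and a direct computation gives $\ured(t^{-1}\nu_{(1)})=\nu_{(1,1)}$, not $\nu_{\emptyset}$. The iterated upper reduction does not track the running shape $\ta{t}_j$ of the up-down tableau at all: by \cref{geombimodonproj} it tracks the label of the indecomposable projective $\hat{P}(\overline{\nu})\cong\hat{\theta}_{i_j}\cdots\hat{\theta}_{i_1}\hat{P}(\overline{\iota})$, which corresponds to the shape of the equivalent up-\emph{tableau} (cf.\ \cref{sequenceiseitheruptableauorreducible}); these coincide only when no boxes are removed. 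Since both the base case and the inductive step are wrong, the induction cannot be repaired by simply "carefully checking the four cases", and in any event you explicitly leave that check — the entire mathematical content of the statement — unproved.

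The deeper point your layer-peeling misses is that orientability of $\underline{\iota}t^{i_k}\cdots t^{i_1}\overline{\iota}$ is not a conjunction of independent single-layer conditions against a running partition: the centering conditions on non-propagating lines couple the top boundary $\overline{\iota}$, the bottom boundary $\underline{\iota}$, and the multiset of labels globally. The paper's proof therefore argues on the sequence itself: it reduces consecutive patterns $(a,a\pm1,a)$ (which correspond to removable local cup--cap pictures that do not affect orientability, and on the tableau side to insertable/deletable add--remove detours), and then characterizes the reduced orientable diagrams by four explicit positivity/interleaving properties of $(i_1,\dots,i_k)$ that force it to be the residue sequence of an $(n+1)\times n$ rectangle, hence of an up-down tableau of shape $\emptyset$. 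If you want a genuinely inductive argument, you would have to induct on something like the number of removals (as the paper effectively does), not on the length of the sequence with a deterministic single-step recursion.
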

\begin{proof}
	Let $(i_1, \dots, i_k)$ be a target residue sequence of an up-down tableau of shape $\emptyset$.
	First assume that it does not contain a subsequence $(a,a\pm1,a)$ (i.e.\ a consecutive subsequence up to swapping entries of difference $>1$).
	Suppose furthermore that $\underline{\iota}t^{i_k}\cdots t^{i_1}\overline{\iota}$ is not orientable.
	This means that it either has a circle or a non-propagating line ending on both sides of $0$.
	But as we assumed that we have no subsequence $(a,a\pm1,a)$ we do not have a subpicture of the form (or its vertical mirror image)
	\begin{equation*}
		\begin{tikzpicture}[scale=0.4, baseline=(current bounding box.south)]
			\LINE{0}{-1}
			\CUP{-1}
			\CAP{-1}
			\LINE{0}{-1.5}
			\CUP{1}
			\CAP{1}
			\LINE{0}{-1}
		\end{tikzpicture}\,,
	\end{equation*}
	so a circle would consist of only one cup and cap and the non-propagating line only of one cup or cap.
	Now the circle would give a subsequence $(a,a)$ which cannot happen for residue sequences of up-down tableaux.
	For the non-propagating line we can move the entry corresponding to the cup (resp.\ cap) to the front (resp.\ back) but as both endpoints lie on the same side of $0$ the sequence then either does not start with $0$ or does not end with $-1$.
	But every target residue sequence of an up-down tableau of shape $\emptyset$ has to start with $0$ and end with $-1$.
	Therefore, $\underline{\iota}t^{i_k}\cdots t^{i_1}\overline{\iota}$ is orientable.

	Now we do an induction on the number of subsequences of the form $(a,a\pm1,a)$.
	The paragraph before established the base case.
	Now we will take such a residue sequence $(i_1, \dots, i_k)$ with a subsequence $(a,a\pm1,a)$.
	We will show that after replacing $(a,a\pm1,a)$ by $(a)$ we are still left with a residue sequence of an up-down tableau of shape $\emptyset$.
	Note that this reduction process (up to vertical mirror image) looks like 
	\begin{equation*}
		\begin{tikzpicture}[scale=0.4, baseline={([yshift=0.6cm]current bounding box.south)}]
			\CUP{1}
			\SETCOORD{1}{0}
			\LINE{0}{-1.5}
			\CUP{-1}
			\CAP{-1}
			\LINE{0}{-1.5}
			\CUP{1}
			\CAP{1}
			\LINE{0}{-1.5}
			\SETCOORD{-1}{0}
			\CAP{-1}
			\draw[->, decorate, decoration={snake}] (2.5,-2.25)--(3.5,-2.25);
			\SETCOORD{4}{1.5}
			\CAP{1}\SETCOORD{1}{0}
			\LINE{0}{1.5}
			\SETCOORD{-2}{0}
			\CUP{1}
		\end{tikzpicture}\,.
	\end{equation*}
	So this does not change orientability of the associated stacked circle diagram.
	Therefore, once we proved that the reduced sequence is a residue sequence of $\emptyset$ we know by induction that the associated stacked circle diagram is orientable and then reversing the reduction process there gives still an orientable diagram.
	Hence, it suffices to show that replacing a subsequence of the form $(a,a\pm1,a)$ by $(a)$ gives the target residue sequence of an up-tableau of shape $\emptyset$.
	The following are the possibilities how a subsequence $(a,a-1,a)$ can look in terms of up-down tableau.
	\ytableausetup{boxsize=1.5em}
	\begin{gather*}
		\scalebox{0.8}{$\ydiagram{2,1,1}\overset{a}{\rightarrow}\ytableaushort{\none,\none a,\none}*{2,2,1}*[*(green)]{0,1+1}\overset{a-1}{\rightarrow}\ydiagram{2,1,1}\overset{a}{\rightarrow}$}\\
		\scalebox{0.8}{$\ytableaushort{\none,\none a{\scriptstyle{a+1}},\none}*{3,3,1}*[*(green)]{0,1+2}\overset{a}{\rightarrow}\ytableaushort{\none,\none a,\none}*{3,2,1}*[*(green)]{0,1+1}\overset{a-1}{\rightarrow}\ydiagram{3,1,1}\overset{a}{\rightarrow}\ytableaushort{\none,\none a,\none}*{3,2,1}*[*(green)]{0,1+1}$}
	\end{gather*}
	We easily see that in each case we have that $a-1$ removes a box that was either added by the $a$ before or is added again by the $a$ afterwards.
	Thus, replacing this sequence by $a$ we still have a residue sequence of an up-down tableau of shape $\emptyset$.
	Given a subsequence $(a,a+1,a)$ we have the following possibilities.
	\begin{gather*}
		\scalebox{0.8}{$\ydiagram{3,1,1}\overset{a}{\rightarrow}\ytableaushort{\none,\none a,\none}*{3,2,1}*[*(green)]{0,1+1}\overset{a+1}{\rightarrow}\ytableaushort{\none,\none a{\scriptstyle{a+1}},\none}*{3,3,1}*[*(green)]{0,1+2}\overset{a}{\rightarrow}\ytableaushort{\none,\none a,\none}*{3,2,1}*[*(green)]{0,1+1}$}\\
		\scalebox{0.8}{$\ytableaushort{\none,\none {\scriptstyle{a+1}},\none}*{3,2,1}*[*(green)]{0,1+1}\overset{a}{\rightarrow}\ydiagram{3,1,1}\overset{a+1}{\rightarrow}\ytableaushort{\none,\none {\scriptstyle{a+1}},\none}*{3,2,1}*[*(green)]{0,1+1}\overset{a}{\rightarrow}$}
	\end{gather*}
	Similar to before $a+1$ adds a box which is removed by one of the $a$'s. 
	Thus, the reduced sequence is also a target residue sequence of an up-down tableau of shape $\emptyset$.
	This proves then that such a residue sequence gives rise to an oriented stacked circle diagram.

	Now assume that $\underline{\iota}t^{i_k}\cdots t^{i_1}\overline{\iota}$ is orientable.
	We will prove that any diagram without subpictures of the form
	\begin{equation*}
		\begin{tikzpicture}[scale=0.4]
			\LINE{0}{-1}
			\CUP{-1}
			\CAP{-1}
			\LINE{0}{-1.5}
			\CUP{1}
			\CAP{1}
			\LINE{0}{-1}
		\end{tikzpicture}
	\end{equation*}
	or its vertical mirror image gives rise to a target residue sequence of shape $\emptyset$.
	Furthermore, we will prove that given a target residue sequence of an up-down tableau, replacing any entry $(a)$ by $(a,a\pm1,a)$ still gives a target residue sequence of the same shape.

	Now given any diagram, we can use the reduction process from the second paragraph to obtain a picture without these subpictures.
	This is then a target residue sequence by our first claim, and then we can reverse the reduction process and add the sequences $(a,a\pm1,a)$ again and by the second claim this stays in the desired form.

	We will first prove the second claim for $(a,a-1,a)$.
	Suppose we are given such a residue sequence $(i_1, \dots, i_k)$ and let $i_l=a$ be some entry.
	If there exists an entry $a$ that corresponds to adding a box that is not removed until step $l$ we can remove this with $a-1$ and add it again with $a$.
	\begin{equation*}
		\scalebox{0.8}{
		\begin{tikzcd}[ampersand replacement = \&]
			\overset{a}{\rightarrow}\ytableaushort{\none,\none {a},\none}*{3,2,1}*[*(green)]{0,1+1}\arrow[rr, decorate, decoration={snake}]\&\&
			\overset{a}{\rightarrow}\ytableaushort{\none,\none {a},\none}*{3,2,1}*[*(green)]{0,1+1}\overset{a-1}{\rightarrow}\ydiagram{3,1,1}\overset{a}{\rightarrow}\ytableaushort{\none,\none {a},\none}*{3,2,1}*[*(green)]{0,1+1}
		\end{tikzcd}
		}
	\end{equation*}
	If no such entry exists, this means that $a$ removes a box of residue $a+1$ in the first column.
	But in this case we could also add this $a$ and then remove the two boxes with $a-1$ and $a$.
	\begin{equation*}
		\scalebox{0.8}{
			\begin{tikzcd}[ampersand replacement = \&]
				\ytableaushort{\none, {\scriptstyle{a+1}}}*{1,1}*[*(green)]{0,1}\overset{a}{\rightarrow}\ydiagram{1}\arrow[rr,decorate,decoration={snake}]\&\&
				\ytableaushort{\none, {\scriptstyle{a+1}}}*{1,1}*[*(green)]{0,1}\overset{a}{\rightarrow}\ytableaushort{\none, {\scriptstyle{a+1}},a}*{1,1,1}*[*(green)]{0,1,1}\overset{a-1}{\rightarrow}\ytableaushort{\none, {\scriptstyle{a+1}}}*{1,1}*[*(green)]{0,1}\overset{a}{\rightarrow}\ydiagram{1}
			\end{tikzcd}
		}
	\end{equation*}
	Next we prove the second claim for $(a,a+1,a)$.
	Suppose we are given such a residue sequence $(i_1, \dots, i_k)$ and let $i_l=a$ be some entry.
	If after step $l$ there exists an addable box of content $a+1$, we can add this with $a+1$ and remove this with $a$.
	\begin{equation*}
		\scalebox{0.8}{
		\begin{tikzcd}[ampersand replacement = \&]
			\overset{a}{\rightarrow}\ytableaushort{\none,{a}}*{2,1}\arrow[rr, decorate, decoration={snake}]\&\&
			\overset{a}{\rightarrow}\ytableaushort{\none,{a}}*{2,1}\overset{a+1}{\rightarrow}\ytableaushort{\none,{a}{\scriptstyle{a+1}}}*{2,2}*[*(green)]{0,1+1}\overset{a}{\rightarrow}\ytableaushort{\none,{a}}*{2,1}
		\end{tikzcd}
		}
	\end{equation*}
	If there is no addable box of content $a+1$, this means that $a$ added a box and the situation looks as follows.
	\begin{equation*}
		\scalebox{0.8}{
		\begin{tikzcd}[ampersand replacement = \&]
			\ytableaushort{\none,\none {\scriptstyle{a+1}},\none}*{3,2,1}*[*(green)]{0,1+1}\overset{a}{\rightarrow}\ytableaushort{\none,\none {\scriptstyle{a+1}},\none {a}}*{3,2,2}*[*(green)]{0,1+1,1+1}\arrow[rr, decorate, decoration={snake}]\&\&
			\ytableaushort{\none,\none {\scriptstyle{a+1}},\none}*{3,2,1}*[*(green)]{0,1+1}\overset{a}{\rightarrow}\ydiagram{3,1,1}\overset{a+1}{\rightarrow}\ytableaushort{\none,\none {\scriptstyle{a+1}},\none}*{3,2,1}*[*(green)]{0,1+1}\overset{a}{\rightarrow}\ytableaushort{\none,\none {\scriptstyle{a+1}},\none {a}}*{3,2,2}*[*(green)]{0,1+1,1+1}
		\end{tikzcd}
		}
	\end{equation*}
	Hence, we have proved the second claim.

	It remains to prove that any orientable diagram $\underline{\iota}t^{i_k}\cdots t^{i_1}\overline{\iota}$ without subpictures of the form
	\begin{equation*}
		\begin{tikzpicture}[scale=0.4]
			\LINE{0}{-1}
			\CUP{-1}
			\CAP{-1}
			\LINE{0}{-1.5}
			\CUP{1}
			\CAP{1}
			\LINE{0}{-1}
		\end{tikzpicture}
	\end{equation*}
	or its vertical mirror image gives rise to a target residue sequence of an up-down tableau of shape $\emptyset$.
	In this case we have the following properties of the integer sequence $(i_1, \dots i_k)$.
	\begin{enumerate}
		\item If $i_r=a>0$, then there exists $r'<r$ with $i_{r'}=a-1$.
		\item If $i_r=a<0$, then there exists $r'<r$ with $i_{r'}=a+1$.
		\item If $i_{r'}=i_r=a$ for $r'<r$, then there exists $r'<l,l'<r$ such that $i_l=a+1$ and $i_{l'}=a-1$.
		\item The first two properties hold for the sequence $(i_k+1, i_{k-1}+1, \dots, i_1+1)$.
	\end{enumerate}
	If the first two would not be satisfied we could create a non-propagating line ending at the top with both endpoints on the same side of $0$.
	The last one ensures the same for non-propagating lines ending at the bottom (remember the shift by $-1$ for cup diagrams).
	And the third condition prevents the existence of the subpictures from the top as well as circles.

	All these together imply that $\bm{i}$ is the residue sequence of an $(n+1)\times n$ rectangle for which it is easy to see that this can be interpreted as a residue sequence of an up-down tableau of shape $\emptyset$.
	This is done by adding boxes to reach the partition $(n,n-1,n-2,\dots 1)$ and then removing boxes until one ends up at $\emptyset$.
\end{proof}
\begin{lem}\label{phifaithful}
	The functor $\Phi$ is faithful.
\end{lem}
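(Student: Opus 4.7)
The plan is to use the basis of $\sGp$ from \cref{spanningsetisbasis} to establish faithfulness. Since the pairs $(\ta{t}, \ta{s})$ of up-down tableaux of common shape index a basis $\{\Psi_{\ta{t}\ta{s}}\}$ of $\sGp$, it suffices to show that $\Phi$ maps this basis to a linearly independent set in $\mathcal{F}$. Both $\sGp$ and the algebra $A$ from \eqref{locfindimalgebraaofK} carry natural bases (pairs of up-down tableaux and orientable stretched circle diagrams, respectively), so the strategy is to identify $\Phi(\Psi_{\ta{t}\ta{s}})$ as a specific basis element of $A$.

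First, I would compute $\Phi(\Psi_{\ta{t}\ta{s}})$ diagrammatically. The basis element $\Psi_{\ta{t}\ta{s}}$ consists of caps on the bottom reducing $\bm{i}_{\ta{t}}^l$ to the shape $\lambda = \shape(\ta{t}) = \shape(\ta{s})$, a middle permutation of propagating strands, and cups on the top lifting $\lambda$ to $\bm{i}_{\ta{s}}^r$. Under $\Phi$ each cap becomes $\epsilon$, each cup becomes $\eta$, and each crossing becomes $\psi$, which by the pictorial descriptions in \cref{figureformapsbetweenthetas,figuremapsarerotationsofsurgproc} realize these as rotated surgery procedures on stretched circle diagrams in $A$. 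Tracking which surgeries are applied, $\Phi(\Psi_{\ta{t}\ta{s}})$ corresponds, up to a nonzero scalar and reduction of the middle section, to the orientable stretched circle diagram whose source matching is read off from $\bm{i}_{\ta{t}}^l$ and whose target matching is read off from $\bm{i}_{\ta{s}}^r$, glued through the nesting data encoded by $\Psi_{\ta{t}\ta{s}}$.

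Next, I would invoke \cref{udtableauxIFForientablecirclediag} on both the source and target residue sequences to confirm that the corresponding generalized circle diagrams are orientable, so that the image does not automatically vanish. Then I would observe that the combinatorics of $(\ta{t}, \ta{s})$ can be recovered from the resulting stretched circle diagram: the box-pairings recorded inside each up-down tableau correspond precisely to the cups and caps in the relevant half of $\Psi_{\ta{t}\ta{s}}$, and the middle permutation is determined by the remaining propagating connections. Consequently, distinct pairs $(\ta{t}, \ta{s})$ with the same source and target residue sequences map to distinct orientable stretched circle diagrams, which form a linearly independent family in $A$.

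The hard part will be the surgery bookkeeping that certifies $\Phi(\Psi_{\ta{t}\ta{s}})$ really is a single nonzero basis element of $A$, rather than a genuine linear combination or zero. Here the associativity diagram \eqref{geombimodassoc} and the commutation of surgery procedures will play the key role, together with a careful case analysis using the straightening, split, and reconnect classification from \cref{detailedanalysis} to rule out any unwanted vanishing when composing $\hat{\eta}$, $\hat{\epsilon}$, and $\hat{\psi}$ in the order prescribed by $\Psi_{\ta{t}\ta{s}}$. Once these identifications are in place, faithfulness of $\Phi$ is immediate.
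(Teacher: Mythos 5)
There is a genuine gap in the non-vanishing step, which you correctly identify as the hard part but do not actually close. The maps $\hat{\eta}_i$, $\hat{\epsilon}_i$, $\hat{\psi}_{i,i+1}$ are defined to be zero whenever the relevant \emph{intermediate} diagram fails to be orientable, so to conclude that $\Phi(\Psi_{\ta{t}\ta{s}})\neq 0$ you must verify orientability after \emph{every} elementary step of the composition, not merely for the source and target residue sequences. Checking the two endpoints via \cref{udtableauxIFForientablecirclediag}, as you propose, does not rule out an intermediate vanishing. Moreover, \cref{udtableauxIFForientablecirclediag} only applies to closed diagrams of the form $\underline{\iota}t^{i_k}\cdots t^{i_1}\overline{\iota}$, i.e.\ to residue sequences of up-down tableaux of shape $\emptyset$; a general $\Psi_{\ta{t}\ta{s}}$ with $\shape(\ta{t})=\shape(\ta{s})=\lambda\neq\emptyset$ does not produce such diagrams. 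The paper resolves both points at once: by the adjunction $(\hat{\theta}_{i+1},\hat{\theta}_i)$ (with which $\Phi$ is compatible) one reduces to $\ta{s}=\emptyset$, decomposes $\Psi_{\emptyset\ta{t}}$ into distant crossings and neighboured KLR-cups using \eqref{tangletwo}, and observes that after each elementary step one sits at a pair $(\emptyset,\ta{t}')$ with $\ta{t}'$ of shape $\emptyset$, so \cref{udtableauxIFForientablecirclediag} gives orientability, hence non-vanishing, at every stage.

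A second, smaller problem is your final linear-independence claim. A basis element of $A$ from \eqref{locfindimalgebraaofK} is an orientable stretched circle diagram $\underline{\iota}\bm{u}\wr\bm{t}'\overline{\iota}$, and this diagram is completely determined by the two residue sequences $\bm{i}$, $\bm{j}$ (there is no extra ``nesting data'' to glue through); in particular each $\Hom$-space of $\mathcal{F}$ is at most one-dimensional. So two distinct pairs $(\ta{t},\ta{s})$ with the same source and target residue sequences could \emph{not} map to distinct basis diagrams. What rescues the argument is that the $\Hom$-spaces of $\sGp$ are themselves at most one-dimensional, so faithfulness reduces exactly to the non-vanishing of each $\Phi(\Psi_{\ta{t}\ta{s}})$ --- which brings you back to the gap above.
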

\begin{proof}
	The category $\sGp$ has a basis given by $\Psi_{\ta{s}\ta{t}}$, where $\ta{s}$ and $\ta{t}$ are two up-down-tableaux of the same shape.
	By adjunction and using that $\Phi$ is compatible with this adjunction, we can assume that $\ta{s}=\emptyset$ is the trivial up-down-tableaux.
	All these basis vectors are build by applying KLR-cups (to distinguish them from the cups in $\K$).
	Using \eqref{tangletwo}, we can get rid of the non-distant crossings that might be involved in the KLR-cups.
	So $\Psi_{\emptyset\ta{t}}$ can be cut into a sequence of distant crossings and KLR-cups on neighbored strands.
	In terms of the stretched circle diagram basis for $A$ from \eqref{locfindimalgebraaofK} this means that we start with the diagram $\underline{\iota}\wr\overline{\iota}$ and successively apply the crossings and KLR-cups.
	But in every step we reach a pair $(\emptyset,\ta{t}')$, where $\ta{t}'$ is an up-down-tableau of shape $\emptyset$.
	By \cref{udtableauxIFForientablecirclediag} this means that in every step we get an orientable stretched circle diagram, and thus by definition of $\epsilon_i$ and $\psi_{i,j}$ the result is nonzero in every step.
	But this means that $\Psi_{\emptyset\ta{t}}$ is mapped to something nonzero and by adjunction this holds for all basis vectors.

	Using that the dimension of morphism spaces in $\sGp$ is $\leq 1$ we conclude that $\Phi$ is faithful.
\end{proof}
\subsubsection{The functor $\Phi$ is an isomorphism of categories}
\begin{thm}\label{phiisomorphism}
	The functor $\Phi$ is an isomorphism of categories.
\end{thm}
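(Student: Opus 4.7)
The plan is to leverage what is already established and focus on the one remaining ingredient. By construction of $\mathcal{F}$ the functor $\Phi$ is bijective on objects, since the objects of $\mathcal{F}$ are indexed by the same finite sequences of integers as the objects of $\sGp$. Moreover, $\Phi$ is faithful by \cref{phifaithful}. Hence the only thing left is to show that $\Phi$ is full, and I would do this by exhibiting an explicit bijection between distinguished bases on the two sides.

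On the $\sGp$ side, \cref{spanningsetisbasis} provides the basis $\{\Psi_{\ta{t}\ta{s}}\}$ of $\Hom_{\sGp}(\bm{i},\bm{j})$, parameterized by pairs of up-down-tableaux of the same shape whose residue sequences match $\bm{i}$ and $\bm{j}$. On the $\mathcal{F}$ side, the algebra description at \eqref{locfindimalgebraaofK}, together with \cref{multdescendstoiso,gtprojleftright}, gives a basis of $\Hom_{\mathcal{F}}(\Phi(\bm{i}),\Phi(\bm{j}))$ consisting of orientable stretched circle diagrams $\underline{\iota}\bm{u}\wr\bm{t}'\overline{\iota}$ whose top and bottom layers are determined by $\bm{i}$ and $\bm{j}$. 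The bridge between the two bases is the adjunction: $(\_\otimes(a+1),\_\otimes a)$ from \cref{adjunctionaap1} on $\sGp$ and the compatible $(\hat{\theta}_{t^\ddagger},\hat{\theta}_t)$ from \cref{hatthetaadjunction} on $\mathcal{F}$. Since $\Phi$ intertwines $\theta_i$ with $\hat{\theta}_i$ and sends cups and caps to the corresponding unit and counit maps, these adjunctions are compatible under $\Phi$, so bending all upper strands downward reduces the problem to the case $\bm{i}=\emptyset$, i.e.\ to morphisms out of $\hat{P}(\overline{\iota})$. There the $\sGp$-basis becomes $\{\Psi_{\emptyset\ta{s}}\}$ for up-down-tableaux $\ta{s}$ of shape $\emptyset$ with target residue sequence $\bm{k}$, while the $\mathcal{F}$-basis becomes the set of orientable generalized circle diagrams $\underline{\iota}t^{k_l}\cdots t^{k_1}\overline{\iota}$. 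These two sets are in canonical bijection by \cref{udtableauxIFForientablecirclediag}, and the computation already carried out in the proof of \cref{phifaithful} shows that $\Phi(\Psi_{\emptyset\ta{s}})$ realizes the stretched circle diagram paired with $\ta{s}$ under this bijection.

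The main obstacle I anticipate is purely bookkeeping: verifying that the two adjunction isomorphisms translate the chosen bases to one another strictly, and not merely up to a unitriangular correction that would obscure the matching. Since both adjunctions are built from the same diagrammatic cups and caps, and the basis vectors on each side are constructed from these same primitives, this verification should go through by induction on the length of the sequences, with no new phenomenon beyond those already handled in the proofs of \cref{spanningsetisbasis} and \cref{phifaithful}. Combining fullness with the faithfulness of \cref{phifaithful} and the bijectivity on objects then yields the claim.
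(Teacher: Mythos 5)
Your proposal is correct and uses exactly the same ingredients as the paper's proof: faithfulness from \cref{phifaithful}, bijectivity on objects, and the combination of \cref{udtableauxIFForientablecirclediag} with the adjunction \cref{hatthetaadjunction} to compare hom-spaces. The paper simply phrases the last step as a dimension count, $\dim\Hom_{\sGp}(\bm{i},\bm{j})=\dim\Hom_{\mathcal{F}}(\hat{\theta}_{\bm{i}}\hat{P}(\overline{\iota}),\hat{\theta}_{\bm{j}}\hat{P}(\overline{\iota}))$, so that faithfulness alone forces fullness; this makes the basis-matching ``bookkeeping'' you worry about unnecessary, since an injective linear map between spaces of equal finite dimension is automatically surjective.
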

\begin{proof}
	By \cref{phifaithful} we know that $\Phi$ is faithful.
	On the other hand \cref{udtableauxIFForientablecirclediag} and the adjunction $(\hat{\theta}_i,\hat{\theta}_{i-1})$ from \cref{hatthetaadjunction} imply $\dim_{\sGp}(\bm{i},\bm{j})=\dim_{\mathcal{F}}(\hat{\theta}_{\bm{i}}\hat{P}(\overline{\iota}),\hat{\theta}_{\bm{j}}\hat{P}(\overline{\iota}))$.
	Furthermore, $\Phi$ is a bijection on objects given by $\bm{i}\mapsto\hat{\theta}_{\bm{i}}\hat{P}(\overline{\iota})$, so $\Phi$ is an isomorphism of categories.
\end{proof}
\section{The algebra \texorpdfstring{$\K_n$}{Kn}: definition and basic properties}\label{kndefisection}
Overall, we want to relate the algebra $\K$ with finite dimensional representations of $\lie{p}(n)$.
As $\K$ is Morita equivalent to $\sGp$, this algebra is a bit too big to be equivalent to $\lie{p}(n)\mmod$.
Therefore, we introduce a new algebra $\K_n$ that will turn out to satisfy our purpose.
This algebra is defined as a quotient of an idempotent truncation of $\K$.
The general idea is to first pick those idempotents that correspond to projective representations of $\lie{p}(n)$ and then quotient out by all the morphisms that vanish for the Lie superalgebra.

With \cref{defidominttocupdiag} we can associate to every ($\rho$-shifted) dominant integral weight $\lambda$ a cup (or cap) diagram with $n$ cups, and we obtain a bijection between dominant integral weights and $\Lambda_n$.
Recall, that a weight is called typical if $\lambda_i-\lambda_{i+1}>1$.
This means for the associated cup (or cap) diagram that there are no nested cups (or caps).
\begin{defi}
	We then define $e\K e$ to be the idempotent truncation of $\K$ at $\Lambda_n$, i.e.\ at all cup diagrams with $n$ cups.
	The algebra $e\K e$ has a basis given by orientable circle diagrams with exactly $n$ cups and caps.

	We denote by $\mathbb{I}_n$ the subspace spanned by all orientable circle diagrams $\underline{\lambda}\overline{\mu}$ with $n$ cups and caps such that there exists at least one non-propagating line.
\end{defi}
\begin{lem}
	The space $\mathbb{I}_n$ is a two-sided ideal of $\K$.
\end{lem}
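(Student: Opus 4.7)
The plan is to show that both $xy$ and $yx$ lie in $\mathbb{I}_n$ whenever $y=\underline{\lambda}\overline{\mu}$ is a basis vector of $\mathbb{I}_n$ and $x=\underline{\alpha}\overline{\beta}$ is any basis vector of $\K$. By symmetry, I will focus on $yx$. First I would appeal to \cref{multwithidempotents} to observe that $yx$ is zero unless $\mu=\alpha$, in which case $\alpha\in\Lambda_n$ automatically since $y\in\mathbb{I}_n\subseteq e\K e$. The product is then computed via the surgery procedure on the stacked diagram $\underline{\lambda}\overline{\mu}\underline{\mu}\overline{\beta}$, and (if nonzero) equals $\underline{\lambda}\overline{\beta}$.

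The cup/cap count of the result must also equal $n$ for it to lie in $e\K e$. To this end I would establish a counting lemma: using the redraw rule of \cref{defiorientable}, the difference between the number of top infinity endpoints and bottom infinity endpoints of any orientable circle diagram $\underline{\lambda}\overline{\beta}$ in a suitably large near region equals $2(|\beta|-|\lambda|)$, and this difference also equals $2$ times the difference between the numbers of top and bottom non-propagating lines. Since orientability forces top non-propagating lines to be centered at $0$ and bottom ones at $-1$, combined with the fact that the cup $\underline{\lambda}$ already has $n$ cups, any nonzero surgery result must have $|\beta|=n$. In particular this shows matching cup/cap counts in the result, i.e., $\underline{\lambda}\overline{\beta}\in e\K e$.

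It remains to exhibit a non-propagating line in $\underline{\lambda}\overline{\beta}$. Since $y\in\mathbb{I}_n$ is orientable with matching cup/cap counts, the counting lemma of the previous paragraph specializes to give that the numbers of non-propagating lines at top and bottom of $y$ are equal, so $y$ has at least one such line at the bottom. Its two endpoints sit at bottom infinity of $y$ and hence of the stacked diagram $\underline{\lambda}\overline{\mu}\underline{\mu}\overline{\beta}$. The surgery procedure acts only on the middle section and, by the analysis of the three local cases in \cref{detailedanalysis}, preserves the connectivity at the top and bottom boundaries of the stacked diagram: the \emph{straightening} case preserves topology, the \emph{split} case produces zero (and so does not concern us), and the \emph{reconnect} case only reroutes three strands locally within the middle without altering which endpoints lie at top or bottom infinity. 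Consequently the two bottom-infinity endpoints of $y$'s non-propagating line remain connected through the resulting diagram $\underline{\lambda}\overline{\beta}$ by a line whose two endpoints are still at bottom infinity, giving the sought non-propagating line.

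The main obstacle is making the last step rigorous: ensuring that in the reconnect case the surgery cannot convert a bottom-to-bottom line into a top-to-bottom (propagating) line. I would handle this by a direct check that in all reconnect configurations the three replacement strands match the original strands at their boundary with the rest of the diagram, so the global assignment ``which endpoint of each line lies at top vs. bottom infinity'' is preserved. Combined with the parity constraint from the counting lemma, this forces at least one of the resulting lines through the surviving bottom endpoints of $y$ to be non-propagating, completing the argument.
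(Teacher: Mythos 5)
Your reduction is sound up to the last step: it is correct that one may assume $\mu=\alpha$, that $y$ having equal numbers of cups and caps forces it to have as many non-propagating lines ending at the bottom as at the top (hence at least one at the bottom), and that the issue is whether that line survives the surgeries. The gap is exactly where you flag it, and your proposed repair does not work. A reconnect does \emph{not} preserve ``which endpoint of each line lies at top vs.\ bottom infinity'': while the six boundary points $a,\dots,f$ of \cref{figureReconnect} stay attached to the same external strands, the surgery changes their pairing (before: $a\leftrightarrow b$, $c\leftrightarrow d$, $e\leftrightarrow f$; after: $a\leftrightarrow d$, $b\leftrightarrow e$, $c\leftrightarrow f$). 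That is precisely why a nonzero reconnect converts three propagating lines into one propagating and two non-propagating lines, so the assignment you want to preserve is genuinely altered. Nor does the parity count rescue you: equality of the numbers of top- and bottom-ending non-propagating lines in the result is perfectly consistent with there being none at all.

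The missing ingredient is the other half of the case analysis in \cref{detailedanalysis}: a split always yields $0$, and a reconnect in which one of the three components involved is already non-propagating always yields a non-orientable intermediate diagram, hence $0$ by the definition of the multiplication. Consequently, the first surgery touching your distinguished bottom-ending non-propagating line of $y$ is either a straightening — which keeps it a bottom-ending non-propagating line with the same endpoints — or it kills the product; iterating gives the claim. (Separately, your ``counting lemma'' overreaches: orientability of $\underline{\lambda}\overline{\beta}$ only forces the difference between cap and cup numbers to equal the difference between the numbers of bottom- and top-ending non-propagating lines, not $\abs{\beta}=n$; a single cup centered at $0$ under a capless $\overline{\beta}$ is already orientable. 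This is harmless for the paper's purposes, since $\mathbb{I}_n$ is only ever used as an ideal of $e\K e$, where both factors have $n$ cups and caps automatically.)
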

\begin{proof}
	From \cref{detailedanalysis} it is clear that every surgery procedure either preserves non-propagating lines or produces $0$.
\end{proof}
\begin{lem}\label{nonpropifffactorthroughhigherlayer}
	Given $f\in e\K e$ the following holds:
	\begin{equation}
		f\in \mathbb{I}_n\Rightarrow f\text{ factors through an object }\nu\in I_{n+1}
	\end{equation}
\end{lem}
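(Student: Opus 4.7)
The plan is constructive. Since $\mathbb{I}_n$ is by definition spanned by basis vectors $\underline{\lambda}\overline{\mu}$ (with $\lambda,\mu\in\Lambda_n$) containing at least one non-propagating line, it suffices to exhibit a factorization for each such basis vector through some $\nu\in\Lambda_{n+1}$. A general $f$ then factors through a direct sum; the ambient category $\K\mmod$ is the natural setting for this.

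Fix such a basis vector $\underline{\lambda}\overline{\mu}$ and pick a non-propagating line $\gamma$. Using a horizontal-flip symmetry on $\K$ analogous to \cref{dualityonKLR}, we may assume $\gamma$ ends at the bottom, with endpoints at rays of $\underline{\lambda}$ at positions $p<q$; then $\gamma$ traces an alternating chain of $\overline{\mu}$-caps and $\underline{\lambda}$-cups along the equator, starting and ending with $\overline{\mu}$-caps. In particular, $\gamma$ passes through at least one $\overline{\mu}$-cap.

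The construction of $\nu\in\Lambda_{n+1}$ then proceeds by refining one arc of $\gamma$: pick an $\overline{\mu}$-cap $C$ lying on $\gamma$ and define $\nu$ by adding one extra cup to $\mu$, nested just inside the cup of $\mu$ corresponding to $C$. The stacked product $\underline{\lambda}\overline{\nu}\cdot\underline{\nu}\overline{\mu}$ has a middle section in which this extra cup-cap pair forms precisely the local configuration for a \emph{Straightening} surgery (see \cref{figureStraightening}); applying this surgery returns the product to $\underline{\lambda}\overline{\mu}$, yielding the required factorization. In more complicated cases, where the chosen refinement introduces secondary cup-cap pairs in the middle, the same argument iterates: one peels off one arc of $\gamma$ at a time and induces on the length of $\gamma$.

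The main obstacle is the combinatorial verification: (a) both $\underline{\lambda}\overline{\nu}$ and $\underline{\nu}\overline{\mu}$ must be orientable, so that they are legitimate basis vectors of $\K$; and (b) every surgery step in the middle section must be of \emph{Straightening} type and not of \emph{Split} type (which would kill the product) or \emph{Reconnect} type (which could produce a different diagram). For (a), one exploits that the refinement is local and preserves the global propagating/non-propagating structure inherited from the orientable $\underline{\lambda}\overline{\mu}$---in particular, the orientability centering conditions at $0$ and $-1$ from \cref{defiorientable} remain satisfied. For (b), the refinement is chosen pointwise to match the Straightening template, so the case analysis of \cref{detailedanalysis} rules out the Split and Reconnect situations.
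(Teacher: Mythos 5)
Your overall strategy is the right one — reduce to a single basis vector $\underline{\lambda}\overline{\mu}$ with a non-propagating line, insert one extra cup--cap pair to manufacture an intermediate object in $\Lambda_{n+1}$, and check that the resulting surgeries are all straightenings — but the concrete construction of $\nu$ does not work. You propose to add a cup to $\mu$ \emph{nested just inside} the cup of $\mu$ corresponding to a cap $C$ lying on $\gamma$. If $C$ is a small cap (adjacent endpoints), which is the generic and in fact unavoidable case (e.g.\ when the whole line $\gamma$ is ray--cap--ray), there is literally no room to nest a cup inside it, so $\nu$ is not defined. Even when there is room, you never verify the two things you yourself identify as the main obstacles: that $\underline{\lambda}\overline{\nu}$ and $\underline{\nu}\overline{\mu}$ are orientable, and that \emph{all} $n+1$ surgeries in the middle section (not just the one involving the new cup--cap pair) are straightenings returning $\underline{\lambda}\overline{\mu}$. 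These are asserted ("the refinement is chosen pointwise to match the Straightening template") rather than arranged, and the whole difficulty of the lemma lies in making a choice for which they actually hold.

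The correct choice modifies $\underline{\lambda}$, not $\overline{\mu}$: take a non-propagating line $L$ ending at the bottom (one always exists since $\underline{\lambda}$ and $\overline{\mu}$ have equally many cups and caps, so bottom- and top-ending non-propagating lines come in equal numbers) with \emph{minimal} left endpoint, and let $\underline{\lambda'}$ be $\underline{\lambda}$ with the left endpoint of $L$ and the ray immediately to its left joined into a new cup. Minimality of $L$ guarantees $\underline{\lambda'}\overline{\mu}$ is orientable, and $\underline{\lambda}\overline{\lambda'}$ is orientable because the new cap retraces $L$. One then checks $\underline{\lambda}\overline{\lambda'}\cdot\underline{\lambda'}\overline{\mu}=\underline{\lambda}\overline{\mu}$: performing the surgery at the new cup first produces (an orientable diagram equivalent to) $e_\lambda$ stacked under $\underline{\lambda}\overline{\mu}$, after which the remaining surgeries are exactly those of $e_\lambda\cdot\underline{\lambda}\overline{\mu}$, all straightenings by \cref{multwithidempotents}. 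Your flip-symmetry reduction to a bottom-ending line is also unnecessary once one notes the counting argument above, and is not formally available for $\K$ as stated (the involution $\ast$ is only introduced later, for $\K_n$).
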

\begin{proof}
	It suffices to prove this statement for all circle diagrams $\underline{\lambda}\overline{\mu}\in e\K e$ that contain a non-propagating line.
	As the number of cups in $\underline{\lambda}$ is the same as the number of caps in $\overline{\mu}$ we see that we have as many non-propagating lines ending at the bottom as we have at the top.
	So consider a non-propagating line $L$ ending at the bottom and we choose $L$ such that its left endpoint is minimal. 
	This has one cap more than cup.
	Now let $\underline{\lambda'}$ be the cup diagram that is the same as $\underline{\lambda}$ except that the left endpoint of the non-propagating line and the next ray to its left form a cup instead of two rays (see also \cref{figureexlambdaprimenonproplines}).
	As $L$ was chosen minimal, $\underline{\lambda'}\overline{\mu}$ is orientable and it has one more cup than caps.
	Furthermore, $\underline{\lambda}\overline{\lambda'}$ is also orientable as the additional cap connects exactly the same endpoints as $L$ in $\underline{\lambda}\overline{\mu}$ by construction.
	We claim that $\underline{\lambda}\overline{\lambda'}\cdot\underline{\lambda'}\overline{\mu}=\underline{\lambda}\overline{\mu}$.
	As $\underline{\lambda}$ and $\underline{\lambda'}$ agree up to one cup, we see that $\underline{\lambda}\overline{\lambda'}$ is very close to the idempotent $e_\lambda$.
	We may choose the surgery procedure coming from the additional cup as the first one.
	But this surgery procedure produces by definition of $\underline{\lambda'}$ the diagram $\underline{\lambda}\overline{\lambda}$ drawn underneath $\underline{\lambda}\overline{\mu}$ which is clearly orientable (see also \cref{exampleformultiplicationlamdaprimenonproplines}).
	After this we essentially compute $e_\lambda\cdot\underline{\lambda}\overline{\mu}$ which gives clearly $\underline{\lambda}\overline{\mu}$.
	As $\underline{\lambda'}\in I_{n+1}$, this proves the lemma.
	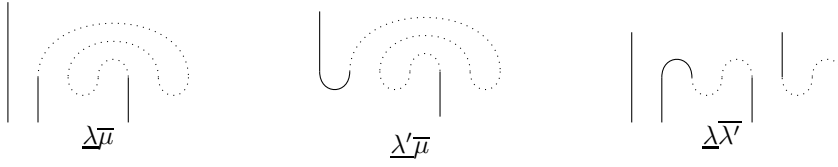
\begin{figure}
		\centering
		\begin{tabular}{m{0.3\textwidth}m{0.3\textwidth}m{0.3\textwidth}}
		\begin{tikzpicture}[scale=0.4]
			\LINE{0}{4}
			\SETCOORD{1}{-4}
			\LINE{0}{1.5}
			\CAP[dotted]{5}
			\CUP[dotted]{-1}
			\CAP[dotted]{-3}
			\CUP[dotted]{1}
			\CAP[dotted]{1}
			\LINE{0}{-1.5}
			\node[anchor=south] at (3,-1.2) {$\underline{\lambda}\overline{\mu}$};
		\end{tikzpicture}&
		\begin{tikzpicture}[scale=0.4]
			\SETCOORD{0}{4}
			\LINE{0}{-2}
			\CUP{1}
			\CAP[dotted]{5}
			\CUP[dotted]{-1}
			\CAP[dotted]{-3}
			\CUP[dotted]{1}
			\CAP[dotted]{1}
			\LINE{0}{-1.5}
			\node[anchor=south] at (3,-1.2) {$\underline{\lambda'}\overline{\mu}$};
		\end{tikzpicture}&
		\begin{tikzpicture}[scale=0.4]
			\draw[white] (0,0)--(0,4);
			\LINE{0}{3}
			\SETCOORD{1}{-3}
			\LINE{0}{1.5}
			\CAP{1}
			\CUP[dotted]{1}
			\CAP[dotted]{1}
			\LINE{0}{-1.5}
			\SETCOORD{3}{0}
			\LINE{0}{1.5}
			\CAP[dotted]{-1}
			\CUP[dotted]{-1}
			\LINE{0}{1.5}
			\node[anchor=south] at (3,-1.2) {$\underline{\lambda}\overline{\lambda'}$};
		\end{tikzpicture}
	\end{tabular}
	\caption{Example of how $\lambda'$ is constructed.}
	\label{figureexlambdaprimenonproplines}
	\end{figure}
	\begin{figure}
		\centering
		\begin{tikzpicture}[scale=0.4]
			\LINE{0}{-1.5}
			\CUP{1}
			\CAP[dotted]{5}
			\CUP[dotted]{-1}
			\CAP[dotted]{-3}
			\CUP[dotted]{1}
			\CAP[dotted]{1}
			\LINE{-4}{-2.5}
			\LINE{0}{-1.5}
			\SETCOORD{1}{0}
			\LINE{0}{1.5}
			\CAP{1}
			\CUP[dotted]{1}
			\CAP[dotted]{1}
			\LINE{0}{-1.5}
			\SETCOORD{3}{5.5}
			\LINE{0}{-1.5}
			\LINE{-2}{-2.5}
			\CUP[dotted]{1}
			\CAP[dotted]{1}
			\LINE{0}{-1.5}
			\draw[->, decorate, decoration={snake}] (7.5,-2.75)--(8.5,-2.75);
			\SETCOORD{2}{5.5}
			\LINE{0}{-5.5}
			\SETCOORD{1}{0}
			\LINE{0}{4}
			\CAP[dotted]{5}
			\CUP[dotted]{-1}
			\CAP[dotted]{-3}
			\CUP[dotted]{1}
			\CAP[dotted]{1}
			\LINE{-2}{-2.5}
			\CUP[dotted]{1}
			\CAP[dotted]{1}
			\LINE{0}{-1.5}
			\SETCOORD{3}{5.5}
			\LINE{0}{-1.5}
			\LINE{-2}{-2.5}
			\CUP[dotted]{1}
			\CAP[dotted]{1}
			\LINE{0}{-1.5}
		\end{tikzpicture}
		\caption{Example for the first surgery procedure of $\underline{\lambda}\overline{\lambda'}\cdot\underline{\lambda'}\overline{\mu}$}
		\label{exampleformultiplicationlamdaprimenonproplines}
	\end{figure}
\end{proof}
\begin{defi}
	We define the algebra $\K_n$ to be $e\K e/\mathbb{I}_n$.

	When talking about orientable circle diagrams in the context of $\K_n$ we mean orientable circle diagrams for $\K$ without any non-propagating lines.
\end{defi}
\begin{rem}
	For $\K_n$ the multiplication is given exactly as for $\K$ but we declare the result to be $0$ whenever a non-propagating line appears.
	Thus, comparing with \cref{detailedanalysis} we see that the split and reconnect produce $0$ in every case and straightening produces something non-zero.
\end{rem}
\begin{lem}\label{KNisessfinite}
	The algebra $\K_n$ is an essentially finite locally unital algebra $\K$ with idempotents $e_\lambda$ indexed by $\lambda\in\Lambda_n$.
\end{lem}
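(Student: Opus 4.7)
The plan is to split the statement into two parts: the locally unital structure with the stated idempotents, and essential finiteness. The first part is a formal consequence of the quotient construction and the basis description, while the second part contains the real combinatorial content.

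For the locally unital structure, I would first observe that for each $\lambda\in\Lambda_n$ the idempotent $e_\lambda=\underline{\lambda}\overline{\lambda}$ is a circle diagram consisting entirely of propagating segments (since $\overline{\lambda}$ is the shifted horizontal mirror of $\underline{\lambda}$), hence $e_\lambda\notin\mathbb{I}_n$, so its image in $\K_n$ is a non-zero idempotent. Combining \cref{klocuntial} and \cref{multwithidempotents} yields that $\{e_\lambda\}_{\lambda\in\Lambda_n}$ is a set of mutually orthogonal idempotents of $e\K e$, and that this decomposition passes to the quotient $\K_n=\bigoplus_{\lambda,\mu\in\Lambda_n}e_\lambda\K_n e_\mu$.

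Next, I would argue that each space $e_\lambda\K_n e_\mu$ is at most one-dimensional: by the basis description of $\K$ and the definition of $\mathbb{I}_n$, it is spanned by the image of the unique diagram $\underline{\lambda}\overline{\mu}$, which survives to a non-zero basis vector in the quotient precisely when $\underline{\lambda}\overline{\mu}$ is orientable with no non-propagating lines.

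The main obstacle, and the step where the real work lies, is showing that for each $\lambda\in\Lambda_n$ only finitely many $\mu\in\Lambda_n$ yield a non-zero hom space. My strategy is to combine two combinatorial inputs. First, the cups of $\underline{\lambda}$ are supported in some bounded interval $[-M,M]$, so outside this interval $\underline{\lambda}$ consists only of straight rays. Second, the nesting axiom in the definition of a cup/cap diagram forces any single cap of $\overline{\mu}$ with endpoints $c<d$ to satisfy $d-c\le 2n-1$: the $d-c-1$ positions strictly between $c$ and $d$ must all be endpoints of caps strictly nested inside $(c,d)$, and $\overline{\mu}$ has only $n$ caps in total. Now if some cap of $\overline{\mu}$ lay entirely outside $[-M,M]$, then at both of its endpoints $\underline{\lambda}$ would have a downward ray; these rays together with the cap would form a line with both loose ends at the bottom of the strip, i.e.\ a non-propagating line, contradicting the assumption. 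So every cap of $\overline{\mu}$ meets $[-M,M]$, and the spread bound then confines all caps of $\overline{\mu}$ to the enlarged interval $[-M-2n,M+2n]$. Only finitely many cap diagrams with $n$ caps fit inside a bounded interval, so only finitely many $\mu$ can occur. A symmetric argument (swapping the roles of top and bottom, using that the rays of $\overline{\mu}$ outside a bounded interval are straight) handles the finiteness of $\{\mu:e_\mu\K_n e_\lambda\ne 0\}$, completing the proof.
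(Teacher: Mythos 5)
Your proposal is correct and follows essentially the same route as the paper: local unitality descends from \cref{klocuntial}, and essential finiteness comes from the observation that forbidding non-propagating lines forces every cap of $\overline{\mu}$ to attach to a cup of $\underline{\lambda}$, confining $\overline{\mu}$ to a bounded strip. Your version merely makes the paper's phrase \enquote{all the cups are close to the caps} quantitative via the spread bound $d-c\le 2n-1$, which is a welcome but not essentially different elaboration.
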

\begin{proof}
	By \cref{klocuntial} $K$ is a locally unital algebra, so clearly $\K_n$ is as well.
	It is essentially finite dimensional, i.e.\ $\dim\K_ne_\lambda<\infty$, as we do not allow non-propagating lines.
	This means that if we fix a cap diagram $\overline{\lambda}$ and $\underline{\mu}\overline{\lambda}$ is orientable, then all the cups of $\underline{\mu}$ are close to the caps of $\overline{\lambda}$.
	In particular every cup diagram of this form fits into a finite strip where it is non-trivial, thus $\dim\K_ne_\lambda<\infty$.
	And similarly also $\dim e_\lambda\K_n<\infty$.
\end{proof}
\begin{defi}
	We define an anti-involution $\ast$ on $\K_n$, which is given by rotating a circle diagram around $\frac{1}{2}$.
\end{defi}
From now on we will assume that any $\K_n$-module $M$ is compatible with the locally unital structure, i.e.\ $M=\bigoplus_{\lambda\in\Lambda_n}e_\lambda M$.
\subsection{A triangular basis and quasi-hereditaryness}
In this section we explore the structure of $\K_n$ further. 
In particular, we will prove that $\K_n$ is an essentially finite based quasi-hereditary algebra in the sense of \cite{BS21}.
\subsubsection{\texorpdfstring{$\Delta$ and $\nabla$}{Delta and Nabla} orientations}
We introduce orientations of circle diagrams that allow us to split our diagrams and provide then some cellular like properties.
These additional orientations behave very much like orientations for the Khovanov algebras of type $A$ respectively $B$, see \cite{BS11} for type $A$ and \cite{ES16a} for type $B$.
The main difference is that for type $P$ the orientations are not needed to define the algebra structure, they are only important for the more involved structure.
\begin{defi}
	A $\Delta$-orientation of a cap diagram $\overline{\lambda}$ associates to each cap with endpoints $a<b\in\bbZ+\frac{1}{2}$ an integer $k$ such that $a<k<b$ and for every other cap with endpoints $a'<k<b'$ we have $a'<a<b<b'$.

	A $\nabla$-orientation of a cup diagram $\overline{\lambda}$ associates to each cup with endpoints $a<b\in\bbZ+\frac{1}{2}$ an integer $k\in\{a-\frac{1}{2},b+\frac{1}{2}\}$.

	Furthermore, we require for both orientations that the integers are pairwise distinct.
	We draw this as $n$ black dots at the corresponding positions in the cap (resp.\ cup) diagram.
\end{defi}
\begin{rem}
	One might argue that \enquote{orientation} is not a descriptive name for this.
	However, we chose this wording (similar to surgery procedures) as these mimic the orientations in \cite{BS11a} and \cite{ES16a} for the Khovanov algebras of other types.
	In their setup, these orientations provide filtrations of indecomposable projective modules via standard and costandard modules.
	Our orientations play the exact same role as shown in \cref{knisquher} below.
\end{rem}
\begin{rem}\label{remorientationsareweights}
	We can think of $\Delta$- and $\nabla$-orientations also as integral dominant weights for $\lie{p}(n)$.
	If the set $\{k_1>\dots> k_n\}$ describes the orientation we can associate the integral dominant weight $k_1-1>\dots> k_n-1$.
	
	So cup diagrams are shifted by $\frac{1}{2}$, orientations by $1$ and cap diagrams by $\frac{3}{2}$.
\end{rem}
\begin{rem}
	Spelling out the definition of $\Delta$-orientation, we associate to each cap a point that lies below this cap but not below any other cap inside this cap.
	Informally speaking, we move the point from the cap somewhere to the inside.
	Especially if we have a small cap there is only one point we can associate.
	
	So for a typical cap diagram there is only one $\Delta$-orientation.
\end{rem}
\begin{rem}\label{alternativenablaorientation}
	Instead of associating the integer $a-\frac{1}{2}$ or $b+\frac{1}{2}$ to a cup, we could also demand for an integer $k$ such that $k\notin[a,b]$ and for another cup (here two neighbored rays are also considered as a cup) with endpoints $a'<a<b<b'$ we have $a'<k<b'$.
	This allows a priori for more possibilities for each cup diagram, but the interested reader can easily verify that these two definitions are equivalent.
	This other definition resembles more the definition of a $\Delta$-orientation meaning that we can move every point to the outside but not outside another cup, but the original one is more practical.
\end{rem}
\begin{defi}
	An orientation of an orientable circle diagram $\underline{\lambda}\overline{\mu}$ is a $\Delta$-ori\-en\-ta\-tion of $\overline{\mu}$ and a $\nabla$-orientation $\underline{\lambda}$ such that the sets of the associated integers agree. 
	Using \cref{remorientationsareweights} we also write $\underline{\lambda}\nu\overline{\mu}$ if the orientation of $\underline{\lambda}\overline{\mu}$ is exactly $\nu$.
\end{defi}
\begin{lem}\label{circlediaghasuniqueorientation}
	Every orientable circle diagram admits a unique orientation.
\end{lem}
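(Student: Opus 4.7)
The plan is an induction on the number $n$ of cups of $\underline{\lambda}$, which equals the number of caps of $\overline{\mu}$ whenever the two sets of associated integers are to agree. The base case $n=0$ is trivial, since the empty assignment is simultaneously a $\Delta$- and a $\nabla$-orientation.

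For the inductive step I pick an innermost cup $C$ of $\underline{\lambda}$, i.e.\ one containing no nested cup. The cup-diagram axiom that forbids rays strictly inside a cup forces $C=(a,a+1)$ to be small, with $a\in\bbZ+\tfrac{1}{2}$, so the only candidates for the $\nabla$-integer of $C$ are the two integers $a-\tfrac{1}{2}$ and $a+\tfrac{3}{2}$ adjacent to $C$. I then trace the connected component of $\underline{\lambda}\overline{\mu}$ through the two middle vertices $a,a+1$ of $C$ and examine the two $\overline{\mu}$-pieces sitting directly above $C$ (each being either a ray or a cap). A direct local case analysis shows that exactly one of the two $\nabla$-candidates can simultaneously serve as the $\Delta$-integer of one of the adjacent caps, thus fixing a pair $(C,C')$ together with its common integer $k$.

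I then remove $C$ and $C'$, replacing each by two rays at the vacated positions, obtaining a smaller diagram $\underline{\lambda}'\overline{\mu}'$ with $n-1$ cups and caps. This deletion is a local surgery akin to those of \cref{detailedanalysis} and either straightens or splits the line through $(C,C')$; orientability is preserved because no new circle is created, and any newly non-propagating segment inherits the side-of-$0$ property from the original component. By the inductive hypothesis $\underline{\lambda}'\overline{\mu}'$ admits a unique orientation $K'$, and $K:=K'\cup\{k\}$ is then an orientation of $\underline{\lambda}\overline{\mu}$; existence follows. Uniqueness follows because the integer $k$ for the pair $(C,C')$ is forced in the local step, and $k\notin K'$ since the positions supporting $k$ are no longer covered by any cup or cap after the reduction.

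The main obstacle is the local case analysis identifying the pair $(C,C')$ and the integer $k$: one must run through all configurations of the $\overline{\mu}$-pieces adjacent to $C$ (ray--ray, ray--cap on either side, cap--cap) together with the possible nesting patterns of the adjacent caps, and check in each case that orientability allows exactly one matching. These verifications, together with the preservation of orientability under the surgery, reduce to finite local analyses in the spirit of \cref{detailedanalysis}.
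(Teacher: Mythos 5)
Your induction is the mirror image of the paper's, but it runs the reduction from the wrong end and this creates a genuine gap. The paper starts from a \emph{small cap}: for a cap with endpoints $a,a+1$ the $\Delta$-integer is forced (there is exactly one integer strictly between the endpoints), so the only freedom left is which adjacent cup claims that dot. You instead start from an innermost (small) cup, whose $\nabla$-integer has two candidates $a-\frac{1}{2}$ and $a+\frac{3}{2}$, and you assert that a local case analysis of the pieces of $\overline{\mu}$ directly above $C$ singles out exactly one of them. That assertion is false: in the configuration where $C$ has a small cap ending at each of its two endpoints (the vertical mirror of the paper's third case), \emph{both} candidates are the forced $\Delta$-integers of adjacent caps, and which one $C$ actually takes is decided only by the rest of the diagram. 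Concretely, take $e_\lambda=\underline{\lambda}\overline{\lambda}$ where $\underline{\lambda}$ has the two cups $\{a,a+1\}$ and $\{a+2,a+3\}$, so that $\overline{\lambda}$ has caps $\{a+1,a+2\}$ and $\{a+3,a+4\}$ with forced integers $a+\frac{3}{2}$ and $a+\frac{7}{2}$: locally the cup $\{a+2,a+3\}$ could take either, and only the constraint coming from the \emph{other} cup forces it to take $a+\frac{7}{2}$. The paper handles the analogous ambiguity in its cup--cap--cup case by performing the reduction first and reading off the correct assignment from the unique orientation of the smaller diagram; you would need to do the same, so your claim that the pair $(C,C')$ and the integer $k$ are determined \emph{before} reducing cannot be salvaged as stated.

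There are also two smaller problems with the reduction itself. Replacing the removed cup $C$ by two rays can produce something that is not a cup diagram at all: if $C$ is nested inside another cup, the definition forbids rays strictly inside a cup (this is exactly why the paper's third case merges the local configuration into a single cup rather than deleting pieces and leaving rays). And the orientability of the reduced diagram is not purely a statement about the old component: the redrawing convention for rays in \cref{defiorientable} depends on the total number of cups, which drops by one, so the phrase \enquote{inherits the side-of-$0$ property} needs an actual argument.
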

\begin{proof}
	We prove this via induction on the number $k$ of caps, with $k=0$ being trivial.
	
	So let $c$ be an orientable circle diagram.
	If $k>0$, there exists a small cap, which is connected in one of the following ways.
	\begin{equation*}
		\begin{tikzpicture}[scale=0.4]
			\CUP{1}\CAP{1}\LINE[dotted]{0}{-1}
			\SETCOORD{2}{0}
			\LINE[dotted]{0}{1}\CAP{1}\CUP{1}
			\SETCOORD{2}{0}
			\CUP{1}\CAP{1}\CUP{1}
		\end{tikzpicture}
	\end{equation*}
	The dotted ray indicates that it is either a ray or a cup that contains the undotted cup.
	Any orientation of $c$ necessarily needs to have a black dot at the unique position inside this small cap, which needs to be associated to one of the undotted cups in the above picture.
	In the first two cases we can remove the cup/cap pair and in the last we replace the picture with a cup.
	In all three cases we obtain an orientable diagram with one cap less.
	By induction, we now that this admits a unique orientation.
	 
	In the first two cases we directly get a unique orientation for $c$ by putting the cup/cap pair back and adding a dot below the small cap.
	In the third case note that the new cup that we added necessarily has a dot either to its left or right.
	If it is on the left, we associate this with the left cup and the dot inside the small cap with the right cup and vice versa.
	In any case, we can build a unique orientation out of the smaller diagram.
\end{proof}
\subsubsection{A triangular basis}
\begin{defi}
	For $\lambda,\mu\in\Lambda_n$ we define the sets $X(\lambda, \mu)$ and $Y(\lambda, \mu)$ as
	\begin{align*}
		X(\lambda, \mu)&\coloneqq \begin{cases}\{\underline{\lambda}\lambda\overline{\mu}\}&\text{if $\lambda$ is a valid $\Delta$-orientation of $\overline{\mu}$,}\\\emptyset&\text{otherwise,}\end{cases}\\
		Y(\lambda, \mu)&\coloneqq \begin{cases}\{\underline{\lambda}\mu\overline{\mu}\}&\text{if $\mu$ is a valid $\nabla$-orientation of $\underline{\lambda}$,}\\\emptyset&\text{otherwise.}\end{cases}
	\end{align*}
	We also set $Y(\lambda)\coloneqq \bigcup_{\mu\in\Lambda_n}Y(\mu,\lambda)$ and $X(\lambda)\coloneqq \bigcup_{\mu\in\Lambda_n}X(\lambda,\mu)$.
\end{defi}
\begin{lem}\label{matchingorientationsgivescircdiag}
	If $\nu$ is a valid $\Delta$-orientation of $\overline{\mu}$ and a valid $\nabla$-orientaion of $\underline{\lambda}$, then $\underline{\lambda}\overline{\mu}$ is orientable.
\end{lem}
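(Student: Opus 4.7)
I plan to prove the lemma by induction on $n := |\nu|$, which equals both the number of caps in $\overline{\mu}$ and the number of cups in $\underline{\lambda}$. The base case $n = 0$ is immediate: with no cups and no caps, the diagram $\underline{\lambda}\overline{\mu}$ consists entirely of parallel vertical rays, so it has neither circles nor non-propagating lines and is orientable in the sense of \cref{defiorientable}.

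For the inductive step, I would select a small (innermost) cap $c$ in $\overline{\mu}$ with endpoints $a < b$. By the definition of $\Delta$-orientation, the integer $k \in \nu$ assigned to $c$ satisfies $a < k < b$, and since $c$ is innermost the nesting condition at $c$ is vacuous. Because $\nu$ is simultaneously a $\nabla$-orientation of $\underline{\lambda}$, the same $k$ is assigned to a unique cup $c'$ in $\underline{\lambda}$ with an endpoint located at either $k - \tfrac{1}{2}$ or $k + \tfrac{1}{2}$. The local configuration of $c$ together with $c'$ now falls into finitely many sub-cases, mirroring the case analysis in the proof of \cref{circlediaghasuniqueorientation}: either $c'$ shares an endpoint with $c$ (forming a kink on the left or the right), or $c'$ lies entirely inside $(a,b)$ straddling $k$, or $c'$ crosses the boundary $a$ or $b$ from outside. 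In each case there is a natural local reduction producing a cap diagram $\overline{\mu}'$ and a cup diagram $\underline{\lambda}'$ each with one fewer cap and one fewer cup, and the restricted labeling $\nu' := \nu \setminus \{k\}$ remains a $\Delta$-orientation of $\overline{\mu}'$ and a $\nabla$-orientation of $\underline{\lambda}'$: the only orientation constraints affected are those that become vacuous when $c$ and $c'$ are removed. Applying the induction hypothesis yields orientability of $\underline{\lambda}'\overline{\mu}'$.

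The final step is to argue that reversing the local reduction to recover $\underline{\lambda}\overline{\mu}$ preserves orientability. Reinserting $c$ and $c'$ either extends an existing open strand into another open strand, or inserts a short open arc; in no case does it form a closed loop, because $c$ and $c'$ are glued only along the single dot position $k$ and not along both endpoints, so no new circle can appear. One must additionally check that the redrawing convention from \cref{defiorientable}, whose ray-endpoint positions depend on the total cap/cup count, shifts in a way exactly compensating for the inserted pair, so that any non-propagating line still has one endpoint on each side of $0$ (for top endpoints) or $-1$ (for bottom endpoints).

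The main obstacle is precisely this last verification: enumerating the finitely many local configurations of $c$ and $c'$ relative to surrounding cups, caps, and rays, and checking in each configuration that reversing the reduction neither creates a circle nor pushes a non-propagating line to the wrong side of $0$ under the $n \mapsto n+1$ shift of the redrawing convention. This is a direct but tedious bookkeeping argument, handled in exactly the spirit of \cref{circlediaghasuniqueorientation}; the conceptual content is that matching $\Delta$- and $\nabla$-dots at $k$ force $c$ and $c'$ into a pairing compatible with the orientability criterion, rather than one producing an offending circle.
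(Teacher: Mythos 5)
Your induction is a genuinely different route from the paper's. The paper proves the contrapositive by a short global counting argument: a circle (or a non-propagating line ending at the bottom) has some number $k$ of caps, the $\Delta$-condition traps $k$ dots inside that component, and the $\nabla$-condition on the component's cups (only $k$ of them for a circle, $k-1$ for such a line, with admissible dot slots sitting just outside the cups) cannot account for all of them. No induction or local case analysis is needed.

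As written, your sketch has concrete gaps. First, the case analysis is off: for an innermost cap $c=\{a,a+1\}$ the dot is $k=a+\tfrac{1}{2}$, and the $\nabla$-condition forces the matched cup $c'$ to have left endpoint $a+1$ or right endpoint $a$, so $c'$ always shares exactly one endpoint with $c$ and forms a kink (it cannot lie "inside $(a,b)$" or "cross the boundary"; note also $c'\neq\{a,a+1\}$ since then its admissible dots would be $a-\tfrac{1}{2}$ and $a+\tfrac{3}{2}$). Identifying this forced kink matters, because only then is the reduction a topological straightening, which puts the components of $\underline{\lambda}\overline{\mu}$ and $\underline{\lambda}'\overline{\mu}'$ in bijection; your argument that no new circle appears only excludes the two-arc circle $c\cup c'$, not a longer circle passing through $c$, $c'$ and other arcs. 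Second, if $c'$ is nested inside another cup of $\underline{\lambda}$, naively deleting it leaves rays inside that outer cup, violating the defining axiom of cup diagrams, so $\underline{\lambda}'$ need not be a cup diagram and the inductive hypothesis does not directly apply; the reduction has to be set up so that the surviving cups, their nesting, and their dot slots are genuinely unchanged. Finally, the interaction with the redrawing convention of \cref{defiorientable} — the only place where orientability actually constrains non-propagating lines — is exactly where the content of the lemma lies, and it is deferred wholesale. Until that bookkeeping is carried out the proof is incomplete; given that the paper's counting argument is three lines, I would recommend switching to it.
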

\begin{proof}
	Suppose that $\underline{\lambda}\overline{\mu}$ is not orientable.
	Then it contains either a non-propagating line ending at the bottom or a circle.
	In both cases look at the $k$ caps of this component. 
	As $\nu$ is a valid $\Delta$-orientation of $\overline{\mu}$ we have $k$ dots in between these $k$ caps.
	But this means that we also have $k$ dots in $k$ cups if we have a circle or $k$ dots for $k-1$ cups bounded by rays if we have a non-propagating line ending at the bottom.
	But in either case $\nu$ is not a valid $\nabla$-orientation of $\underline{\lambda}$.
\end{proof}
\begin{thm}\label{thmpropbasedquher}
	The following properties hold.
	\begin{enumerate}
		\item\label{thmpropbasedquheri} The products $(yx)$ for $(y,x)\in\sqcup_{\lambda\in\Lambda_n}Y(\lambda)\times X(\lambda)$ form a basis of $\K_n$.
		\item\label{thmpropbasedquherii} For $\lambda,\mu\in\Lambda_n$, the sets $Y(\mu,\lambda)$ and $X(\lambda, \mu)$ are empty unless $\mu\leq\lambda$.
		\item\label{thmpropbasedquheriii} We have that $Y(\lambda,\lambda)=X(\lambda,\lambda)=\{e_\lambda\}$ for each $\lambda\in\Lambda_n$.
	\end{enumerate}
\end{thm}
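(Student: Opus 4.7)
My plan is to handle parts (iii) and (ii) first as essentially combinatorial observations, and then reduce part (i) to a multiplication computation combined with a structural analysis of orientations. For (iii), the diagram $e_\lambda=\underline{\lambda}\overline{\lambda}$ is manifestly orientable (every cap of $\overline{\lambda}$ is vertically paired with the corresponding cup of $\underline{\lambda}$, giving only straight propagating components), hence admits a unique orientation by \cref{circlediaghasuniqueorientation}. Unwinding the identification between orientations and weights from \cref{remorientationsareweights} shows that this orientation is $\lambda$ itself, so $Y(\lambda,\lambda)=X(\lambda,\lambda)=\{e_\lambda\}$. For (ii), if $Y(\mu,\lambda)\neq\emptyset$ then $\lambda$ is a valid $\nabla$-orientation of $\underline{\mu}$. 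Reading off the constraints on dot positions from the alternative description in \cref{alternativenablaorientation}—each dot lies just outside the endpoints of its cup but inside any enclosing cup—and comparing with the description of $\mu$ via the right endpoints of cups in $\underline{\mu}$, one checks componentwise that $\mu_i\leq\lambda_i$, i.e.\ $\mu\leq\lambda$ in the ordering of \cref{defipartialordering}. The case of $X(\lambda,\mu)$ is symmetric under horizontal reflection.

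For (i), I would set up a bijection between $\bigsqcup_{\lambda\in\Lambda_n}Y(\lambda)\times X(\lambda)$ and the natural basis of $\K_n$, namely orientable circle diagrams $\underline{\mu}\overline{\sigma}$ with $\mu,\sigma\in\Lambda_n$ and no non-propagating lines. Given a pair $(y,x)=(\underline{\mu}\lambda\overline{\lambda},\underline{\lambda}\lambda\overline{\sigma})$, the product $yx$ in $\K$ has middle section $\overline{\lambda}\underline{\lambda}$, in which every surgery is a \enquote{straightening} in the sense of \cref{detailedanalysis}. Consequently $yx=\underline{\mu}\overline{\sigma}$ in $\K$; by \cref{matchingorientationsgivescircdiag} this diagram is orientable, and by \cref{circlediaghasuniqueorientation} its unique orientation is $\lambda$. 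Conversely, any basis element $\underline{\mu}\overline{\sigma}$ of $\K_n$ has a unique orientation $\nu$, which factors it as the product of $\underline{\mu}\nu\overline{\nu}\in Y(\nu,\mu)$ with $\underline{\nu}\nu\overline{\sigma}\in X(\nu,\sigma)$, showing that the spanning set $\{yx\}$ exhausts the basis and that the indexing is injective.

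The delicate heart is the verification that the product $yx$ survives the quotient by $\mathbb{I}_n$, equivalently that $\underline{\mu}\overline{\sigma}$ has no non-propagating lines. Orientability by itself only excludes non-propagating lines whose endpoints lie on the same side of $0$, leaving open those that straddle $0$. To rule those out I would argue locally: a non-propagating line at the top with $k$ caps of $\overline{\sigma}$ and $k-1$ cups of $\underline{\mu}$ receives $k$ dots from the $\Delta$-orientation placed strictly inside the caps and only $k-1$ dots from the $\nabla$-orientation placed just outside the cups, and the count-and-position mismatch combined with the hypothesis that $y$ and $x$ are individually nonzero in $\K_n$ (so neither $\underline{\mu}\overline{\lambda}$ nor $\underline{\lambda}\overline{\sigma}$ contains such a line) yields the contradiction. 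This is the main obstacle; once it is cleared, $(y,x)\mapsto yx$ is the claimed bijection, and parts (ii) and (iii) furnish precisely the triangularity and unitriangularity conditions required to match the definition of an essentially finite based quasi-hereditary algebra from \cite{BS21}.
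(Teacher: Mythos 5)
Your overall strategy coincides with the paper's: establish (ii) and (iii) by direct inspection of orientations, and obtain (i) by factoring each orientable circle diagram $\underline{\mu}\overline{\sigma}$ as $\underline{\mu}\overline{\nu}\cdot\underline{\nu}\overline{\sigma}$ through its unique orientation $\nu$ and invoking \cref{circlediaghasuniqueorientation}. However, there is a genuine gap at the decisive step. You assert that in the product $yx=\underline{\mu}\overline{\lambda}\cdot\underline{\lambda}\overline{\sigma}$ \enquote{every surgery is a straightening}, and consequently $yx=\underline{\mu}\overline{\sigma}$. This is precisely the statement that needs proof, and it is not automatic: the multiplication is declared to be $0$ as soon as any intermediate diagram fails to be orientable (or, in $\K_n$, acquires a non-propagating line), and for $\underline{\mu}\neq\underline{\lambda}$ the caps of the middle section $\overline{\lambda}$ are not directly joined to the cups of the middle section $\underline{\lambda}$, so the argument of \cref{multwithidempotents} does not apply verbatim. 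The paper's proof spends essentially all of its effort here: it interpolates between $\underline{\lambda}$ and $\underline{\nu}$ by a chain of cup diagrams $\underline{\nu_k}$, checks that the relevant weights remain orientations at each stage, and then performs the $n$ surgeries in a specific order (right to left) so that each block reduces to multiplication by an idempotent and hence consists of straightenings. Some argument of this kind is indispensable; without it both directions of your bijection (that $yx$ equals the expected basis vector, and that the factorization of $\underline{\mu}\overline{\sigma}$ through $\nu$ really multiplies back to $\underline{\mu}\overline{\sigma}$) are unproven.

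By contrast, the step you single out as the delicate heart --- that the target diagram $\underline{\mu}\overline{\sigma}$ has no non-propagating lines --- is comparatively easy and is exactly the dot-counting argument of \cref{matchingorientationsgivescircdiag}; since $\underline{\mu}$ and $\overline{\sigma}$ each carry $n$ cups resp.\ caps, excluding circles and non-propagating lines ending at the bottom already excludes all non-propagating lines. One further small point: with the convention of \cref{defipartialordering} (where $\underline{\lambda}\leq\underline{\mu}$ means $\lambda_i\geq\mu_i$ on right endpoints), your componentwise inequality in (ii) is stated the wrong way around; the correct observation is that every valid orientation of $\underline{\mu}$ is obtained from the rightmost admissible placement of dots (which yields $\mu$ itself) by moving individual dots to the left, and moving dots left makes the weight strictly larger in this order.
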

\begin{proof}
For \cref{thmpropbasedquherii} note that $Y(\mu, \lambda)$ (resp.\ $X(\lambda,\mu)$) is non-empty only if $\lambda$ is a $\nabla$-orientation (resp.\ $\Delta$-orientation) of $\underline{\mu}$ (resp.\ $\overline{\mu}$).
Observe that if one always puts the dot at the rightmost position possible one obtains the $\nabla$-orientation (resp.\ $\Delta$-orientation) $\mu$.
All the others are obtained from this by moving single dots to the left, which makes the weight bigger in our order from \cref{defipartialordering}.

For \cref{thmpropbasedquheriii} observe that $\lambda$ is clearly a $\Delta$-orientation of $\overline{\lambda}$ and a $\nabla$-orientation of $\underline{\lambda}$.

Thus, it only remains to proof \cref{thmpropbasedquheri}.
We will prove that if $\underline{\lambda}\nu\overline{\mu}$ is an orientation, then $\underline{\lambda}\overline{\mu}=\underline{\lambda}\overline{\nu}\cdot\underline{\nu}\overline{\mu}$.
We clearly have $\underline{\lambda}\overline{\nu}\in Y(\lambda,\nu)$ and $\underline{\nu}\overline{\mu}\in X(\nu,\mu)$ and together with \cref{circlediaghasuniqueorientation} this proves \cref{thmpropbasedquheri}.

Denote by $\underline{\nu_k'}$ the cup diagram, where we remove the $k$ cups of $\underline{\lambda}$ which are associated to the $k$ leftmost dots of $\nu$.
Then let $\underline{\nu_k}$ be obtained from $\underline{\nu_k'}$ by adding $k$ cups such that their endpoints correspond to the $k$ leftmost dots of $\nu$.
In other words $\nu_0=\lambda$ and $\nu_n=\nu$.
By construction, we know that $\nu$ is a $\nabla$-orientation of $\underline{\nu_k}$.
Therefore, $\underline{\nu_k}\overline{\mu}$ is orientable by \cref{matchingorientationsgivescircdiag}.
Furthermore, by construction we also have that $\nu_k$ is a $\nabla$-orientation of $\underline{\lambda}$ and hence $\underline{\lambda}\overline{\nu_k}$ is orientable as well.

Now we compute some surgery procedures for $\underline{\lambda}\overline{\nu_k}\cdot\underline{\nu_k}\overline{\mu}$.
Refer for this also to \cref{orientationsforKn}, where we describe explicitly how orientations behave under multiplication.
When multiplying these diagrams we have $n$ surgery procedures in total, $n-k$ of them come from $\lambda$ and the last $k$ from $\nu$.
By construction, we may choose to apply the $n-k$ procedures for $\lambda$ first.
As stated before $\nu_k$ is an orientation of $\underline{\lambda}\overline{\nu_k}$ and the $n-k$ rightmost dots in $\nu_k$ all also appear in $\lambda$.
But this means that the first $n-k$ surgery procedures are similar to multiplying with $e_{\nu_k}$ instead of $\underline{\lambda}\overline{\nu_k}$ and each of these looks just as in the proof of \cref{multwithidempotents}, i.e.\ all theses are straightenings.
Thus, none of these change orientability and after $n-k$ steps we have reached an orientable diagram.
But this diagram is exactly the diagram that we obtain after $n-k$ surgery procedures (for one specific order) of the multiplication $\underline{\lambda}\overline{\nu}\cdot\underline{\nu}\overline{\mu}$.
Therefore, with the correct order (namely from right to left) of surgery procedures we see that after applying each of them we receive an orientable diagram.
Thus, by definition of the multiplication we get $\underline{\lambda}\overline{\mu}=\underline{\lambda}\overline{\nu}\cdot\underline{\nu}\overline{\mu}$.
\end{proof}
\begin{cor}\label{knisquher}
	The algebra $\K_n$ is an essentially finite based quasi-hereditary algebra in the sense of \cite{BS21}.
\end{cor}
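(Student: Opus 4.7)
The plan is to verify the axioms of an essentially finite based quasi-hereditary algebra from \cite{BS21} directly from the results already established. The definition in \cite{BS21} requires: a choice of interval-finite poset $\Lambda_n$; for each $\lambda\in\Lambda_n$ a distinguished idempotent $e_\lambda$ together with sets $X(\lambda)$ and $Y(\lambda)$ of elements factoring through $e_\lambda$; a triangular basis in the sense that the products $yx$ for $(y,x)\in\sqcup_\lambda Y(\lambda)\times X(\lambda)$ form a basis; plus the standard compatibility conditions with the partial order and the diagonal. Every one of these pieces of data has been set up above.

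First, I would observe that the partial order on $\Lambda_n$ from \cref{defipartialordering} restricts to an interval-finite poset, and \cref{KNisessfinite} supplies the essential finiteness ($\dim e_\lambda\K_n<\infty$ and $\dim\K_ne_\lambda<\infty$). Next, I would invoke \cref{thmpropbasedquher}: part \cref{thmpropbasedquheri} provides the triangular basis indexed by $\sqcup_{\lambda}Y(\lambda)\times X(\lambda)$; part \cref{thmpropbasedquherii} is exactly the upper-triangularity condition, namely that $Y(\mu,\lambda)$ and $X(\lambda,\mu)$ are empty unless $\mu\leq\lambda$, which is the required vanishing for $\mu\not\leq\lambda$; and part \cref{thmpropbasedquheriii} gives $Y(\lambda,\lambda)=X(\lambda,\lambda)=\{e_\lambda\}$, i.e.\ the diagonal condition that singles out each $e_\lambda$ as the unique basis element in its stratum.

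It remains to check the anti-involution compatibility, if required by the chosen formulation in \cite{BS21}: the map $*$ defined on $\K_n$ via rotation by $\frac{1}{2}$ interchanges cup and cap diagrams, and sends a basis element $\underline{\lambda}\nu\overline{\mu}$ to $\underline{\mu}\nu\overline{\lambda}$; in particular, it interchanges $X(\lambda,\mu)$ with $Y(\mu,\lambda)$ and fixes each $e_\lambda$. Together with the triangular basis property, this verifies that the data $(\Lambda_n, \leq, \{e_\lambda\}, X, Y, *)$ satisfies the axioms of \cite{BS21}. I do not expect a real obstacle here: the substantive content has already been absorbed by \cref{thmpropbasedquher} (which itself rests on \cref{circlediaghasuniqueorientation} and \cref{matchingorientationsgivescircdiag}), and the corollary is a packaging statement. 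The only minor care point is to match our poset convention to the one used in \cite{BS21}, but since \cref{defipartialordering} and \cref{thmpropbasedquherii} use the same convention consistently, nothing more is needed.
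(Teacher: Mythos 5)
Your proposal is correct and follows the same route as the paper, which simply combines \cref{KNisessfinite} (essential finiteness) with \cref{thmpropbasedquher} (the triangular basis data and its compatibility with the order and the diagonal). The extra remark on the anti-involution $\ast$ is harmless but not needed for the packaging.
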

\begin{proof}
	This is immediate from \cref{KNisessfinite} and \cref{thmpropbasedquher}.
\end{proof}
\subsubsection{An alternative description of $\K_n$ using orientations}\label{orientationsforKn}
Alternatively we can describe $\K_n$ as the algebra with a basis given by all oriented circle diagrams $\underline{\lambda}\nu\overline{\mu}$.
And the multiplication is given by exactly the same procedure we just have to define what happens with the orientation during a surgery procedure.

In order to describe this, we first define what such an orientation ought to be.
\begin{defi}
	Suppose we are given a generalized stacked circle diagram $\underline{\lambda}\bm{t}\overline{\mu}$ with $t=t_k\cdots t_1$.
	An orientation of this diagram is a sequence of weights $\nu_k, \dots, \nu_0$ such that 
	\begin{enumerate}
		\item $\nu_k$ is a $\nabla$-orientation of $\underline{\lambda}$,
		\item $\nu_0$ is a $\Delta$-orientation of $\overline{\mu}$,
		\item and every of $\nu_it_i\nu_{i-1}$ is oriented (where we think of the orientations $\nu_i$ as dots), by which we mean
		\begin{itemize} 
			\item every cap has associated a unique dot of $\lambda_i$ inside it that is not contained in any other cap,
			\item every cup has associated a unique dot of $\lambda_{i-1}$ directly to its left or right,
			\item and every other dot of $\lambda_i$ that is not contained in any cap lies in a region bounded by two rays.
				For each of these dots, there exists a unique dot of $\lambda_{i-1}$ that is in the same region and not associated to a cup.
		\end{itemize}
	\end{enumerate}
\end{defi}
\begin{rem}
	Associating the dots to cups and or caps is not part of the data. 
	We require only the existence of such a pairing.

	In \cref{exorientationscrossmatching} it is easy to see that there are many possibilities to define this pairing, but we consider this to be only one orientation.
\end{rem}
\begin{ex}
	\Cref{exorientationscrossmatching} provides examples for some orientations of crossingless matchings.
	\begin{figure}[h]
		\centering
		\begin{tikzpicture}[scale=0.5]
			\CUP{1}
			\SETCOORD{1}{0}
			\CUP{-3}
			\SETCOORD{-2}{0}
			\CUP{-1}
			\SETCOORD{-1}{0}
			\LINE{0}{-3.5}
			\SETCOORD{1}{0}
			\LINE{2}{3.5}
			\SETCOORD{5}{0}
			\LINE{0}{-3.5}
			\SETCOORD{-1}{0}
			\CAP{-5}
			\SETCOORD{1}{0}
			\CAP{1}
			\SETCOORD{1}{0}
			\CAP{1}
			\node at (-0.5,0) {$\bullet$};
			\node at (2.5,0) {$\bullet$};
			\node at (-1.5,0) {$\bullet$};
			\node at (-2.5,0) {$\bullet$};
			\node at (-4.5,0) {$\bullet$};
			\node at (0.5,-3.5) {$\bullet$};
			\node at (-0.5,-3.5) {$\bullet$};
			\node at (-1.5,-3.5) {$\bullet$};
			\node at (-3.5,-3.5) {$\bullet$};
			\node at (-4.5,-3.5) {$\bullet$};
		\end{tikzpicture}
		\caption{Examples of orientations of crossingless matchings}
		\label{exorientationscrossmatching}
\end{figure}
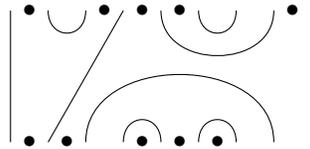
\end{ex}
\begin{lem}
	Any orientable diagram of the form $\underline{\lambda}\bm{t}\overline{\mu}$ admits a unique orientation (and if it admits an orientation it is orientable).
\end{lem}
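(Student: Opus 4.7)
The plan is to induct on the number of layers $k$ in $\bm{t}=t_k\cdots t_1$. The base case $k=0$ is precisely \cref{circlediaghasuniqueorientation}: a diagram $\underline{\lambda}\,\overline{\mu}$ with no intermediate matching is an ordinary circle diagram, and an orientation in the new sense collapses to a single weight $\nu_0$ that must simultaneously be a $\nabla$-orientation of $\underline{\lambda}$ and a $\Delta$-orientation of $\overline{\mu}$, which is the original definition.

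For the inductive step, I will establish a bijection between orientations of $\underline{\lambda}\bm{t}\,\overline{\mu}$ and orientations of the reduced diagram $\underline{\lambda}'\,t_{k-1}\cdots t_1\,\overline{\mu}$, where $\underline{\lambda}'=\lred(\underline{\lambda}t_k)$. The key observation is that, given $\nu_k$ (a $\nabla$-orientation of $\underline{\lambda}$) and the layer $t_k$, the compatibility axioms uniquely determine the restriction of $\nu_{k-1}$ to the finite strip affected by $t_k$: each cap of $t_k$ is forced to contain exactly one dot of $\nu_k$ in its interior (not nested in another cap), each cup of $t_k$ is paired with an adjacent dot of $\nu_{k-1}$ directly to its left or right, and the remaining dots propagate unchanged through the ray regions. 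A direct local case analysis, mirroring the small-cap induction used for \cref{circlediaghasuniqueorientation}, shows that this prescription yields a bijection when restricted to a single layer, hence globally between orientations of the two diagrams.

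Orientability transfers along this bijection because $\underline{\lambda}t_k$ and $\underline{\lambda}'$ are topologically identical by construction of $\lred$, so circles and non-propagating lines in the two diagrams correspond bijectively. Furthermore, $\lred$ only modifies the picture inside a bounded strip, so which endpoints of rays land on either side of $0$ is preserved, and hence the condition that non-propagating lines have endpoints on opposite sides of $0$ transfers. Combining this with the bijection on orientations, the inductive hypothesis applied to the $(k-1)$-layer diagram supplies a unique orientation at the bottom, which then lifts uniquely all the way up.

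The main obstacle is the local case analysis in the inductive step: one must check in every possible local configuration inside $t_k$ (a cap of $t_k$ capping off two cup endpoints, two ray endpoints, or one of each from $\underline{\lambda}$; a cup of $t_k$ connecting two rays, or joining a ray to an existing cup; and the mirror situations) that the matching of dots between $\nu_k$ and $\nu_{k-1}$ exists, is unique, and respects the total dot count. This is somewhat tedious but essentially parallel to the base case of \cref{circlediaghasuniqueorientation}, and once verified it yields both the existence-plus-uniqueness and the equivalence of orientability with admitting an orientation.
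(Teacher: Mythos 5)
Your route — peeling off the bottom layer via $\lred(\underline{\lambda}t_k)$ and inducting on the number of layers — is genuinely different from the paper's, which simply reruns the innermost-small-cap induction of \cref{circlediaghasuniqueorientation} on the whole stacked diagram. The layer-peeling idea is attractive because it reduces everything to the already-proved circle-diagram case, but as written it has two concrete problems.

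First, the mechanism you describe for the single-layer step is not right. Given $\nu_k$, the compatibility axioms do \emph{not} uniquely determine $\nu_{k-1}$ on the strip of $t_k$: each cup of $t_k$ leaves a binary left/right choice for its dot, and a free dot of $\nu_{k-1}$ is only required to lie in the same ray-bounded region as its partner in $\nu_k$, not at the same position, so dots do not ``propagate unchanged'' (compare \cref{figureStraighteningwithorient}, where dots move within their region). The statement you actually need is the reverse uniqueness — that each $\nabla$-orientation $\nu_{k-1}$ of $\lred(\underline{\lambda}t_k)$ lifts to a \emph{unique} $\nu_k$ that is a $\nabla$-orientation of $\underline{\lambda}$ with $\nu_k t_k\nu_{k-1}$ oriented, and that every compatible pair arises this way — and that is precisely the nontrivial content, which your proposal defers to an unperformed case analysis. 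Second, $\lred(\underline{\lambda}t_k)$ is only a cup diagram when $\underline{\lambda}t_k$ contains no closed loop and no arc returning to the bottom edge. For the forward implication this is guaranteed by orientability (in the $\K_n$ sense no non-propagating lines are allowed at all), but for the parenthetical converse ``admits an orientation $\Rightarrow$ orientable'' you cannot assume it: you must first show that the existence of an orientation excludes such components, by the dot-counting argument of \cref{matchingorientationsgivescircdiag} — which is also the result your base case needs for that direction, rather than \cref{circlediaghasuniqueorientation} alone. With these two points repaired (and the local bijection actually verified), the induction on layers would go through.
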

\begin{proof}
	This is proven by exactly the same argument as for \cref{circlediaghasuniqueorientation}.
\end{proof}
Observe that all diagrams that are obtained using surgery procedures are of the form $\underline{\lambda}t\overline{\mu}$ for some crossingless matching $t$.
We have now all the ingredients to describe how orientations behave under surgeries.
If the surgery we apply is a split or a reconnect the result is $0$, so we only have to look at straightening.
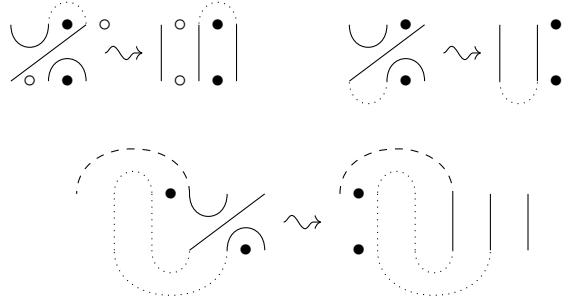
\begin{figure}[h]
	\centering
	 \begin{tikzpicture}[scale=0.5]
			\CUP{1}
			\CAP[dotted]{1}
			\LINE{-2}{-1.5}
			\SETCOORD{1}{0}
			\CAP{1}
			\SETCOORD{9}{0}
			\CAP{-1}
			\CUP[dotted]{-1}
			\LINE{2}{1.5}
			\SETCOORD{-1}{0}
			\CUP{-1}
			\draw[->, decorate, decoration={snake}] (2.5, -0.75) -- (3.5, -0.75);
			\draw[->, decorate, decoration={snake}] (11.5, -0.75) -- (12.5, -0.75);
			\node at (1.5,0) {$\bullet$};
			\node at (1.5,-1.5) {$\bullet$};
			\node at (2.5,0) {$\circ$};
			\node at (0.5,-1.5) {$\circ$};
			\node at (5.5,0) {$\bullet$};
			\node at (5.5,-1.5) {$\bullet$};
			\node at (4.5,0) {$\circ$};
			\node at (4.5,-1.5) {$\circ$};
			\node at (10.5,-1.5) {$\bullet$};
			\node at (10.5,0) {$\bullet$};
			\node at (14.5,-1.5) {$\bullet$};
			\node at (14.5,0) {$\bullet$};
			\SETCOORD{4}{0}
			\LINE{0}{-1.5}
			\CUP[dotted]{1}
			\LINE{0}{1.5}
			\SETCOORD{1}{0}
			\LINE{0}{-1.5}
			\SETCOORD{-9}{0}
			\LINE{0}{1.5}
			\CAP[dotted]{-1}
			\LINE{0}{-1.5}
			\SETCOORD{-1}{0}
			\LINE{0}{1.5}
			
			\SETCOORD{2.75}{-6}
			\CAP{-1}
			\CUP[dotted]{-3}
			\LINE[dotted]{0}{1.5}
			\CAP[dotted]{1}
			\LINE[dotted]{0}{-1.5}
			\CUP[dotted]{1}
			\LINE{2}{1.5}
			\SETCOORD{-1}{0}
			\CUP{-1}
			\CAP[dashed]{-3}
			\SETCOORD{7}{0}
			\CAP[dashed]{3}
			\LINE{0}{-1.5}
			\CUP[dotted]{-1}
			\LINE[dotted]{0}{1.5}
			\CAP[dotted]{-1}
			\LINE[dotted]{0}{-1.5}
			\CUP[dotted]{3}
			\LINE{0}{1.5}
			\SETCOORD{1}{0}
			\LINE{0}{-1.5}
			\node at (9.25,-6) {$\bullet$};
			\node at (6.25,-6) {$\bullet$};
			\node at (4.25,-4.5) {$\bullet$};
			\node at (9.25,-4.5) {$\bullet$};
			\draw[->, decorate, decoration={snake}] (7.25, -5.25) -- (8.25, -5.25);
											\end{tikzpicture}
		\caption{How orientations behave under straightening}
		\label{figureStraighteningwithorient}
\end{figure}
Note that by definition of orientation these are all the cases that appear.
In every case we move the dots as depicted in \cref{figureStraighteningwithorient} (the white dots may or may not be there)

This preserves the orientation as the resulting dots lie in the same region.

After every surgery procedure we have a trivial middle section and the two orientations agree.
Then we collapse the middle section and identify the two orientations.
\begin{ex}
	Let $n=3$ and $\lambda=(1,0,-1)$, $\mu=(4,3,-1)$ and $\nu=(3,2,-1)$. 
	We then compute in \cref{exmultwithoris} $\underline{\nu}\overline{\mu}\cdot\underline{\mu}\overline{\lambda}$ including the orientations.
	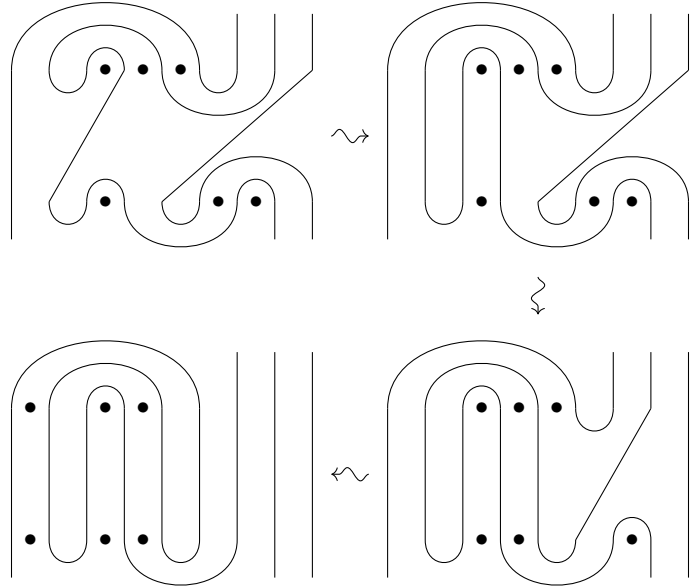
\begin{figure}[h]
		\centering
		\begin{tikzpicture}[scale=0.5]
			\LINE{0}{4.5}
			\CAP{5}
			\CUP{1}
			\LINE{0}{1.5}
			\SETCOORD{1}{0}
			\LINE{0}{-1.5}
			\CUP{-3}
			\CAP{-3}
			\CUP{1}
			\CAP{1}
			\LINE{-2}{-3.5}
			\CUP{1}
			\CAP{1}
			\CUP{3}
			\CAP{1}
			\LINE{0}{-1}
			\SETCOORD{1}{0}
			\LINE{0}{1}
			\CAP{-3}
			\CUP{-1}
			\LINE{4}{3.5}
			\LINE{0}{1.5}
			\node at (2.5,1) {$\bullet$};
			\node at (5.5,1) {$\bullet$};
			\node at (6.5,1) {$\bullet$};
			\node at (2.5,4.5) {$\bullet$};
			\node at (3.5,4.5) {$\bullet$};
			\node at (4.5,4.5) {$\bullet$};
			\draw[->, decorate, decoration={snake}] (8.5,2.75)--(9.5,2.75); 
			\SETCOORD{2}{-6}
			\LINE{0}{4.5}
			\CAP{5}
			\CUP{1}
			\LINE{0}{1.5}
			\SETCOORD{1}{0}
			\LINE{0}{-1.5}
			\CUP{-3}
			\CAP{-3}
			\LINE{0}{-3.5}
			\CUP{1}
			\LINE{0}{3.5}
			\CAP{1}
			\LINE{0}{-3.5}
			\CUP{3}
			\CAP{1}
			\LINE{0}{-1}
			\SETCOORD{1}{0}
			\LINE{0}{1}
			\CAP{-3}
			\CUP{-1}
			\LINE{4}{3.5}
			\LINE{0}{1.5}
			\node at (12.5,1) {$\bullet$};
			\node at (15.5,1) {$\bullet$};
			\node at (16.5,1) {$\bullet$};
			\node at (12.5,4.5) {$\bullet$};
			\node at (13.5,4.5) {$\bullet$};
			\node at (14.5,4.5) {$\bullet$};
			\draw[->, decorate, decoration={snake}] (14, -1) -- (14, -2);
			\SETCOORD{-8}{-15}
			\LINE{0}{4.5}
			\CAP{5}
			\CUP{1}
			\LINE{0}{1.5}
			\SETCOORD{1}{0}
			\LINE{0}{-1.5}
			\LINE{-2}{-3.5}
			\CUP{-1}
			\LINE{0}{3.5}
			\CAP{-3}
			\LINE{0}{-3.5}
			\CUP{1}
			\LINE{0}{3.5}
			\CAP{1}
			\LINE{0}{-3.5}
			\CUP{3}
			\CAP{1}
			\LINE{0}{-1}
			\SETCOORD{1}{0}
			\LINE{0}{6}
			\node at (12.5,-8) {$\bullet$};
			\node at (13.5,-8) {$\bullet$};
			\node at (16.5,-8) {$\bullet$};
			\node at (12.5,-4.5) {$\bullet$};
			\node at (13.5,-4.5) {$\bullet$};
			\node at (14.5,-4.5) {$\bullet$};
			\draw[->, decorate, decoration={snake}] (9.5, -6.25) -- (8.5, -6.25);
			\SETCOORD{-18}{-6}
			\LINE{0}{4.5}
			\CAP{5}
			\LINE{0}{-3.5}
			\CUP{-1}
			\LINE{0}{3.5}
			\CAP{-3}
			\LINE{0}{-3.5}
			\CUP{1}
			\LINE{0}{3.5}
			\CAP{1}
			\LINE{0}{-3.5}
			\CUP{3}
			\LINE{0}{5}
			\SETCOORD{1}{0}
			\LINE{0}{-6}
			\SETCOORD{1}{0}
			\LINE{0}{6}
			\node at (2.5,-8) {$\bullet$};
			\node at (3.5,-8) {$\bullet$};
			\node at (0.5,-8) {$\bullet$};
			\node at (2.5,-4.5) {$\bullet$};
			\node at (3.5,-4.5) {$\bullet$};
			\node at (0.5,-4.5) {$\bullet$};
		\end{tikzpicture}\label{exmultwithoris}
		\caption{Example of multiplication including orientations}
	\end{figure}
\end{ex}
\section{Geometric bimodules and adjunction}\label{geombimodknsection}
This section adapts the definition of geometric bimodules for $\K$ to the subquotient $\K_n$ and proving similar results as in \cref{geombimodK}.
\subsection{Definition}
\begin{defi}
	For a generalized crossingless matching $\bm{t}$ we can define the $\K_n$-$\K_n$-bimdoule $G_{\bm{t}}\coloneqq e\hat{G}_{\bm{t}}e/\mathbb{I}_{\bm{t}}$ where $\mathbb{I}_{\bm{t}}$ is the subvectorspace spanned by all generalized circle diagrams with non-propagating lines.
	That this is indeed a two-sided submodule can be verified analogously to \cref{nonpropifffactorthroughhigherlayer}.
	
	We then define $\theta_{\bm{t}}$ as tensoring over $\K_n$ with $G_{\bm{t}}$ and if $\bm{t}=t^i$ we also write $\theta_i$ for $\theta_{t^i}$ and call these $\theta_{\bm{t}}$ \emph{projective functors}.
\end{defi}
\begin{rem}
	As in \cref{bimoduleisoreductiontosinglelayer}, we still have $G_{\bm{t}}\cong G_{\red(\bm{t})}$, thus we restrict ourselves from now on to crossingless matchings of one layer.
	\end{rem}	
Observe that for these bimodules the appropriate replacement for \eqref{geombimodassoc} also holds.
On the other hand given a generalized crossingless matching $\bm{t}$ we define $\bm{t}^\ast$ as the rotation of $\bm{t}$ around $\frac{1}{2}$.
And we can define a linear map $\ast\colon \hat{G}_{\bm{t}}\to \hat{G}_{\bm{t}^\ast},a\bm{t}b\mapsto b^\ast\bm{t}^\ast a^\ast$, i.e.\ $\ast$ is given by rotating the total picture around $\frac{1}{2}$.
Then the map $\ast$ is anti-multiplicative in the sense that
\begin{equation}\label{geombimodantimultnuclear}
	\begin{tikzcd}
		\hat{G}_{\bm{t}}\otimes \hat{G}_{\bm{u}}\arrow[d, "m"]\arrow[r, "P\circ(\ast\otimes\ast)"]&\hat{G}_{\bm{u}^\ast}\otimes \hat{G}_{\bm{t}^\ast}\arrow[d, "m"]\\
		\hat{G}_{\bm{t}\bm{u}}\arrow[r, "\ast"]&\hat{G}_{\bm{u}^\ast\bm{t}^\ast}
	\end{tikzcd}
\end{equation}
commutes, where $P$ is the normal flip map.
\subsection{Adjunction}
The next proposition states the equivalent of \cref{geombimodonproj} for $\K_n$.
\begin{prop}\label{actionofgeombimodonprojnuclear}
	We have 
	\begin{equation*}
		\theta_{t}P(\overline{\gamma})=\begin{cases}
			0&\text{if $t\overline{\gamma}$ is not orientable,}\\
			P(\overline{\nu})&\text{otherwise, where $\overline{\nu}=\ured(t\overline{\gamma})$.}
		\end{cases}
	\end{equation*}
\end{prop}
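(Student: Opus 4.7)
The plan is to imitate the proof of \cref{geombimodonproj}, with the extra bookkeeping needed for the idempotent truncation at $\Lambda_n$ and for the quotient by non-propagating lines. First I would identify $\theta_t P(\overline{\gamma})=G_t\otimes_{\K_n}\K_n e_\gamma\cong G_te_\gamma$. Unwinding the definition $G_t=e\hat G_t e/\mathbb{I}_t$, a basis of $G_te_\gamma$ is given by the classes of stretched diagrams $\underline{\lambda}t\overline{\gamma}$ with $\underline{\lambda}\in\Lambda_n$ that are orientable in the generalized sense and that additionally contain no non-propagating line (since any such basis element is killed in the quotient by $\mathbb{I}_t$).

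Case 1: $t\overline{\gamma}$ is not orientable in the $\K_n$-sense. Any failure of orientability is a topological obstruction entirely inside the $t\overline{\gamma}$-part: either there is a circle, or there is a component both of whose endpoints lie on the same side of $0$, or the component is forced to meet the boundary on a side that cannot be capped off by rays of $\underline{\lambda}$ without creating a non-propagating line. For every choice of $\underline{\lambda}\in\Lambda_n$ the resulting stretched diagram inherits this defect, so the corresponding basis class is already zero in $G_te_\gamma$; hence $\theta_tP(\overline{\gamma})=0$.

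Case 2: $t\overline{\gamma}$ is orientable. Then $\overline{\nu}\coloneqq\ured(t\overline{\gamma})\in\Lambda_n$ is well-defined (the number of caps is preserved by the topological equivalence, so $\overline{\nu}$ has exactly $n$ caps). I would define
\[
f\colon G_te_\gamma\longrightarrow P(\overline{\nu})=\K_n e_\nu,\qquad \underline{\lambda}t\overline{\gamma}\longmapsto\underline{\lambda}\overline{\nu},
\]
and check it is a $\K_n$-module isomorphism. Bijectivity on basis elements is immediate from the fact that upper reduction is a topological equivalence of diagrams: it preserves orientability, the locations of ray-endpoints, and therefore also the presence or absence of circles and non-propagating lines. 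So the basis of $G_te_\gamma$ is sent bijectively to the basis of $\K_ne_\nu$, with inverse $\underline{\lambda}\overline{\nu}\mapsto\underline{\lambda}t\overline{\gamma}$. $\K_n$-linearity follows because the product in $\K_n$, inherited from $\K$, is defined by surgery procedures that depend only on the topology of the middle section; applying these before or after the upper reduction produces the same answer, exactly as in \cref{bimoduleisoreductiontosinglelayer} and \cref{surgproccommute}.

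The only delicate point, and the main obstacle, is the bookkeeping in Case 1 and the non-propagating-line part of the bijection in Case 2: I need to verify that a non-propagating line in $\underline{\lambda}t\overline{\gamma}$ corresponds exactly to a non-propagating line in $\underline{\lambda}\overline{\nu}$, so that the extra relations imposed by $\mathbb{I}_t$ and by $\mathbb{I}_n$ correspond under $f$. This follows from the topological character of $\ured$, but should be spelled out explicitly since the orientability conditions in the generalized (stretched) and non-stretched settings differ slightly.
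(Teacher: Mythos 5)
Your proposal is correct and takes essentially the same route as the paper: the paper's proof simply transports the argument of \cref{geombimodonproj} (identify $\theta_tP(\overline{\gamma})$ with $G_te_\gamma$, kill the non-orientable case by definition, and use that upper reduction is a topological equivalence to get a $\K_n$-linear bijection onto $P(\overline{\nu})$), noting exactly the point you flag, namely that non-propagating lines are now killed by the definition of $G_t$ and correspond under $\ured$ to non-propagating lines in $\underline{\lambda}\overline{\nu}$.
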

\begin{proof}
	This is proven in the same way as \cref{geombimodonproj} by noticing that now any non-propagating line is killed by definition of $\theta_{t}$.
\end{proof}
\begin{cor}\label{gtprojleftrightnuclear}
	The $\K_n$-$\K_n$-bimodule $G_{t}$ is sweet, i.e.~projective as a left and as a right $\K_n$-module.
\end{cor}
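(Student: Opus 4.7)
The plan is to mimic the proof of Corollary \ref{gtprojleftright} for $\hat{G}_t$, using the $\K_n$-version \cref{actionofgeombimodonprojnuclear} of \cref{geombimodonproj} in place of the original. The key observation is that a bimodule is projective on one side as soon as it decomposes, via the locally unital structure of $\K_n$, into a direct sum of indecomposable projectives on that side.

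First I would exploit the locally unital structure. Since $\K_n = \bigoplus_{\lambda \in \Lambda_n} e_\lambda \K_n$ (on the left) and analogously on the right (cf.~\cref{KNisessfinite}), and since any $\K_n$-module $M$ is compatible with this structure by our standing assumption, we may decompose
\begin{equation*}
  G_t \;=\; \bigoplus_{\lambda \in \Lambda_n} G_t\, e_\lambda
\end{equation*}
as a left $\K_n$-module. Now each summand $G_t e_\lambda$ is exactly $\theta_t P(\overline{\lambda})$, which by \cref{actionofgeombimodonprojnuclear} is either $0$ or of the form $P(\overline{\nu})$ for a suitable $\nu$ obtained by upper reduction. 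In either case it is a projective left $\K_n$-module, and hence so is the direct sum $G_t$.

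For the right module structure, I would apply the rotational anti-involution $\ast$ introduced just before this corollary. Concretely, $\ast$ carries $G_t$ to $G_{t^\ast}$ and intertwines the left and right actions of $\K_n$ (via the anti-automorphism $\ast$ on $\K_n$). Because $t^\ast$ is again a crossingless matching, the preceding argument applied to $G_{t^\ast}$ shows that $G_{t^\ast}$ is projective as a left $\K_n$-module, whence $G_t$ is projective as a right $\K_n$-module. Alternatively, one can repeat the above decomposition from the right, writing $G_t = \bigoplus_{\lambda \in \Lambda_n} e_\lambda G_t$, and use the right-hand analogue of \cref{actionofgeombimodonprojnuclear} directly. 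Either route yields the desired sweetness.

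There is really no serious obstacle here; the only thing to check carefully is that passing from $\hat{G}_t$ to $G_t = e\hat{G}_t e/\mathbb{I}_t$ does not destroy projectivity, but this is built into \cref{actionofgeombimodonprojnuclear}, whose statement already accommodates the quotient by non-propagating lines (those $\overline{\gamma}$ for which $t\overline{\gamma}$ fails to be orientable are precisely those killed in $\K_n$). So the main content of the corollary reduces to reading off \cref{actionofgeombimodonprojnuclear} on both sides.
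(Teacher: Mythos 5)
Your proof is correct and follows essentially the same route as the paper: decompose $G_t=\bigoplus_{\lambda\in\Lambda_n}G_te_\lambda$ and read off projectivity of each summand from \cref{actionofgeombimodonprojnuclear}, then transfer to the right side via the rotational anti-involution $\ast$ (the paper invokes \eqref{geombimodantimultnuclear} for exactly this). Your alternative of using the right-hand analogue of \cref{actionofgeombimodonprojnuclear} directly is also fine and mirrors the argument for $\hat{G}_t$ in \cref{gtprojleftright}.
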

\begin{proof}
	We have $G_{t}=\bigoplus_{\lambda\in\Lambda_n}G_{t}e_\lambda$ is projective as a left $\K_n$-module by \cref{actionofgeombimodonprojnuclear}.
	Furthermore, using \eqref{geombimodantimultnuclear} we see that $G_{t}$ is projective as a right $\K_n$-module if and only if $G_{t^*}$ is projective as a left $\K_n$-module which we know.
\end{proof}
\begin{cor}\label{projfunctorsareexact}
	Projective functors are exact and preserve finitely generated modules.
\end{cor}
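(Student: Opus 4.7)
The plan is to deduce both properties directly from the sweetness of $G_t$ established in \cref{gtprojleftrightnuclear}, together with the explicit description of $G_t e_\lambda$ from \cref{actionofgeombimodonprojnuclear}. After reducing to a single crossingless matching via \cref{bimoduleisoreductiontosinglelayer}, it suffices to handle $\theta_t = G_t \otimes_{\K_n} -$ for a single $t$; the general case then follows by composition since a composition of exact functors preserving finitely generated modules has the same properties.

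For exactness, I would simply observe that by \cref{gtprojleftrightnuclear} the bimodule $G_t$ is projective, hence flat, as a right $\K_n$-module. Therefore the functor $G_t \otimes_{\K_n} -$ is exact on left $\K_n$-modules, which is the claim.

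For preservation of finitely generated modules, the key input is that $G_t$ is also projective as a left $\K_n$-module, but more importantly that it has an explicit block decomposition $G_t = \bigoplus_{\lambda \in \Lambda_n} G_t e_\lambda$ in which each summand is either zero or isomorphic to some $P(\overline{\nu})$ by \cref{actionofgeombimodonprojnuclear}. Now if $M$ is a finitely generated $\K_n$-module, then $M$ is a quotient of some finite direct sum $\bigoplus_{i=1}^r \K_n e_{\lambda_i}$. Tensoring with $G_t$ (which is right exact, even exact) yields a surjection
\begin{equation*}
  \bigoplus_{i=1}^r G_t e_{\lambda_i} \twoheadrightarrow \theta_t M,
\end{equation*}
and each $G_t e_{\lambda_i}$ is either zero or of the form $P(\overline{\nu_i}) = \K_n e_{\nu_i}$, hence cyclic. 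Therefore $\theta_t M$ is finitely generated.

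I do not anticipate a real obstacle here; the entire argument is a formal consequence of results already proved. The only mild subtlety is the implicit use of the local unitality of $\K_n$ (which ensures that ``finitely generated'' behaves as expected with respect to the idempotent decomposition), and the need to reduce to a single-layer crossingless matching before quoting \cref{actionofgeombimodonprojnuclear}; both are already in place by \cref{KNisessfinite} and \cref{bimoduleisoreductiontosinglelayer} respectively.
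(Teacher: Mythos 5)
Your argument is correct and is exactly the paper's proof: the paper simply cites \cref{actionofgeombimodonprojnuclear} and \cref{gtprojleftrightnuclear}, deducing exactness from the sweetness (hence right-flatness) of $G_t$ and preservation of finite generation from the fact that each $G_te_\lambda$ is zero or a cyclic module $P(\overline{\nu})$. You have merely spelled out the details that the paper leaves implicit.
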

\begin{proof}
	Use \cref{actionofgeombimodonprojnuclear} and \cref{gtprojleftrightnuclear}.
\end{proof}
Our next goal is to prove an adjunction theorem between the $\theta_{t}$.
\begin{thm}\label{thetaadjunction}
	We have an adjunction $(\theta_{t^\ddagger},\theta_t)$.
	Thus, $\theta_{i+1}$ is left adjoint to $\theta_{i}$. 
\end{thm}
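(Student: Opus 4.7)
The plan is to mirror the proof of \cref{hatthetaadjunction} in the quotient setting. Concretely, I will establish a $\K_n$-$\K_n$-bimodule isomorphism $G_t \xrightarrow{\sim} \Hom_{\K_n}(G_{t^\ddagger}, \K_n)$ and then combine it with the projectivity of $G_{t^\ddagger}$ as a left $\K_n$-module (which we have from \cref{gtprojleftrightnuclear}) to conclude.

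The first step is to descend the pairing $\phi$ from \cref{phiisbimodandbalanced} to $G_{t^\ddagger}\otimes G_t \to \K_n$. Recall that $\phi$ is the composite of the multiplication $m$, the reduction isomorphism $\hat G_{t^\ddagger}\otimes \hat G_t \to \hat G_{\red(t^\ddagger t)}$, and the surgery map $\omega$ to $\K$. We then postcompose with the projection $\K \twoheadrightarrow \K_n$ restricted to $e\K e$. To see this descends to the quotient, note that if either tensor factor lies in the image of $\mathbb{I}$ (i.e.\ has a non-propagating line), then so does the product, essentially by the same argument as in \cref{nonpropifffactorthroughhigherlayer}: the surgery procedures detailed in \cref{detailedanalysis} either preserve non-propagating lines or produce $0$. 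Hence the pairing induces a well-defined $\K_n$-$\K_n$-bimodule map $\phi \colon G_{t^\ddagger} \otimes G_t \to \K_n$ which is still $\K_n$-balanced because $m$ is balanced by the analogue of \eqref{geombimodassoc} for $G$.

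Next I define $\hat\phi \colon G_t \to \Hom_{\K_n}(G_{t^\ddagger}, \K_n)$ by $\hat\phi(y)(x) = \phi(x\otimes y)$, and verify bimodule compatibility by the same formal calculation as in \cref{adjunctionhelperthm}. To show $\hat\phi$ is a vector space isomorphism, I restrict to idempotent summands and check
\begin{equation*}
\hat\phi \colon e_\lambda G_t \longrightarrow \Hom_{\K_n}(G_{t^\ddagger} e_\lambda, \K_n)
\end{equation*}
is an isomorphism for every $\lambda \in \Lambda_n$. By \cref{actionofgeombimodonprojnuclear} (and its right-module mirror), both sides are either simultaneously zero (when $t\overline\lambda$, equivalently $t^\ddagger \underline\lambda$, fails to be orientable or yields a diagram outside $\Lambda_n$), or they are respectively $e_\nu \K_n$ and $\Hom_{\K_n}(\K_n e_\nu, \K_n)$ for the same $\nu \in \Lambda_n$ obtained by upper/lower reduction. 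Under these identifications, $\phi$ becomes multiplication in $\K_n$, and the resulting map $e_\nu \K_n \to \Hom_{\K_n}(\K_n e_\nu, \K_n)$, $y \mapsto (x\mapsto xy)$, is the standard isomorphism.

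Finally, using that $G_{t^\ddagger}$ is projective as a left $\K_n$-module (\cref{gtprojleftrightnuclear}), we have the natural isomorphism $\Hom_{\K_n}(G_{t^\ddagger}, \K_n) \otimes_{\K_n} M \xrightarrow{\sim} \Hom_{\K_n}(G_{t^\ddagger}, M)$, and precomposing with $\hat\phi$ gives $G_t \otimes_{\K_n} M \cong \Hom_{\K_n}(G_{t^\ddagger}, M)$. Combined with the usual tensor-hom adjunction $(G_{t^\ddagger}\otimes_{\K_n}-, \Hom_{\K_n}(G_{t^\ddagger},-))$, this yields $(\theta_{t^\ddagger}, \theta_t)$ as adjoint endofunctors. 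The special case $t = t^i$ gives $t^\ddagger = t^{i+1}$, establishing the adjunction $(\theta_{i+1},\theta_i)$. The only step that requires real care is the descent of $\phi$ to $G_{t^\ddagger}\otimes G_t$ and, symmetrically, the identification of $e_\lambda G_t$ with $e_\nu \K_n$ in $\K_n$ rather than in $\K$; everything else is formal transport of the $\K$-version.
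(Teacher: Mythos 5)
Your proposal is correct and follows exactly the route the paper intends: the paper simply states that \cref{thetaadjunction} ``is proven exactly as \cref{hatthetaadjunction},'' and your argument is precisely that transport, with the descent of the pairing $\phi$ to the quotient and the idempotent-summand identification via \cref{actionofgeombimodonprojnuclear} being the only points requiring care, as you correctly note.
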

\begin{proof}
	This is proven exactly as \cref{hatthetaadjunction}.
\end{proof}
\begin{thm}\label{geombimodduality}
	Given any crossingless matching $t$ and any finite dimensional $\K_n$-module $M$ there exists a natural isomorphism $\theta_{t^\dagger}M^\dual\cong(\theta_{t}M)^\dual$, where $t^\dagger$ is the vertical reflection of $t$ at $0$.
	In particular, $(\theta_iM)^\dual\cong\theta_{-i}M^\dual$.
\end{thm}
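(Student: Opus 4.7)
My plan is to produce the isomorphism via a combination of tensor--Hom adjunction and an explicit bimodule identification. Let $M^\dual:=\Hom_\bbC(M,\bbC)$ be the $\bbC$-dual with the left $\K_n$-action $(a\cdot f)(m):=f(a^\ast m)$ induced by the anti-involution $\ast$ on $\K_n$ from \cref{kndefisection}; this duality is exact on finite dimensional modules.

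The crucial step is to construct a $\K_n$-bimodule isomorphism
\[
\Xi\colon G_{t^\dagger}\ \xrightarrow{\ \sim\ }\ \Hom_\bbC(G_t,\bbC),
\]
where the right-hand side carries the bimodule structure $(a\cdot f\cdot b)(x):=f(b^\ast\,x\,a^\ast)$. Combinatorially, $t^\dagger$ is the left--right reflection of $t$ about the vertical axis at $0$, so its diagrammatic basis of orientable diagrams is in natural bijection with that of $G_t$ via this reflection, and $\Xi$ sends a basis vector of $G_{t^\dagger}$ to the functional dual of its reflected partner. Verifying bimodule equivariance reduces to the local statement that reflection about $0$ intertwines each of the three surgery patterns of \cref{detailedanalysis} with the action of $\ast$ on $\K_n$; this is the technical heart of the argument and the principal obstacle, requiring a careful inspection that surgeries happening on one side of $0$ become the corresponding surgeries on the other side after reflection, compatibly with orientability.

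Given $\Xi$, standard tensor--Hom adjunction provides natural isomorphisms of left $\K_n$-modules
\[
(\theta_tM)^\dual=\Hom_\bbC(G_t\otimes_{\K_n}M,\bbC)\ \cong\ \Hom_{\K_n}(M,\Hom_\bbC(G_t,\bbC))\ \cong\ \Hom_{\K_n}(M,G_{t^\dagger}),
\]
where the second iso is induced by $\Xi$. To conclude, I would compare the right-hand side with $\theta_{t^\dagger}M^\dual=G_{t^\dagger}\otimes_{\K_n}M^\dual$ via the natural transformation $x\otimes f\mapsto\bigl(m\mapsto f(m)\cdot x\bigr)$. Both functors $M\mapsto\theta_{t^\dagger}M^\dual$ and $M\mapsto\Hom_{\K_n}(M,G_{t^\dagger})$ are exact and contravariant on finite dimensional $M$: the first because $(-)^\dual$ and $\theta_{t^\dagger}$ are exact (the latter by sweetness of $G_{t^\dagger}$, \cref{gtprojleftrightnuclear}), and the second by right-projectivity of $G_{t^\dagger}$. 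It therefore suffices to verify the isomorphism on projectives $M=P(\overline\gamma)=\K_n e_\gamma$, where both sides reduce, via \cref{actionofgeombimodonprojnuclear} and its right-module analogue, to the same finite dimensional vector space indexed by the orientable diagrams in $e_\gamma G_{t^\dagger}$; a projective presentation of $M$ together with the five lemma then extends the isomorphism to all finite dimensional $M$, with naturality transparent at every stage.

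Combining these two steps yields the desired natural isomorphism $\theta_{t^\dagger}M^\dual\cong(\theta_tM)^\dual$. The special case $t=t^i$ satisfies $t^\dagger=t^{-i}$, giving the stated consequence $(\theta_iM)^\dual\cong\theta_{-i}M^\dual$.
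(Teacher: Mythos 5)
Your overall architecture (dualize the bimodule diagrammatically, then reduce to idempotents/projectives) is close in spirit to the paper's argument, but two of the maps in your chain are not well-defined as written, and the step you defer as the ``technical heart'' is precisely the content that still needs a proof.

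First, the rule $(a\cdot f\cdot b)(x):=f(b^\ast xa^\ast)$ is not a bimodule structure: since $\ast$ is an \emph{anti}-involution, $(a_1\cdot(a_2\cdot f))(x)=f(xa_1^\ast a_2^\ast)$ while $((a_1a_2)\cdot f)(x)=f(xa_2^\ast a_1^\ast)$, so the left action fails to associate. The coherent twist is $(a\cdot f\cdot b)(x)=f(a^\ast xb^\ast)$. Even with that fix, $e_\lambda\Hom_\bbC(G_t,\bbC)e_\mu$ is the dual of $e_\lambda^\ast G_te_\mu^\ast$, and one must check that your reflection bijection actually lands there: because cap diagrams are cup diagrams shifted by one, the naive reflection at $0$ of $\underline{\lambda}t^\dagger\overline{\mu}$ produces a cap part that is off by a shift from $\overline{\mu}^\ast$, so the identification is not purely topological, and the compatibility of reflection with the surgery rules (hence the bimodule equivariance of $\Xi$) is a genuine verification, not a remark. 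The paper sidesteps this bookkeeping entirely by observing $(t^\dagger)^\ast=t^\ddagger$ and building the pairing $x\otimes f\otimes y\otimes m\mapsto f(\phi(x^\ast\otimes y)m)$ out of the already-established map $\phi\colon G_{t^\ddagger}\otimes G_t\to\K_n$, whose balancedness and equivariance (\cref{phiisbimodandbalanced}) give well-definedness for free.

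Second, your comparison map $G_{t^\dagger}\otimes_{\K_n}M^\dual\to\Hom_{\K_n}(M,G_{t^\dagger})$, $x\otimes f\mapsto\bigl(m\mapsto f(m)x\bigr)$, is not balanced: $xu\otimes f$ goes to $m\mapsto f(m)xu$, while $x\otimes uf$ goes to $m\mapsto f(u^\ast m)x$, and these disagree. A map of this shape does exist with $\Hom_{\K_n}(M,\K_n)$ in place of $M^\dual$, but conflating the two duals would require $\K_n$ to be symmetric, which is not available. The repair is to drop the detour through $\Hom_{\K_n}(M,G_{t^\dagger})$ and define the natural transformation $G_{t^\dagger}\otimes_{\K_n}M^\dual\to(G_t\otimes_{\K_n}M)^\dual$ directly from the four-fold pairing above; once that is in place, your reduction to projectives works, though the paper's shortcut of restricting to $e_\lambda G_{t^\dagger}\cong e_\nu\K_n$ via \cref{actionofgeombimodonprojnuclear} handles arbitrary finite dimensional $M$ without a resolution.
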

\begin{proof}
	It suffices to construct a natural isomorphism $G_{t^\dagger}\otimes_{\K_n}M^\dual\cong(G_t\otimes_{\K_n}M)^\dual$.
	For this we define the auxiliary map
	\begin{equation*}
		s\colon G_{t^\dagger}\otimes M^\dual\otimes G_t\otimes M\to\bbC
	\end{equation*}
	by sending $x\otimes f\otimes y\otimes m$ to $f(\phi(x^*\otimes y)m)$.
	Observe that $(t^\dagger)^*$ is the rotation around $\frac{1}{2}$ of the vertical mirror of $t$ at $0$.
	But this is the same as the horizontal mirror image shifted one to the right, i.e.\ $(t^\dagger)^*=t^\ddagger$.
	Hence, we can apply $\phi$ in the definition of $s$, and thus $s$ is well-defined.

	Now let $u\in\K_n$.
	Using the linearity properties from \cref{phiisbimodandbalanced} (or its $\K_n$-equivalent) we have 
	\begin{align*}
		s(xu\otimes f\otimes y\otimes m)&=f(\phi((xu)^*\otimes y)m)=f(u^*\phi(x^*\otimes y)m)=s(x\otimes uf\otimes y \otimes m),\\
		s(x\otimes f\otimes yu\otimes m)&=f(\phi(x^*\otimes yu)m)=f(\phi(x^*\otimes y)um)=s(x\otimes uf\otimes y \otimes um).
	\end{align*}
	This means that $s$ factors over 
	\begin{equation*}
		s\colon G_{t^\dagger}\otimes_{\K_n} M^\dual\otimes G_t\otimes_{\K_n} M\to\bbC,
	\end{equation*}
	and thus we get an induced map
	\begin{equation*}
		\tilde{s}\colon G_{t^\dagger}\otimes_{\K_n} M^\dual\to(G_t\otimes_{\K_n} M)^\dual
	\end{equation*}
	by sending $x\otimes f$ to $(y\otimes \mapsto s(x\otimes f\otimes y\otimes m))$.

	It remains to show that $\tilde{s}$ is $\K_n$-linear and a vector space isomorphism.

	For linearity let $u\in\K_n$.
	\begin{align*}
		(u\tilde{s}(x\otimes f))(y\otimes m) &= \tilde{s}(x\otimes f)(u^*y\otimes m)=f(\phi(x^*\otimes u^*y)m)=f(\phi((ux)^*\otimes y)m)\\
		&=\tilde{s}(ux\otimes f)(y\otimes m)
	\end{align*}
	where we used that $\phi$ is $\K_n$-balanced.

	In order to show the vector space isomorphism it suffices to restrict for each $\lambda\in\Lambda_n$ at
	\begin{equation*}
		e_\lambda G_{t^\dagger}\otimes_{\K_n} M^\dual\to e_\lambda(G_t\otimes_{\K_n} M)^\dual.
	\end{equation*}
	We can identify $e_\lambda(G_t\otimes_{\K_n} M)^\dual$ with $(e_\lambda^*G_t\otimes_{\K_n} M)^\dual$.
	Now observe that $e_\lambda^*$ is the same as vertical mirroring $e_\lambda$ at $0$.
	Therefore, we have $e_\lambda G_{t^\dagger}\neq 0$ if and only if $e_\lambda^*G_t\neq 0$, so we may assume that these are non-zero.

	By \cref{actionofgeombimodonprojnuclear} we have that $e_\lambda G_{t^\dagger}=e_\nu\K_n$ for some $\nu\in\Lambda_n$ and for the same $\nu$ we also have $e_\lambda^*G_t=e_\nu^*\K_n$.

	Therefore,it suffices to (see also the proof of \cref{adjunctionhelperthm}) show that
	\begin{equation*}
		e_\nu M^\dual\to (e_\nu^*M)^*,\qquad e_\nu f=f(e_\nu^*\cdot\_)\mapsto (e_\nu^*m\mapsto f(e_\nu^* m))
	\end{equation*}
	is an isomorphism, but this is clear.
\end{proof}
\begin{rem}
	One should observe that the index shift in the theorem (i.e.~$\theta_i$ turning into $\theta_{-i}$ under the duality) agrees exactly with \cref{dualityonKLR}.
\end{rem}
\subsection{Action on special classes of modules}
We already have proven in \cref{actionofgeombimodonprojnuclear} how geometric bimodules act on projective modules. 
In this section we are going to determine the effect of geometric bimodules on standard, costandard and irreducible modules.
But first, we use \cref{knisquher} to define standard and costandard modules.
\begin{defi}\label{defistandardcostandard}
	Let $\lambda\in\Lambda_n$ and define $\K_n^{\leq\lambda}$ to be the quotient of $\K_n$ by the two-sided ideal generated by all $e_\mu$ for $\mu\nleq\lambda$.
	We will also write $\bar{x}$ for the image of $x\in\K_n$ in this quotient.
	We then define the left $\K_n$-modules $\Delta_n(\lambda)\coloneqq \K_n^{\leq\lambda}\bar{e}_\lambda$ and $\nabla_n(\lambda)\coloneqq (\bar{e}_\lambda\K_n^{\leq\lambda})^*$ where $(\_)^*$ denotes the usual vector space dual.
	The module $\Delta(\lambda)$ has a basis given by $\{(y\bar{e}_\lambda)\mid y\in Y(\lambda)\}$.
	Furthermore, the vectors $(\bar{e}_\lambda x)$ for $x\in X(\lambda)$ give a basis for $\bar{e}_\lambda\K_n^{\leq\lambda}$ and its dual basis gives a basis for $\nabla(\lambda)$. 
\end{defi}
\begin{prop}\label{geombimodonstandard}
	Let $t$ be a crossingless matching and $\gamma\in\Lambda_n$. 
	\begin{enumerate}
		\item\label{geombimodonstandardi} The $\K_n$-module $\theta_t\Delta(\overline{\gamma})$ has a filtration
		\begin{equation*}
			\{0\}=M_0\subset M_1\subset\dots\subset M_r=\theta_t\Delta(\overline{\gamma})
		\end{equation*}
		such that $M_i/M_{i-1}\cong\Delta(\overline{\mu_i})$.
		In this case $\mu_1,\dots,\mu_r$ denote the elements of the set $\{\mu\in\Lambda_n\mid\mu t\gamma\text{ oriented}\}$ ordered such that $\mu_j>\mu_i$ implies $j<i$.
		\item\label{geombimodonstandardii} The module is nonzero if and only if no cup of $t\gamma$ contains more dots than cups and for every cup there has to be a dot directly to its left or right (these are chosen pairwise distinct for all cups).
		\item\label{geombimodonstandardiii} Assuming \cref{geombimodonstandardii}, the module $\theta_t\Delta(\overline{\gamma})$ is indecomposable with irreducible head $L(\overline{\lambda})$, where $\overline{\lambda}$ is given by the upper reduction of $t\overline{\gamma}$. 
	\end{enumerate}
\end{prop}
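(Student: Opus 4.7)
My plan is to reduce everything to an analysis of the $\K_n$-module structure on $G_t e_\gamma$ together with the triangular basis of $\Delta(\overline{\gamma})$. Recall that by definition $\Delta(\overline{\gamma})=\K_n^{\leq\gamma}\bar e_\gamma$ has basis $\{\bar y\bar e_\gamma\mid y\in Y(\gamma)\}$, and by \cref{gtprojleftrightnuclear} the bimodule $G_t$ is right projective, so
\[
\theta_t\Delta(\overline{\gamma})\;=\;G_t\otimes_{\K_n}\K_n^{\leq\gamma}\bar e_\gamma\;\cong\;(G_te_\gamma)\!\otimes\!\bar e_\gamma,
\]
where $G_te_\gamma$ has a basis given by orientable diagrams $\underline{\mu}t\overline{\gamma}$. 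The oriented diagrams $\underline{\mu}\,\nu\, t\overline{\gamma}$ (equivalently: choices of a $\nabla$-orientation $\nu$ of $\underline{\mu}$ matching the data on $t\overline{\gamma}$) together with a compatible basis of $\Delta(\overline{\gamma})$ will provide an orientation-style basis of $\theta_t\Delta(\overline{\gamma})$, one vector for each $\mu$ such that $\mu t\gamma$ is orientable. This is precisely indexed by $\{\mu_1,\dots,\mu_r\}$ from the statement.

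\textbf{Part (i).} Order the $\mu_i$ so that $\mu_j>\mu_i$ forces $j<i$ and let $M_i\subseteq\theta_t\Delta(\overline{\gamma})$ be the span of the basis vectors for $\mu_1,\dots,\mu_i$. The main point is to show that each $M_i$ is a $\K_n$-submodule and that $M_i/M_{i-1}\cong\Delta(\overline{\mu_i})$. Submodularity follows from the surgery analysis of \cref{detailedanalysis}: multiplying $\underline{\nu}\overline{\nu'}\in\K_n$ against $\underline{\mu_i}t\overline{\gamma}$ either produces a straightening (which stays in the stratum labelled by $\mu_i$) or produces a diagram whose cup diagram is strictly bigger in the partial order of \cref{defipartialordering}, hence lies in $M_{i-1}$. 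To identify $M_i/M_{i-1}$, send the class of the basis vector for $\mu_i$ to $\bar e_{\mu_i}\in\Delta(\overline{\mu_i})$; using \cref{thmpropbasedquher}\cref{thmpropbasedquheri} and the triangular behaviour above, this extends to a $\K_n$-linear isomorphism $M_i/M_{i-1}\xrightarrow{\sim}\Delta(\overline{\mu_i})$.

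\textbf{Part (ii).} By Part (i), $\theta_t\Delta(\overline{\gamma})\neq 0$ iff the index set $\{\mu\mid\mu t\gamma\text{ orientable}\}$ is non-empty, i.e.\ iff $t\overline{\gamma}$ admits an orientation by some cup diagram $\underline{\mu}$ with $n$ cups (no non-propagating lines allowed, since we work in the quotient $\K_n$). Unpacking the definition of orientability in terms of dots on cups and caps, each cup of $t\overline{\gamma}$ must accommodate a distinct dot directly to its left or right, and no cup can contain more dots than the cups nested inside it, which is exactly the combinatorial criterion in the statement.

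\textbf{Part (iii).} Assuming (ii), let $\overline{\lambda}=\ured(t\overline{\gamma})$; this cup diagram $\underline{\lambda}$ is the unique maximum of $\{\mu_1,\dots,\mu_r\}$ in the ordering, so the \emph{top} subquotient $M_r/M_{r-1}\cong\Delta(\overline{\lambda})$. Since the head of $\Delta(\overline{\lambda})$ is $L(\overline{\lambda})$, it suffices to show that $\Hom_{\K_n}(\theta_t\Delta(\overline{\gamma}),L(\overline{\mu}))\neq 0$ only for $\mu=\lambda$, in which case it is one-dimensional. By the adjunction $(\theta_{t^\ddagger},\theta_t)$ of \cref{thetaadjunction} this becomes
\[
\Hom_{\K_n}(\Delta(\overline{\gamma}),\theta_{t^\ddagger}L(\overline{\mu})),
\]
and standard quasi-hereditary theory (\cite{BS21}, applicable by \cref{knisquher}) reduces the computation to evaluating $\theta_{t^\ddagger}L(\overline{\mu})$ at $\bar e_\gamma$. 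A local analysis of the surgery procedure — analogous to \cref{actionofgeombimodonprojnuclear} but for simple modules, using that $L(\overline{\mu})$ is the quotient of $P(\overline{\mu})$ by its radical — shows that this is non-zero precisely when $\overline{\mu}=\ured(t\overline{\gamma})=\overline{\lambda}$, in which case it contributes a one-dimensional space. Indecomposability is then immediate, since the head is simple.

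\textbf{Expected obstacle.} The technical core is verifying submodularity of the $M_i$ in Part (i): one must track that every surgery performed when multiplying a general basis element of $\K_n$ against $\underline{\mu_i}t\overline{\gamma}$ either remains in the same stratum or moves to strictly larger cup diagrams, and one must match the induced action on the stratum $M_i/M_{i-1}$ with the left regular action on $\Delta(\overline{\mu_i})$. This is essentially a diagrammatic bookkeeping argument in the spirit of the cellular-style proofs for the type $A$ and type $B$ arc algebras in \cite{BS11} and \cite{ES16a}, but it requires care because here the surgery procedures interact with both the orientation data and the idempotent truncation defining $\K_n$.
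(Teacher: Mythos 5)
Your Parts (i) and (ii) follow essentially the paper's route: realize $\theta_t\Delta(\overline{\gamma})$ as the quotient of $G_te_\gamma$ by the span of diagrams whose bottom orientation is not $\gamma$, stratify by the cup diagrams $\underline{\mu_i}$, and identify each stratum with $\Delta(\overline{\mu_i})$ (the paper gets the submodule property from \cite{BS21}*{Lemma 5.5} rather than re-running the surgery analysis, but that is a cosmetic difference). One slip in your setup: the basis of $\theta_t\Delta(\overline{\gamma})$ is not ``one vector for each $\mu$'' --- each stratum contributes one vector for every oriented cup diagram $\underline{\nu}\mu_i$, which is why $M_i/M_{i-1}$ can be all of $\Delta(\overline{\mu_i})$ and not just one-dimensional; only the \emph{strata} are indexed by $\{\mu_1,\dots,\mu_r\}$.

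Part (iii) is where there is a genuine gap. First, the adjunction is used in the wrong direction: \cref{thetaadjunction} says $\theta_{t^\ddagger}$ is \emph{left} adjoint to $\theta_t$, so it converts $\Hom_{\K_n}(\theta_{t^\ddagger}X,Y)$ into $\Hom_{\K_n}(X,\theta_tY)$; it does not give $\Hom_{\K_n}(\theta_t\Delta(\overline{\gamma}),L(\overline{\mu}))\cong\Hom_{\K_n}(\Delta(\overline{\gamma}),\theta_{t^\ddagger}L(\overline{\mu}))$ --- for that you would need the \emph{right} adjoint of $\theta_t$, which is $\theta_{t'}$ for the matching with $(t')^\ddagger=t$ (e.g.\ $\theta_{i-1}$ for $\theta_i$), not $\theta_{t^\ddagger}$. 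Second, even with the correct adjoint, your argument hinges on computing $\theta L(\overline{\mu})$ on irreducibles, which is precisely the content of the later \cref{geombimodonirred}; that theorem is itself proved using the present proposition (and \cref{geombimodoncostandard}), so invoking it here risks circularity, and the ``local analysis of the surgery procedure'' you gesture at is not actually easier than the statement you are trying to prove. The paper's argument for (iii) is much shorter and avoids all of this: since $\theta_t$ is exact (\cref{projfunctorsareexact}) and $P(\overline{\gamma})\twoheadrightarrow\Delta(\overline{\gamma})$, the module $\theta_t\Delta(\overline{\gamma})$ is a quotient of $\theta_tP(\overline{\gamma})$, which by \cref{actionofgeombimodonprojnuclear} is either $0$ or $P(\overline{\lambda})$ with $\overline{\lambda}=\ured(t\overline{\gamma})$; any nonzero quotient of the indecomposable projective $P(\overline{\lambda})$ is indecomposable with irreducible head $L(\overline{\lambda})$. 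You should replace your Part (iii) by this observation.
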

\begin{proof}
	The module $\Delta(\overline{\gamma})$ is the quotient of $P(\overline{\gamma})$ by the subspace spanned by all oriented circle diagrams $\underline{\nu}\eta\overline{\gamma}$ with $\eta\neq \gamma$.
	Therefore, $\theta_t\Delta(\overline{\gamma})=G_t\otimes_{\K_n}\Delta(\overline{\gamma})$ is obtained as the quotient of $G_te_\gamma$ by the subspace spanned by all circle diagrams $\underline{\nu}\mu t\eta\overline{\gamma}$ with $\eta\neq\gamma$.
	Hence, $\theta_t\Delta(\overline{\gamma})$ has a basis given by the images of $\underline{\nu}\mu t\gamma\overline{\gamma}$ under the quotient map.

	Now let $M_0=\{0\}$ and define inductively $M_i$ to be generated by $M_{i-1}$ and $\{\underline{\nu}\mu_i t\gamma\overline{\gamma}\mid \text{for all oriented cup diagrams $\underline{\nu}\mu_i$}\}$.
	Every $M_i$ is a $\K_n$-submodule by \cite{BS21}*{Lemma 5.5}.
	Furthermore, the map 
	\begin{equation*}
		M_i/M_{i-1}\to \Delta(\overline{\mu_i}), \quad \underline{\nu}\mu_i t\gamma\overline{\gamma}\mapsto\underline{\nu}\mu_i\overline{\mu_i}
	\end{equation*}
	defines an isomorphism of $\K_n$-modules (in the definition of the map we mean the images of the elements under the quotient maps rather than the elements themselves).
	This proves \cref{geombimodonstandardi}.

	For \cref{geombimodonstandardii} observe that if any of the mentioned conditions is not satisfied then there exists no oriented diagram of the form $\mu t\gamma$. 
	Thus, in this case $\theta_t\Delta(\lambda)=0$.
	For the converse observe that $\lambda t \gamma$ is oriented with $\lambda$ defined as in \cref{geombimodonstandardiii}.

	Now the functors $\theta_t$ are exact by \cref{gtprojleftright}, and thus $\theta_t\Delta{\overline{\gamma}}$ is a quotient of $\theta_tP(\overline{\gamma})$.
	Thus, it is either $0$ or indecomposable with irreducible head $L(\overline{\lambda})$ (see \cref{actionofgeombimodonprojnuclear}).
\end{proof}
\newpage
\begin{prop}\label{geombimodoncostandard}
	Let $t$ be a crossingless matching and $\gamma\in\Lambda_n$. 
	\begin{enumerate}
		\item\label{geombimodoncostandardi} The $\K_n$-module $\theta_t\nabla(\overline{\gamma})$ has a filtration
		\begin{equation*}
			\{0\}=M_0\subset M_1\subset\dots\subset M_r=\theta_t\nabla(\overline{\gamma})
		\end{equation*}
		such that $M_i/M_{i-1}\cong\nabla(\overline{\mu_i})$.
		In this case $\mu_1,\dots,\mu_r$ denote the elements of the set $\{\mu\in\Lambda_n\mid\gamma t^\ddagger\mu\text{ oriented}\}$ ordered such that $\mu_j>\mu_i$ implies $j>i$.
		\item\label{geombimodoncostandardii} The module is nonzero if and only if each cap of $\gamma t^\ddagger$ contains exactly as many dots as caps and for every cap there cannot be a dot directly to its left and right.
		\item\label{geombimodoncostandardiii} Assuming \cref{geombimodonstandardii}, the module $\theta_t\nabla(\overline{\gamma})$ is indecomposable with irreducible socle $L(\overline{\lambda})$, where $\underline{\lambda}$ is given by the lower reduction of $\underline{\gamma}t^\ddagger$. 
	\end{enumerate}
\end{prop}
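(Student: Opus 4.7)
The plan is to derive this proposition from Proposition \ref{geombimodonstandard} by means of the contravariant duality $\_\,^\dual$ coming from the anti-involution $\ast$ on $\K_n$. First, I would observe that $\ast$ fixes each idempotent $e_\lambda$ and preserves the chain of two-sided ideals defining $\K_n^{\leq\lambda}$, so the functor $\_\,^\dual$ (the vector-space dual with twisted action $(u\cdot f)(m)=f(u^\ast m)$) is an exact contravariant self-equivalence of finite-dimensional $\K_n$-modules. Comparing the bases of $\Delta(\overline{\lambda})=\K_n^{\leq\lambda}\bar e_\lambda$ and $\nabla(\overline{\lambda})=(\bar e_\lambda \K_n^{\leq\lambda})^*$ from Definition \ref{defistandardcostandard}, together with $\ast(\K_n^{\leq\lambda}\bar e_\lambda)=\bar e_\lambda\K_n^{\leq\lambda}$, gives $\Delta(\overline{\lambda})^\dual\cong\nabla(\overline{\lambda})$, and each simple $L(\overline{\lambda})$ is fixed by $\_\,^\dual$.

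Combining this with Theorem \ref{geombimodduality} applied to $M=\nabla(\overline{\gamma})$ yields the isomorphism $\theta_t\nabla(\overline{\gamma})\cong(\theta_{t^\dagger}\Delta(\overline{\gamma}))^\dual$. Proposition \ref{geombimodonstandard}(\ref{geombimodonstandardi}) then produces a $\Delta$-filtration of $\theta_{t^\dagger}\Delta(\overline{\gamma})$ indexed by $\{\mu\mid\mu t^\dagger\gamma\text{ oriented}\}$, ordered such that $\mu_j>\mu_i$ implies $j<i$. Applying $\_\,^\dual$ reverses the filtration and sends each $\Delta(\overline{\mu_i})$ to $\nabla(\overline{\mu_i})$, yielding the filtration claimed in \cref{geombimodoncostandardi} with the correct reversed ordering, provided we can identify the two index sets.

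The required bijection between $\{\mu\mid\mu t^\dagger\gamma\text{ oriented}\}$ and $\{\mu\mid\gamma t^\ddagger\mu\text{ oriented}\}$ comes from rotation by $180^\circ$ around $\frac{1}{2}$. Unpacking the definitions, $\dagger$ is vertical reflection at $0$ while $\ast$ is horizontal reflection shifted one to the right composed with vertical reflection, so $(t^\dagger)^\ast=t^\ddagger$; since the rotation sends cup diagrams to cap diagrams (with unchanged label) and preserves orientability, the bijection follows. For \cref{geombimodoncostandardii}, non-vanishing of $\theta_t\nabla(\overline{\gamma})$ is equivalent to non-vanishing of $\theta_{t^\dagger}\Delta(\overline{\gamma})$, which under the rotation translates the condition of Proposition \ref{geombimodonstandard}(\ref{geombimodonstandardii}) on cups of $t^\dagger\gamma$ into the stated condition on caps of $\gamma t^\ddagger$. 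Finally, for \cref{geombimodoncostandardiii}, dualizing the indecomposability and irreducible-head statement for $\theta_{t^\dagger}\Delta(\overline{\gamma})$ gives indecomposability with irreducible socle of $\theta_t\nabla(\overline{\gamma})$, labelled by the image under rotation of the upper reduction of $t^\dagger\overline{\gamma}$, which is precisely the lower reduction of $\underline{\gamma}t^\ddagger$.

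The main obstacle I anticipate is the combinatorial bookkeeping: making sure the $\ast$-rotation really does give $(t^\dagger)^\ast=t^\ddagger$ and $\mu^\ast=\mu$ as labels under the conventions of Section \ref{Khovanovdefisection}, verifying the compatibility of orientations and the dot correspondence under the rotation, and checking that $\_\,^\dual$ is order-reversing on $\Lambda_n$ in the precise sense needed to turn the decreasing ordering of Proposition \ref{geombimodonstandard}(\ref{geombimodonstandardi}) into the increasing ordering of \cref{geombimodoncostandardi}.
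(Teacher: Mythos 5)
Your reduction to \cref{geombimodonstandard} via a duality is a reasonable strategy, and it is genuinely different from the paper's proof (which works directly with the right module $\nabla^*(\overline{\gamma})=\bar e_\gamma\K_n^{\leq\gamma}$, reruns the proof of \cref{geombimodonstandard} verbatim for $\nabla^*(\overline{\gamma})\otimes_{\K_n}G_{t^\ddagger}$, and then applies the \emph{plain} right-to-left vector space dual, which involves no twist by $\ast$). However, your version has a concrete error at its foundation: the anti-involution $\ast$ does \emph{not} fix the idempotents $e_\lambda$. Since $\ast$ is rotation of a circle diagram by $180^\circ$ around $\frac12$, it sends $e_\lambda$ to $e_{\lambda^\dagger}$, where $\lambda^\dagger$ is the vertical mirror of $\lambda$ at $0$ — the paper says this explicitly in the proof of \cref{geombimodduality} (``$e_\lambda^*$ is the same as vertical mirroring $e_\lambda$ at $0$''). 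Consequently the $\ast$-twisted dual satisfies $\Delta(\overline{\lambda})^\dual\cong\nabla(\overline{\lambda^\dagger})$ and $L(\overline{\lambda})^\dual\cong L(\overline{\lambda^\dagger})$, not the untwisted identifications you assert (compare also \cref{dualityonsimples}, where the induced duality on irreducibles is computed and is visibly nontrivial on labels).

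This is not merely cosmetic: the missing $\dagger$ propagates through your whole argument. The correct chain is $\nabla(\overline{\gamma})\cong\Delta(\overline{\gamma^\dagger})^\dual$, hence $\theta_t\nabla(\overline{\gamma})\cong(\theta_{t^\dagger}\Delta(\overline{\gamma^\dagger}))^\dual$, with subquotients $\nabla(\overline{\mu^\dagger})$ indexed by $\{\mu\mid\mu t^\dagger\gamma^\dagger\text{ oriented}\}$; only after the rotation bijection $\mu t^\dagger\gamma^\dagger\mapsto\gamma\, t^\ddagger\mu^\dagger$ (using $(t^\dagger)^\ast=t^\ddagger$ and $(\gamma^\dagger)^\dagger=\gamma$) does one land on the stated index set $\{\nu\mid\gamma t^\ddagger\nu\text{ oriented}\}$ with the correct labels $\nu=\mu^\dagger$. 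As you have written it, the rotation takes your index set $\{\mu\mid\mu t^\dagger\gamma\text{ oriented}\}$ to $\{\nu\mid\gamma^\dagger t^\ddagger\nu\text{ oriented}\}$, which differs from the one in the statement unless $\gamma=\gamma^\dagger$, and your subquotient labels $\nabla(\overline{\mu_i})$ should be $\nabla(\overline{\mu_i^\dagger})$. The same flip affects the socle in \cref{geombimodoncostandardiii}. The approach can be repaired by inserting $\dagger$ consistently (and checking that $\dagger$ is compatible with the partial order so that the filtration ordering reverses correctly), but the proof as written does not establish the proposition.
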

\begin{proof}
	First we look at the right $\K_n$-modules $\nabla^*(\nu)=\bar{e}_\lambda\K_n^{\leq\lambda}$ as in the notation of \cref{defistandardcostandard}.
	With verbatim the same proof as \cref{geombimodonstandard} we can prove the following.
	\begin{enumerate}
		\item The right $\K_n$-module $\nabla^*(\overline{\gamma})\otimes_{\K_n}G_{t^\ddagger}$ has a filtration
		\begin{equation*}
			\{0\}=M_0\subset M_1\subset\dots\subset M_r=\theta_t\nabla(\overline{\gamma})
		\end{equation*}
		such that $M_i/M_{i-1}\cong\nabla^*(\overline{\mu_i})$.
		In this case $\mu_1,\dots,\mu_r$ denote the elements of the set $\{\mu\in\Lambda_n\mid\gamma t^\ddagger\mu\text{ oriented}\}$ ordered such that $\mu_j>\mu_i$ implies $j<i$.
		\item The module is nonzero if and only if each cap of $\gamma t^\ddagger$ contains exactly as many dots as caps and for every cap there cannot be a dot directly to its left and right.
		\item Assuming \cref{geombimodonstandardii}, the right module $\nabla^*(\overline{\gamma})\otimes_{\K_n}G_{t^\ddagger}$ is indecomposable with irreducible head $L(\overline{\lambda})$, where $\underline{\lambda}$ is given by the lower reduction of $\underline{\gamma}t^\ddagger$. 
	\end{enumerate}
	
	Now using the proof and arguments as in the proof of \cref{geombimodduality}, we can show that for a right $\K_n$-module $M$ we have 
	\begin{equation*}
		G_t\otimes_{\K_n}M^*\cong (M\otimes_{\K_n}G_{t^\ddagger})^*
	\end{equation*}
	where $*$ denotes the usual vector space dual that turns the right module $M$ into a left module via $(xf)(m)=f(mx)$.

	By definition, we have that $\nabla(\overline{\nu})^*=\nabla^*(\overline{\nu})$, and thus $\theta_t\nabla(\overline{\nu})=(\nabla^*(\overline{\nu})\otimes_{\K_n}G_{t^\ddagger})^*$.
	Under this duality the filtration is turned upside down, and thus we have proven the proposition.
\end{proof}

\begin{thm}\label{geombimodonirred}
Let $t$ be a generalized crossingless matching and $\lambda\in\Lambda_n$.
Then
\begin{enumerate}
	\item\label{thetaonirredi} in the Grothendieck group of $\K_n\mmod$
	\begin{equation*}
		[\theta_tL(\overline{\lambda})]=\sum_{\mu} [L(\overline{\mu})]
	\end{equation*}
	where we sum over all $\mu$ such that 
	\begin{enumerate}
		\item\label{thetaonirredia} $t^\ddagger\overline{\mu}$ contains neither circles nor non-propagating lines ending at the top,
		\item\label{thetaonirredib} $\overline{\lambda}$ is the upper reduction of $t^\ddagger\overline{\mu}$.
	\end{enumerate}
	\item\label{thetaonirredii} $\theta_tL(\overline{\lambda})$ is nonzero if and only if $t\overline{\lambda}$ has neither circles nor non-propagating lines ending at the top and $\overline{\lambda}$ is the upper reduction of $t^\ddagger t\overline{\lambda}$.
	\item\label{thetaonirrediii} Under the assumptions from \cref{thetaonirredii} define $\overline{\nu}$ to be the upper reduction of $t\overline{\lambda}$.
	In this case $\theta_tL(\overline{\lambda})$ is an indecomposable module with irreducible head $L(\overline{\nu})$.
	\item\label{thetaonirrediv} Under the assumptions from \cref{thetaonirredii} define $\underline{\nu'}$ to be the lower reduction of $\underline{\lambda}t^\ddagger$.
	In this case $\theta_tL(\overline{\lambda})$ is an indecomposable module with irreducible socle $L(\overline{\nu'})$.
\end{enumerate}
\end{thm}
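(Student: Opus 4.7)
The plan is to establish~(i) first via an adjunction-based multiplicity computation, then read off~(iii) and~(iv) from the exactness of $\theta_t$ together with the known behaviour of $\theta_t$ on projectives and costandards, and finally extract~(ii) by combining these. Throughout one reduces to the case of a single-layer crossingless matching using \cref{bimoduleisoreductiontosinglelayer}, and to keep notation light I write $\nu$ for the cap diagram with $\overline{\nu}=\ured(t\overline{\lambda})$ whenever the latter is defined.

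For~(i), I would exploit that $\K_n$ is essentially finite based quasi-hereditary (\cref{knisquher}) with one-dimensional simples, so that for the finite-dimensional module $\theta_tL(\overline{\lambda})$
\[
[\theta_tL(\overline{\lambda}):L(\overline{\mu})] = \dim_{\bbC}\Hom_{\K_n}\bigl(P(\overline{\mu}),\theta_tL(\overline{\lambda})\bigr).
\]
Applying the adjunction $(\theta_{t^\ddagger},\theta_t)$ of \cref{thetaadjunction}, this becomes $\dim\Hom\bigl(\theta_{t^\ddagger}P(\overline{\mu}),L(\overline{\lambda})\bigr)$. By \cref{actionofgeombimodonprojnuclear}, $\theta_{t^\ddagger}P(\overline{\mu})$ is zero unless the stacked diagram $t^\ddagger\overline{\mu}$ contains no circles and no non-propagating lines, in which case it is the indecomposable projective $P(\ured(t^\ddagger\overline{\mu}))$. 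Since the bottom boundary of $t^\ddagger\overline{\mu}$ consists only of straight rays descending from $t^\ddagger$, any non-propagating line that does occur must end at the top, matching condition~(a). The Hom then contributes $\delta_{\overline{\lambda},\,\ured(t^\ddagger\overline{\mu})}$, which is exactly the prescription in~(i).

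For~(iii) and the cleanness half of~(ii), I would apply the exact functor $\theta_t$ (\cref{projfunctorsareexact}) to the projective cover $P(\overline{\lambda})\twoheadrightarrow L(\overline{\lambda})$, realising $\theta_tL(\overline{\lambda})$ as a quotient of $\theta_tP(\overline{\lambda})$. By \cref{actionofgeombimodonprojnuclear} this quotient is zero whenever $t\overline{\lambda}$ has a circle or a non-propagating line at the top (establishing the cleanness direction of~(ii)), and otherwise is a nonzero quotient of the indecomposable projective $P(\overline{\nu})$, hence indecomposable with simple head $L(\overline{\nu})$. For~(iv) I would dualise: $L(\overline{\lambda})\hookrightarrow\nabla(\overline{\lambda})$ gives $\theta_tL(\overline{\lambda})\hookrightarrow\theta_t\nabla(\overline{\lambda})$, and \cref{geombimodoncostandard}(iii) identifies the socle of the target as the simple $L(\overline{\nu'})$ with $\underline{\nu'}=\lred(\underline{\lambda}t^\ddagger)$, forcing $\theta_tL(\overline{\lambda})$ to have simple socle $L(\overline{\nu'})$ and so be indecomposable.

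To finish~(ii), the equation $\overline{\lambda}=\ured(t^\ddagger t\overline{\lambda})$ drops out of applying~(iii) twice: from the simple head of $\theta_tL(\overline{\lambda})$ and the adjunction one obtains $\Hom\bigl(L(\overline{\lambda}),\theta_{t^\ddagger}L(\overline{\nu})\bigr)\neq 0$, and the already-established~(iii) applied to the pair $(t^\ddagger,\overline{\nu})$ identifies this head as $L(\ured(t^\ddagger\overline{\nu}))=L(\ured(t^\ddagger t\overline{\lambda}))$ via topological invariance of $\ured$. For the converse I would verify that, under both assumed conditions, the choice $\mu=\nu$ already satisfies (a) and~(b) of~(i), so that the sum there contains at least one term and $\theta_tL(\overline{\lambda})\neq 0$. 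The main anticipated obstacle is precisely this last verification: one must argue that the well-definedness of $\ured(t^\ddagger t\overline{\lambda})$ (which is implicit in writing $\overline{\lambda}=\ured(t^\ddagger t\overline{\lambda})$) combined with the cleanness of $t\overline{\lambda}$ forces $t^\ddagger\overline{\nu}$ itself to be clean, rather than merely topologically equivalent to a clean diagram. This is a topological bookkeeping argument about how cups of $t^\ddagger$ can click together with caps of $\overline{\nu}$ at the interface, and it is where a careful case analysis (in particular for $t=t^i$) will be required.
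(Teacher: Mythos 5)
Your proposal is correct and follows essentially the same route as the paper: part (i) via the adjunction $(\theta_{t^\ddagger},\theta_t)$ combined with the action of $\theta_{t^\ddagger}$ on projectives, part (iii) by exhibiting $\theta_tL(\overline{\lambda})$ as a quotient of $\theta_tP(\overline{\lambda})$, part (iv) by embedding it into $\theta_t\nabla(\overline{\lambda})$, and part (ii) by combining these. The only difference is that you flag the converse direction of (ii) as a delicate topological verification, whereas the paper treats it as immediate from (i) applied with $\mu=\nu$; your caution there is reasonable but does not indicate a gap in the argument.
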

\begin{proof}
	In order to prove \cref{thetaonirredi} we compute (using the adjunction $(\theta_{t^\ddagger},\theta_t)$ from \cref{thetaadjunction})
	\begin{equation}\label{thetaonirredproofeq}
		\Hom_{\K_n}(P(\overline{\mu}),\theta_tL(\overline{\lambda}))=\Hom_{\K_n}(\theta_{t^\ddagger}P(\overline{\mu}),L(\overline{\lambda}))
	\end{equation}
	Now $\theta_{t^\dagger}P(\overline{\mu})\neq 0$ if and only if \cref{thetaonirredia} is satisfied, in which case it is isomorphic to $P(\overline{\beta})$ where $\overline{\beta}$ denotes the upper reduction of $t^\dagger\overline{\mu}$.
	Thus, \eqref{thetaonirredproofeq} is nonzero if and only if \cref{thetaonirredia} and \cref{thetaonirredib} are satisfied and in which case it is isomorphic to $\bbC\langle k\rangle$.

	For \cref{thetaonirredii} and \cref{thetaonirrediii} note that $\theta_tL(\overline{\lambda})$ is a quotient of $\theta_tP(\lambda)$ as $\theta_t$ is exact by \cref{projfunctorsareexact}.
	But $\theta_tP(\lambda)$ is non-zero if and only if $t\overline{\lambda}$ has neither circles nor non-propagating lines ending at the top and in which case it is isomorphic to $P(\overline{\nu})$ by \cref{actionofgeombimodonprojnuclear}.
	Thus, $\theta_t L(\overline{\lambda})$ is either zero or has irreducible head $L(\overline{\nu})$.
	But $L(\overline{\nu})$ can only appear if $\lambda$ is the upper reduction of $t^\ddagger\overline{\nu}$ or equivalently $t^\ddagger t\overline{\lambda}$.
	For \cref{thetaonirrediv} note that $L(\overline{\lambda})$ is the socle of $\nabla(\lambda)$.
	Again using \cref{projfunctorsareexact} we see that $\theta_tL(\overline{\lambda})$ is a submodule of $\theta_t\nabla(\lambda)$, which is indecomposable and has irreducible socle $L(\overline{\nu'})$ ($\nu'$ as in the statement of the lemma).
	So if $\theta_tL(\overline{\lambda})\neq 0$, it has the same socle.
\end{proof}
\section{Equivalence between \texorpdfstring{$\K_n$-mod}{Kn-mod} and \texorpdfstring{$\lie{p}(n)\mmod$}{p(n)-mod}}\label{sectionmainthm}
In this section we will prove the main equivalence between $\K_n$-mod and $\lie{p}(n)$-mod.
We will achieve this by identifying $\K_n$ with the endomorphism ring of a projective generator for $\lie{p}(n)$-mod.
\subsection{Generating projective objects using translation functors}
Recall that we can associate to each $\mathcal{P}(\lambda)$ a cap diagram with $n$ caps via \cref{defidominttocupdiag}.
On the other hand $\mathcal{P}(\lambda)$ also arises as the direct summand of some (shift of) $V^{\otimes d}$, and thus as $\Theta^+_{i_k}\cdots\Theta^+_{i_1}\bbC$.
\Cref{KLRtopnp,phiisomorphism} tells us that $\mathcal{P}(\lambda)$ is actually the image of $\hat{\theta}_{i_k}\cdots\hat{\theta}_{i_1}\hat{P}(\overline{\iota})$ in $\mathcal{F}$.
But $\hat{\theta}_{i_k}\cdots\hat{\theta}_{i_1}\hat{P}(\iota)$ is isomorphic to some $\hat{P}(\overline{\nu})$ for some cap diagram $\overline{\nu}$ by \cref{geombimodonproj}.
This gives us two different ways to associate a cup diagram to an integral dominant weight $\lambda$ and our next goal is to show that these two notions agree (cf.~\cref{combinatorialdiagram}).

The next proposition will prove that the action of $\Theta_i$ on $\mathcal{P}(\lambda)$ follows the same rule as \cref{geombimodonproj}.
\begin{prop}\label{translationfunctoronprojperiplectic}
	Given a dominant integral weight $\lambda$, we have
	\begin{equation*}
		\Theta_i\mathcal{P}(\lambda)=\begin{cases}
			0&\text{if $t^i\overline{\lambda}$ contains a circle or a non-propagating line,}\\
			\mathcal{P}(\nu)&\text{otherwise, where $\nu$ denotes the upper reduction of $t^i\overline{\lambda}$.}
		\end{cases}
	\end{equation*}
\end{prop}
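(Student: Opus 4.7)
The strategy is to transport the computation from $\lie{p}(n)\mmod$ to $\K\mmod$ via the Schur--Weyl chain
\[
\Cp\xleftarrow{\FunctorKLRtopnp}\sGp\xrightarrow{\Phi}\mathcal{F}\subseteq\K\mmod
\]
set up in \cref{relationSuperBrauer,action} and Section~\ref{Khovanovdefisection}, and then to invoke the analogous formula for $\K$ given by \cref{geombimodonproj}.

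First, since $V$ is a projective generator for $\lie{p}(n)\mmod$, each $\mathcal{P}(\lambda)$ is a summand of some $V^{\otimes d}$; by \cref{partitioncriterionforproj} we may write $\mathcal{P}(\lambda)\cong\FunctorKLRtopnp(\bm{j})$ (up to parity shift) for a residue sequence $\bm{j}=(j_1,\ldots,j_k)$ of an up-tableau whose shape contains $\delta_n$. The isomorphism $\Phi$ from \cref{phiisomorphism} sends $\bm{j}$ to $\hat{\theta}_{j_k}\cdots\hat{\theta}_{j_1}\hat{P}(\overline{\iota})$, which by iterated application of \cref{geombimodonproj} equals $\hat{P}(\overline{\mu})$ for some cap diagram $\overline{\mu}$ obtained by successive upper reductions. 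That this $\overline{\mu}$ coincides with the cap diagram $\overline{\lambda}$ associated to $\lambda$ via \cref{defidominttocupdiag} is the commutativity of diagram \eqref{combinatorialdiagram}, i.e.\ the content of \cref{notionsofcupdiagramsagree}.

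Since $\FunctorKLRtopnp$ intertwines the strand-adding endofunctor $\theta_i$ of $\sGp$ with $\Theta^+_i$ (by construction in \cref{KLRtopnp}) and $\Phi$ intertwines $\theta_i$ with $\hat{\theta}_i$, we obtain the correspondence
\[
\Theta_i^+\mathcal{P}(\lambda)\;\cong\;\FunctorKLRtopnp(\bm{j},i)\;\longleftrightarrow\;\hat{\theta}_i\hat{P}(\overline{\lambda}).
\]
Now \cref{geombimodonproj} gives $\hat{\theta}_i\hat{P}(\overline{\lambda})=0$ if $t^i\overline{\lambda}$ is not orientable in the sense of \cref{defiorientable}, and equals $\hat{P}(\overline{\nu})$ with $\overline{\nu}=\ured(t^i\overline{\lambda})$ otherwise. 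Because $\overline{\lambda}$ has exactly $n$ caps and $t^i$ adds at most one cup on the bottom, any non-propagating line that appears in $t^i\overline{\lambda}$ automatically has both endpoints on the same side of $0$, so the $\K$-notion of non-orientability coincides with the statement's condition ``contains a circle or a non-propagating line''. Discarding the parity shift (irrelevant for the isomorphism type of an indecomposable projective) then yields the formula for $\Theta_i\mathcal{P}(\lambda)$ as stated.

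The main obstacle is the identification $\overline{\mu}=\overline{\lambda}$ invoked above: the combinatorial recipe of \cref{defidominttocupdiag} and the algorithmic iterated upper reduction through $\Phi$ must produce the same cap diagram. This is precisely \cref{notionsofcupdiagramsagree}, which will be established by induction on the length of the residue sequence, matching the single-step combinatorial update of the cap diagram attached to a dominant weight under $\Theta_i$ against the effect of $\hat{\theta}_i$ on $\hat{P}(\overline{\lambda})$ computed via \cref{geombimodonproj}; the base case $\overline{\iota}$ corresponds to the trivial weight and the inductive step reduces to a local analysis of how adding a single $t^i$ modifies the corresponding cap diagram.
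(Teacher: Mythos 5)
Your proposed route is circular. You reduce the proposition to the identification $\overline{\mu}=\overline{\lambda}$, which you correctly recognize as the commutativity of diagram \eqref{combinatorialdiagram}, i.e.\ \cref{notionsofcupdiagramsagree}. But in the paper that statement is proved \emph{from} \cref{translationfunctoronprojperiplectic}: one first checks the two notions of cap diagram agree for the single weight $(n-2,n-4,\dots,-n)$ via \cref{deltanprojexplicit}, and then propagates to all dominant integral weights precisely by using the formula you are trying to prove. Your sketched induction for \cref{notionsofcupdiagramsagree} has the same problem: the ``single-step combinatorial update of the cap diagram attached to a dominant weight under $\Theta_i$'' that you want to match against \cref{geombimodonproj} in the inductive step \emph{is} the content of \cref{translationfunctoronprojperiplectic}. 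So nothing in your argument supplies an independent description of $\Theta_i\mathcal{P}(\lambda)$ on the Lie superalgebra side.

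The missing input is exactly what the paper's (very short) proof uses: the known combinatorial description of translation functors on indecomposable projectives from \cite{BDE19} (Lemmas 7.2.1 and 7.2.3), phrased there in terms of $\circ\bullet$ weight diagrams. The proposition is then a pure translation exercise — one observes that the cap diagrams here are obtained from the weight diagrams of \cite{BDE19} by joining $\circ\bullet$ pairs into caps and shifting by $\tfrac{3}{2}$, and checks that the black-dot rules of \cite{BDE19} become exactly the circle/non-propagating-line and upper-reduction rules of the statement. If you want to keep your Schur--Weyl strategy, you must either import that external combinatorial description anyway (at which point the detour through $\Phi$ and $\sGp$ is unnecessary), or give a genuinely independent proof of \cref{notionsofcupdiagramsagree}, which your outline does not do. As a minor secondary point, your claim that any non-propagating line in $t^i\overline{\lambda}$ automatically has both endpoints on the same side of $0$ (to reconcile the orientability convention of \cref{geombimodonproj} with the blanket condition in the statement) is asserted rather than argued and would also need justification.
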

\begin{proof}
	It suffices to match the combinatorics from \cref{geombimodonproj} with the black dot combinatorics from \cite{BDE19}
	For this observe that our cap diagrams are obtained from their weight diagrams by connecting pairs of $\circ\bullet$ by a cap and then shifting the total diagram $\frac{3}{2}$ to the right.
	The statement then follows from \cite{BDE19}*{Lemma 7.2.1 + Lemma 7.2.3}.
\end{proof}
\begin{lem}\label{deltanprojexplicit}
	Given a residue sequence $(i_1, \dots i_k)$ of an up-tableau of shape $\delta_n$, then
	\begin{equation*}
		\Theta_{i_k}\dots\Theta_{i_1}\bbC=\mathcal{P}(n-2,n-4,\dots,-n).
	\end{equation*}
\end{lem}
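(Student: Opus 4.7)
My plan is to transfer the computation through the categorical identifications established earlier and then perform an explicit cap-diagram computation in the Khovanov-type category $\mathcal{F}$.

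First, I would establish that $\Theta_{i_k}\cdots\Theta_{i_1}\bbC$ is a nonzero indecomposable projective in $\lie{p}(n)\mmod$. Nonvanishing follows from \cref{partitioncriterionforzero} since $\delta_{n+1}\not\subseteq\delta_n$, and projectivity follows from \cref{partitioncriterionforproj} since trivially $\delta_n\subseteq\delta_n$. Indecomposability holds because the module is the image under $\FunctorKLRtopnp$ of the indecomposable object $(i_1,\dots,i_k)\in\sGp$. Hence $\Theta_{i_k}\cdots\Theta_{i_1}\bbC\cong\mathcal{P}(\mu)$ for a unique dominant integral $\mu$, and the remaining task is to identify $\mu$.

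Second, I would move the computation into $\mathcal{F}$. By \cref{KLRtopnp} together with the isomorphism $\Phi$ of \cref{phiisomorphism}, the object $(i_1,\dots,i_k)\in\sGp$ corresponds to $\Theta_{i_k}\cdots\Theta_{i_1}\bbC$ on the $\Cp$ side and to $\hat{\theta}_{i_k}\cdots\hat{\theta}_{i_1}\hat{P}(\overline{\iota})$ on the $\mathcal{F}$ side. Iterating \cref{geombimodonproj}, this latter module equals $\hat{P}(\overline{\nu})$ with $\overline{\nu}=\ured(t^{i_k}\cdots t^{i_1}\overline{\iota})$.

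Third, the combinatorial heart is to compute $\overline{\nu}$ explicitly. I would proceed by induction on the rows of $\delta_n$, using that row $r$ contributes residues $-r+1,-r+2,\ldots,n-2r+1$ in order. Checking how each new strand $t^{i_j}$ interacts with the current cap configuration, either propagating an existing cap outward via the straightening surgery of \cref{detailedanalysis} or introducing a fresh cap while preserving orientability, one verifies that after processing rows $1$ through $r$ the cap diagram has exactly $r$ non-nested small caps placed at a regularly-spaced family of positions. After all $n$ rows, $\overline{\nu}$ consists of $n$ non-nested small caps whose positions coincide, via \cref{defidominttocupdiag}, with the cap diagram attached to $(n-2,n-4,\dots,-n)$.

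Finally, to conclude $\mu=(n-2,n-4,\dots,-n)$, I would invoke \cref{translationfunctoronprojperiplectic}, which gives the matching upper-reduction rule on the $\lie{p}(n)$ side for $\Theta_i$ acting on projective modules. Since an indecomposable projective is determined by its cap diagram and our combinatorial $\overline{\nu}$ agrees with the cap diagram attached to $(n-2,\dots,-n)$, the equality $\mu=(n-2,n-4,\dots,-n)$ follows. The main obstacle is the third step: tracking the cap diagram through all $n(n+1)/2$ successive strands requires careful bookkeeping, and one must verify at each stage both that orientability is preserved (so that no unintended vanishing occurs) and that the caps move to the predicted positions; the regular row-by-row structure of the residue sequence of $\ta{t}^{\delta_n}$ is what makes the induction feasible.
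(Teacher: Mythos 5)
Your first step is fine: non-vanishing, projectivity and indecomposability all follow as you say, so the composite is $\mathcal{P}(\mu)$ for a unique $\mu$. The gap is in how you identify $\mu$. Your plan computes the cap diagram $\overline{\nu}=\ured(t^{i_k}\cdots t^{i_1}\overline{\iota})$ on the Khovanov side and then concludes $\mu=(n-2,n-4,\dots,-n)$ because $\overline{\nu}$ is the cap diagram that \cref{defidominttocupdiag} attaches to that weight. But the statement that the Khovanov-side cap diagram of $\hat{\theta}_{i_k}\cdots\hat{\theta}_{i_1}\hat{P}(\overline{\iota})$ coincides with the cap diagram of the highest weight of the head of $\Theta_{i_k}\cdots\Theta_{i_1}\bbC$ is precisely the commutativity of \eqref{combinatorialdiagram}, i.e.\ \cref{notionsofcupdiagramsagree} --- and that proposition is proved in the paper \emph{using} \cref{deltanprojexplicit} as its anchor case before propagating with \cref{translationfunctoronprojperiplectic}. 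So your argument is circular. Nor can you escape by iterating \cref{translationfunctoronprojperiplectic} along the chain $\bbC,\Theta_{i_1}\bbC,\Theta_{i_2}\Theta_{i_1}\bbC,\dots$: that proposition only governs $\Theta_i$ applied to projectives, the trivial module $\bbC$ is not projective, and since every intermediate shape is a proper subpartition of $\delta_n$, by \cref{partitioncriterionforproj} the \emph{only} projective in the whole chain is the final term. There is no projective to start the iteration from, so \cref{translationfunctoronprojperiplectic} never gets a foothold.

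What is needed (and what the paper does) is an independent computation on the $\lie{p}(n)$ side. Reduce to the residue sequence that first builds $\delta_{n-1}$ and then fills in the last hooks; show by a multiplicity computation, using the adjunction $(\Theta_{i+1},\Theta_i)$ together with \cref{translationfunctoronprojperiplectic} applied to the \emph{test} projectives $\mathcal{P}(\mu)$ in $\Hom(\mathcal{P}(\mu),\Theta_{\bm j}\bbC)=\Hom(\Theta_{\bm j'}\mathcal{P}(\mu),\mathcal{L}(n-1,\dots,0))$, that the $\delta_{n-1}$-stage equals the irreducible costandard module $\nabla(n-1,n-3,\dots,-n+1)$; then use \cite{BDE19}*{Prop.\ 5.2.2} to exhibit $\nabla(n-2,n-4,\dots,-n)$ as a quotient of the full composite, and combine this with the projectivity you already established to force the composite to be $\mathcal{P}(n-2,n-4,\dots,-n)$. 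Your purely diagrammatic computation of $\overline{\nu}$ is not wasted --- it is essentially the content of the subsequent \cref{notionsofcupdiagramsagree} --- but it cannot substitute for this anchor.
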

\begin{proof}
	We prove this only for the residue sequence $(0,1,\dots, n-2,-1,\dots,n-4,\dots,-n+2,n-1,n+1,\dots,-n+1)$.
	This is the residue sequence that first builds $\delta_{n-1}$ and then adds the missing boxes.
	All the other ones are obtained via swapping entries $i$ and $j$ with $\abs{i-j}>1$, which gives isomorphic $\lie{p}(n)$-modules.

	First look at the residue sequence $\bm{j}=(0,1,\dots, n-2,-1,\dots,n-4,\dots,-n+2)$ of $\delta_{n-1}$.
	We claim that $\Theta_{\bm{j}}\bbC=\nabla(0,-1,\dots,-n+1)=\mathcal{L}(n-1,n-3,\dots,-n+1)$.
	Using the adjunction $(\Theta_{i+1},\Theta_i)$ we find
	\begin{align*}
		[\Theta_{\bm{j}}\bbC : \mathcal{L}(\mu)]&=\dim\Hom_{\lie{p}(n)}(\mathcal{P}(\mu), \Theta_{\bm{j}}\bbC)\\
		&=\dim\Hom_{\lie{p}(n)}(\Theta_{\bm{j}'}\mathcal{P}(\mu), \mathcal{L}(n-1,n-2,\dots,0))\\
	\end{align*}
	where $\bm{j}'=(-n+3,-n+5,-n+4,-n+7,-n+6,-n+5,\dots,n-1,\dots,1)$.
	This means that we need to find all $\mu$ such that $\Theta_{\bm{j}'}\mathcal{P}(\mu)\cong \mathcal{P}(n-1,n-2,\dots,0)$.
	Note that the (reduction of the) generalized crossingless matching looks like:
	\begin{equation*}
		\begin{tikzpicture}[scale=0.4, transform shape]
			\CUP{1}
			\SETCOORD{1}{0}
			\CUP{1}
			\SETCOORD{3}{0}
			\CUP{1}
			\SETCOORD{0}{-3.5}
			\CAP{-7}
			\SETCOORD{2}{0}
			\CAP{3}
			\SETCOORD{-1}{0}
			\CAP{-1}
			\node at (4.5,0) {$\dots$};
			\node at (1,-3.5) {$\dots$};
			\node at (6,-3.5) {$\dots$};
			\node at (3.5,-1.6) {$\vdots$};
			\node[anchor=south] at (0,0) {$\frac{5}{2}-n$};
			\node[anchor=south] at (7,0) {$n-\frac{1}{2}$};
			\node[anchor=north] at (0,-3.5) {$\frac{5}{2}-n$};
			\node[anchor=north] at (7,-3.5) {$n-\frac{1}{2}$};
			\node[anchor=north] at (4,-3.5) {$\frac{3}{2}$};
		\end{tikzpicture}
	\end{equation*}
	The cap diagram associated to $\mathcal{P}(n-1,n-2,\dots,0)$ has right endpoints of caps at positions $\{n+\frac{1}{2},n-\frac{1}{2},\dots,\frac{3}{2}\}$, hence its cap diagram looks like
	\begin{equation*}
		\begin{tikzpicture}[scale=0.5, transform shape]
			\SETCOORD{-1}{0}
			\CAP{9}
			\SETCOORD{-1}{0}
			\CAP{-7}
			\SETCOORD{2}{0}
			\CAP{3}
			\SETCOORD{-1}{0}
			\CAP{-1}
			\node at (1,0) {$\dots$};
			\node at (6,0) {$\dots$};
			\node at (3.5,1.9) {$\vdots$};
			\node[anchor=north] at (0,0) {$\frac{5}{2}-n$};
			\node[anchor=north] at (7,0) {$n-\frac{1}{2}$};
			\node[anchor=north] at (4,0) {$\frac{3}{2}$};
		\end{tikzpicture}
	\end{equation*}
	Thus, there exists only one $\mu$ such that the upper reduction process returns the cap diagram of $(n-1,n-2,\dots,0)$.
	Namely, the cap diagram where the right endpoints of caps are given by $\{n+\frac{1}{2},n-\frac{1}{2},\dots,-n+\frac{5}{2}\}$.
	And this is associated with $\mu=(n-1,n-3,\dots,-n+1)$ so $\Theta_{\bm{j}}\bbC=\mathcal{L}(n-1,n-3,\dots,-n+1)=\nabla(n-1,n-3,\dots,-n+1)$ as claimed.
	
	For the next step we prove that 
	\begin{equation*}
		\Theta_{-n+1}\Theta_{-n+3}\cdots\Theta_{n-1}\nabla(n-1,n-3,\dots,-n+1)=\mathcal{P}(n-2,n-4,\dots,-n).
	\end{equation*}
	
	By \cite{BDE19}*{Prop.\ 5.2.2} we know that 
	\begin{equation*}
		\Theta_{n-1-2i}\nabla(n-2,n-4,\dots,n-2i,n-2i-1,n-2i-3,\dots,-n+1)
	\end{equation*} has a quotient isomorphic to $\nabla(n-2,n-4,\dots,n-2i,n-2i-2,n-2i-3,\dots,-n+1)$. 
	But repeating this step we see that $\Theta_{-n+1}\Theta_{-n+3}\cdots\Theta_{n-1}\nabla(n-1,n-3,\dots,-n+1)$ has a quotient isomorphic to $\nabla(n-2,n-4,\dots,-n)=\mathcal{L}(n-2,n-4,\dots,-n)$.

	Furthermore, $\Theta_{-n+1}\Theta_{-n+3}\dots\Theta_{n-1}\nabla(0,-1,\dots,-n+1)$ is projective by \cref{partitioncriterionforproj}.
	Hence, it has to be isomorphic to $\mathcal{P}(n-2,n-4,\dots,-n)$.
\end{proof}
\begin{prop}\label{notionsofcupdiagramsagree}
	The diagram \cref{combinatorialdiagram} from the introduction commutes.
\end{prop}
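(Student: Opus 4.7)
The plan is to compare the two cap-diagram assignments inductively along sequences of translation functors. The central observation is that both assignments obey exactly the same recursion rule under translation functors: by \cref{translationfunctoronprojperiplectic}, if $\mathcal{P}(\lambda)$ corresponds to cap diagram $\overline{\lambda}$ via \cref{defidominttocupdiag}, then $\Theta_i\mathcal{P}(\lambda)$ is either zero or $\mathcal{P}(\nu)$ whose cap diagram via \cref{defidominttocupdiag} is exactly $\ured(t^i\overline{\lambda})$; by \cref{actionofgeombimodonprojnuclear} (and the $\K$-version \cref{geombimodonproj}), the functor $\hat{\theta}_i$ acts on $\hat{P}(\overline{\mu})$ by the very same rule $\overline{\mu}\mapsto\ured(t^i\overline{\mu})$. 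Thus, once the two assignments coincide for a single projective, they continue to agree under any sequence of translation functors.

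For the base case I take the residue sequence $\bm{i}=(i_1,\dots,i_k)$ of an up-tableau of shape $\delta_n$. On the $\lie{p}(n)$-side, \cref{deltanprojexplicit} gives $\Theta_{i_k}\cdots\Theta_{i_1}\mathbb{C}\cong\mathcal{P}(\lambda_0)$ with $\lambda_0=(n-2,n-4,\dots,-n)$; applying \cref{defidominttocupdiag} produces a cap diagram $\overline{\lambda_0}$ consisting of $n$ nested-free length-one caps at positions $(n-\tfrac12,n+\tfrac12),(n-\tfrac52,n-\tfrac32),\dots,(-n+\tfrac32,-n+\tfrac52)$. On the $\K$-side, iterated application of \cref{geombimodonproj} identifies $\hat{\theta}_{i_k}\cdots\hat{\theta}_{i_1}\hat{P}(\overline{\iota})$ with $\hat{P}(\overline{\nu_0})$ for $\overline{\nu_0}=\ured(t^{i_k}\cdots t^{i_1}\overline{\iota})$. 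I then verify directly that $\overline{\nu_0}=\overline{\lambda_0}$ by an induction on $n$: first build $\delta_{n-1}$ (by the inner induction hypothesis this produces $n-1$ small caps in the appropriate nested-free arrangement), then adjoin the remaining boxes corresponding to the bottom row, observing that each new $t^{i_j}$ either contributes a fresh length-one cap at a new position or topologically slides an existing cap and creates a new one next to it, and no non-trivial reductions occur because at every intermediate step the residue sequence is compatible with an up-down-tableau of suitable shape (so orientability is guaranteed by \cref{udtableauxIFForientablecirclediag}).

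The inductive step is straightforward: by \cref{partitioncriterionforproj}, every indecomposable projective $\mathcal{P}(\lambda)$ comes from a partition $\mu$ with $\delta_n\subseteq\mu$. I may therefore choose an up-tableau of shape $\mu$ whose residue sequence starts with $\bm{i}$ (reordering any residues that differ by more than $1$ using distant crossings, which yields isomorphic modules both in $\lie{p}(n)\mmod$ and in $\K\mmod$ by the braid relation and the definition of $\sGp$). Factoring the sequence of functors through $\mathcal{P}(\lambda_0)$, the recursion from the first paragraph propagates the equality $\overline{\lambda_0}=\overline{\nu_0}$ to the equality of the two cap diagrams attached to $\mathcal{P}(\lambda)$, establishing commutativity of \eqref{combinatorialdiagram}.

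The main technical obstacle is the explicit base-case computation, which forces one to track how each crossingless matching $t^{i_j}$ interacts topologically with the partial cap diagram built so far. In particular, one must verify that no circle ever appears and no cap of greater length is ever produced; equivalently, one must check that every upper reduction at an intermediate step yields a cap diagram with only small caps added sequentially. This can be reduced to the combinatorics of \cref{udtableauxIFForientablecirclediag}, which guarantees both orientability and the correctness of the bijection between residue sequences of up-tableaux of shape $\delta_n$ and the particular cap-diagram pattern for $\overline{\lambda_0}$.
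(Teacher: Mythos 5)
Your proposal is correct and follows essentially the same strategy as the paper: verify the base case for the weight $(n-2,n-4,\dots,-n)$ arising from a residue sequence of $\delta_n$ (the paper does this by drawing the topological picture of $t^{i_k}\cdots t^{i_1}\overline{\iota}$ and reading off the cap endpoints, rather than by an inner induction on $n$), and then propagate to all dominant integral weights using the fact that \cref{translationfunctoronprojperiplectic} and \cref{geombimodonproj} impose the identical upper-reduction recursion on both cap-diagram assignments.
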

\begin{proof}
	Observe that the cap diagram associated to the dominant integral weight $(n-2,n-4,\dots,-n)$ has $n$ caps and the positions of their right endpoints are $n-2i+\frac{3}{2}$ for $1\leq i\leq n$.
	On the other hand to a residue sequence $(i_1,\dots, i_k)$ of $\delta_n=(n,n-1,\dots,1)$  we associate the cap diagram that is obtained by the upper reduction $\overline{\nu}$ of $t^{i_k}\cdots t^{i_1}\overline{\iota}$.
	Now $t^{i_k}\cdots t^{i_1}\overline{\iota}$ looks (topologically) like the following.
	\begin{equation*}
		\begin{tikzpicture}[scale=0.5, transform shape]
			\SETCOORD{0}{1}
			\LINE{0}{-1}
			\CUP{1}
			\LINE{0}{1}
			\SETCOORD{1}{0}
			\LINE{0}{-1}
			\CUP{-3}
			\LINE{0}{1}
			\SETCOORD{-2}{0}
			\LINE{0}{-1}
			\CUP{7}
			\LINE{0}{1}
			\SETCOORD{1}{0}
			\LINE{0}{-1}
			\CUP{-9}
			\LINE{0}{1}
			\SETCOORD{0}{-5}
			\CAP{1}
			\SETCOORD{2}{0}
			\CAP{1}
			\SETCOORD{1}{0}
			\CAP{1}
			\SETCOORD{2}{0}
			\CAP{1}
			\node at (3,0) {$\dots$};
			\node at (-2,0) {$\dots$};
			\node at (3,-4) {$\dots$};
			\node at (-2,-4) {$\dots$};
			\node[anchor=south] at (0,1) {$-\frac{1}{2}$};
			\node[anchor=south] at (1,1) {$\frac{1}{2}$};
			\node[anchor=south] at (-4,1) {$-n+\frac{1}{2}$};
			\node[anchor=south] at (5,1) {$n-\frac{1}{2}$};
			\node[anchor=north] at (-4,-4) {$-n+\frac{1}{2}$};
			\node[anchor=north] at (5,-4) {$n-\frac{1}{2}$};
		\end{tikzpicture}
	\end{equation*}
	Hence, the right endpoints of the caps in $\overline{\nu}$ are exactly at positions $-n+\frac{3}{2},-n+\frac{5}{2},\dots,n-\frac{1}{2}$, which agrees with the cup diagram associated to $(n-2,n-4,\dots,-n)$.
	Therefore, for this particular weight the two notions agree and using \cref{translationfunctoronprojperiplectic} we see that these notions agree for all dominant integral weights.
	Hence, we obtain the commutativity of \cref{combinatorialdiagram} from the introduction.
\end{proof}	
\subsection{Dimension of homomorphism spaces}
When proving the main theorem we will need some estimate on the dimensions of homomorphism spaces to conclude that the ideal $\mathbb{I}_n$ is big enough.
The next theorem provides this estimate. 
Note that we only state one inequality here as it suffices for the proof of the main theorem but from the main theorem it is clear that it is actually an equality.
\begin{thm}\label{correctdimensionformaintheorem}
	We have the following equality.
	\begin{equation}
		\dim\Hom_{\lie{p}(n)}(\mathcal{P}(\lambda), \mathcal{P}(\mu))\geq\dim e_\lambda\K_n e_\mu
	\end{equation}
\end{thm}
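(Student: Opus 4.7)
The plan is to use the categorical isomorphism $\Phi\colon\sGp\xrightarrow{\sim}\mathcal{F}$ from \cref{phiisomorphism} together with the fullness of $\FunctorKLRtopnp\colon\sGp\to\Cp$ from \cref{KLRtopnp} to produce a surjection from $e_\lambda\K e_\mu$ onto $\Hom_{\lie{p}(n)}(\mathcal{P}(\lambda),\mathcal{P}(\mu))$, and then to verify that its kernel is contained in $e_\lambda\mathbb{I}_n e_\mu$. This containment is exactly what is needed to deduce the stated inequality.

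First I would fix sequences $\bm{i}_\lambda,\bm{i}_\mu$ with $\Phi(\bm{i}_\lambda)=\hat{P}(\lambda)$ and $\Phi(\bm{i}_\mu)=\hat{P}(\mu)$; by \cref{notionsofcupdiagramsagree} these also satisfy $\FunctorKLRtopnp(\bm{i}_\lambda)=\mathcal{P}(\lambda)$ and $\FunctorKLRtopnp(\bm{i}_\mu)=\mathcal{P}(\mu)$. Composing the isomorphism $e_\lambda\K e_\mu\cong\Hom_{\sGp}(\bm{i}_\lambda,\bm{i}_\mu)$ induced by $\Phi^{-1}$ with the surjection given by fullness of $\FunctorKLRtopnp$ yields
\[
\pi\colon e_\lambda\K e_\mu\twoheadrightarrow\Hom_{\lie{p}(n)}(\mathcal{P}(\lambda),\mathcal{P}(\mu)).
\]
If $\ker(\pi)\subseteq e_\lambda\mathbb{I}_n e_\mu$, then $\pi$ factors as $e_\lambda\K e_\mu\twoheadrightarrow\Hom_{\lie{p}(n)}(\mathcal{P}(\lambda),\mathcal{P}(\mu))\twoheadrightarrow e_\lambda\K_n e_\mu$, where the second map is the surjection onto a further quotient; this immediately gives $\dim\Hom_{\lie{p}(n)}(\mathcal{P}(\lambda),\mathcal{P}(\mu))\geq\dim e_\lambda\K_n e_\mu$.

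To establish the containment, I would expand any $f\in e_\lambda\K e_\mu$ via $\Phi^{-1}$ in the basis $\{\Psi_{\ta{s}\ta{t}}\}$ from \cref{spanningsetisbasis} indexed by pairs of up-down tableaux of common shape $\tau$. By \cref{restrictiontotlambda} each $\Psi_{\ta{s}\ta{t}}$ factors through the object $\bm{i}_{\ta{t}^\tau}$ associated to $\tau$. Applying \cref{partitioncriterionforzero}, $\FunctorKLRtopnp(\bm{i}_{\ta{t}^\tau})$ vanishes exactly when $\delta_{n+1}\subseteq\tau$; using the commutativity of diagram \eqref{combinatorialdiagram} extended to such larger shapes, the corresponding element $\Phi(\Psi_{\ta{s}\ta{t}})\in e_\lambda\K e_\mu$ factors through a cap diagram with at least $n+1$ caps and therefore lies in $e_\lambda\mathbb{I}_n e_\mu$ by \cref{nonpropifffactorthroughhigherlayer}. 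Thus every basis vector lying in $\ker(\pi)$ because of shape reasons is already accounted for in the ideal.

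The hard part will be showing that the remaining basis vectors $\Psi_{\ta{s}\ta{t}}$ (those with $\delta_{n+1}\not\subseteq\tau$) are mapped by $\pi$ to linearly independent elements of $\Hom_{\lie{p}(n)}(\mathcal{P}(\lambda),\mathcal{P}(\mu))$; once this is in place, any element of $\ker(\pi)$ must have zero coefficient on every such vector, leaving only a combination of vectors already in $e_\lambda\mathbb{I}_n e_\mu$. This is the assertion that $\FunctorKLRtopnp$ is \emph{faithful} on the non-degenerate cellular layer of $\sGp$. My plan for this step is to exploit the based quasi-hereditary structure of \cref{sGisquher} to decompose morphisms into independent cellular components labelled by shapes $\tau$, and then identify each non-vanishing cellular component with an independent contribution to a $\Delta$-filtration of $\mathcal{P}(\mu)$, via the classification of indecomposable summands of $V^{\otimes d}$ in \cite{CE21} combined with the effect of translation functors on standard modules from \cite{BDE19}. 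Matching the cellular structure on the $\K$-side with the $\Delta$-flag on the $\lie{p}(n)$-side is the central technical challenge.
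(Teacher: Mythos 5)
Your overall strategy is genuinely different from the paper's. The paper proves the inequality by a direct combinatorial count: for \emph{very typical} $\mu$ it computes $\dim\K_n e_\mu=2^n$ explicitly and compares with $\dim\Hom_{\lie{p}(n)}(\mathcal{P}(\lambda),\mathcal{P}(\mu))=\abs{\blacktriangle(\mu)\cap\blacktriangledown(\lambda)}$ from \cite{BDE19}*{Proposition 8.1.1}, and then reduces the general case to the very typical one using the adjunctions $(\Theta_i,\Theta_{i-1})$ and $(\theta_i,\theta_{i-1})$ together with \cref{translationfunctoronprojperiplectic} and \cref{actionofgeombimodonprojnuclear}. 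You instead try to go through Schur--Weyl duality and the cellular structure of $\sGp$.

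The problem is that your central step is not carried out, and it is exactly the hard content of the whole section. Note that $e_\lambda\K e_\mu$ is at most one-dimensional (\cref{klocuntial}), so your ``linear independence of the images of the surviving basis vectors'' reduces to: if the unique orientable circle diagram $\underline{\lambda}\overline{\mu}$ has no non-propagating line, then the corresponding morphism $\mathcal{P}(\mu)\to\mathcal{P}(\lambda)$ is \emph{nonzero}. This is precisely the statement $\dim\Hom_{\lie{p}(n)}(\mathcal{P}(\lambda),\mathcal{P}(\mu))\geq\dim e_\lambda\K_n e_\mu$ that you set out to prove, so your reduction is essentially circular unless the non-vanishing is established by independent means. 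Your proposed means --- matching cellular layers of $\sGp$ with a $\Delta$-flag of $\mathcal{P}(\mu)$ via \cite{CE21} and \cite{BDE19} --- is only named, not executed, and it amounts to a faithfulness statement for $\FunctorKLRtopnp$ on the non-degenerate layer which is morally equivalent to the main theorem itself. The paper deliberately avoids this by proving the inequality through an independent dimension count, and only then uses it (in \cref{mainthm}) to conclude that $\mathbb{I}_n$ exhausts the kernel.

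Two smaller points. First, you invoke \cref{nonpropifffactorthroughhigherlayer} in the wrong direction: that lemma says elements of $\mathbb{I}_n$ factor through a higher layer, whereas you need the converse (that a morphism factoring through an object with $n+1$ caps lies in the span of diagrams with non-propagating lines); this is true because nonzero surgeries never destroy non-propagating lines (cf.\ \cref{detailedanalysis}), but it requires saying. Second, the claimed match between ``shape contains $\delta_{n+1}$'' and ``non-propagating line'' is an extension of \cref{combinatorialdiagram} beyond what \cref{notionsofcupdiagramsagree} establishes and would also need an argument.
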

\begin{proof}
	Note that the right-hand side is given by circle diagrams with $n$ cups and caps that have no non-propagating line.
	First assume that $\mu$ is very typical, i.e.\ $\mu_i\geq\mu_{i+1}+4$.
	This means that the associated cap diagram contains no nested caps and all caps have by assumption at least to rays between them.
	Therefore, any orientable circle diagram necessarily looks locally like one of the following two
	\begin{equation*}
		\begin{tikzpicture}[scale=0.4]
			\LINE{0}{-1}
			\CUP{1}
			\CAP{1}
			\LINE{0}{-1}
			\SETCOORD{3}{0}
			\LINE{0}{1}
			\CAP{1}
			\CUP{1}
			\LINE{0}{1}
		\end{tikzpicture}
	\end{equation*} 
	and as all caps are far apart from one another these local picture do not interact.
	This gives rise to $2^n$ valid orientable circle diagrams and the cup diagrams are associated to all weights of the form $(\mu_1+2\eps_1,\dots,\mu_n+2\eps_n)$ where $(\eps_1,\dots,\eps_n)\in\{0,1\}^n$.
	
	On the other hand the proof of \cite{BDE19}*{Proposition 8.1.1} we know that
	\begin{equation}
		\dim\Hom_{\lie{p}(n)}(\mathcal{P}(\lambda), \mathcal{P}(\mu))=\abs{\blacktriangle(\mu)\cap\blacktriangledown(\lambda)}
	\end{equation}
	in their notation.
	As $\lambda$ is also a typical weight, it is clear from their definition that $\blacktriangledown(\lambda)$ contains all weights of the form $(\lambda_1-2\eps_1,\dots,\lambda_n-2\eps_n)$ as well as $\mu\in\blacktriangle(\mu)$.
	So we proved in this case that the inequality holds.

	If $\mu$ is not very typical, we can use translation functors and \cite{BDE19}*{Theorem 7.1.1} to obtain $\mathcal{P}(\mu)$ as $\Theta_{i_k}\cdots\Theta_{i_1}\mathcal{P}(\mu')$ where $\mu'$ is a very typical weight.
	Then we have 
	\begin{align*}
		&\dim\Hom_{\lie{p}(n)}(\mathcal{P}(\lambda), \mathcal{P}(\mu))\\
		&=\dim\Hom_{\lie{p}(n)}(\mathcal{P}(\lambda), \Theta_{i_k}\cdots\Theta_{i_1}\mathcal{P}(\mu'))\\
		&=\dim\Hom_{\lie{p}(n)}(\Theta_{i_1+1}\cdots\Theta_{i_k+1}\mathcal{P}(\lambda), \mathcal{P}(\mu'))\\
		&=\dim\Hom_{\lie{p}(n)}(\mathcal{P}(\lambda'), \mathcal{P}(\mu'))\\
		&\geq\dim\Hom_{\K_n}(P(\overline{\lambda'}),P(\overline{\mu'}))\\
		&=\dim\Hom_{\K_n}(\theta_{i_1+1}\cdots\theta_{i_k+1}P(\overline{\lambda}),P(\overline{\mu'}))\\
		&=\dim\Hom_{\K_n}(P(\overline{\lambda}),\theta_{i_k}\cdots\theta_{i_1}P(\overline{\mu'}))
	\end{align*}
	where we used the adjunction $(\Theta_i,\Theta_{i-1})$ (resp.\ $(\theta_i,\theta_{i-1})$) and that translation functor act topologically on projective modules by \cref{translationfunctoronprojperiplectic} and \cref{actionofgeombimodonprojnuclear}.
\end{proof}
\begin{rem}
	In fact, in \cref{correctdimensionformaintheorem} equality holds.
	This will follow from \cref{mainthm} below.
	We will however use this inequality in the proof of \cref{mainthm} to argue that the ideal $\mathbb{I}_n$ is \enquote{big enough}.
\end{rem}
\subsection{Main theorem}
\begin{thm}[Main theorem]\label{mainthm}
	There is an equivalence of categories 
	\begin{equation}
		\Psi\colon\K_n\mmod\to\lie{p}(n)\mmod
	\end{equation}
	such that $\Psi\circ\theta_i\cong\Theta_i\circ\Psi$.
	Furthermore, $\Psi$ identifies the highest weight structures on both sides.
\end{thm}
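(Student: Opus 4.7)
The plan is to identify $\K_n$ explicitly with the opposite endomorphism algebra of a projective generator of $\lie{p}(n)\mmod$ and then invoke standard Morita theory. Set $P \coloneqq \bigoplus_{\lambda\in\Lambda_n} \mathcal{P}(\lambda)$; by \cref{defidominttocupdiag}, $\Lambda_n$ is in bijection with the set of dominant integral weights, so $P$ is a projective generator of $\lie{p}(n)\mmod$. The desired equivalence will then be $\Psi(M) = P \otimes_{\K_n} M$.

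To build the algebra isomorphism $\Xi\colon\K_n \xrightarrow{\sim} \End_{\lie{p}(n)}(P)^{op}$, I first combine the isomorphism $\Phi\colon\sGp\xrightarrow{\sim}\mathcal{F}$ of \cref{phiisomorphism} with the functor $\FunctorKLRtopnp\colon\sGp\to\Cp$ of \cref{KLRtopnp}. Using \cref{notionsofcupdiagramsagree} to identify $\hat{P}(\overline{\lambda})$ with $\mathcal{P}(\lambda)$ for $\lambda\in\Lambda_n$, restriction to these idempotents yields an algebra homomorphism $e\K e\to\End_{\lie{p}(n)}(P)^{op}$ where $e = \sum_{\lambda\in\Lambda_n} e_\lambda$. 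The first substantial step is to show this factors through $\K_n = e\K e/\mathbb{I}_n$: by \cref{nonpropifffactorthroughhigherlayer}, every element of $\mathbb{I}_n$ factors through an object indexed by $\nu\in\Lambda_{n+1}$, which via \cref{combinatorialdiagram} corresponds to a residue sequence whose associated partition contains $\delta_{n+1}$; by \cref{partitioncriterionforzero}, the image of such an object in $\Cp$ is zero. This gives the well-defined homomorphism $\Xi$.

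The heart of the argument is to show $\Xi$ is an isomorphism. For surjectivity, Schur--Weyl duality \eqref{SchurWeylDuality} provides a surjection $\sBr_d\twoheadrightarrow\End_{\lie{p}(n)}(V^{\otimes d})$; since every $\mathcal{P}(\lambda)$ with $\lambda\in\Lambda_n$ appears as a summand of some $V^{\otimes d}$, and since $\FunctorKLRtosBr$ is full (\cref{KLRtosuperBraueriso}) and $\FunctorKLRtopnp = \FunctorsBrtopn\circ\FunctorKLRtosBr$ on the level of categories, every morphism in $\End_{\lie{p}(n)}(P)^{op}$ is hit by $\Xi$. Injectivity is then forced by the inequality $\dim e_\lambda\K_n e_\mu \leq \dim\Hom_{\lie{p}(n)}(\mathcal{P}(\lambda),\mathcal{P}(\mu))$ from \cref{correctdimensionformaintheorem}: surjectivity gives $\dim\K_n \geq \dim\End_{\lie{p}(n)}(P)^{op}$ block by block, and combined with the reverse inequality from \cref{correctdimensionformaintheorem} this forces equality and hence bijectivity of $\Xi$. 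The hard part will be making the surjectivity argument precise, since morphisms in $\End_{\lie{p}(n)}(P)^{op}$ arise from iterated translation functors via cups, caps and crossings in the super Brauer category, and one must verify that all of these lift through $\Phi^{-1}$ to basis elements of $\K_n$ without accidentally landing in $\mathbb{I}_n$.

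Once $\Xi$ is established, the equivalence $\Psi(M) = P\otimes_{\K_n} M$ follows by standard Morita theory applied to the progenerator $P$. The intertwining $\Psi\circ\theta_i\cong\Theta_i\circ\Psi$ is built in: by construction $\FunctorKLRtopnp$ intertwines adding an $i$-strand in $\sGp$ with $\Theta_i^+$, this corresponds under $\Phi$ to the geometric bimodule functor $\hat{\theta}_i$ on $\K\mmod$, which in turn descends to $\theta_i$ on $\K_n\mmod$. Finally, the highest weight structures match because both categories are indexed by the same set $\Lambda_n$ equipped with compatible partial orders (\cref{defipartialordering} on the $\K_n$ side matches the block-preserving dominance order for $\lie{p}(n)$), and because $\Psi$ sends $P(\overline{\lambda})$ to $\mathcal{P}(\lambda)$; the identification $\Psi(\Delta_n(\lambda))\cong\Delta(\lambda)$ and $\Psi(\nabla_n(\lambda))\cong\nabla(\lambda)$ can then be verified by reducing (via the translation functor intertwiner and the base case of a very typical weight) to the action of iterated translation functors on Kac modules, using \cref{geombimodonstandard,geombimodoncostandard} on one side and the analogous filtrations of $\Theta_i\Delta(\mu)$, $\Theta_i\nabla(\mu)$ from \cite{BDE19} on the other.
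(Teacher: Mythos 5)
Your proposal is correct and follows essentially the same route as the paper: the surjection $e\K e\to\bigoplus\Hom_{\lie{p}(n)}(\mathcal{P}(\lambda),\mathcal{P}(\mu))$ built from \cref{phiisomorphism} and \cref{notionsofcupdiagramsagree}, the factorization through $\K_n$ via \cref{nonpropifffactorthroughhigherlayer} and \cref{partitioncriterionforzero}, and the dimension bound of \cref{correctdimensionformaintheorem} forcing the surjection to be an isomorphism, with Morita theory and the matching $(\Delta,\nabla)$-combinatorics finishing the argument. The only point you flag as "hard" (lifting morphisms without landing in $\mathbb{I}_n$) is in fact automatic: once the map from $e\K e$ is surjective and kills $\mathbb{I}_n$, the induced map from the quotient $\K_n$ is surjective with no further verification needed.
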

\begin{proof}
	By \cref{phiisomorphism} we know that the categories $\mathcal{F}$ and $\sGp$ are isomorphic.
	Thus, we get an induced functor $\mathcal{F}\to\Cp$, where $\Cp$ was defined as the category with objects $\Theta_{i_k}\cdots\Theta_{i_1}\bbC$.
	This functor is full and by definition essentially bijective on objects and it intertwines the translation functors.
	Now $\Theta_{i_k}\cdots\Theta_{i_1}\bbC$ is a direct summand of $V^{\otimes k}$ and $V$ is a projective generator for $\lie{p}(n)$.
	This means that every indecomposable projective module $\mathcal{P}(\lambda)$ appears as $\Theta_{i_k}\cdots\Theta_{i_1}\bbC$.
	By \cref{sequenceiseitheruptableauorreducible} we may assume that $(i_1, \dots, i_k)$ is the residue sequence of some up-tableau of shape $\Gamma$.
	Using \cref{partitioncriterionforzero} and \cref{partitioncriterionforproj} we see that $\delta_n\subseteq\Gamma\subsetneq\delta_{n+1}$.
	By \cref{geombimodonproj} we see that this corresponds to the module $\hat{P}(\overline{\nu})$ where $\overline{\nu}$ is the upper reduction of $t^{i_k}\dots t^{i_1}\overline{\iota}$.
	But this is equal to $\overline{\lambda}$ (which has $n$ caps) by \cref{notionsofcupdiagramsagree}, and thus we have a surjective algebra homomorphism
	\begin{equation}\label{ekesurjectsontopn}
		e\K e\cong \bigoplus_{\lambda,\mu}\Hom_{\K}(\hat{P}(\overline{\lambda}),\hat{P}(\overline{\mu}))\twoheadrightarrow\bigoplus_{\lambda,\mu}\Hom_{\lie{p}(n)}(\mathcal{P}({\lambda}),\mathcal{P}({\mu})),
	\end{equation}
	where the sum runs over all $\lambda$ and $\mu$ which are dominant integral weights for $\lie{p}(n)$.
	
	The left-hand side has a basis given by all orientable circle diagrams with exactly $n$ cups and caps.
	Now by \cref{nonpropifffactorthroughhigherlayer} we see that every circle diagram that has a non-propagating line maps factors through an object with more than $n$ caps (which means that the shape of the corresponding partition contains $\delta_{n+1}$).
	This means that its image in $\lie{p}(n)\mmod$ is $0$ by \cref{partitioncriterionforzero}.
	Therefore, we get an induced surjective morphism 
	\begin{equation}\label{knisopn}
		\Psi\colon\K_n=e\K e/\mathbb{I}_n\twoheadrightarrow\bigoplus_{\lambda,\mu}\Hom_{\lie{p}(n)}(\mathcal{P}({\lambda}),\mathcal{P}({\mu})).
	\end{equation}
	Using \cref{correctdimensionformaintheorem} we see that this is actually an isomorphism.
	The right-hand side is now equivalent to $\lie{p}(n)\mmod$.

	The functor $\mathcal{F}\to\Cp$ was compatible with the translation functors by definition.
	But the inclusion $e\K e\to \K$ is not compatible with translation functors as these can create new cups in $\K$ but not in $e\K e$.
	But from \cref{geombimodonproj} it is clear that if $\hat{\theta}_i \hat{P}(\overline{\lambda})=\hat{P}(\overline{\mu})$, then $\overline{\mu}$ has the same number of caps as $\overline{\lambda}$ or one more.
	This last case is exactly the reason why the inclusion $e\K e\to \K$ is not compatible with translation functors as $e\hat{\theta}_i e$ would produce $0$.
	But if the number of caps increases, the image in the right-hand side of \eqref{ekesurjectsontopn} is $0$, therefore \eqref{ekesurjectsontopn} is still compatible with translation functors.
	So we see especially that $\Psi\circ\theta_i\cong\Theta_i\circ\Psi$.
	Finally, it is compatible with the highest weight structures as the combinatorics describing multiplicities of standard and costandard modules in projectives agree.
\end{proof}
\section{Applications and consequences}\label{sectionapplications}
\subsection{Duals of irreducible modules}\label{dualityonsimples}
\begin{thm}
	Up to parity shift we have $\mathcal{L}(\lambda)^*\cong \mathcal{L}(\lambda^\#)$, where $\underline{\lambda^\#}=(\overline{\lambda}^\ast)$, i.e.\ it is obtained from $\lambda$ by rotating its cap diagram around $\frac{1}{2}$.
\end{thm}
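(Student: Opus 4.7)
The plan is to invoke the equivalence $\Psi\colon\K_n\mmod\to\lie{p}(n)\mmod$ of \cref{mainthm} and to transport the contragredient duality on $\lie{p}(n)\mmod$ through it, using the diagrammatic description of duality provided by \cref{geombimodduality}. By \cref{mainthm}, it is enough to prove the corresponding statement $L(\overline{\lambda})^{\vee}\cong L(\overline{\lambda^\#})$ (up to parity) inside $\K_n\mmod$, where $^{\vee}$ denotes the duality obtained from the contragredient dual via $\Psi$. This duality satisfies $(\theta_iM)^{\vee}\cong\theta_{-i}M^{\vee}$ by \cref{geombimodduality}, and sends the trivial module to itself.

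Next I would realize each indecomposable projective $\mathcal{P}(\lambda)$ as $\Theta_{i_k}\cdots\Theta_{i_1}\bbC$ for a suitable residue sequence; such a sequence exists by the argument in the proof of \cref{mainthm} (using \cref{sequenceiseitheruptableauorreducible,partitioncriterionforproj}). Since $\bbC$ is self-dual, applying $^{\vee}$ and the intertwining relation above gives
\begin{equation*}
\mathcal{P}(\lambda)^{\vee}\cong\Theta_{-i_k}\cdots\Theta_{-i_1}\bbC.
\end{equation*}
By \cref{translationfunctoronprojperiplectic} together with \cref{notionsofcupdiagramsagree}, the right-hand side is the indecomposable projective whose cap diagram is $\ured(t^{-i_k}\cdots t^{-i_1}\overline{\iota})$. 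The main combinatorial claim is that this cap diagram agrees with $\overline{\lambda^\#}$, so that $\mathcal{P}(\lambda)^{\vee}\cong\mathcal{P}(\lambda^\#)$.

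The plan for this combinatorial step is a topological rotation argument: the $180^{\circ}$-rotation around $\frac{1}{2}$ of the stacked diagram $t^{i_k}\cdots t^{i_1}\overline{\iota}$ exchanges the roles of upper and lower reductions, so by the very definition of $^{\#}$ it identifies $\overline{\lambda}=\ured(t^{i_k}\cdots t^{i_1}\overline{\iota})$ with $\underline{\lambda^\#}$. A careful comparison of the rotated stack with the negated-index stack $t^{-i_k}\cdots t^{-i_1}\overline{\iota}$ then identifies their reductions, and produces the desired equality $\ured(t^{-i_k}\cdots t^{-i_1}\overline{\iota})=\overline{\lambda^\#}$.

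Finally, $\mathcal{L}(\lambda)$ is the head of $\mathcal{P}(\lambda)$, so its dual $\mathcal{L}(\lambda)^{\vee}$ is the socle of $\mathcal{P}(\lambda)^{\vee}\cong\mathcal{P}(\lambda^\#)$. Since $^{\#}$ is an involution and $(\;)^{\vee\vee}$ is naturally isomorphic to the identity on finite dimensional modules, iterating $^{\vee}$ together with the projective computation pins down the socle of $\mathcal{P}(\lambda^\#)$ as $\mathcal{L}(\lambda^\#)$, giving the claim up to parity shift. The main obstacle will be the combinatorial identification $\ured(t^{-i_k}\cdots t^{-i_1}\overline{\iota})=\overline{\lambda^\#}$: the naive rotation of matchings does not send $t^i$ literally to $t^{-i}$, so one has to verify that the shift between the two operations disappears upon taking upper reductions.
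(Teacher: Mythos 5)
Your proposal diverges completely from the paper's proof, which never touches \cref{geombimodduality}: the paper takes the explicit algorithm of \cite{BDE19}*{Proposition 5.3.1} for the highest weight of $\mathcal{L}(\lambda)^*$ and verifies by induction on $n$ that it agrees with rotating the cap diagram around $\frac{1}{2}$. More importantly, your route has a genuine gap at its foundation. You define $^{\vee}$ as the transport of the contragredient duality through $\Psi$ and then assert $(\theta_iM)^{\vee}\cong\theta_{-i}M^{\vee}$ ``by \cref{geombimodduality}''. But \cref{geombimodduality} concerns the diagrammatic duality $(\_)^\dual$ built from the anti-involution $\ast$ on $\K_n$; it says nothing about $^{\vee}$. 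Identifying the two dualities under $\Psi$ is essentially equivalent to the theorem you are proving, since $(\_)^\dual$ sends $L(\overline{\lambda})$ to $L(\overline{\lambda^\#})$ by construction. To avoid circularity you would need an independent statement on the Lie superalgebra side of the form $(\Theta_iM)^*\cong\Pi^{?}\Theta_{-i}(M^*)$, which is neither proved nor cited in the paper.

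Even granting that intertwining relation, the final bookkeeping is wrong, and $n=1$, $\lambda=(0)$ is a counterexample to both of your concluding claims. There $\lambda^\#=\lambda$, yet $\mathcal{P}(0)^*\cong\mathcal{I}(0)\cong\mathcal{P}(-2)\neq\mathcal{P}(0)=\mathcal{P}(\lambda^\#)$, so the asserted isomorphism $\mathcal{P}(\lambda)^{\vee}\cong\mathcal{P}(\lambda^\#)$ fails; correspondingly your ``main combinatorial claim'' $\ured(t^{-i_k}\cdots t^{-i_1}\overline{\iota})=\overline{\lambda^\#}$ is false (indeed $\Theta_{-1}\Theta_0\bbC\cong\mathcal{P}(-2)$). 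Likewise the socle of $\mathcal{P}(\lambda^\#)$ is not $\mathcal{L}(\lambda^\#)$: the socle of $\mathcal{P}(0)$ is $\mathcal{L}(2)$. The correct chain is $\mathcal{L}(\lambda)^{\vee}=$ socle of $\mathcal{P}(\lambda)^{\vee}$, where $\mathcal{P}(\lambda)^{\vee}$ is the \emph{injective hull} of $\mathcal{L}(\lambda^\#)$ — a projective $\mathcal{P}(\mu)$ with $\mu\neq\lambda^\#$ in general — so you would additionally need a combinatorial description of socles of indecomposable projectives, which your sketch does not supply. As written, the argument proves nothing beyond what it assumes, and the two errors do not cancel without substantial extra work.
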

\begin{proof}
	We begin by recalling a combinatorial procedure from \cite{BDE19}*{Proposition 5.3.1}.
	Namely, they look at a ($\rho$-shifted) dominant integral weight $\lambda_1>\lambda_2>\dots>\lambda_n$.
	Then they ordered the set $(i,j)$ for $i,j$ lexicographically and then look iteratively at the pairs $(i,j)$, starting with $(1,2)$ and ending with $(n-1,n)$.
	In each step one creates a new weight as follows. 
	If $\abs{\lambda_{k+1}-\lambda_k}>1$ for $k=i,j$, we increase $\lambda_i$ and $\lambda_j$ by $1$, otherwise we change nothing.
	Using this iterative procedure we receive a new weight diagram $\lambda^\dagger$.
	They prove then that $\mathcal{L}(\lambda)^*$ has highest weight given by the reflection of $\lambda^\dagger$ at $\frac{n-1}{2}$.

	We will prove now that this procedure gives the same result as rotating the associated cap diagram around $\frac{1}{2}$.

	We will prove this via induction on $n$.
	If $n=1$, the procedure from \cite{BDE19} sends a weight $(k)$ to $(-k)$.
	We would associate to $(k)$ the cap diagram with one cap that has endpoints $k+\frac{1}{2}$ and $k+\frac{3}{2}$.
	Rotating this around $\frac{1}{2}$ gives the cup diagram with one cup that has endpoints at $-k+\frac{1}{2}$ and $-k-\frac{1}{2}$.
	The right endpoint of this cup is at $-k+\frac{1}{2}$ and when translating back into weight diagrams we have to shift by $-\frac{1}{2}$, which results in $(-k)$.
	Observe here that our cup diagrams are shifted with respect to the cap diagrams, i.e.\ when translating a weight into a cap diagram we shift by $\frac{3}{2}$ and by $\frac{1}{2}$ for cup diagrams.

	Now let $n>1$ and assume that the statement has been proven for all smaller values.
	We have two different cases to distinguish, either we have one big outer cap containing all smaller caps or we have multiple outer caps.
	Suppose first that we have multiple outer caps.
	We denote the rightmost outer cap and all the caps contained in it by $C_1$ and the rest by $C_2$ and let $d_1$ be the number of caps in $C_1$ and $d_2$ the same for $C_2$.
	Then the rotation is the same as rotating each of these components separately around $\frac{1}{2}$.
	We claim now that the same holds true for the procedure of \cite{BDE19}.
	For the procedure of \cite{BDE19} we have to evaluate all the pairs $(i,j)$. 
	We first only have pairs where both belong to $C_1$, then mixed terms and in the end all the ones from $C_2$.
	We claim that any of the mixed terms changes the weight non-trivially.
	This holds true as $C_1$ is given by an outer cap. This means that for every right endpoint there is also one left endpoint before $C_2$.
	Hence, in terms of weights that there is enough space in between so that every of the mixed pairs is non-trivial.
	But therefore we first shift everything according to $C_1$, then we have the non-trivial mixed pairs, which shift $C_1$ to the right $d_2$ steps and $C_2$ by $d_1$.
	And then we shift everything according to $C_2$.
	After this we reflect at $\frac{n-1}{2}=\frac{d_1+d_2-1}{2}$. 
	But shifting first by $d_2$ and then reflecting at $\frac{d_1+d_2-1}{2}$ is the same as reflecting at $\frac{d_1-1}{2}$ and similar for $d_1$.
	But this means that the procedure of \cite{BDE19} splits this into two, applies their procedure there and glues them back together.
	Buy induction we know that for the shorter weights this procedure is the same as rotating around $\frac{1}{2}$ and so it also agrees for the long one.

	The last case is that we have one big outer cap, and we want to relate the procedure here to the one for the big cap removed.
	So we compare the procedure for the weight $(\lambda_1, \dots, \lambda_n)$ with $(\lambda'_2, \dots, \lambda'_n)$
	We denote the pairs for the smaller component by $(i',j')$ starting from $(2',3')$.
	We first look at the pairs $(1,i)$. 
	Note first that $(1,2)$ is trivial.
	But this means that the pair $(1,i)$ is non-trivial if and only if the pair $(2',i')$ is.
	Therefore, the $\lambda_1$ is in the end exactly one higher as the integer obtained for $\lambda_2'$.
	Furthermore, we know that $1\leq\abs{\lambda_2-\lambda_3}\leq 2$ as the cap diagram is completely nested.
	If $\abs{\lambda_2-\lambda_3}=2$ the pair $(1,3)$ non-trivial, and thus after executing all pairs $(1,i)$ we have $\abs{\lambda_2-\lambda_3}=1$.
	But then we see that $(2,3)$ is trivial and the other $(2,i)$ are non-trivial if and only if $(3',i')$ is non-trivial.
	Therefore, $\lambda_2$ ends up exactly one higher than $\lambda_3'$ would.
	Repeating this argument over and over we see that $\lambda_i$ for $i<n$ end up one bigger than $\lambda_{i+1}'$ and $\lambda_n$ lies directly next to $\lambda_{n-1}$.
	Reflecting $\lambda_i$ for $i<n$ at $\frac{n-1}{2}$ is the same as reflecting $\lambda'_{i+1}$ at $\frac{n-2}{2}$ and the additional $\lambda_n$ is after reflection directly to the right of $\lambda_{n-1}$.
	On the other hand rotating this cap diagram means rotating the inner part and adding a right endpoint of a cup directly to the right of everything.
	Thus, we can apply our induction hypothesis and see that both procedures give the same result.
\end{proof}
\begin{rem}
	There is no contravariant duality $d$ on $\lie{p}(n)$ preserving irreducibles and satisfying $d\theta_i\cong\theta_id$.
	For instance in $\lie{p}(1)$:
	\begin{align*}
		&\theta_1\theta_0d\bbC=\theta_1\theta_0\bbC=\mathcal{P}(0)\\
		&d\theta_1\theta_0\bbC = d\mathcal{P}(0) = \mathcal{I}(0)= \mathcal{P}(-2),
	\end{align*}
	which are not isomorphic.
\end{rem}
\begin{rem}
	We do not expect a direct geometric realization of $\lie{p}(n)\mmod$ in contrast to ordinary Brauer algebras (see e.g.~\cite{SW19}) since there one would expect such a (Verdier) duality.
\end{rem}
In many related representation theoretic contexts, a geometric interpretation implies Koszulity, \cite{BGS96}, which we will see now also fails for $\lie{p}(n)$.
\subsection{Gradings and Non-Koszulity}\label{nonskoszulsection}
Let $n=2$ and consider the indecomposable projective $\lie{p}(2)$-module $P$ corresponding to the cap diagram \begin{equation*}
	\begin{tikzpicture}[myscale=0.7]
		\setcounter{count}{0}
		\foreach \x in {3,1} {
			\node[anchor=north] at (\value{count},0) {$-\frac{\x}{2}$};
			\stepcounter{count}
		}
		\foreach \x in {1,3,5,7,9} {
			\node[anchor=north] at (\value{count},0) {$\frac{\x}{2}$};
			\stepcounter{count}
		}
		\LINE{0}{1}
		\SETCOORD{1}{-1}
		\CAP{1}
		\SETCOORD{1}{0}
		\LINE{0}{1}
		\SETCOORD{1}{-1}
		\CAP{1}
		\SETCOORD{1}{0}
		\LINE{0}{1}
		\node at (-1,0.5) {$\dots$};
		\node at (7,0.5) {$\dots$};
	\end{tikzpicture}.
\end{equation*}
By \cref{mainthm} we can carry out the computation inside $\K_n$ and $P\cong \mathcal{P}(2,-1)$ has a basis given by 
\begin{gather*}
	\begin{tikzpicture}[myscale=0.5]
		\LINE{0}{-1}
		\CUP{1}
		\CAP{1}
		\LINE{0}{-1}
		\SETCOORD{1}{2}
		\LINE{0}{-1}
		\CUP{1}
		\CAP{1}
		\LINE{0}{-1}
		\SETCOORD{1}{2}
		\LINE{0}{-2}
	\end{tikzpicture}\qquad\qquad\quad
	\begin{tikzpicture}[myscale=0.5]
		\LINE{0}{-1}
	\CUP{1}
	\CAP{1}
	\LINE{0}{-1}
	\SETCOORD{1}{2}
	\LINE{0}{-2}
	\SETCOORD{1}{0}
	\LINE{0}{1}
	\CAP{1}
	\CUP{1}
	\LINE{0}{1}
\end{tikzpicture}\qquad\qquad\quad
\begin{tikzpicture}[myscale=0.5]
	\LINE{0}{-2}
	\SETCOORD{3}{2}
	\LINE{0}{-1}
	\CUP{1}
	\CAP{1}
	\CUP{-3}
	\CAP{-1}
	\LINE{0}{-1}
	\SETCOORD{5}{2}
	\LINE{0}{-2}
\end{tikzpicture}\\
\begin{tikzpicture}[myscale=0.5]
	\LINE{0}{-2}
	\SETCOORD{3}{2}
	\LINE{0}{-1}
	\CUP{-1}
	\CAP{-1}
	\CUP{3}
	\CAP{1}
	\LINE{0}{-1}
	\SETCOORD{1}{2}
	\LINE{0}{-2}
\end{tikzpicture}\qquad\qquad\qquad
\begin{tikzpicture}[myscale=0.5]
	\LINE{0}{-2}
	\SETCOORD{1}{0}
	\LINE{0}{1}
	\CAP{1}
	\CUP{1}
	\LINE{0}{1}
	\SETCOORD{1}{-2}
	\LINE{0}{1}
	\CAP{1}
	\CUP{1}
	\LINE{0}{1}
\end{tikzpicture}
\end{gather*}
Using this and the multiplication rule inside $\K_n$, it is not hard to deduce the structure of $P$.
Namely, one gets for the radical and socle filtration
\begin{equation*}
	\begin{tabular}{lr}
		\multicolumn{2}{c}{$\mathcal{L}(2,-1)$}\\
		$\mathcal{L}(4,-1)$&$\mathcal{L}(3,2)$\\
		&$\mathcal{L}(2,1)$\\
		\multicolumn{2}{c}{$\mathcal{L}(4,1)$}
	\end{tabular}\qquad\text{and}\qquad
	\begin{tabular}{lr}
		\multicolumn{2}{c}{$\mathcal{L}(2,-1)$}\\
		&$\mathcal{L}(3,2)$\\
		$\mathcal{L}(4,-1)$&$\mathcal{L}(2,1)$\\
		\multicolumn{2}{c}{$\mathcal{L}(4,1)$}
	\end{tabular}
\end{equation*}
respectively.
In particular, these do not agree.
By \cite{BGS96}*{Proposition 2.4.1}, we see that $\K_n$ cannot be non-negatively graded such that $(\K_n)_0$ is semisimple and $\K_n$ is generated by $(\K_n)_1$.
For $n>2$, the same phenomenon occurs when considering e.g.~$\mathcal{P}(2,-1,-5,-9,-13,\dots)$.

Hence, $\lie{p}(n)$ does not admit a Koszul grading for $n\geq2$.
\subsection{Irreducible summands of \texorpdfstring{$V^{\otimes d}$}{V\textasciicircum d}}\label{irredsummandsinvod}
We want to defer our attention now to irreducible modules appearing as direct summands of $V^{\otimes d}$.
For $\lie{gl}(m|n))$ and $\lie{osp}(r|2n)$, there is a rather rich class of irreducible summands, the so-called \emph{Kostant modules}, see e.g.~\cite{Hei17} and \cite{HNS23} (or \cite{Neh21}) respectively.
In particular, there is exactly one irreducible summand for every block.
The situation is similar for $\lie{p}(n)$ in the sense that we will show that there exist exactly $n$ irreducible summands, each belonging to a unique one of the $n+1$ blocks.
However, representations of $\lie{gl}(m|n))$ and $\lie{osp}(r|2n)$ decompose into very small blocks, giving rise to many Kostant modules, contrasting the $n$ for $\lie{p}(n)$.
\begin{thm}
	The irreducible $\lie{p}(n)$-module $\mathcal{L}(\mu)$ appears as a summand of $V^{\otimes d}$ for some $d$ if and only if $\mu=(n-1,n-2,\dots,k,k-2,k-4,\dots,-k)$ for some $0\leq k\leq n-1$.
	Moreover, this summand appears as the image of the indecomposable object $\delta_k$ of $\Rep(P)$ under Schur--Weyl duality.
\end{thm}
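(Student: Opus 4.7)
Plan.

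By \cref{mainthm}, $\mathcal{L}(\mu)$ is a direct summand of some $V^{\otimes d}$ if and only if the corresponding irreducible $\K_n$-module $L(\overline{\mu})$ is a direct summand of some iterated translation $\theta_{i_d}\cdots\theta_{i_1}L(\overline{\mu_0})$, where $L(\overline{\mu_0})$ corresponds to the trivial module $\bbC=\mathcal{L}(n-1,n-2,\dots,0)$. Combining with the classification of indecomposable objects in $\Rep(P)$ by partitions (\cref{partitioncriterionforzero} together with \cite{CE21}), every indecomposable summand of $V^{\otimes d}$ arises as the image under Schur--Weyl duality of the indecomposable object indexed by some partition $\lambda$ with $\delta_{n+1}\not\subseteq\lambda$, so it remains to characterize those $\lambda$ whose image is irreducible.

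For the forward implication, I proceed by induction on $k\in\{0,1,\ldots,n-1\}$ to show that applying the functors $\theta_i$ prescribed by the residue sequence of an up-tableau of shape $\delta_k$ (filled row by row) to $L(\overline{\mu_0})$ yields the irreducible $L(\overline{\mu_k})$ with $\mu_k=(n-1,n-2,\ldots,k,k-2,\ldots,-k)$. At each inductive step, \cref{geombimodonirred}(i) combined with the explicit cap diagram for $\overline{\mu_k}$ (an outer cap enclosing a staircase-like arrangement of smaller inner caps) forces uniqueness of the target $\overline{\mu}$ satisfying the upper-reduction condition $\overline{\nu}=\ured(t^i\overline{\mu})$, so $\theta_i L(\overline{\mu_{k-1}})$ stays irreducible at each step and equals $L(\overline{\mu_k})$ once the full $k$-th row of $\delta_k$ is added. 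The identification of this irreducible with the image of the indecomposable object indexed by $\delta_k$ in $\Rep(P)$ then follows from the commutativity of diagram \eqref{combinatorialdiagram}, established in \cref{notionsofcupdiagramsagree}.

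For the converse, suppose $\mathcal{L}(\mu)$ is an irreducible summand corresponding to a partition $\lambda$ with $\delta_{n+1}\not\subseteq\lambda$. If $\lambda\supseteq\delta_n$, then by \cref{partitioncriterionforproj} the summand is projective, so irreducibility forces $\mathcal{P}(\mu)\cong\mathcal{L}(\mu)$, which via \cref{mainthm} and the basis description of $\K_n$ from \cref{thmpropbasedquher} is equivalent to $\dim \K_n e_\mu=1$. A direct combinatorial check using orientable circle diagrams shows this never holds: in particular, for $\mu=(n-2,n-4,\ldots,-n)$ (arising from $\lambda=\delta_n$ via \cref{deltanprojexplicit}), one can exhibit $\underline{\lambda}'\neq\underline{\mu}$ in $\Lambda_n$ producing an orientable propagating-only circle diagram $\underline{\lambda}'\overline{\mu}$, which also pinpoints the block excluded in the introduction. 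If instead $\lambda\not\supseteq\delta_n$ and $\lambda\neq\delta_k$ for any $k$, then as we build $\lambda$ from $\emptyset$ via translations, some intermediate step $\theta_i L(\overline{\nu})$ acquires an additional composition factor via \cref{geombimodonirred}(i) once the staircase rigidity used in the forward step breaks down, and this loss of irreducibility cannot be undone by subsequent translations.

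The main obstacle will be the last case of the converse: proving that any ``excursion'' from a staircase shape $\delta_k$ immediately produces a non-irreducible translation. Making this rigorous requires a careful combinatorial analysis of which cap diagrams $\overline{\mu}$ satisfy the upper-reduction condition in \cref{geombimodonirred}(i) once the staircase symmetry is broken, together with the observation that composition factors can only proliferate (not collapse) under iterated applications of the exact functors $\theta_i$.
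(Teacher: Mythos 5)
Your overall framework---passing to $\K_n$ via \cref{mainthm}, realizing every indecomposable summand of $V^{\otimes d}$ as $\theta_{\bm{t}}L(\overline{\nu})$ applied to the trivial module, and deciding irreducibility with \cref{geombimodonirred}---is the right one, but two load-bearing claims in your argument are false. In the forward direction, the intermediate modules do \emph{not} stay irreducible: the partial shapes traversed while filling $\delta_k$ (for instance $(2)$ on the way to $\delta_2=(2,1)$) are not staircases, and their images are indecomposable but reducible; for $n\geq 3$ the module $\theta_1\theta_0L(\overline{\nu})$ already has more than one composition factor, as one sees from \cref{geombimodonirred}\cref{thetaonirredi} (there are two admissible $\overline{\mu}$ because the corresponding $t^\ddagger$ has nested cups). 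So no induction that preserves irreducibility one $\theta_i$ at a time can be set up. More seriously, the principle underlying your converse---that composition factors ``can only proliferate'' and that loss of irreducibility ``cannot be undone''---is wrong: the $\theta_i$ are exact but annihilate many irreducibles (\cref{geombimodonirred}\cref{thetaonirredii}), so the length can drop. It drops in this very situation: $\theta_{-1}\theta_1\theta_0L(\overline{\nu})$ corresponds to $\delta_2$ and \emph{is} irreducible, even though $\theta_1\theta_0L(\overline{\nu})$ is not. Hence tracking reducibility along the path of translations cannot prove the converse.

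The fix is to avoid paths entirely. By \cref{bimoduleisoreductiontosinglelayer} one replaces the whole sequence by a single reduced crossingless matching $t$, and \cref{geombimodonirred}\cref{thetaonirredi} says $\theta_tL(\overline{\nu})$ is irreducible precisely when there is a \emph{unique} $\overline{\mu}$ with $t^\ddagger\overline{\mu}$ admissible and $\ured(t^\ddagger\overline{\mu})=\overline{\nu}$. Since $\overline{\nu}$ (the cap diagram of the trivial module) is completely nested with innermost right endpoint $\frac{3}{2}$, the reduction condition forces the caps of $t^\ddagger$ to be nested with innermost right endpoint $\frac{3}{2}$, and uniqueness of $\overline{\mu}$ rules out both nested cups and rays separating cups; so $t^\ddagger$ consists of $k$ nested caps with the remaining cups side by side. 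The case $k>n$ gives zero, $k=n$ gives the indecomposable projective of \cref{deltanprojexplicit} (not irreducible), and $0\leq k\leq n-1$ gives exactly the weights $(n-1,\dots,k,k-2,\dots,-k)$. Your treatment of the $\lambda\supseteq\delta_n$ case is fine in spirit, but it still requires the (true, and not entirely free) fact that no $\mathcal{P}(\mu)$ is irreducible; the paper gets this from \cref{deltanprojexplicit} rather than from a dimension count of $\K_ne_\mu$.
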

\begin{proof}
	Denote by $\overline{\nu}$ the cap diagram corresponding to the ($\rho$-shifted) highest weight $(n-1,n-2,\dots,0)$ of the trivial module.
	Then,
	\begin{equation*}\overline{\nu}=
		\begin{tikzpicture}[baseline={(0,0.5)}]
			\node[anchor=north] at (3.5,0) {$\frac{3}{2}$};
			\node[anchor=north] at (2.5,0) {$\frac{1}{2}$};
			\node[anchor=north] at (5,0) {$n+\frac{1}{2}$};
			\node[anchor=north] at (1,0) {$\frac{3}{2}-n$};
			\LINE{0}{1}
			\SETCOORD{1}{-1}
			\CAP{4}
			\SETCOORD{1}{0}
			\LINE{0}{1}
			\SETCOORD{-3.5}{-1}
			\CAP{1}
			\node at (-1,0.5) {$\dots$};
			\node at (7,0.5) {$\dots$};
			\node at (4.25,0) {$\dots$};
			\node at (1.75,0) {$\dots$};
		\end{tikzpicture}\,.
	\end{equation*}
	We have to find all generalized crossingless matchings $t$ such that $\theta_tL(\overline{\nu})$ is irreducible.
	By means of \cref{geombimodonirred}, for any of these $t$ there exists exactly one $\overline{\mu}$ such that $t^\ddagger\overline{\mu}$ is orientable and $\overline{\nu}=\ured(t^\ddagger\overline{\mu})$.

	Hence, $t^\ddagger$ has to look something like
	\begin{equation*}
		\begin{tikzpicture}[yscale=0.6, baseline = {(0,-2.4)}]
			\CUP{1}
			\SETCOORD{-2}{0}
			\CUP{3}
			\SETCOORD{1}{0}
			\CUP{1}
			\SETCOORD{2}{0}
			\CUP{1}
			\SETCOORD{-2}{0}
			\LINE{2}{-4}
			\SETCOORD{-1}{0}
			\CAP{-7}
			\SETCOORD{1}{0}
			\CAP{5}
			\SETCOORD{-1}{0}
			\CAP{-3}
			\SETCOORD{1}{0}
			\CAP{1}
			\node[anchor=north] at (3,-4) {$\frac{3}{2}$};
		\end{tikzpicture}\,.
	\end{equation*}
	The condition $\overline{\nu}=\ured(t^\ddagger\overline{\mu})$ implies that all the caps are nested with the right endpoint of the innermost cap at $\frac{3}{2}$.
	
	If there are any nested cups at the top, there cannot exist exactly one $\overline{\mu}$ because we could either connect the two left or right endpoints with a cup in $\overline{\mu}$ and can extend these to cap diagrams satisfying the conditions above.
	We also cannot have a line in between two cups, as we can connect this line either to the left or right cup with a cap in $\overline{\mu}$ and extend to two different admissible cap diagrams.
	But this means that $t^\ddagger$ is of the form
	\begin{equation*}
		\begin{tikzpicture}[yscale=0.6, baseline = {(0,-2.4)}]
			\CUP{1}
			\SETCOORD{1}{0}
			\CUP{1}
			\SETCOORD{1}{0}
			\CUP{1}
			\SETCOORD{1}{0}
			\CUP{1}
			\SETCOORD{0}{-4}
			\CAP{-7}
			\SETCOORD{1}{0}
			\CAP{5}
			\SETCOORD{-1}{0}
			\CAP{-3}
			\SETCOORD{1}{0}
			\CAP{1}
			\node[anchor=north] at (4,-4) {$\frac{3}{2}$};
		\end{tikzpicture}\,,
	\end{equation*}
	where the only freedom is given by the number of nested caps $k$.
	If $k>n$ the result is $\theta_tL(\overline{\nu})=0$ as $t^\ddagger\overline{\mu}$ will always contain a non-propagating line.
	If $k=n$, \cref{deltanprojexplicit} computed $\theta_tL(\overline{\nu})$ to be an indecomposable projective module and not irreducible.
	In all other cases (i.e.~$0\leq k\leq n-1$) the result is irreducible and given by the cap diagram
	\begin{equation*}
		\overline{\mu_k}=\begin{tikzpicture}[yscale=0.6]
			\CAP{8.5}
			\SETCOORD{-1.5}{0}
			\CAP{-5.5}
			\SETCOORD{1}{0}
			\CAP{1}
			\SETCOORD{1.5}{0}
			\CAP{1}
			\node at (0.75,0) {$\dots$};
			\node at (4.25,0) {$\dots$};
			\node at (7.75,0) {$\dots$};
			\node[anchor=north] at (0,0) {$\frac{3}{2}-n$};
			\node[anchor=north] at (2.5,0) {$\frac{1}{2}-k$};
			\node[anchor=north] at (6,0) {$k+\frac{3}{2}$};
			\node[anchor=north] at (8.5,0) {$n+\frac{1}{2}$};
		\end{tikzpicture}
	\end{equation*}
	corresponding to the weight $(n-1,n-2,\dots,k,k-2,k-4,\dots,-k)$ as claimed.
\end{proof}
\subsection{Extensions between irreducibles and Ext-quivers}\label{extbetweensimplessection}
The main goal of this section is to compute the dimension of 
\begin{equation*}
	\Ext_{\lie{p}(n)}(\mathcal{L}(\lambda), \mathcal{L}(\mu)).
\end{equation*}
Using a projective resolution of $\mathcal{L}(\lambda)$, it is easy to see that it is at most one dimensional as $\mathcal{P}(\lambda)$ is multiplicity free.
Furthermore, it is one dimensional if and only if $\mathcal{L}(\mu)$ appears in the head of the radical of $\mathcal{P}(\lambda)$.
The computation of the head of the radical of $\mathcal{P}(\lambda)$ can be done in $\K_n\mmod$ by \cref{mainthm}, meaning that we need to determine all $\underline{\mu}\overline{\lambda}$ (with $\mu\neq\lambda$) such that
\begin{equation}\label{nonsplittable}
	\underline{\mu}\overline{\lambda} = \underline{\mu}\overline{\kappa}\cdot\underline{\kappa}\overline{\lambda}\quad\implies\quad\kappa\in\{\mu,\lambda\}.
\end{equation}
\begin{defi}\label{deficircdiagprimitive}
	An orientable circle diagram $\underline{\mu}\overline{\lambda}$ with $\mu\neq\lambda$ is called \emph{$\Delta$-primitive} if $\mu\overline{\lambda}$ a $\Delta$-orientation such that 
	\begin{enumerate}[label=($\Delta$\arabic*)]
		\item\label{deltatwooff} There is at most one cap in $\overline{\lambda}$ such that its corresponding dot is not at the rightmost possible position,
		\item\label{deltaonedotmovestwice} it is \begin{tikzpicture}[scale=0.5]
			\CAP{5}\SETCOORD{-1}{0}\CAP{-1}\SETCOORD{-1}{0}\CAP{-1}\node at (0.5,0) {$\bullet$};
		\end{tikzpicture} avoiding, and
		\item\label{deltaextra} it is \begin{tikzpicture}[scale=0.5]
			\CAP{1}\SETCOORD{1}{0}\CAP{3}\SETCOORD{-1}{0}\CAP{-1}\node at (2.5,0) {$\bullet$};
		\end{tikzpicture} avoiding.
	\end{enumerate}
	It is called $\nabla$-primitive if $\underline{\mu}\lambda$ is a $\nabla$-orientation such that
	\begin{enumerate}[label=($\nabla$\arabic*)]
		\item\label{nablatwooff} There is at most one cup in $\overline{\lambda}$ such that its corresponding dot is not at the rightmost possible position, and
		\item\label{nablaextra} it is \begin{tikzpicture}[scale=0.5]
			\CUP{3}\SETCOORD{-1}{0}\CUP{-1}\node at (0.5,0) {$\bullet$};
		\end{tikzpicture} avoiding, where the outer cup is not contained in any other cup.
	\end{enumerate}

	We call $\underline{\mu}\overline{\lambda}$ \emph{primitive} if it is either $\Delta$- or $\nabla$-primitive.
\end{defi}
\begin{thm}
	Let $\underline{\mu}\overline{\lambda}$ be an orientable circle diagram. 
	Then $\underline{\mu}\overline{\lambda}$ satisfies \eqref{nonsplittable} if and only if it is primitive.
\end{thm}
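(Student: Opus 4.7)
The strategy is to use \cref{thmpropbasedquher} and \cref{circlediaghasuniqueorientation} to reformulate \eqref{nonsplittable} in terms of orientations. Given any orientable circle diagram $\underline{\gamma}\overline{\delta}$, write $\alpha(\gamma,\delta)$ for its unique orientation; the multiplication description in \cref{orientationsforKn} then implies that the product $\underline{\mu}\overline{\kappa}\cdot\underline{\kappa}\overline{\lambda}$ is non-zero precisely when both factors are orientable and the orientations $\alpha(\mu,\kappa)$ and $\alpha(\kappa,\lambda)$ agree on the shared middle layer, in which case the product equals $\underline{\mu}\overline{\lambda}$.

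First I would dispose of the case in which neither $\mu$ is a $\Delta$-orientation of $\overline{\lambda}$ nor $\lambda$ is a $\nabla$-orientation of $\underline{\mu}$: here the unique orientation $\nu$ of $\underline{\mu}\overline{\lambda}$ is distinct from both $\mu$ and $\lambda$, and the factorization $\underline{\mu}\overline{\lambda} = \underline{\mu}\overline{\nu}\cdot\underline{\nu}\overline{\lambda}$ established in the proof of \cref{thmpropbasedquher} provides a non-trivial splitting; correspondingly, $\underline{\mu}\overline{\lambda}$ is neither $\Delta$- nor $\nabla$-primitive, so the two sides of the claim agree in this case.

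It remains, using the anti-involution $\ast$ to swap the roles of $\Delta$- and $\nabla$-primitivity, to treat the case $\nu = \mu$, i.e.\ $\mu$ is itself a $\Delta$-orientation of $\overline{\lambda}$. Here the goal is to classify intermediate cup diagrams $\underline{\kappa}$ with $\kappa\ne\mu,\lambda$ that are simultaneously $\nabla$-orientations of $\underline{\mu}$ and $\Delta$-orientations of $\overline{\lambda}$, and for which the middle-layer orientations match. I would argue that the three forbidden patterns \ref{deltatwooff}, \ref{deltaonedotmovestwice}, \ref{deltaextra} are exactly the local configurations admitting such a $\kappa$: pattern \ref{deltatwooff} permits splitting by only moving one of two displaced dots (yielding a $\kappa$ strictly between $\lambda$ and $\mu$); \ref{deltaonedotmovestwice} admits a sequence of straightening surgeries through the three nested caps that shifts a doubly-displaced dot via an intermediate position; \ref{deltaextra} similarly exhibits an intermediate $\kappa$ by a local rearrangement of the adjacent caps. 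Conversely, if $\underline{\mu}\overline{\lambda}$ is $\Delta$-primitive, the rightmost-dot constraint \ref{deltatwooff} forces the $\nabla$-orientation of $\underline{\mu}$ induced by any admissible $\kappa$ to coincide with $\mu$ except possibly at the single displaced cap, and the remaining conditions \ref{deltaonedotmovestwice} and \ref{deltaextra} rule out the two residual ways to still insert a non-trivial intermediate, so $\kappa\in\{\mu,\lambda\}$.

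The main obstacle will be the detailed combinatorial analysis in the last step: explicitly constructing a non-trivial factorization witnessing each forbidden pattern, and conversely ruling out such factorizations in the primitive case. This requires careful bookkeeping of how the straightening surgeries from \cref{figureStraighteningwithorient} propagate dots across nested caps and cups, and it is precisely this local analysis that forces the structural constraints encoded in \cref{deficircdiagprimitive}.
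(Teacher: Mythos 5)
Your overall architecture (contraposition in both directions, case analysis on which of $\mu$, $\lambda$ is a valid orientation, explicit splittings witnessing each forbidden pattern) matches the paper's, but two steps you rely on do not hold.

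First, the reduction of the $\nabla$-case to the $\Delta$-case via the anti-involution $\ast$ fails. Rotation by $180^\circ$ around $\tfrac{1}{2}$ turns caps into cups, but it does \emph{not} turn $\Delta$-orientations into $\nabla$-orientations: a $\Delta$-orientation places the dot strictly \emph{inside} its cap, while a $\nabla$-orientation places it immediately \emph{outside} its cup, and these conditions are not exchanged by rotation. This is already visible in the asymmetry of \cref{deficircdiagprimitive}: $\Delta$-primitivity imposes three forbidden patterns \ref{deltatwooff}--\ref{deltaextra}, while $\nabla$-primitivity imposes only two, \ref{nablatwooff} and \ref{nablaextra}. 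The paper accordingly runs separate (and structurally different) case analyses for the two notions; you cannot avoid doing the same.

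Second, your reformulation of \eqref{nonsplittable} — that $\underline{\mu}\overline{\kappa}\cdot\underline{\kappa}\overline{\lambda}$ is nonzero exactly when both factors are orientable and their orientations agree on the middle layer, and that the relevant intermediates $\kappa$ are those which are simultaneously $\nabla$-orientations of $\underline{\mu}$ and $\Delta$-orientations of $\overline{\lambda}$ — is too narrow on both counts. Orientability of both factors does not guarantee a nonzero product (a split or reconnect from \cref{detailedanalysis} can still occur), and, more importantly, a violating factorization need not have its middle weight of the form you describe: in the paper's explicit counterexamples for \ref{nablaextra}, \ref{deltaonedotmovestwice} and \ref{deltaextra}, the intermediate cup diagram $\underline{\kappa}$ has a genuinely rearranged cup configuration rather than being an orientation of either side. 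Consequently your classification of intermediates would miss exactly the factorizations that make conditions \ref{deltaonedotmovestwice}, \ref{deltaextra} and \ref{nablaextra} necessary, and, in the converse direction, establishing that a primitive diagram admits \emph{no} violating factorization requires first reducing an \emph{arbitrary} factorization $\underline{\mu}\nu\overline{\eta}\cdot\underline{\eta}\kappa\overline{\lambda}$ to normal form by repeated use of \cref{thmpropbasedquher}\cref{thmpropbasedquheri} and \cref{thmpropbasedquherii}, together with a surgery-level analysis of how dots move under \cref{figureStraighteningwithorient}. That reduction is the bulk of the paper's proof and is absent from your sketch.
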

\begin{proof}
	First we prove that every primitive circle diagram satisfies \eqref{nonsplittable} by contraposition.
	
	For this consider an oriented circle diagram 
	\begin{equation*}
		\underline{\mu}\delta\overline{\lambda} = \underline{\mu}\nu\overline{\eta}\cdot\underline{\eta}\kappa\overline{\lambda}
	\end{equation*}
	that violates \eqref{nonsplittable}.
	We have to show that $\underline{\mu}\delta\overline{\lambda}$ is neither $\Delta$- nor $\nabla$-primitive.

	By \cref{thmpropbasedquher}\cref{thmpropbasedquheri} we know that 
	\begin{equation*}
		\underline{\mu}\delta\overline{\lambda} = \underline{\mu}\nu\overline{\nu}\cdot\underline{\nu}\kappa'\overline{\lambda}
	\end{equation*}
	for some $\kappa'$.
	Now using \cref{thmpropbasedquher}\cref{thmpropbasedquherii} we know that $\kappa'\geq\lambda$, $\nu\geq\mu$ and $\delta\geq\kappa',\nu$ (this last inequality uses \cite{BS21}*{Lemma 5.5}).
	If $\kappa'\neq\lambda$ and $\nu\neq\mu$ we have $\delta\neq\mu,\lambda$, and thus it can neither be $\Delta$- nor $\nabla$-primitive as any orientation is unique by \cref{circlediaghasuniqueorientation}.
	So either $\kappa'=\lambda$ or $\nu = \mu$.
	\begin{itemize}
		\item Suppose that $\kappa'=\lambda$ and $\nu\neq\mu$.
			This means we have 
			\begin{equation*}
				\underline{\mu}\delta\overline{\lambda} = \underline{\mu}\nu\overline{\nu}\cdot\underline{\nu}\lambda\overline{\lambda}.
			\end{equation*}
			Observe furthermore that $\lambda$,$\nu$,and $\mu$ are pairwise distinct by assumption, and thus $\delta\geq\lambda>\nu>\mu$ by \cref{thmpropbasedquher}\cref{thmpropbasedquherii}.

			As $\delta>\mu$ this cannot be $\Delta$-primitive.
			If $\delta$ and $\mu$ differ by at least two dots it cannot be $\nabla$-primitive as it violates \cref{nablatwooff}.
			If they differ by exactly one dot, observe that this dot is uniquely defined (there will be a sequence of neighbored cups that have their associated dots in between them (so one is missing), the additional dot has to be to the left of this sequence).
			In case this sequence contains more than one cup, we violate \cref{nablatwooff}.
			If this sequence contains exactly one cup, we arrive at a contradiction as there cannot exist a $\mu<\nu<\delta$.
	\end{itemize}
	So we may assume that $\nu=\mu$, i.e.\ we are reduced to the case
	\begin{equation*}
		\underline{\mu}\delta\overline{\lambda} = \underline{\mu}\mu\overline{\eta}\cdot\underline{\eta}\kappa\overline{\lambda}
	\end{equation*}
	Using again \cref{thmpropbasedquher}\cref{thmpropbasedquheri} we have
	\begin{equation*}
		\underline{\mu}\delta\overline{\lambda} = \underline{\mu}\eta'\overline{\kappa}\cdot\underline{\kappa}\kappa\overline{\lambda}
	\end{equation*}
	for some $\eta'$.
	By the same reasoning as above we have $\delta\geq\eta',\kappa$, $\eta'\geq\mu$ and $\kappa\geq\lambda$.
	Thus, if $\eta'\geq\mu$ and $\kappa\geq\lambda$ the circle diagram cannot be primitive.
	So either $\eta'=\mu$ or $\kappa=\lambda$.
	\begin{itemize}
		\item Suppose that $\eta'=\mu$ and $\kappa\neq\lambda$.
			This means we have \begin{equation*}
				\underline{\mu}\delta\overline{\lambda} = \underline{\mu}\mu\overline{\kappa}\cdot\underline{\kappa}\kappa\overline{\lambda}
			\end{equation*}
			Observe furthermore that $\lambda$, $\kappa$ and $\mu$ are pairwise distinct, and thus $\delta\geq\mu>\kappa>\lambda$ by \cref{thmpropbasedquher}\cref{thmpropbasedquherii}.
			Therefore, this cannot be $\nabla$-primitive.
			If $\delta$ and $\lambda$ differ by at least two dots, it violates \cref{deltatwooff}, so it cannot be $\Delta$-primitive.
			If they differ by exactly one dot look at the cap in $\overline{\lambda}$ that corresponds to this dot.
			As $\delta>\kappa>\lambda$, this dot has to move two times along a valid $\Delta$-orientaion, and thus it violates \cref{deltaonedotmovestwice} and cannot be $\Delta$-primitive.
	\end{itemize}
	Thus, we can also assume that $\kappa=\lambda$, meaning that we are reduced to showing that 
	\begin{equation*}
		\underline{\mu}\delta\overline{\lambda} = \underline{\mu}\mu\overline{\eta}\cdot\underline{\eta}\lambda\overline{\lambda}
	\end{equation*}
	can be neither $\Delta$- nor $\nabla$-primitive.
	Note that by assumption $\eta\neq\lambda,\mu$ and furthermore $\mu\neq\lambda$ as otherwise it is not primitive by definition.
	Furthermore, if $\delta\neq\lambda,\mu$ it cannot be primitive by definition.
	\begin{itemize}
		\item Suppose that $\delta=\lambda>\mu$.
		In this case $\underline{\mu}\overline{\lambda}$ cannot be $\Delta$-primitive.
		We may also assume that it satisfies \cref{nablatwooff} as otherwise it cannot be $\nabla$-primitive either.
		This means that in the multiplication process no dot in the top number line moves.
		As $\mu\neq\eta$ we find a situation \begin{tikzpicture}[scale=0.5]
			\CAP{3}\SETCOORD{-1}{0}\CAP{-1}\node at (0.5,0) {$\bullet$};
		\end{tikzpicture} in $\mu\overline{\eta}$.
		This means that considering the total diagram $\underline{\mu}\mu\overline{\eta}$ it looks like
		\begin{equation*}
			\begin{tikzpicture}[scale=0.5]
				\CUP{1}
				\CAP{3}
				\SETCOORD{-1}{0}
				\CAP{-1}
				\CUP{-3}
				\node at (1.5, 0) {$\bullet$};
				\node at (2.5, 0) {$\bullet$};
			\end{tikzpicture}
		\end{equation*}
		We may assume that this outer cup is not contained in any other cup, as otherwise we would find the same pattern for the cup that contains this.
		Now consider the surgery procedure involving the outer cap.
		This cannot be of the second kind from \cref{figureStraighteningwithorient} as the inner cap afterwards would be of the first kind with the additional white dots.
		This would mean that a dot in the top number line moves which contradicts our assumption.
		Therefore, it has to be of the third kind, which means that the left dot moves to the other side of the inner cup (and stays there).

		But then $\underline{\mu}\lambda\overline{\lambda}$ violates \cref{nablaextra}.
		\item Suppose that $\delta=\mu>\lambda$.
		In this case $\underline{\mu}\overline{\lambda}$ cannot be $\nabla$-primitive.
		We may also assume that it satisfies \cref{deltatwooff} and \cref{deltaonedotmovestwice} as otherwise it cannot be $\nabla$-primitive either.
		This means that in the multiplication process no dot in the bottom number line moves.
		As $\lambda\neq\eta$ there exists a subpicture like \begin{tikzpicture}[scale=0.5]
			\CUP{1}\node at (-0.5,0) {$\bullet$};
		\end{tikzpicture} where there appears not dot to the right of this cup.
		Observe now that no surgery procedure of the third kind from \cref{figureStraighteningwithorient} can occur, as no dot in the bottom number line can move by assumption.
		But the other two kinds of straightenings require a dot to the right of the cup.
		This means that the one dot difference between $\lambda$ and $\mu$ has to move via a surgery procedure of the first kind (as a white dot) and move to the right of this cup.
		Therefore, our situation looks like
		\begin{equation*}
			\begin{tikzpicture}[scale=0.5]
				\CAP{2}
				\CUP{1}
				\CAP{3}
				\SETCOORD{-5}{0}
				\CUP{3}
				\CAP{1}
				\LINE{-4}{-3}
				\SETCOORD{1}{0}
				\CAP{3}
				\node at (1.5,0) {$\bullet$};
				\node at (5.5,0) {$\circ$};
				\node at (1.5,-3) {$\circ$};
			\end{tikzpicture}
		\end{equation*}
		Now the surgery procedure of the first kind moves the white dot to the other side of the cap (and the dot stays there), so that $\underline{\mu}\mu\overline{\lambda}$ violates \cref{deltaextra}, therefore it cannot be $\Delta$-primitive.
	\end{itemize}
	This finishes the first half of the proof.
	We are left to show that any circle diagram satisfying \eqref{nonsplittable} is primitive.
	We prove this also by contraposition, i.e.\ we show that any non-primitive diagram does not satisfy \eqref{nonsplittable}.
	For this, we split the proof into three different cases.
	\begin{enumerate}
		\item Suppose that $\lambda$ is not a $\nabla$-orientation of $\underline{\mu}$ and $\mu$ is not a $\Delta$-orientation of $\overline{\lambda}$.
		Now let $\eta$ be the orientation of $\underline{\mu}\eta\overline{\lambda}$.
		By our assumption we have $\eta\neq\lambda,\mu$.
		By \cref{thmpropbasedquher}\cref{thmpropbasedquheri} we have 
		\begin{equation*}
			\underline{\mu}\eta\overline{\lambda} = \underline{\mu}\eta\overline{\eta}\cdot\underline{\eta}\eta\overline{\lambda}
		\end{equation*}
		violating \eqref{nonsplittable}.
		\item Suppose that $\lambda$ is a valid $\nabla$-orientation of $\underline{\mu}$.
		\begin{enumerate}[label=(\alph*)]
			\item Suppose first that this violates \cref{nablatwooff}.
				Let $\kappa$ be the same weight as $\lambda$ except that we replace the rightmost dot of $\lambda$ that corresponds to a left endpoint of a cup with the dot to the right of the same cup.
				By construction $\underline{\kappa}\lambda\overline{\lambda}$ is oriented (and only one dot in $\kappa$ and $\lambda$ differ).
				Furthermore, $\underline{\mu}\kappa\overline{\kappa}$ is also oriented and $\kappa$ and $\mu$ have one dot more in common than $\lambda$ and $\mu$.
				We claim that 
				\begin{equation*}
					\underline{\mu}\kappa\overline{\kappa}\cdot\underline{\kappa}\lambda\overline{\lambda}=\underline{\mu}\lambda\overline{\lambda}
				\end{equation*}
				For this it suffices to show that any appearing surgery procedure is a straightening.
				First look at the surgery procedures that involve a cup that has a dot to its right.
				This means there has to be a cap above this dot and because the orientation of $\underline{\kappa}\overline{\lambda}$ is $\lambda$, we have to have a straightening.
				There is exactly one surgery procedure without a dot to the right of the cup.
				The situation then looks like
				\begin{equation*}
					\begin{tikzpicture}[scale=0.5]
						\CUP{1}
						\SETCOORD{1}{0}
						\LINE{-2}{-1.5}
						\SETCOORD{1}{0}
						\CAP{1}
						\node at (-0.5,0) {$\bullet$};
						\node at (1.5,-1.5) {$\bullet$};
					\end{tikzpicture}.
				\end{equation*}
				By construction $\kappa$ and $\lambda$ agree up to one dot (which is exactly the one depicted above), and thus all the other dots agree.
				But now there cannot be a dot between the line and the cap as it would have to correspond to one to the right of the line at the top number line, which would mean that $\kappa$ and $\lambda$ differ by at least two dots.
				Additionally, by construction (as the dot on the top is to the left of the cup) the dot on the bottom has to be to the right of a cup.
				As there is no dot between the cap and the line, this means that this cup connects the line with the cap, meaning that we have a straightening.
				Thus, the multiplication is nonzero, therefore violating \cref{nonsplittable}.
			\item Suppose now that \cref{nablatwooff} holds but \cref{nablaextra} is violated.
				Then our situation looks like 
				\begin{equation*}
					\begin{tikzpicture}[scale=0.5]
						\LINE{0}{-1.5}
						\CUP{-1}
						\CAP{-1}
						\CUP{3}
						\CAP{1}
						\LINE{0}{-1.5}
						\node at (-1.5,-1.5) {$\bullet$};
						\node at (1.5,-1.5) {$\bullet$};
					\end{tikzpicture},
				\end{equation*}
				and it is easy to check that 
				\begin{equation*}
						\begin{tikzpicture}[scale=0.5]
							\LINE{0}{-1.5}
							\CUP{-1}
							\CAP{-1}
							\CUP{3}
							\CAP{1}
							\LINE{0}{-1.5}
							\SETCOORD{1}{3}
							\LINE{0}{-3}
							\node at (-1.5,-1.5) {$\bullet$};
							\node at (1.5,-1.5) {$\bullet$};
						\end{tikzpicture}
						=
						\begin{tikzpicture}[scale=0.5]
							\LINE{0}{-1.5}
							\CUP{1}
							\CAP{1}
							\CUP{-3}
							\CAP{-1}
							\LINE{0}{-1.5}
							\SETCOORD{5}{3}
							\LINE{0}{-3}
							\node at (-1.5,-1.5) {$\bullet$};
							\node at (1.5,-1.5) {$\bullet$};
						\end{tikzpicture}
						\;\cdot\;
						\begin{tikzpicture}[scale=0.5]
							\LINE{0}{-1.5}
							\CUP{3}
							\CAP{1}
							\LINE{0}{-1.5}
							\SETCOORD{-3}{3}
							\LINE{0}{-1.5}
							\CUP{1}
							\CAP{3}
							\LINE{0}{-1.5}
							\node at (2.5,-1.5) {$\bullet$};
							\node at (3.5,-1.5) {$\bullet$};
						\end{tikzpicture}
				\end{equation*}
				provides a counterexample to \eqref{nonsplittable}.
		\end{enumerate}
		\item Suppose now that $\mu$ is a valid $\Delta$-orientation of $\overline{\lambda}$
		\begin{enumerate}[label=(\alph*)]
			\item First assume that \cref{deltatwooff} is violated.
			As \cref{deltatwooff} does not hold, there exists a dot in $\mu$ that is not at the right end below the corresponding cap $C$ in $\overline{\lambda}$.
			Of these dots we choose the rightmost one such that any caps contained in $C$ have their dot at the rightmost position.
			Let $\kappa$ be the same as $\mu$ except that we replace this dot by the rightmost position in $C$.
			By \cref{deltatwooff} we have $\kappa\neq\lambda,\mu$. 
			The weight $\kappa$ is a $\Delta$-orienation of $\overline{\lambda}$ by construction, and thus $\underline{\kappa}\kappa\overline{\lambda}$ is oriented.
			Furthermore, our choice ensures that $\mu$ is a valid $\Delta$-orientation of $\overline{\kappa}$, and thus $\underline{\mu}\mu\overline{\kappa}$ is oriented.
			We claim now that
			\begin{equation*}
				\underline{\mu}\mu\overline{\kappa}\cdot\underline{\kappa}\kappa\overline{\lambda}= \underline{\mu}\mu\overline{\lambda}
			\end{equation*}
			For this we show that no split or reconnect can occur as a surgery procedure.
			If a split occurred, this would look like one of the following two possibilities:
			\begin{center}
				\begin{tabularx}{0.7\textwidth}{>{\centering\arraybackslash}X >{\centering\arraybackslash}X}
					\begin{tikzpicture}[scale=0.5]
						\CUP{-1}
						\CAP[dashed]{-1}
						\LINE[dashed]{0}{-1.5}
						\CUP[dashed]{1}
						\LINE{2}{1.5}
						\SETCOORD{-1}{-1.5}
						\CAP{1}
						\node at (-1.5,0) {$\bullet$};
						\node at (0.5,0) {$\bullet$};
					\end{tikzpicture}&
					\begin{tikzpicture}[scale=0.5]
						\CUP{1}
						\SETCOORD{-1}{-1.5}
						\LINE{2}{1.5}
						\CAP[dashed]{1}
						\LINE[dashed]{0}{-1.5}
						\CUP[dashed]{-1}
						\CAP{-1}
						\node at (1.5,0) {$\bullet$};
						\node at (3.5,-1.5) {$\bullet$};
					\end{tikzpicture}
				\end{tabularx}
			\end{center}
			The first case cannot appear as every dot is to the right of a cup by assumption.
			This means that the left dot in the first case cannot be matched with the cup to its right, so it has to be matched to a dot at the bottom number line.
			This would be contained in the dashed cup, which is not possible by definition of orientation.

			The second case also cannot appear, as the bottom dot is not contained in a cap, and thus has to be matched to a dot at the top.
			But then these two dots at the top need to lie inside the same cap, which contradicts the definition of orientation.
			Therefore, there cannot appear any split.

			Now if a reconnect would appear the situation would look like
			\begin{equation*}
				\begin{tikzpicture}[scale=0.5]
					\LINE{0}{-3}
					\CUP{3}
					\CAP{1}
					\LINE{0}{-1.5}
					\SETCOORD{-3}{4.5}
					\LINE{0}{-3}
					\CUP{1}
					\LINE{2}{1.5}
					\CAP{1}
					\LINE{0}{-3}
					\SETCOORD{-3}{4.5}
					\LINE{0}{-1.5}
					\CUP{1}
					\CAP{3}
					\LINE{0}{-3}
					\SETCOORD{2}{4.5}
					\draw[->, decorate, decoration={snake}] (6.5, -2.25) -- (7.5, -2.25);
					\LINE{0}{-3}
					\CUP{3}
					\LINE{0}{1.5}
					\CAP{3}
					\LINE{0}{-3}
					\SETCOORD{-5}{4.5}
					\LINE{0}{-3}
					\CUP{1}
					\LINE{0}{3}
					\SETCOORD{2}{-4.5}
					\LINE{0}{3}
					\CAP{1}
					\LINE{0}{-3}
					\node at (3.5,-1.5) {$\bullet$};
					\node at (2.5,-3) {$\bullet$};
					\node at (3.5,-3) {$\bullet$};
				\end{tikzpicture}
			\end{equation*}
			But any further surgery procedure that we could apply (even though it is already not orientable) would preserve at least one of the non-propagating lines (and as this number has to be even), this would imply that $\underline{\mu}\overline{\lambda}$ contain non-propagating lines which contradicts our assumption.
			Therefore, no reconnect can appear.
			Thus, only straightenings can occur, and thus the multiplication is nonzero.
			\item Suppose now that \cref{deltatwooff} holds but \cref{deltaonedotmovestwice} is violated.
			Then our situation looks like
			\begin{equation*}
				\begin{tikzpicture}[scale=0.5]
					\LINE{0}{-1.5}
					\CUP{1}
					\CAP{5}
					\LINE{0}{-1.5}
					\SETCOORD{-7}{3}
					\LINE{0}{-1.5}
					\CUP{3}
					\CAP{1}
					\CUP{1}
					\CAP{1}
					\LINE{0}{-1.5}
					\node at (1.5, -1.5) {$\bullet$};
					\node at (2.5, -1.5) {$\bullet$};
					\node at (4.5, -1.5) {$\bullet$};
				\end{tikzpicture},
			\end{equation*}
			and it is easy to check that 
			\begin{equation*}
				\begin{tikzpicture}[scale=0.5]
					\LINE{0}{-1.5}
					\CUP{1}
					\CAP{5}
					\LINE{0}{-1.5}
					\SETCOORD{-7}{3}
					\LINE{0}{-1.5}
					\CUP{3}
					\CAP{1}
					\CUP{1}
					\CAP{1}
					\LINE{0}{-1.5}
					\node at (1.5, -1.5) {$\bullet$};
					\node at (2.5, -1.5) {$\bullet$};
					\node at (4.5, -1.5) {$\bullet$};
				\end{tikzpicture}
				=
				\begin{tikzpicture}[scale=0.5]
					\LINE{0}{-1.5}
					\CUP{5}
					\CAP{1}
					\LINE{0}{-1.5}
					\SETCOORD{-5}{3}
					\LINE{0}{-1.5}
					\CUP{3}
					\CAP{-1}
					\CUP{-1}
					\CAP{5}
					\LINE{0}{-1.5}
					\node at (3.5, -1.5) {$\bullet$};
					\node at (4.5, -1.5) {$\bullet$};
					\node at (5.5, -1.5) {$\bullet$};
				\end{tikzpicture}
				\;\cdot\;
				\begin{tikzpicture}[scale=0.5]
					\LINE{0}{-1.5}
					\CUP{3}
					\CAP{1}
					\CUP{1}
					\CAP{-3}
					\CUP{-1}
					\CAP{5}
					\LINE{0}{-1.5}
					\SETCOORD{1}{3}
					\LINE{0}{-3}
					\node at (2.5, -1.5) {$\bullet$};
					\node at (3.5, -1.5) {$\bullet$};
					\node at (5.5, -1.5) {$\bullet$};
				\end{tikzpicture}
			\end{equation*}
			provides a counterexample to \eqref{nonsplittable}
			\item Lastly suppose that \cref{deltatwooff} and \cref{deltaonedotmovestwice} hold but \cref{deltaextra} does not.
			Then the circle diagram looks like
			\begin{equation*}
				\begin{tikzpicture}[scale=0.5]
					\LINE{0}{-1.5}
					\CUP{5}
					\CAP{1}
					\LINE{0}{-1.5}
					\SETCOORD{-5}{3}
					\LINE{0}{-1.5}
					\CUP{1}
					\CAP{1}
					\CUP{1}
					\CAP{3}
					\LINE{0}{-1.5}
					\node at (2.5, -1.5) {$\bullet$};
					\node at (4.5, -1.5) {$\bullet$};
					\node at (5.5, -1.5) {$\bullet$};
				\end{tikzpicture}
			\end{equation*}
			and a counterexample for \eqref{nonsplittable} is given by
			\begin{equation*}
				\begin{tikzpicture}[scale=0.5]
					\LINE{0}{-1.5}
					\CUP{5}
					\CAP{1}
					\LINE{0}{-1.5}
					\SETCOORD{-5}{3}
					\LINE{0}{-1.5}
					\CUP{1}
					\CAP{1}
					\CUP{1}
					\CAP{3}
					\LINE{0}{-1.5}
					\node at (2.5, -1.5) {$\bullet$};
					\node at (4.5, -1.5) {$\bullet$};
					\node at (5.5, -1.5) {$\bullet$};
				\end{tikzpicture}
				=
				\begin{tikzpicture}[scale=0.5]
					\LINE{0}{-3}
					\SETCOORD{1}{3}
					\LINE{0}{-1.5}
					\CUP{5}
					\CAP{-1}
					\CUP{-3}
					\CAP{1}
					\CUP{1}
					\CAP{3}
					\LINE{0}{-1.5}
					\node at (2.5, -1.5) {$\bullet$};
					\node at (5.5, -1.5) {$\bullet$};
					\node at (6.5, -1.5) {$\bullet$};
				\end{tikzpicture}
				\;\cdot\;
				\begin{tikzpicture}[scale=0.5]
					\LINE{0}{-1.5}
					\CUP{5}
					\CAP{-1}
					\CUP{-1}
					\CAP{3}
					\LINE{0}{-1.5}
					\SETCOORD{-5}{3}
					\LINE{0}{-1.5}
					\CUP{1}
					\CAP{5}
					\LINE{0}{-1.5}
					\node at (2.5, -1.5) {$\bullet$};
					\node at (4.5, -1.5) {$\bullet$};
					\node at (5.5, -1.5) {$\bullet$};
				\end{tikzpicture}.\qedhere
			\end{equation*}
		\end{enumerate}
	\end{enumerate}
\end{proof}
This explicit description of diagrams satisfying \eqref{nonsplittable} gives us immediately the following corollary.
\begin{cor}
	We have 
	\begin{equation*}
		\dim\Ext_{\lie{p}(n)}(\mathcal{L}(\lambda), \mathcal{L}(\mu))=\begin{cases}
			1&\text{if $\underline{\mu}\overline{\lambda}$ is primitive,}\\
			0&\text{otherwise.}
		\end{cases}
	\end{equation*}
\end{cor}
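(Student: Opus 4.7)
The plan is to reduce the computation of $\Ext^1_{\lie{p}(n)}(\mathcal{L}(\lambda),\mathcal{L}(\mu))$ to a purely diagrammatic question about $\K_n$ and then quote the preceding theorem. By the main theorem \cref{mainthm}, the equivalence $\Psi\colon\K_n\mmod\to\lie{p}(n)\mmod$ identifies irreducibles and projectives and preserves $\Ext$-groups, so it suffices to compute $\Ext^1_{\K_n}(L(\overline{\lambda}),L(\overline{\mu}))$.

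First I would argue that the dimension is at most $1$. Apply $\Hom_{\K_n}(-,L(\overline{\mu}))$ to the start $P_1\to P(\overline{\lambda})\to L(\overline{\lambda})\to 0$ of a projective resolution, so that $\Ext^1_{\K_n}(L(\overline{\lambda}),L(\overline{\mu}))$ is identified with the space of homomorphisms $\mathrm{rad}\,P(\overline{\lambda})\to L(\overline{\mu})$ that do not extend to $P(\overline{\lambda})$; equivalently, with the $\mu$-isotypic component of $\mathrm{rad}\,P(\overline{\lambda})/\mathrm{rad}^2\,P(\overline{\lambda})$. From \cref{thmpropbasedquher} the projective $P(\overline{\lambda})=\K_n e_\lambda$ has a basis of oriented circle diagrams $\underline{\nu}\eta\overline{\lambda}$, and a given $\underline{\mu}$ occurs at most once, so $P(\overline{\lambda})$ is multiplicity free. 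This bounds the dimension by $1$.

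Next I would characterize when the dimension equals $1$. A nonzero basis vector $\underline{\mu}\delta\overline{\lambda}\in P(\overline{\lambda})$ lies in $\mathrm{rad}^2\,P(\overline{\lambda})$ if and only if it factors nontrivially, i.e.\ there exists $\kappa\notin\{\lambda,\mu\}$ with $\underline{\mu}\delta\overline{\lambda}=\underline{\mu}\overline{\kappa}\cdot\underline{\kappa}\overline{\lambda}$; this follows from the description of multiplication in $\K_n$ together with the fact that $\mathrm{rad}\,P(\overline{\lambda})$ is spanned by the $\underline{\mu}\delta\overline{\lambda}$ with $\mu\ne\lambda$ (so every radical element can be written as $y_1 x_1$ with $x_1\in\mathrm{rad}$, and the question is whether we may also take $y_1\in\mathrm{rad}$). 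Combining with \cref{thmpropbasedquher}\cref{thmpropbasedquheriii}, the class of $\underline{\mu}\,\delta\,\overline{\lambda}$ is nonzero in $\mathrm{rad}\,P(\overline{\lambda})/\mathrm{rad}^2\,P(\overline{\lambda})$ precisely when $\underline{\mu}\overline{\lambda}$ satisfies \eqref{nonsplittable}.

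Finally, since $\underline{\mu}\overline{\lambda}$ admits a unique orientation by \cref{circlediaghasuniqueorientation}, there is at most one candidate basis vector contributing to the $\mu$-isotypic part. Applying the theorem just proved, which identifies the circle diagrams satisfying \eqref{nonsplittable} with the primitive ones (Definition \ref{deficircdiagprimitive}), we conclude that the dimension equals $1$ exactly when $\underline{\mu}\overline{\lambda}$ is primitive and is $0$ otherwise. The only subtlety to be careful with is ensuring the nontrivial-factorization criterion really detects membership in $\mathrm{rad}^2$; this is exactly what \eqref{nonsplittable} encodes, so no further work is needed.
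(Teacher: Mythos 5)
Your argument is correct and follows essentially the same route as the paper: the reduction to $\K_n$ via \cref{mainthm}, the multiplicity-freeness bound $\dim\Ext^1\leq 1$, and the identification of the head of the radical with the circle diagrams satisfying \eqref{nonsplittable}, which the preceding theorem characterizes as the primitive ones. The paper carries out exactly this reduction in the paragraph introducing \eqref{nonsplittable} and then treats the corollary as immediate.
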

\subsubsection{\texorpdfstring{$\lie{p}(1)$}{p(1)} as a quiver with relations}
The irreducible modules of $\lie{p}(1)\mmod$ are labelled by $\bbZ$.
The category decomposes into $2$ blocks $B_0$, $B_1$, where $\mathcal{L}(i)\in B_0$ if $i$ is even and $\mathcal{L}(i)\in B_1$ if $i$ is odd.
The module $\mathcal{P}(i)$ has irreducible head $\mathcal{L}(i)$ and irreducible socle $\mathcal{L}(i+2)$ and nothing else.
Both blocks can be described by the same quiver
\begin{equation*}	
	\begin{tikzpicture}[baseline={([yshift=-.8ex]current bounding box.center)}, scale=0.75]
		\foreach \x in {0,2,4,6,8} \node at (\x, 0) {$\bullet$};
		\foreach \x in {0,2,4,6} \draw[->, shorten >=0.3cm, shorten <=0.3cm] (\x, 0)--(\x+2,0);
		\node[xshift=-0.6cm] at (0,0) {$\dots$};
		\node[xshift=0.6cm] at (8,0) {$\dots$};
	\end{tikzpicture}
\end{equation*}
with relation $\tikz[baseline={([yshift=-.8ex]current bounding box.center)}, scale=0.75]{\draw[->, shorten >=0.3cm, shorten <=0.3cm] (0, 0)--(2,0);\draw[->, shorten >=0.3cm, shorten <=0.3cm] (2, 0) node {$\bullet$}--(4,0);}=0$.
\subsubsection{\texorpdfstring{$\lie{p}(2)$}{p(2)} as a quiver with relations}
The irreducibles for $\lie{p}(2)$ are labelled by two integers $(i,j)$ with $i>j$.
The category decomposes into three blocks $B_0$, $B_1$ and $B_2$ depending on how many odd entries there are.
We only consider the block $B_1$ as it is the most irregular one ($B_0$ and $B_1$ can be obtained by removing the leftmost diagonal).
It can be described as the quiver
\begin{center}
\begin{tikzcd}[row sep = small, column sep = tiny]
	&&&&&&&\cdots\arrow[dl, orange]&&\cdots&&\cdots&&\\
	&&&&&&\bullet\arrow[dl, orange]\arrow[rr, ForestGreen]&&\bullet\arrow[rr, ForestGreen]&&\bullet\arrow[rr, ForestGreen]&&\bullet&\\
	&&&&&\bullet\arrow[dl, orange]\arrow[rr, ForestGreen]&&\bullet\arrow[rr, ForestGreen]&&\bullet\arrow[rr, ForestGreen]\arrow[uu, blue]&&\bullet\arrow[rr, ForestGreen]\arrow[uu, blue]&&\cdots\\
	&&&&\bullet\arrow[dl, orange]\arrow[rr, ForestGreen]&&\bullet\arrow[uuur, red]\arrow[rr, ForestGreen]&&\bullet\arrow[rr, ForestGreen]\arrow[uu, blue]&&\bullet\arrow[rr, ForestGreen]\arrow[uu, blue]&&\bullet\arrow[uu, blue]&\\
	&&&\bullet\arrow[dl, orange]\arrow[rr, ForestGreen]&&\bullet\arrow[uuur, red]\arrow[rr, ForestGreen]&&\bullet\arrow[rr, ForestGreen]\arrow[uu, blue]&&\bullet\arrow[rr, ForestGreen]\arrow[uu, blue]&&\bullet\arrow[rr, ForestGreen]\arrow[uu, blue]&&\cdots\\
	&&\bullet\arrow[dl, orange]\arrow[rr, ForestGreen]&&\bullet\arrow[uuur, red]\arrow[rr, ForestGreen]&&\bullet\arrow[rr, ForestGreen]\arrow[uu, blue]&&\bullet\arrow[rr, ForestGreen]\arrow[uu, blue]&&\bullet\arrow[rr, ForestGreen]\arrow[uu, blue]&&\bullet\arrow[uu, blue]&\\
	&\bullet\arrow[dl, orange]\arrow[rr, ForestGreen]&&\bullet\arrow[uuur, red]\arrow[rr, ForestGreen]&&\bullet\arrow[rr, ForestGreen]\arrow[uu, blue]&&\bullet\arrow[rr, ForestGreen]\arrow[uu, blue]&&\bullet\arrow[rr, ForestGreen]\arrow[uu, blue]&&\bullet\arrow[rr, ForestGreen]\arrow[uu, blue]&&\cdots\\
	\cdots&&\cdots\arrow[uuur, red]&&\cdots\arrow[uu, blue]&&\cdots\arrow[uu, blue]&&\cdots\arrow[uu, blue]&&\cdots\arrow[uu, blue]&&\cdots\arrow[uu, blue]&
 \end{tikzcd}
\end{center}
with relations

 \begin{tabularx}{\textwidth}{c>{\centering\arraybackslash}Xc>{\centering\arraybackslash}Xc}
   \begin{tikzcd}[ampersand replacement = \&, row sep = tiny, column sep = tiny]
	 \bullet\\\phantom{\bullet}\\\bullet\arrow[uu, blue]\\\phantom{\bullet}\\\bullet\arrow[uu, blue]
   \end{tikzcd} = 0&&
   \begin{tikzcd}[ampersand replacement = \&, row sep = tiny, column sep = tiny]
	 \bullet\arrow[rr, ForestGreen]\&\phantom{\bullet}\&\bullet\arrow[rr, ForestGreen]\&\phantom{\bullet}\&\bullet
   \end{tikzcd} = 0&&
   \begin{tikzcd}[ampersand replacement = \&, row sep = tiny, column sep = tiny]
	\&\&\bullet\arrow[dl, orange]\\
	\&\bullet\arrow[dl, orange]\\
	\bullet
  \end{tikzcd} = 0\\
	\begin{tikzcd}[ampersand replacement = \&, row sep = tiny, column sep = tiny]
	  \&\bullet\\\phantom{\bullet}\\\phantom{\bullet}\\
	  \bullet\arrow[uuur, red]\\\phantom{\bullet}\\
	  \bullet\arrow[uu, blue]
	\end{tikzcd} = 0&&
	\begin{tikzcd}[ampersand replacement = \&, row sep = tiny, column sep = tiny]
	  \&\bullet\arrow[rr, ForestGreen]\&\phantom{\bullet}\&\bullet\\\phantom{\bullet}\\\phantom{\bullet}\\
	  \bullet\arrow[uuur, red]
	\end{tikzcd} = 0&&
	\begin{tikzcd}[ampersand replacement = \&, row sep = tiny, column sep = tiny]
	 \&\bullet\arrow[dl, orange]\\
	 \bullet\arrow[rr, ForestGreen]\&\phantom{\bullet}\&\bullet\\
	 \phantom{\bullet}\\
	 \bullet\arrow[uuur, red]
   \end{tikzcd} = \begin{tikzcd}[ampersand replacement = \&, row sep = tiny, column sep = tiny]
	 \&\phantom{\bullet}\&\bullet\\
	 \phantom{\bullet}\\
	 \bullet\arrow[rr, ForestGreen]\&\&\bullet\arrow[uu, blue]
   \end{tikzcd}\\
   \multicolumn{3}{r}{\begin{tikzcd}[ampersand replacement = \&, row sep = tiny, column sep = tiny]
	\&\phantom{\bullet}\&\phantom{\bullet}\&\bullet\\
	\phantom{\bullet}\\
	\&\bullet\arrow[dl, orange]\\
	\bullet\arrow[rr, ForestGreen]\&\&\bullet\arrow[uuur, red]
  \end{tikzcd} = \begin{tikzcd}[ampersand replacement = \&, row sep = tiny, column sep = tiny]
	\&\phantom{\bullet}\&\phantom{\bullet}\&\bullet\arrow[dl, orange]\\
	\&\&\bullet\\
	\phantom{\bullet}\\
	\bullet\arrow[rr, ForestGreen]\&\&\bullet\arrow[uuur, red]
  \end{tikzcd}}
&&
   \begin{tikzcd}[ampersand replacement = \&, row sep = tiny, column sep = tiny]
	 \&\phantom{\bullet}\&\bullet\\\phantom{\bullet}\\\bullet\arrow[rr, ForestGreen]\&\&\bullet\arrow[uu, blue]
   \end{tikzcd} = \begin{tikzcd}[ampersand replacement = \&, row sep = tiny, column sep = tiny]
	 \bullet\arrow[rr, ForestGreen]\&\phantom{\bullet}\&\bullet\\\phantom{\bullet}\\\bullet\arrow[uu, blue]
   \end{tikzcd}.
\end{tabularx}

Here, we also see that the last relation in the second row is not homogeneous, which reflects again the problems with the grading that occured in \cref{nonskoszulsection}.
\newpage
\appendix
\section{}
\subsection{Checking relations for \texorpdfstring{$\FunctorKLRtosBr$}{Psi}}\label{checkingrelationsklrtosbr}
\begin{proof}[Proof of \cref{functorXiexplicit}]
	For \cref{twicezero} we have to show that $M'\coloneqq i\mind\circ i\mind M= 0$ for every object $M\in\Rep(P)$.
    For every $M$ there exists $n\gg0$ such that $\End_{Rep(P)}(M')\to\End_{\lie{p}(n)}(\FunctorsBrtopn(M'))$ is faithful.
    In virtue of \cref{action} we have $\FunctorsBrtopn(M')=\Theta_k^{-}\Theta_k^{-}\FunctorsBrtopn(M)=0$ by \cite{BDE19}*{Thm.\ 4.5.1}.

	Note that \begin{tikzpicture}[line width = \lw, myscale=0.7]
		\draw (0,0) -- (1,1) (1,0)--(0,1);\node[fill=white, anchor=north] at (0,0) {$b$}; \node[fill=white, anchor=north] at (1,0) {$a$};\node[fill=white, anchor=south] at (0,1) {$a$}; \node[fill=white, anchor=south] at (1,1) {$b$};
	\end{tikzpicture} gets mapped to $0$ if $b =a-1$, and thus \cref{inverse} holds if $\abs{a-b}=1$.
If $\abs{a-b}>1$, then $\lambda_{b,a}((b-a)s+1)\circ\lambda_{a,b}((a-b)s+1)=\lambda_{b,a}\lambda_{a,b}(1-(a-b)^2)=1$ by \cref{scalarone},
thus \cref{inverse} holds.

For \cref{untwist} we have $\cap\circ\lambda_{a+1,a}(1+ s)\circ\mathrm{incl}=2\cap\circ\mathrm{incl}=0$, where the last line uses that $(\id^{\otimes k}\otimes\cap) e(i_1, \dots, i_k,j,l) = 0$ if $l\neq j+1$ by \cite{Cou18a}*{Lemma 6.3.1}.

Using the snake identities in $\sBr$ together with \cite{Cou18a}*{Lemma 6.3.1}, \cref{snake} follows.

Now observe that both sides of \cref{tangleone} give a zero morphism if $a=b,b+1$.
So assume that $a\neq b,b+1$.
Then we have
\begin{align*}
	LHS&=\pr\circ\lambda_{b,a}((b-a) s_k+1)\circ\cup_{k+1}\\
	&=\pr\circ(\lambda_{b,a}(b-a)s_k\cup_{k+1}+\lambda_{b,a}\cup_{k+1})\\
	&=\pr\circ\lambda_{b,a}(b-a)s_k\cup_{k+1}\\
	RHS&=\pr\circ\lambda_{a,b+1}((a-b-1) s_{k+1}+1)\circ\cup_k\\
	&=\pr\circ(\lambda_{a,b+1}(a-b-1)s_{k+1}\cup_k+\lambda_{a,b+1}\cup_k)\\
	&=\pr\circ\lambda_{a,b+1}(a-b-1)s_{k+1}\cup_k.
\end{align*}
In each case, the second summand vanishes as $a\neq b,b+1$ by \cite{Cou18a}*{Lemma 6.3.1}.
Now the equality follows from \cref{scalartwo} and the corresponding relation in the super Brauer category.

Additionally, consider the braid relation \cref{braid}, which trivially holds true if $b=a-1$, $c=b-1$, $c=a-1$ or $a$, $b$ and $c$ are not pairwise distinct.
Otherwise,
\begin{align*}
	LHS=&\lambda_{b,a}((b-a)s_{k-1}+1)\circ\lambda_{c,a}((c-a)s_{k}+1)\circ\lambda_{c,b}((c-b)s_{k-1}+1)\\
	=&\lambda_{b,a}\lambda_{c,a}\lambda_{c,b}({\color{blue}{(b-a)(c-a)(c-b)s_{k-1}s_{k}s_{k-1}}}+{\color{olive}{(c-a)(c-b)s_{k}s_{k-1}}}\\
	&+{\color{green}{(b-a)(c-b)s_{k-1}^2}}+{\color{cyan}{(c-b)s_{k-1}}}+{\color{red}{(b-a)(c-a)s_{k-1}s_{k}}}+{\color{orange}{(c-a)s_{k}}}\\
	&+{\color{cyan}{(b-a)s_{k-1}}}+{\color{violet}{1}}),\\
	RHS=&\lambda_{c,b}((c-b)s_{k}+1)\circ\lambda_{c,a}((c-a)s_{k-1}+1)\circ\lambda_{b,a}((b-a)s_{k}+1)\\
	=&\lambda_{c,b}\lambda_{c,a}\lambda_{b,a}({\color{blue}{(c-b)(c-a)(b-a)s_{k}s_{k-1}s_{k}}}+{\color{red}{(c-a)(b-a)s_{k-1}s_{k}}}\\
	&+{\color{green}{(c-b)(b-a)s_{k}^2}}+{\color{orange}{(b-a)s_{k}}}+{\color{olive}{(c-b)(c-a)s_{k}s_{k-1}}}+{\color{cyan}{(c-a)s_{k-1}}}\\
	&+{\color{orange}{(c-b)s_{k}}}+{\color{violet}{1}}).
\end{align*}
Note that the blue terms agree as the braid relation holds in the super Brauer category.

Similar to checking \cref{twicezero}, we find $n\gg0$ so that we can calculate with finite dimensional $\lie{p}(n)$-representations instead.
By \cite{BDE19}*{Thm.4.5.1} we know that the left-hand side of \cref{idempotentone} gets mapped to an isomorphism.
Furthermore, we have by \cref{snake}:
\begin{equation}
	\begin{tikzpicture}[line width = \lw, myscale=0.5]
		\node at (0,0) (A) {$a$};
		\node at (1,0) (B) {$a+1$};
		\node at (2,0) (C) {$a$};
		\node at (0,1.75) (D) {$a$};
		\node at (1,1.75) (E) {$a-1$};
		\node at (2,1.75) (F) {$a$};
		\node at (0,3.5) (G) {$a$};
		\node at (1,3.5) (H) {$a+1$};
		\node at (2,3.5) (I) {$a$};
		\draw (A)..controls +(0.2,0.8) and +(-0.2,0.8)..(B)(D)..controls +(0.2,-0.8) and +(-0.2,-0.8)..(E)(E)..controls +(0.2,0.8) and +(-0.2,0.8)..(F)(H)..controls +(0.2,-0.8) and +(-0.2,-0.8)..(I) (C)--(F) (D)--(G);	
		\node at (0,5.25) (J) {$a$};
		\node at (1,5.25) (K) {$a-1$};
		\node at (2,5.25) (L) {$a$};
		\node at (0,7) (M) {$a$};
		\node at (1,7) (N) {$a+1$};
		\node at (2,7) (O) {$a$};
		\draw (G)..controls +(0.2,0.8) and +(-0.2,0.8)..(H)(J)..controls +(0.2,-0.8) and +(-0.2,-0.8)..(K)(K)..controls +(0.2,0.8) and +(-0.2,0.8)..(L)(N)..controls +(0.2,-0.8) and +(-0.2,-0.8)..(O) (I)--(L) (J)--(M);
	\end{tikzpicture}=\eps\begin{tikzpicture}[line width = \lw, myscale=0.7]
	\node at (0,0) (A) {$a$};
	\node at (1,0) (B) {$a+1$};
	\node at (2,0) (C) {$a$};
	\node at (0,1.75) (D) {$a$};
	\node at (1,1.75) (E) {$a-1$};
	\node at (2,1.75) (F) {$a$};
	\node at (0,3.5) (G) {$a$};
	\node at (1,3.5) (H) {$a+1$};
	\node at (2,3.5) (I) {$a$};
	\draw (A)..controls +(0.2,0.8) and +(-0.2,0.8)..(B)(D)..controls +(0.2,-0.8) and +(-0.2,-0.8)..(E)(E)..controls +(0.2,0.8) and +(-0.2,0.8)..(F)(H)..controls +(0.2,-0.8) and +(-0.2,-0.8)..(I) (C)--(F) (D)--(G);
\end{tikzpicture}
\end{equation}
This means that $\eps LHS$ in \cref{idempotentone} gets mapped to an isomorphism and an idempotent, thus it has to be the identity.
Similar for \cref{idempotenttwo}.
\end{proof}
\subsection{Proof of Theorem~\ref{spanningsetisbasis}}\label{proofofspanningsetisbasis}
\begin{defi}
	We have a filtration $\{0\}=F_{\leq -1}\subseteq F_{\leq 0}\subseteq F_{\leq 1}\subseteq\dots$ on $\Tud$ given by $F_{\leq b} = \bigcup_{\abs{\lambda}\leq b}\Tud(\lambda)\sqcup\Tud(\lambda)$.
	Denote by $\Rl{}$ the image of $\Psi$ and by $\Rts{}$ the ideal in $\sG$ generated by $\Rl{}$.
	Let $\Rl{\leq b}\coloneqq\Psi(F_{\leq b})$ and $\Rts{\leq b}$ be the ideal in $\sG$ generated by $\Rl{\leq b}$.
    Analogously, define $\Rl{< b}$ and $\Rts{<b}$.
	Then $\Rl{\leq b}$ provides a filtration of $\Rl{}$ and $\Rts{\leq b}$ one of $\Rts{}$.
\end{defi}
The next lemma is directly implied by the definition of $\Psi_{\ta{t}\ta{s}}$.
\begin{lem}\label{thetaiaddablebox}
    Let $\ta{t}$, $\ta{s}\in\Tud(\lambda)$ of length $m$ and $n$ respectively and $\alpha\in\Add_i(\lambda)$.
    Then, $\theta_i(\Psi_{\ta{t}\ta{s}}) = \Psi_{\ta{u}\ta{v}}$, where $\ta{u}$, $\ta{v}\in\Tud(\lambda\oplus\alpha)$ of length $m+1$ and $n+1$ respectively such that $\restr{\ta{u}}{m}=\ta{t}$ and $\restr{\ta{v}}{n} = \ta{{s}}$.
\end{lem}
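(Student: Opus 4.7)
The plan is to produce $\ta{u}$ and $\ta{v}$ explicitly and then to check, by unwinding the algorithm defining $\Psi$, that the diagram $\Psi_{\ta{u}\ta{v}}$ is exactly the diagram $\Psi_{\ta{t}\ta{s}}$ with a single vertical strand labelled $i$ adjoined on the right; since $\theta_i$ is by definition the functor that adds such a strand on the right, this will give the claim.

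First I would define $\ta{u}$ to be $\ta{t}$ with $\alpha$ appended as the last step (i.e.\ $\ta{u}_{m+1}=\lambda\oplus\alpha$) and analogously $\ta{v}$ from $\ta{s}$. Because $\alpha\in\Add_i(\lambda)$, both are well-defined up-down-tableaux of shape $\lambda\oplus\alpha$, of length $m+1$ and $n+1$, with $\restr{\ta{u}}{m}=\ta{t}$ and $\restr{\ta{v}}{n}=\ta{s}$. Since $\alpha$ is an added box, one has $\res^l(\alpha)=\res^r(\alpha)=i$, and therefore
\begin{equation*}
\bm{i}^l_{\ta{u}}=(\bm{i}^l_{\ta{t}},i),\qquad \bm{i}^r_{\ta{v}}=(\bm{i}^r_{\ta{s}},i),
\end{equation*}
so $\Psi_{\ta{u}\ta{v}}$ has precisely the correct source and target to be compared with $\theta_i(\Psi_{\ta{t}\ta{s}})$.

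Next I would go through the recipe for $\Psi_{\ta{u}\ta{v}}$ in order. Caps are drawn by iterating over those indices $k$ where $\ta{u}_k$ is obtained from $\ta{u}_{k-1}$ by a removal; since the final step $m\to m+1$ is an addition, the set of removal indices and the corresponding left-matching indices are literally the same as for $\ta{t}$. Thus the caps produced are the same caps as in $\Psi_{\ta{t}\ta{s}}$ drawn on the first $m$ positions at the top. The identical argument for cups via $\ta{v}$ shows the cups of $\Psi_{\ta{u}\ta{v}}$ coincide with those of $\Psi_{\ta{t}\ta{s}}$ drawn on the first $n$ positions at the bottom.

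Finally I would inspect the remaining strands. At the top the leftover residues are those of $\Psi_{\ta{t}\ta{s}}$ together with the new rightmost position labelled $i$; similarly at the bottom. The unique fewest-crossings connection pairs the new rightmost top position with the new rightmost bottom position by a single straight vertical strand, while the other residues are connected exactly as in $\Psi_{\ta{t}\ta{s}}$. This is precisely $\Psi_{\ta{t}\ta{s}}$ with a vertical $i$-strand appended on the right, which equals $\theta_i(\Psi_{\ta{t}\ta{s}})$. There is no real obstacle: the only minor verification is that the height conventions (all caps below all cups, left-to-right ordering by endpoint) are undisturbed by attaching a rightmost vertical strand, which is obvious.
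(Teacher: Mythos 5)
Your proof is correct and matches the paper's intent exactly: the paper gives no argument beyond the remark that the lemma "is directly implied by the definition of $\Psi_{\ta{t}\ta{s}}$", and your write-up is precisely that unwinding (appending $\alpha$ as a final addition step leaves the removal indices, hence the caps and cups, untouched, and the fewest-crossings matching adjoins a single vertical $i$-strand on the right, which is $\theta_i$).
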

\begin{lem}\label{thetainotaddablebox}
	Let $\lambda$ be a partition of size $b$ and suppose that $\Add_i(\lambda)=\emptyset$.
	If $\Rl{<b'}=\Rts{<b'}$ for all $b'\leq b$, then $\theta_i(\Psi_{\ta{t}^\lambda\ta{t}^\lambda})\in \Rl{<b}$.
\end{lem}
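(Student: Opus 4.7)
The morphism $\Psi_{\ta{t}^\lambda\ta{t}^\lambda}$ is simply the identity on the object $\bm{i}^l_{\ta{t}^\lambda}$: since $\ta{t}^\lambda$ is an up-tableau, its construction involves no cups or caps, and all strands connect equal residues vertically. Hence $\theta_i(\Psi_{\ta{t}^\lambda\ta{t}^\lambda})$ is the identity on the extended object $\bm{j}\coloneqq(\bm{i}^l_{\ta{t}^\lambda},i)$. The task therefore reduces to showing $\operatorname{id}_{\bm{j}}\in\Rts{<b}$, since the inductive hypothesis $\Rl{<b}=\Rts{<b}$ will then conclude.

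The plan is to use the periplectic relations \cref{idempotentone,idempotenttwo} to factor $\operatorname{id}_{\bm{j}}$ through an object that, read as a residue sequence, corresponds to an up-down-tableau whose intermediate shape has size at most $b-1$. The crucial combinatorial step is to exhibit, in $\bm{j}$, a consecutive triple of the form $(a,a\pm 1,a)$, after suitable rearrangement by distant crossings --- these are invertible by \cref{inverse} and can be moved around freely using \cref{braid,tangleone,tangletwo}. The hypothesis $\Add_i(\lambda)=\emptyset$ is precisely what guarantees the existence of such a triple: $\bm{j}$ cannot be the residue sequence of any up-tableau of size $b+1$ extending $\ta{t}^\lambda$, so the combinatorial analysis in the proof of \cref{sequenceiseitheruptableauorreducible}, now applied to $\bm{j}$ with the fixed initial segment $\bm{i}^l_{\ta{t}^\lambda}$, produces the desired triple.

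Once the triple is in place, \cref{idempotentone} (respectively \cref{idempotenttwo}) rewrites the identity on the three strands as a composition factoring through the dual triple $(a,a\mp 1,a)$, with one cap at the bottom and one cup at the top. Reading the cap as a box removal and the cup as a re-addition, the composition factors through an object that is the left residue sequence of an up-down-tableau whose intermediate shape, at the corresponding step, has strictly smaller size. Since sequence length and shape size agree modulo $2$ (every step changes the shape by $\pm 1$), this one-step shrinkage actually forces the intermediate size to be at most $b-1<b$. Thus $\operatorname{id}_{\bm{j}}\in\Rts{<b}$, and the inductive hypothesis finishes the proof.

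The main obstacle is the combinatorial step: producing a $(a,a\pm 1,a)$ triple inside $\bm{j}$ (possibly after distant-crossing rearrangement) requires a case analysis based on whether $i$ itself already occurs in $\bm{i}^l_{\ta{t}^\lambda}$. If $i$ does occur, then sliding the newly appended $i$-strand leftwards past strands of labels not in $\{i-1,i,i+1\}$ eventually places two $i$-strands either adjacent (so that \cref{twicezero} yields a direct collapse) or separated by a single $i\pm1$, giving the triple. If $i$ does not occur, then $\Add_i(\lambda)=\emptyset$ together with the row-by-row structure of $\ta{t}^\lambda$ forces equalities $\lambda_{r-1}=\lambda_r$ for the relevant rows $r$, and the triple must be located near the rightmost entries of residue $i\pm 1$ in the tail of $\bm{i}^l_{\ta{t}^\lambda}$; carefully tracing these equalities is what makes this argument work.
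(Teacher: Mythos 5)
Your reduction to showing $\id_{\bm{j}}\in\Rts{<b}$ for $\bm{j}=(\bm{i}^l_{\ta{t}^\lambda},i)$ is correct, and for the cases where $i$ is adjacent to the last entry $i_b$ of $\bm{i}^l_{\ta{t}^\lambda}$ your mechanism (locate a triple $(a,a\pm1,a)$ using the row-by-row structure of $\ta{t}^\lambda$, apply \cref{idempotentone} or \cref{idempotenttwo}, and land on a basis element of strictly smaller shape) is essentially what the paper does. The gap is your central combinatorial claim that $\Add_i(\lambda)=\emptyset$ \emph{guarantees} such a triple. It does not: take $\lambda=(1)$, so $\bm{i}^l_{\ta{t}^\lambda}=(0)$, and $i=5$. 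Then $\Add_5(\lambda)=\emptyset$, $\bm{j}=(0,5)$, and no rearrangement by distant crossings produces a subsequence $(a,a\pm1,a)$, since no two labels are within distance one of each other. The same happens whenever $i$ is far from every residue occurring in $\lambda$, and more generally your case split (``$i$ occurs'' versus ``$i$ does not occur, so look at the rightmost $i\pm1$'') silently assumes that some label in $\{i-1,i,i+1\}$ occurs in the sequence. The appeal to the combinatorics behind \cref{sequenceiseitheruptableauorreducible} does not repair this, because that analysis concerns residue sequences of up-down-tableaux, and $\bm{j}$ need not be one.

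The missing mechanism is an induction on $b$ that peels off distant last entries: if $\abs{i-i_b}>1$, then also $\Add_i(\ta{t}^\lambda_{b-1})=\emptyset$, so by the inductive hypothesis $\id_{(i_1,\dots,i_{b-1},i)}\in\Rl{<b-1}$; applying $\theta_{i_b}$ (which is filtered of degree $1$ on $\Rl{\leq b-1}$) and then the invertible double crossing from \cref{inverse} gives $\id_{\bm{j}}\in\Rts{<b}=\Rl{<b}$. Iterating, this either terminates in $i=i_b$ (killed by \cref{twicezero}), in a sequence starting with $i\neq 0$ (killed by \cref{firstzero}), or in an adjacent case $i_b=i\pm1$, where your triple argument takes over. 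Your remark about ``sliding the appended $i$-strand leftwards'' is the right instinct, but you only deploy it in the sub-case where $i$ already occurs, and you never say what happens when the strand reaches the far left; as written the proof does not close.
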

\begin{rem}\label{thetainotaddableboxforgeneraltands}
    Let $\ta{t}$, $\ta{s}\in\Tud(\lambda)$ and suppose that $\Add_i(\lambda)=\emptyset$ and $\Rl{<b'}=\Rts{<b'}$ for all $b'\leq\abs{\lambda}$.
    Then \cref{thetainotaddablebox} implies $\theta_i(\Psi_{\ta{t}\ta{s}})\in \Rl{<\abs{\lambda}}$.
\end{rem}
\begin{proof}[Proof of \cref{thetainotaddablebox}]
	If $b=0$, the claim follows from \cref{firstzero}.

	Now assume that the claim holds for all $b'<b$. 
	Note that by \cref{thetaiaddablebox} as well as \cref{thetainotaddableboxforgeneraltands} we have proved on the way that $\restr{\theta_i}{\Rl{\leq b-1}}$ is a filtered map of degree $1$.
	Let $(i_1, \dots, i_b)\coloneqq\bm{i}^l_{\ta{t}^\lambda}=\bm{i}^r_{\ta{t}^\lambda}$.
	Also note that $\Psi_{\ta{t}^\lambda\ta{t}^\lambda}=\id_{(i_1, \dots, i_b)}$.
	We consider four different cases.
	\begin{enumerate}
		\item If $i=i_b$, we have $\theta_i(\Psi_{\ta{t}^\lambda\ta{t}^\lambda}) = 0$ by \cref{twicezero}.
		\item If $\abs{i-i_b}>1$, then also $\Add_i(\ta{t}^\lambda_{b-1})=\emptyset$.
			By induction, $\id_{(i_1, \dots, i_{b-1}, i)}\in \Rl{<b-1}$ and hence $\id_{(i_1, \dots, i_{b-1}, i, i_b)}\in \Rl{<b}$ as $\restr{\theta_i}{\Rl{\leq b-1}}$ is filtered of degree $1$.
			Now $\Rl{<b}=\Rts{<b}$ by assumption and 
			\begin{equation*}
				\Psi_{\ta{t}^\lambda\ta{t}^\lambda}=
				\begin{tikzpicture}[line width = \lw, myscale=0.6, yscale=0.8]
					\node at (1,1.5) (A) {$i_1$};
					\node at (3,1.5) (B){$i_{b-1}$};
					\node at (4,1.5) (C){$i$};
					\node at (5,1.5) (D){$i_b$};
					\node at (2,1.5) {$\dots$};
					\draw (1,0)-- (A) -- (1,3) (3,0) --(B) -- (3,3) (4,0) -- (D.south) (D.north) -- (4,3) (5,0) -- (C.south) (C.north) --(5,3);
				\end{tikzpicture}
			\end{equation*}\nopagebreak
            by \cref{inverse}.
			Thus, $\Psi_{\ta{t}^\lambda\ta{t}^\lambda}\in \Rl{<b}$.
		\item Suppose that $i_b=i+1$.
			By definition of $\ta{t}^\lambda$, $i_b$ is the residue of the last box in the last row of $\lambda$.
			As there is no addable box with residue $i$, the last row of $\lambda$ has more than one box.
			But this means by construction that $i_{b-1}=i$.
			
			Now let $\ta{u}$ be the up-down-tableau such that $\ta{u}_k=\ta{t}^\lambda_k$ for $k<b$, $\ta{u}_{b+1}=\ta{u}_{b-1}$ and $\ta{u}_b=\ta{u}_{b-2}$.
			So the first $b-1$ steps agree with $\ta{t}$, then we remove and add the box that was added in step $b-1$.
			Similarly, we define $\ta{v}_k$ to be the up-down-tableau such that $\ta{v}_k=\ta{t}^\lambda_k$ for $k\leq b$ and $\ta{v}_{b+1}=\ta{v}_{b-1}$.
			In other words $\ta{v}$ agrees with $\ta{t}$ on the first $b$ steps and then removes the box that was added in step $b$.
			Observe that the source residue sequence of $\ta{u}$ and the target residue sequence of $\ta{v}$ are exactly $(i_1, \dots, i_b, i)$ by construction.
			By \cref{idempotentone} we have 
			\begin{align*}
				\Psi_{\ta{u}\ta{v}}&=
				\begin{tikzpicture}[line width = \lw, myscale=0.7]
					\node at (0,0) (A) {$i_1$};
					\node at (2,0) (B) {$i_{b-2}$};
					\node at (3,0) (C) {$i$};
					\node at (4,0) (D) {$i+1$};
					\node at (5,0) (E) {$i$};
					\node at (0,1.5) (F) {$i_1$};
					\node at (2,1.5) (G) {$i_{b-2}$};
					\node at (3,1.5) (H) {$i$};
					\node at (4,1.5) (I) {$i+1$};
					\node at (5,1.5) (J) {$i$};
					\node at (1,0.75) {$\dots$};
					\draw (A) -- (F) (B)--(G) (C)..controls +(0.2,0.8) and +(-0.2,0.8)..(D) (E)--(H) (I)..controls +(0.2,-0.8) and +(-0.2,-0.8)..(J);
				\end{tikzpicture}
				=\eps
				\begin{tikzpicture}[line width = \lw, myscale=0.7]
					\node at (0,0) (A) {$i_1$};
					\node at (2,0) (B) {$i_{b-2}$};
					\node at (3,0) (C) {$i$};
					\node at (4,0) (D) {$i+1$};
					\node at (5,0) (E) {$i$};
					\node at (0,1.5) (F) {$i_1$};
					\node at (2,1.5) (G) {$i_{b-2}$};
					\node at (3,1.5) (H) {$i$};
					\node at (4,1.5) (I) {$i+1$};
					\node at (5,1.5) (J) {$i$};
					\node at (1,0.75) {$\dots$};
					\draw (A) -- (F) (B)--(G) (C)--(H) (D)--(I) (E)--(J);
				\end{tikzpicture}\\
				&=\eps\theta_i(\Psi_{\ta{t}^\lambda\ta{t}^\lambda}).
			\end{align*}
			The number of propagating strands in $\Psi_{\ta{u}\ta{v}}$ is exactly one less than the one in $\Psi_{\ta{t}^\lambda\ta{t}^\lambda}$. Thus, $\theta_i(\Psi_{\ta{t}^\lambda\ta{t}^\lambda})\in \Rl{<b}$.
		\item Suppose that $i_b=i-1$.
			This case is similar to the one before, but you also have to use \cref{inverse} to move a value $i$ to the position $b-1$.
	\end{enumerate}
\end{proof}
\begin{lem}\label{thetaipreservesspanningset}
    If $\Rl{<b'}=\Rts{<b'}$ for all $b'\leq b$, then $\theta_i(\Rl{\leq b})\subseteq \Rl{\leq b+1}$ and $\theta_i(\Rl{< b})\subseteq \Rl{< b+1}$.
\end{lem}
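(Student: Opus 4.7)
The plan is straightforward: split into the two possible cases according to whether the partition has an addable box of residue $i$, and invoke the two preceding lemmas. Since $\Rl{\leq b}$ is by definition spanned by the basis vectors $\Psi_{\ta{t}\ta{s}}$ with $\ta{t},\ta{s}\in\Tud(\lambda)$ and $\abs{\lambda}\leq b$, linearity lets me treat one such basis vector at a time; similarly $\theta_i(\Rl{<b})\subseteq\Rl{<b+1}$ reduces to the case $\abs{\lambda}<b$.

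First, I would handle the case where $\Add_i(\lambda)\neq\emptyset$, picking any $\alpha\in\Add_i(\lambda)$. By \cref{thetaiaddablebox} we have $\theta_i(\Psi_{\ta{t}\ta{s}})=\Psi_{\ta{u}\ta{v}}$ with $\ta{u},\ta{v}\in\Tud(\lambda\oplus\alpha)$. Since $\abs{\lambda\oplus\alpha}=\abs{\lambda}+1$, this lands in $\Rl{\leq \abs{\lambda}+1}\subseteq\Rl{\leq b+1}$ in the first setting, and in $\Rl{\leq b}\subseteq\Rl{<b+1}$ in the second setting (using $\abs{\lambda}<b$, hence $\abs{\lambda}+1\leq b$).

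Second, I would handle the case $\Add_i(\lambda)=\emptyset$. Since $\abs{\lambda}\leq b$, the hypothesis $\Rl{<b'}=\Rts{<b'}$ holds for every $b'\leq\abs{\lambda}$, so \cref{thetainotaddablebox} together with \cref{thetainotaddableboxforgeneraltands} applies and gives $\theta_i(\Psi_{\ta{t}\ta{s}})\in\Rl{<\abs{\lambda}}\subseteq\Rl{\leq b}\subseteq\Rl{\leq b+1}$. In the second setting one obtains $\theta_i(\Psi_{\ta{t}\ta{s}})\in\Rl{<\abs{\lambda}}\subseteq\Rl{<b}\subseteq\Rl{<b+1}$.

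Combining the two cases yields both inclusions, and there is essentially no obstacle here: the entire argument is a bookkeeping reduction to the two preceding lemmas, with the non-trivial content (the use of \cref{idempotentone,idempotenttwo,twicezero,inverse} and the equality $\Rl{<b'}=\Rts{<b'}$) already absorbed in \cref{thetainotaddablebox} and its \cref{thetainotaddableboxforgeneraltands}.
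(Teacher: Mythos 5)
Your proposal is correct and matches the paper's argument exactly: the paper's proof is the one-line observation that the claim follows directly from \cref{thetaiaddablebox} and \cref{thetainotaddableboxforgeneraltands}, which is precisely the case split you carry out. Your version merely makes the bookkeeping explicit.
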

\begin{proof}
	This follows directly from \cref{thetaiaddablebox} and \cref{thetainotaddableboxforgeneraltands}.
\end{proof}
\begin{rem}\label{identitiesinspanningset}
	Let $(i_1, \dots, i_b)$ be an object in $\sG$.
	Suppose that $\Rl{<b'}=\Rts{<b'}$ for all $b'\leq b$.
    Then $\id_{(i_1, \dots i_b)}\in \Rl{\leq b}$ by \cref{thetaipreservesspanningset} as $\id_{(i_1)}\in \Rl{\leq 1}$. 
\end{rem}
Our next goal is to show that $\Rl{\leq b}=\Rts{\leq b}$ for all $b$. 
This together with \cref{identitiesinspanningset} implies that the set in \cref{spanningsetisbasis} forms a spanning set of $\sG$.
From now on fix up-down-tableaux $\ta{t}$ and $\ta{s}$ of shape $\lambda$.
We define $b\coloneqq \abs{\lambda}$ so that $\Psi_{\ta{t}\ta{s}}\in \Rl{\leq b}$.
Let $(i_1^l, \dots, i_m^l)\coloneqq\bm{i}^l_{\ta{t}}$ and $(i_1^r, \dots, i_n^r)\coloneqq\bm{i}^r_{\ta{s}}$.
\begin{lem}\label{closedundercups}
	Let $x_i=\begin{tikzpicture}[line width = \lw, myscale=0.7]
		\node at (0,1) (A){$i_1^r$};
		\node at (4,1) (B){$i_n^r$};
		\node at (1,1) (E){$i_k^r$};
		\node at (3,1) (F){$i_{k+1}^r$};
		\node at (1.5,1) (c){$i$};
		\node at (0.5,0.5){$\dots$};
		\node at (3.5,0.5){$\dots$};
		\draw (A)-- (0,0) (B) -- (4,0)(E) -- (1,0)(F) -- (3,0) (c)..controls +(0.2,-0.8) and +(-0.2,-0.8)..(2.5,1);
		\node[fill=white] at (2.25,1) (d){$i-1$};
	\end{tikzpicture}$.
	If $\Rl{\leq b'}=\Rts{\leq b'}$ for all $b'< b$, then $x_i\Psi_{\ta{t}\ta{s}}\in \Rl{\leq b}$.
\end{lem}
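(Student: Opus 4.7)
The plan is to realise $x_i\Psi_{\ta{t}\ta{s}}$ either as a basis element $\pm\Psi_{\ta{t}\ta{s}'}\in\Rl{\leq b}$ (when the new cup can be absorbed as a trivial add-then-remove pair in $\ta{s}$), or to show it lies in $\Rts{<b}$ so that the inductive hypothesis $\Rl{<b}=\Rts{<b}$ collapses it back into $\Rl{\leq b}$. The combinatorial key is the identity $(\res^r(\alpha),\res^r(-\alpha))=(c-r,c-r-1)=(i,i-1)$ for any box $\alpha=(r,c)$ of residue $i$: this is exactly the pair of labels at the top endpoints of the cup in $x_i$. We therefore split the proof on whether $\mu\coloneqq\ta{s}_k$ admits such a box.

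\textbf{Case 1: $\Add_i(\mu)\neq\emptyset$.} Pick $\alpha\in\Add_i(\mu)$ and let $\ta{s}'\in\Tud(\lambda)$ be the up-down-tableau of length $n+2$ obtained from $\ta{s}$ by inserting $\alpha,-\alpha$ between steps $k$ and $k+1$. Then $\Psi_{\ta{t}\ta{s}'}\in\Rl{\leq b}$ by definition, and $\bm{i}^r_{\ta{s}'}$ is $\bm{i}^r_{\ta{s}}$ with $(i,i-1)$ spliced in at position $k$. The two diagrams $\Psi_{\ta{t}\ta{s}'}$ and $x_i\Psi_{\ta{t}\ta{s}}$ contain identical data of strands, cups, and caps; they differ only in the vertical height of the newly inserted small cup (at the very top in $x_i$, at its canonical nested slot in $\Psi_{\ta{t}\ta{s}'}$). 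Using the isotopy relations \cref{inverse}, \cref{snake}, \cref{tangleone}, \cref{tangletwo}, possibly with an $\eps$-sign from \cref{idempotentone} or \cref{idempotenttwo}, the cup can be slid into canonical position, giving $x_i\Psi_{\ta{t}\ta{s}}=\pm\Psi_{\ta{t}\ta{s}'}\in\Rl{\leq b}$.

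\textbf{Case 2: $\Add_i(\mu)=\emptyset$.} If $i_k^r=i$ or $i_{k+1}^r=i-1$, the target of $x_i\Psi_{\ta{t}\ta{s}}$ contains two adjacent strands with equal label, so by \cref{twicezero} the identity on that target already vanishes and $x_i\Psi_{\ta{t}\ta{s}}=0\in\Rl{\leq b}$. Otherwise, we factor $\Psi_{\ta{t}\ta{s}}$ at height $k$ through $\ta{t}^\mu$ in the spirit of \cref{restrictiontotlambda}. After using \cref{tangleone}, \cref{tangletwo}, and \cref{braid} to commute the cup of $x_i$ to the rightmost position, the upper factor takes the form $\theta_i(\Psi_{\ta{t}^\mu\ta{u}})$ for a suitable $\ta{u}$. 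When $|\mu|<b$, the inductive hypothesis guarantees the premises of \cref{thetainotaddableboxforgeneraltands}, placing the upper factor in $\Rl{<|\mu|}\subseteq\Rts{<b}$; composing with the lower factor (which lives in the ideal) gives $x_i\Psi_{\ta{t}\ta{s}}\in\Rts{<b}=\Rl{<b}\subseteq\Rl{\leq b}$. In the boundary case $|\mu|\geq b$, the up-down-tableau $\ta{s}$ must contain removal steps after position $k$, so one iterates the argument at a later splitting point with a smaller intermediate partition, or equivalently uses \cref{inverse} to absorb the cup of $x_i$ into an existing cup of $\Psi_{\ta{t}\ta{s}}$ at the target.

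The main obstacle is Case 2, especially the boundary situation $|\mu|\geq b$, together with the mechanical verification that the meromorphic-braiding moves \cref{tangleone}, \cref{tangletwo}, and \cref{braid} truly isolate the cup of $x_i$ onto an identity block $\id_{\bm{i}^r_{\ta{t}^\mu}}$ where \cref{thetainotaddableboxforgeneraltands} applies. In Case 1, the only real work is the $\eps$-sign bookkeeping as the new cup is slid past nested structure, which is handled uniformly by induction on the nesting depth.
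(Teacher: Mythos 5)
Your Case 1 coincides with the paper's: for $\alpha\in\Add_i(\ta{s}_k)$ one splices an add-then-remove pair into $\ta{s}$ to get a longer up-down-tableau $\ta{u}$ of the same shape $\lambda$, and $x_i\Psi_{\ta{t}\ta{s}}=\pm\Psi_{\ta{t}\ta{u}}\in\Rl{\leq b}$ (the sign is pure Koszul bookkeeping for odd cups; \cref{idempotentone,idempotenttwo} play no role here). The gap is in Case 2. First, the claimed factorization of $\Psi_{\ta{t}\ta{s}}$ \enquote{at height $k$ through $\ta{t}^\mu$} does not exist: \cref{restrictiontotlambda} only provides $\Psi_{\ta{t}\ta{s}}=\Psi_{\ta{t}^\lambda\ta{s}}\cdot\Psi_{\ta{t}\ta{t}^\lambda}$ through the \emph{final} shape, and the paper explicitly warns that compositions cannot be read off at intermediate stages. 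Second, commuting the cup of $x_i$ past the strands $i_{k+1}^r,\dots,i_n^r$ to the rightmost position via \cref{tangleone,tangletwo,braid} requires each of those labels to be distant from $i$ and $i-1$, which fails in general. Third, even with the cup at the right end the morphism is $(\id\otimes\cup)\circ\Psi_{\ta{t}\ta{s}}$, not $\theta_i(\Psi_{\ta{t}^\mu\ta{u}})$: $\theta_i$ adds a vertical strand, not a cup, so \cref{thetainotaddableboxforgeneraltands} is not applicable to it. Finally, the \enquote{boundary case $\abs{\mu}\geq b$} is dismissed without an argument.

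What the lemma actually requires in Case 2, and what the paper supplies, is the following. One first reduces to $\abs{\ta{s}_k}=k$, since cups of $\Psi_{\ta{t}\ta{s}}$ coming from removals before step $k$ commute past the cup of $x_i$. Then one runs the combinatorial analysis from the proof of \cref{thetainotaddablebox} on the sequence $(i_1^r,\dots,i_k^r,i,i-1)$: because $\Add_i(\ta{s}_k)=\emptyset$, this object is either zero (via \cref{twicezero} or \cref{firstzero}, after moves with \cref{inverse}) or contains a subsequence $(a,a\pm1,a)$, to which \cref{idempotentone} or \cref{idempotenttwo} is applied. One then checks whether the reduced sequence extends by $(i_{k+1}^r,\dots,i_n^r)$ to an up-down-tableau of some shape $\mu$ with $\abs{\mu}<b$, in which case the element lands in $\Rl{<b}$ by the inductive hypothesis, and otherwise iterates until the element is seen to vanish. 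This subsequence analysis is the substance of the lemma and is missing from your proposal.
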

\begin{proof}
    If $\alpha\in\Add_i(\ta{s}_k)$, let $\ta{u}\in\Tud(\lambda)$ with $\restr{\ta{u}}{k}=\restr{\ta{s}}{k}$, $\ta{u}_{k+1}=\ta{u}_k\oplus\alpha$, $\ta{u}_{j+2}=\ta{s}_{j}$ for $k\leq j\leq n$.
    Then, $x_i\Psi_{\ta{t}\ta{s}}=\pm\Psi_{\ta{t}\ta{u}}\in \Rl{\leq b}$.

	Suppose now that $\Add_i(\ta{s}_k)=\emptyset$.
    We may also assume that $\abs{\ta{s}_k}=k$ as removing boxes would correspond to cups commuting with the cup of $x_i$.
    By the proof of \cref{thetainotaddablebox} we see that the object $(i_1, \dots, i_k, a, a-1)$ is either $0$ or we find a subsequence $(a,a\pm1,a)$.
    If the subsequence is $(a,a-1,a)$, applying \cref{idempotenttwo} gives the diagram according to removing with $a-1$ the box with residue $a$ and then adding two boxes.

    On the other hand if the subsequence is $(a,a+1,a)$, we get a valid up-down tableau $\ta{v}$ with $\res^r(\ta{v})=(i_1, \dots, i_k, a,a-1)$ where the last two entries remove the boxes corresponding to the subsequence.
    If $\ta{v}$ can be extended to an up-down tableau of shape $\mu$ by $(i_{k+1}, \dots, i_m)$, then $\abs{\mu}<b$ and $x\Psi_{\ta{s}\ta{t}}=a\Psi_{\ta{t}^\mu\ta{v}}b\in \Rl{<b}$ by construction.
    If it cannot be extended, we either find a subsequence $(a-1,a-1)$ which is $0$ by \cref{twicezero} or we try to add a box of residue $a+2$.
    But this strand can be moved to the left using \cref{inverse}, where we then either get $0$ by \cref{twicezero} or \cref{firstzero} or we find a subsequence $(a+2,a+3,a+2)$.
    In the last case we can apply \cref{idempotentone} and then use the same argument as above and end up eventually with $0$.
\end{proof}
\begin{lem}\label{closedundercrossings}
	Let $z=\begin{tikzpicture}[line width = \lw, myscale=0.7]
		\node[anchor=north] at (0,1) (A){$i_1^r$};
		\node[anchor=north] at (2,1) (B){$i_{k-1}^r$};
		\node[anchor=north] at (3,1) (c){$i_{k}^r$};
		\node[anchor=north] at (4,1) (d){$i_{k+1}^r$};
		\node[anchor=north] at (5,1) (e){$i_{k+2}^r$};
		\node[anchor=north] at (7,1) (f){$i_n^r$};
		\node[anchor=north] at (1,1.5){$\dots$};
		\node[anchor=north] at (6,1.5){$\dots$};
		\draw (A)-- (0,2) (B) -- (2,2) (4,2)--(c.north) (3,2)--(d.north) (5,2)--(e) (7,2)--(f);
	\end{tikzpicture}$.
	If $\Rl{\leq b'}=\Rts{\leq b'}$ for all $b'< b$, then $z\Psi_{\ta{t}\ta{s}}\in \Rl{\leq b}$.
\end{lem}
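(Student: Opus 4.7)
The argument will mirror the structure of \cref{closedundercups} closely, with the crossing $z$ playing the role that the cup $x_i$ played there. Write $(i_1^r,\dots,i_n^r)=\bm{i}^r_{\ta{s}}$ and $(\alpha_1,\dots,\alpha_n)=\alpha(\ta{s})$. The well-definedness of $z$ forces $i_k^r\neq i_{k+1}^r$ and $i_{k+1}^r\neq i_k^r+1$, which splits the analysis into two fundamentally different cases.

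When $|i_k^r-i_{k+1}^r|>1$, the boxes on which $\alpha_k$ and $\alpha_{k+1}$ act lie on diagonals differing by more than one, so the two moves commute combinatorially and $\ta{s}s_k$ is again an up-down tableau of shape $\lambda$. Together with the invertibility of distant crossings provided by \cref{inverse} and the braid relation \cref{braid}, which makes the \enquote{as few crossings as possible} choice in the definition of $\Psi$ rigid up to sign, this gives $z\,\Psi_{\ta{t}\ta{s}}=\pm\Psi_{\ta{t},\ta{s}s_k}$, placing it in $\Rl{\leq b}$.

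The substantive case is $i_{k+1}^r=i_k^r-1$. Setting $a:=i_k^r$, there are four possibilities for the pair $(\alpha_k,\alpha_{k+1})$ according as each is an addition or a removal (with target residues $a$ and $a-1$ respectively). In each subcase one first uses \cref{tangleone}, its mirror \cref{tangletwo}, and \cref{inverse} to push the crossing through the topmost cups and caps of $\Psi_{\ta{t}\ta{s}}$ coming from $\ta{s}$. If $\ta{s}s_k$ happens to still be an up-down tableau of shape $\lambda$, the resulting diagram is $\pm\Psi_{\ta{t},\ta{s}s_k}\in\Rl{\leq b}$. Otherwise --- typically when $\alpha_{k+1}$ undoes $\alpha_k$, or when the swap would violate Young-diagram monotonicity --- the crossing gets absorbed by a neighbouring cup or cap into a configuration to which the periplectic relations \cref{idempotentone,idempotenttwo}, together with \cref{twicezero}, apply. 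These relations then factor the resulting diagram through a residue sequence encoding an up-down tableau of shape $\mu$ with $|\mu|<b$, and the inductive hypothesis $\Rl{<b'}=\Rts{<b'}$ for $b'\leq b$ (via \cref{identitiesinspanningset} and \cref{thetaipreservesspanningset}) places each such contribution in $\Rl{<b}\subseteq\Rl{\leq b}$.

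The main obstacle will be a careful combinatorial accounting in the second case, specifically ensuring that every rewrite leaves the result inside $\Rl{\leq b}$ rather than merely inside the larger ideal $\Rts{\leq b}$. The delicate point is to exhibit, for each subcase where $\ta{s}s_k$ fails to be an up-down tableau, an explicit decomposition analogous to the one running through the proof of \cref{thetainotaddablebox}: a pair of up-down tableaux $\ta{u},\ta{v}$ of shape $\mu\subsetneq\lambda$ for which the rewritten diagram coincides with $\pm\Psi_{\ta{u}\ta{v}}$ after applying \cref{idempotentone,idempotenttwo}. Matching these diagrammatic reductions with the combinatorics of adding and removing boxes is the essential technicality.
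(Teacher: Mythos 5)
Your overall two-case structure (distant versus $i_{k+1}^r=i_k^r-1$) matches the paper's, but there are two genuine gaps. First, your justification of the distant case is wrong as stated: you claim that $\abs{i_k^r-i_{k+1}^r}>1$ forces the two boxes to lie on diagonals differing by more than one, but a removal's target residue is shifted by $-1$ from the diagonal of the box it removes, so a removal with target residue $i$ (removing a box of residue $i+1$) followed by an addition of a box of residue $i+2$ has distant residues in the sequence while the boxes sit on \emph{adjacent} diagonals. This is exactly the configuration the paper must rule out by a separate argument (after removing $\alpha$ of residue $i+1$, that box is addable again, so no box of residue $i+2$ can be addable; and if $\beta$ of residue $i+2$ is added first, it sits immediately to the right of $\alpha$, blocking the removal). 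Without this, your claim that $\ta{s}s_k$ is an up-down tableau in the distant case is unsupported.

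Second, in the adjacent case you name the four add/remove subcases but do not execute any of them, and your description of the mechanism is off in two of them. The add-then-remove subcase gives $z\Psi_{\ta{t}\ta{s}}=0$ outright by \cref{untwist} (or \cref{inverse}), not a reduction to lower filtration degree. The remove-remove subcase is not handled by the periplectic relations \cref{idempotentone,idempotenttwo} at all: one either finds a subsequence $(a+1,a,a-1,a)$ killed by \cref{twicezero}, or a subsequence $(a-1,a,a-1,a-2)$ for which a specific local identity via \cref{inverse} and \cref{tangletwo} rewrites $z\Psi_{\ta{t}\ta{s}}$ as a cup and a distant crossing applied to $\Psi_{\ta{t}\ta{u}}$ for a shorter tableau $\ta{u}$, after which one invokes \cref{closedundercups} and the distant case. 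You correctly flag that the combinatorial accounting is the essential technicality, but that accounting is the proof; as written the proposal is a plausible plan rather than an argument.
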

\begin{proof}
     Suppose first that $\abs{i_k-i_{k+1}}>1$. 
	Then we claim that $\ta{s}s_k$ is an up-down-tableau.
    If in steps $k$ and $k+1$ we only add respectively remove boxes, then this is clear as the boxes neither appear in the same row nor column.
    
    In the other two cases let $i$ be the residue of the removal.
    This means that we removed a box $\alpha$ with residue $i+1$.
    So this actually swaps with all residues which are $\leq i-1$ and $\geq i+3$. 
    
    The only case left to consider is, when the added box $\beta$ has residue $i+2$.
    But note that after the removal of $\alpha$, $\alpha$ is addable again.
    As $\alpha$ has residue $i+1$, no box with residue $i+2$ can be addable.
    And vice versa, if we add $\beta$ after adding $\alpha$, the box $\beta$ lies directly to the right of $\alpha$ in the same row.
    Thus, we cannot remove $\alpha$ afterwards.
    Therefore, this case cannot appear.
    
    It remains to show the statement for $i_k=i_{k+1}\pm 1$.
    If steps $k$ and $k+1$ consist out of adding boxes $\alpha$ and $\beta$, these two boxes appear in the same row respectively column.
    This means that $\beta$ cannot be added to $\ta{s}_{k-1}$.
    By \cref{thetainotaddableboxforgeneraltands} and \cref{thetaipreservesspanningset} we know that $z\Psi_{\ta{t}\ta{s}}\in \Rl{< b}$.
    
    Suppose that we remove a box in step $k$ and add a box in step $k+1$.
    Let further be $l$ the step in which the box was added which was removed in step $k$.
    Without loss of generality we may assume that $l=k-1$.
    Using \cref{inverse} we can move the $i_l$ pass every distant entry and every neighbored entry has to be removed prior step $k$, which results in a cup that does not interact with the crossing $z$, meaning that we can swap these two as well.
    We then either have the subsequence $(i_k+1, i_k, i_k+1)$, in which case applying $z$ gives $0$ by \cref{twicezero}.
    Or we have $(i_k+1, i_k, i_k-1)$, in which case step $\ta{s}s_k$ is an up-down-tableau and $z\Psi_{\ta{t}\ta{s}}=\Psi_{\ta{t}\ta{s}s_k}$.
    
    Suppose that we add a box in step $k$ and remove a box in step $k+1$.
    Then $z\Psi_{\ta{t}\ta{s}}=0$ by \cref{inverse} or \cref{untwist} respectively.
    
    Suppose that we remove in both steps a box.

    Let $l$ denote the index of adding the box for step $k$ and $l'$ the one for $k+1$.
    Without loss of generality we may assume that $l=k-1$ and $l'=k-2$.
    Note that $l'$ has to appear before $l$ as $\ta{s}$ is an up-down-tableau.
    
    Then we either have a subsequence $(a+1,a,a-1,a)$ or $(a-1, a, a-1, a-2)$.
    In the first case, applying $z$ gives $0$ by \cref{twicezero}.
    In the second case we have the following equality (locally) using \cref{inverse} and \cref{tangletwo}:
    \begin{equation*}
        \begin{tikzpicture}[line width = \lw, myscale=0.7]
            \node at (0,0) (A) {$a-1$};
            \node at (1,0) (B) {$a$};
            \node at (2,0) (C) {$a-1$};
            \node at (3,0) (D) {$a-2$};
            \draw (A)..controls +(0.2,-1.2) and +(-0.2,-1.2)..(D) (B)..controls +(0.2,-0.8) and +(-0.2,-0.8)..(C) (A)--(0,1.5) (B)--(1,1.5) (C.north)--(3,1.5) (D.north)--(2,1.5);
        \end{tikzpicture}
        =
        \begin{tikzpicture}[line width = \lw, myscale=0.7]
            \node at (0,0) (A) {$a-1$};
            \node at (1,0) (B) {$a-2$};
            \node at (2,0) (C) {$a$};
            \node at (3,0) (D) {$a-1$};
            \draw(A)..controls +(0.2,-0.8) and +(-0.2,-0.8)..(B) (C)..controls +(0.2,-0.8) and +(-0.2,-0.8)..(D)  (A)--(0,1.5) (1,1.5)--(C.north)(B.north)--(2,1.5)(3, 1.5)--(D);
        \end{tikzpicture}
    \end{equation*}
    Now removing steps $k-1$ and $k$ from $\ta{s}$ gives a valid up-down-tableau $\ta{u}$ of shape $\lambda$.
    Thus, looking at the diagrams we see that $z\Psi_{\ta{t}\ta{s}}$ is obtained from $\Psi_{\ta{t}\ta{u}}$ via left multiplication with a cup and a distant crossing. 
    Now the claim follows from \cref{closedundercups} and the first paragraph about distant crossings.
\end{proof}
\begin{lem}\label{closedundercaps}
	Let $y=\begin{tikzpicture}[line width = \lw, myscale=0.7]
		\node at (0,1) (A){$i_1^r$};
		\node at (2,1) (B){$i_{k-1}^r$};
		\node at (3,1) (c){$i_{k}^r$};
		\node at (4,1) (d){$i_{k+1}^r$};
		\node at (5,1) (e){$i_{k+2}^r$};
		\node at (7,1) (f){$i_n^r$};
		\node at (1,1.5){$\dots$};
		\node at (6,1.5){$\dots$};
		\draw (A)-- (0,2) (B) -- (2,2) (c)..controls +(0.2,0.8) and +(-0.2,0.8)..(d) (5,2)--(e) (7,2)--(f);
	\end{tikzpicture}$.
	If $\Rl{\leq b'}=\Rts{\leq b'}$ for all $b'< b$, then $y\Psi_{\ta{t}\ta{s}}\in \Rl{\leq b}$.
\end{lem}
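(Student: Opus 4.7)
The plan is to mirror the proof of \cref{closedundercups}, with the roles of cups and caps (and, correspondingly, additions and removals of boxes) interchanged. The cap $y$ closes off the pair $(i_k^r, i_{k+1}^r) = (a+1, a)$ at the top of $\Psi_{\ta{t}\ta{s}}$, and this pair of target residues matches exactly what an add-then-remove pair in an up-down-tableau produces: for a box $\alpha=(r,c)$ with $c-r = a+1$, the target residues of the steps ``add $\alpha$'' and ``remove $\alpha$'' are $\res^r(\alpha) = a+1$ and $\res^r(-\alpha) = a$ respectively. So the natural case to analyze first is when steps $k$ and $k+1$ of $\ta{s}$ already form such an add-then-remove pair.

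In that nice case, $\Psi_{\ta{t}\ta{s}}$ by construction contains an internal cap at the top joining positions $k$ and $k+1$, and composing with $y$ on top produces a closed bigon whose resolution via \cref{snake} contributes only a sign $\eps$. Letting $\ta{v}$ be the up-down-tableau obtained from $\ta{s}$ by deleting steps $k$ and $k+1$ — still of shape $\lambda$ and of length $n-2$ — one obtains $y\Psi_{\ta{t}\ta{s}} = \eps\cdot\Psi_{\ta{t}\ta{v}} \in \Rl{\leq b}$, as desired. For the general case, the strategy is to reduce to the nice case (or to $\Rl{<b}$) by manipulating $\ta{s}$ using the relations in $\sG$. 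Using \cref{inverse} to commute past distant strands, one may assume $|\ta{s}_k| = k$ (i.e., the first $k$ steps of $\ta{s}$ involve no removals that would interact with $y$), exactly as in the proof of \cref{closedundercups}. If steps $k, k+1$ of $\ta{s}$ do not form an add-then-remove pair of the required residue, one analyzes the local subword of the residue sequence as in the proof of \cref{thetainotaddablebox}: either one finds a subsequence $(a, a\pm 1, a)$ allowing reduction by \cref{idempotentone}/\cref{idempotenttwo} to an up-down-tableau of strictly smaller shape (landing in $\Rl{<b}$ by induction), or the resulting diagram vanishes by \cref{twicezero}, \cref{untwist}, or its mirror \cref{tangletwo}.

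A conceptually cleaner alternative, which I would also pursue in parallel, is to use the adjunction $(\_\otimes(a+1), \_\otimes a)$ of \cref{adjunctionaap1} to turn $y$ into a cup morphism acting on a ``bent'' version of $\Psi_{\ta{t}\ta{s}}$, thereby directly reducing the statement to \cref{closedundercups}. The main obstacle I anticipate is the sub-case in which step $k$ of $\ta{s}$ contributes the residue $a+1$ not by adding a box but by removing one — so that the strand at position $k$ is already part of an internal cap in $\Psi_{\ta{t}\ta{s}}$, and composing with $y$ creates a tangled bigon rather than a neat closure. Resolving this requires combining \cref{snake}, \cref{tangleone}, \cref{tangletwo}, and the periplectic relations while tracking the $\eps$-signs coming from height moves in the supercategory. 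Once all such configurations are reduced to either the matching case or to a diagram factoring through a partition of strictly smaller size, \cref{closedundercrossings} handles any residual distant crossings, completing the argument that $y\Psi_{\ta{t}\ta{s}} \in \Rl{\leq b}$.
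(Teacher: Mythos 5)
Your case division starts from a misidentification of the ``nice case,'' and this propagates. In $\Psi_{\ta{t}\ta{s}}$ a removal at step $m$ of $\ta{s}$ produces the \emph{right} endpoint of a creation arc at the top of the diagram, and an addition produces a through-strand (or the left endpoint of a later creation arc). The cap of $y$ therefore sits in a zigzag configuration resolvable by \cref{snake} precisely when step $k$ \emph{removes} a box and step $k+1$ \emph{adds} one back; that is the case yielding $y\Psi_{\ta{t}\ta{s}}=\pm\Psi_{\ta{t}\ta{u}}$ with $\ta{u}$ obtained by deleting the two steps. Your proposed nice case (add at $k$, then remove at $k+1$) is the opposite situation: there the cap meets the right end of a cup whose other endpoint lies to the \emph{left} of position $k$, forcing a crossing under the cap, and the composite is $0$ by \cref{untwist} -- it is not $\eps\Psi_{\ta{t}\ta{v}}$. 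Note also that \cref{snake} straightens a zigzag sharing a single strand; it does not evaluate a ``closed bigon,'' and indeed no bigon can form here, since a creation arc produces the pair $(a+1,a)$ while the evaluation arc of $y$ consumes $(a,a+1)$, so the labels of an add-then-remove pair at positions $k,k+1$ are incompatible with $y$ in the first place.

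Beyond this, the two genuinely hard cases are not addressed by your sketch. When steps $k$ and $k+1$ both remove boxes, the argument is not a local residue-subword analysis: one factors $\Psi_{\ta{t}\ta{s}}=c\cdot\Psi_{\ta{t}\ta{s}'}$ where $\ta{s}'$ is the ``cup-free'' tableau and $c$ consists of cups, observes via \cref{snake} that $y\cdot c$ is again a product of cups and distant crossings, and invokes \cref{closedundercups,closedundercrossings}. When both steps add boxes, one reduces to $\ta{s}=\ta{t}=\ta{t}^\lambda$ and shows the result is either handled by \cref{thetainotaddablebox} or equals a basis element whose shape has \emph{two fewer} boxes, hence lies in $\Rl{<b}$, and then uses that $\Rl{<b}$ is a two-sided ideal to pass to general $\ta{t},\ta{s}$; this case produces a nonzero element in a strictly lower filtration layer, which your appeal to $(a,a\pm1,a)$-subsequences and vanishing relations does not capture. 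The ``bend via \cref{adjunctionaap1}'' alternative is also not a shortcut: it converts $y\Psi_{\ta{t}\ta{s}}$ into precomposition with a cup on the source side (the mirror of \cref{closedundercups}, needed separately anyway), and one must still verify that the bent morphism is of the form $\Psi_{\ta{t}'\ta{s}'}$, which is essentially the content of the lemma.
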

\begin{proof}
	If in the $k$-th step of $\ta{s}$ a box is removed and in the $k+1$-th one is added, using \cref{snake} we get $y\Psi_{\ta{t}\ta{s}}=\pm\Psi_{\ta{t}\ta{u}}$, where $\ta{u}$ is obtained from $\ta{s}$ via deleting steps $k$ and $k+1$.
	
	If the $k$-th step adds a box and the $k+1$-th removes one, then $y\Psi_{\ta{t}\ta{s}}=0$ by \cref{untwist}.
	
	If in the $k$-th and $k+1$-th steps boxes are removed from $\ta{s}$, let $\ta{s}'$ be the up-down tableaux, that is obtained from $\ta{s}$ by removing all the \enquote{cups} of $\ta{s}$, i.e.\ it is the same as $\ta{s}$ but whenever we would remove a box in $\ta{s}$ or add a box that later would be removed we skip this step. 
	Now $\Psi_{\ta{t}\ta{s}} = c\cdot\Psi_{\ta{t}\ta{s}'}$, where $c$ is a diagram consisting of cups (which might intersect).
	As the $k$-th and $k+1$-th step both remove boxes, we see that $y\cdot\Psi_{\ta{t}\ta{s}}=c'\cdot\Psi_{\ta{t}\ta{s}'}$ by \cref{snake}, where $c'$ also consists only of cups.
	The statement then follows from \cref{closedundercups,closedundercrossings}.

	The remaining case to consider is when two boxes are added.
	Restrict first to the case that $\ta{s}=\ta{t}=\ta{t}^\lambda$.
	Then if removing the $k$-th and $k+1$-th step of $\ta{t}^\lambda$ stays an up-down-tableaux, we define $\ta{u}$ to be obtained from $\ta{s}$ via removing steps $k$ and $k+1$.
	We furthermore define $\ta{v}$ to be obtained from $\ta{t}$ via removing the added box of step $k$ in step $k+1$. 
	Then we have $y\Psi_{\ta{t}\ta{s}}=\Psi_{\ta{v}\ta{s}}\in \Rl{<b}$.
	If $\ta{t}^\lambda$ is not an up-down-tableau after removing steps $k$ and $k+1$, then there exists some point $l$, where there is no addable box of residue $i_l^r$.
	But in this case (using \cref{thetainotaddablebox}) we also end up with $y\Psi_{\ta{t}\ta{s}}\in \Rl{<b}$.
	By assumption $\Rl{<b}$ is a two-sided ideal, so we can extend this result to arbitrary $\ta{t}$ and $\ta{s}$ instead of just $\ta{t}^\lambda$.
\end{proof}
\begin{cor}\label{twosidedideal}
	If $\Rl{\leq b'}=\Rts{\leq b'}$ for all $b'< b$, then $\Rl{\leq b}=\Rts{\leq b}$.
\end{cor}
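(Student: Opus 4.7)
My plan is to deduce from the three preceding lemmas that, under the inductive hypothesis $\Rl{\leq b'}=\Rts{\leq b'}$ for all $b'<b$, the subspace $\Rl{\leq b}$ is itself a two-sided ideal of $\sG$. Since by construction $\Rts{\leq b}$ is the smallest two-sided ideal containing $\Rl{\leq b}$, this will force $\Rts{\leq b}\subseteq\Rl{\leq b}$, and equality then follows together with the trivial reverse inclusion $\Rl{\leq b}\subseteq\Rts{\leq b}$.

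First, I would observe that $\sG$ is generated as a monoidal supercategory by cups, caps, and crossings (together with identity strands on individual objects), so every morphism decomposes as a composition of such elementary diagrams. Consequently, in order to verify that $\Rl{\leq b}$ is a two-sided ideal, it is enough to check closure under left and right composition with each of these three generator types.

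The left-composition part is exactly what Lemmas \ref{closedundercups}, \ref{closedundercrossings}, and \ref{closedundercaps} establish under our inductive hypothesis: iterating them shows $g\cdot\Psi_{\ta{t}\ta{s}}\in\Rl{\leq b}$ for any morphism $g\in\sG$ and any basis vector $\Psi_{\ta{t}\ta{s}}\in\Rl{\leq b}$. For right-composition, I would invoke the vertical reflection of string diagrams: flipping a diagram across a horizontal axis interchanges cups with caps, preserves crossings, and swaps source with target. Applied to $\Psi_{\ta{t}\ta{s}}$, this reflection produces a morphism of the same basis-vector form $\Psi_{\tilde{\ta{s}}\tilde{\ta{t}}}$ for up-down-tableaux $\tilde{\ta{s}}$, $\tilde{\ta{t}}$ obtained by reversing the sequences of $\ta{s}$ and $\ta{t}$ and interchanging each box addition with a removal. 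Crucially the underlying shape size $b$ is preserved, so $\Rl{\leq b}$ is stable under this reflection; and under the reflection, left composition with a cup/crossing/cap becomes right composition with the corresponding cap/crossing/cup, which translates each of the three lemmas into the missing right-closure statement.

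The main obstacle I anticipate is the combinatorial bookkeeping of the vertical reflection: one has to confirm that the reversed sequence of additions/removals produces a genuine up-down-tableau of the same shape size, and that the induced map on residue sequences matches the geometric reflection of $\Psi_{\ta{t}\ta{s}}$, with the correct handling of the height conventions for nested cups and caps imposed in the definition of $\Psi_{\ta{t}\ta{s}}$ (via \eqref{tangleone}, \eqref{braid}, \eqref{tangletwo}). Once this symmetry is verified, left- and right-closure together show $\Rl{\leq b}$ is a two-sided ideal, finishing the proof.
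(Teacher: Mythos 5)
Your argument is correct and is essentially the paper's proof, which likewise deduces the corollary from \cref{closedundercups,closedundercrossings,closedundercaps} together with their ``mirror versions'' for composition on the other side. One small caution: the contravariant symmetry realizing those mirror versions should be the $180^\circ$ rotation (your top--bottom flip combined with a left--right flip), not a pure vertical reflection, since a cap joins $(a+1,a)$ while a cup creates $(a,a+1)$; alternatively one can simply rerun the three lemmas with the roles of $\ta{t}$ and $\ta{s}$ exchanged.
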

\begin{proof}
	This is \cref{closedundercups,closedundercrossings,closedundercaps} and its mirror versions.
\end{proof}
\begin{cor}\label{spanningset}
	The set of all $\Psi_{\ta{t}\ta{s}}$ forms a spanning set for $\sG$.
\end{cor}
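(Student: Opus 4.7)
The plan is to assemble the corollary from the preceding lemmas via a short induction on $b$, rather than doing any new diagrammatic computation. The real work is already hidden in Lemmas \ref{closedundercups}, \ref{closedundercrossings} and \ref{closedundercaps} (culminating in Corollary \ref{twosidedideal}); here one only has to package those results correctly.

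First I would establish by induction on $b \geq -1$ that $\Rl{\leq b} = \Rts{\leq b}$. The base case $b = -1$ is trivial since $F_{\leq -1} = \emptyset$, so both sides are $\{0\}$. For the inductive step, assuming $\Rl{\leq b'} = \Rts{\leq b'}$ for all $b' < b$, Corollary \ref{twosidedideal} immediately gives $\Rl{\leq b} = \Rts{\leq b}$. Taking the union over all $b$, we obtain $\Rl{} = \Rts{}$.

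Next, having verified the hypothesis of Remark \ref{identitiesinspanningset} (namely $\Rl{<b'} = \Rts{<b'}$ for every $b'$), the remark yields $\id_{(i_1,\ldots,i_m)} \in \Rl{\leq m}$ for every object $(i_1,\ldots,i_m)$ of $\sG$. In particular, every identity morphism of $\sG$ lies in $\Rl{}$.

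Finally, I would close the argument by observing that $\Rts{}$ is, by definition, a two-sided ideal of $\sG$ containing $\Rl{}$, hence containing every identity morphism. For any morphism $f \colon \bm{i} \to \bm{j}$ in $\sG$ we have $f = \id_{\bm{j}} \circ f \circ \id_{\bm{i}} \in \Rts{}$, so $\Rts{} = \sG$. Combining with $\Rl{} = \Rts{}$ from the induction, we conclude $\Rl{} = \sG$, i.e.\ the vectors $\Psi_{\ta{t}\ta{s}}$ span $\sG$. The main obstacle is not in this corollary itself but was overcome earlier in the closure lemmas; here the only point requiring care is making sure the induction is primed correctly so that Remark \ref{identitiesinspanningset} applies at every stage.
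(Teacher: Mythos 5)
Your argument is correct and is essentially the paper's own proof, just with the induction establishing $\Rl{\leq b}=\Rts{\leq b}$ spelled out explicitly before invoking \cref{identitiesinspanningset} and the ideal property. The paper compresses exactly these steps into two sentences, so there is nothing to add or fix.
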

\begin{proof}
	By \cref{twosidedideal} we know that all $\Rl{\leq b}$ form two-sided ideals and by \cref{identitiesinspanningset} we see that all identities lie in some $\Rl{\leq b}$ for some $b$.
	These two facts together imply that the $\Psi_{\ta{t}\ta{s}}$ span $\sG$.
\end{proof}
\begin{proof}[Proof of \cref{spanningsetisbasis}]
	We know by \cref{spanningset} that it generates. 
	Furthermore, we know by \cref{functorXiexplicit} that the functor $\FunctorKLRtosBr\colon\sG\to\Rep(P)$ is in fact full.
	In particular,
	\begin{align*}
		&\dim\Hom_{\sBr}(n,m)\leq\bigoplus_{\bm{i}\in\bbZ^n}\bigoplus_{\bm{j}\in\bbZ^m}\dim\Hom_{\sG}(\bm{i}, \bm{j})\\&\leq\#\{(\ta{t},\ta{s})\mid \ta{t}\text{, }\ta{s} \text{ up-down-tableaux of the same shape of size $n$ respectively $m$}\}.
	\end{align*}
	But we know that the right-hand side is exactly the dimension of $\Hom_{\sBr}(n,m)$.
\end{proof}
\subsection{Proof of associativity}
In order to show that $\K$ is an associative algebra, we will look at a slightly more general situation. 
We consider multiple circle diagrams stacked over one another, i.e.\ we take a sequence $\nu_1, \dots \nu_k$ such that $\underline{\nu_i}\overline{\nu_{i+1}}$ is orientable for all $i$. 
Drawing all these beneath each other, as for the multiplication procedure, we get a big orientable diagram.
We will prove that any two surgery procedures commute with each other.

Before we prove this, we will first analyze, when a surgery procedure gives $0$.
\begin{lem}\label{surgproczero}
	A surgery procedure gives $0$ if and only if we are in one of the following two situations (up to rotational symmetry):
	\begin{equation*}	
		\begin{tikzpicture}[scale=0.4]
			\LINE{-2}{-1.5}
			\CUP[dashed]{-1}
			\LINE[dashed]{0}{1.5}
			\CAP[dashed]{1}
			\CUP{1}
			\SETCOORD{0}{-1.5}
			\CAP{1}
			\draw[decorate, decoration={snake}, ->] (1,-0.75)--(2,-0.75);
			\SETSTART{4}{-1.5}
			\LINE{0}{1.5}
			\CAP[dashed]{-1}
			\LINE[dashed]{0}{-1.5}
			\CUP[dashed]{1}
			\SETCOORD{1}{0}
			\LINE{0}{1.5}
			\SETCOORD{1}{0}
			\LINE{0}{-1.5}
		\end{tikzpicture}\qquad\qquad
		\begin{tikzpicture}[scale=0.4]
			\LINE{-2}{-1.5}
			\CUP[dashed]{-1}
			\LINE[dashed]{0}{2.25}
			\SETCOORD{1}{0}
			\LINE[dashed]{0}{-0.75}
			\CUP{1}
			\SETCOORD{0}{-1.5}
			\CAP{1}
			\draw[decorate, decoration={snake}, ->] (1,-0.75)--(2,-0.75);
			\SETSTART{4}{0.75}
			\LINE[dashed]{0}{-0.75}
			\LINE{0}{-1.5}
			\CUP[dashed]{-1}
			\LINE[dashed]{0}{2.25}
			\SETCOORD{2 }{-0.75}
			\LINE{0}{-1.5}
			\SETCOORD{1}{0}
			\LINE{0}{1.5}
		\end{tikzpicture}
	\end{equation*}
	In the second picture the two rays end on the same side of $0$, but they may also end on the bottom.
\end{lem}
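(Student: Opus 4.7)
The plan is to reduce everything to the case analysis from the \enquote{Detailed analysis of surgery procedures} subsection. Recall that every surgery procedure falls into one of three classes: straightening, split, or reconnect. I will verify that a given surgery procedure produces $0$ if and only if it matches one of the two pictures displayed in the lemma statement (up to rotational symmetry).

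First, I would dispose of the straightening case: by construction a straightening never alters orientability, so the outcome is always an orientable diagram, hence nonzero. Second, for the split case, the surgery by definition splits off a closed circle from a line. The resulting diagram therefore contains a circle, so by definition of orientability it is non-orientable and the surgery gives $0$; this is exactly the first pictured configuration, with the dashed portion being the newly created circle.

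The main work lies in the reconnect case. It was already observed in the detailed analysis that a reconnect produces a nonzero result exactly when all three involved lines are propagating, the two right endpoints lie on different sides of $0$, and likewise for the two left endpoints. Hence a reconnect gives $0$ in two kinds of situations: either (a) all three lines are propagating but the endpoints fail to split across $0$ as required, or (b) at least one of the three lines is non-propagating. In case (a) I would trace through the endpoints to show that the outcome necessarily contains a non-propagating line whose two endpoints end on the same side of $0$; this matches the second picture (read without non-propagating components on the dashed side). In case (b), reusing the argument of the detailed analysis about $a$, $b$, $c$, $d$, $e$, $f$, one sees that the reconnect forces at least one of the resulting non-propagating components to have both endpoints on the same side of $0$ — this again corresponds to the second picture, with the dashed side now possibly including the non-propagating component. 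Allowing the rotational symmetry (swapping top/bottom and left/right of the local surgery picture) covers all four orientations in which this failure can manifest.

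For the converse, both pictures visibly violate orientability: the first produces a closed circle among the dashed strands, the second produces a non-propagating line with both endpoints on the same side of $0$ (or a circle in the degenerate sub-case). Hence by definition such a surgery step evaluates to $0$. The main obstacle is purely bookkeeping: carefully enumerating all rotational variants of the reconnect configuration and matching each to the second displayed picture; no new ideas beyond the case analysis of the earlier subsection are required.
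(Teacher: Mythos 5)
Your proposal is correct and follows essentially the route the paper intends: the paper's own proof is just the one-line remark that ``one easily checks that these are all the possible cases,'' implicitly deferring to the straightening/split/reconnect trichotomy of \cref{detailedanalysis}, which is exactly the reduction you carry out (straightening never changes orientability, a split creates the circle of the first picture, and the failing reconnects create the bad non-propagating line of the second picture). The only caution is that the deferred ``bookkeeping'' is genuinely where the content lies --- in particular, in your case (b) the non-orientability of the result is established in \cref{detailedanalysis} by an indirect counting argument on ray positions rather than by directly exhibiting the offending line, and one must also check that the arc created from the cup's right endpoint and the cap's left endpoint is covered by the pictures up to rotation --- but since you explicitly reuse that analysis, this matches the paper's level of rigor.
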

\begin{proof}
	One easily checks that these are all the possible cases.
\end{proof}
\begin{thm}\label{surgproccommute}
	Given two potential surgeries $D$ and $D'$, we have $D\circ D'=D'\circ D$.
\end{thm}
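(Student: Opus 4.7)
The plan is to argue by a case analysis on the relative position of the two surgery windows. Given the stacked circle diagram, each surgery $D$, $D'$ is a local move replacing a specific subpicture consisting of a cup–line–cap configuration by three parallel lines. Call the four endpoints of $D$ (two on top, two on bottom of the local strip) its \emph{support} and similarly for $D'$.

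Case~(i): The supports of $D$ and $D'$ are disjoint and lie in non-interacting windows. Here the two replacements commute trivially, since they modify disjoint portions of the diagram and the outcome does not depend on the order. Case~(ii): The supports are disjoint but $D$ and $D'$ act on strands that belong to the same connected component of the diagram. Here the topological type of the component may change after one surgery (a straightening, split, or reconnect as classified in Section~\ref{detailedanalysis}), but since the local windows do not overlap, the actual \emph{local} replacement at the other window is still well-defined and independent of the order; we just have to check that orientability is preserved symmetrically, which I would do using \cref{surgproczero}. Case~(iii): The supports share at least one endpoint (so the two surgery windows overlap or are adjacent). Up to reflection across the horizontal axis and horizontal reflection, one obtains a finite list of overlap configurations, which I would enumerate; for each, one computes the two compositions and compares. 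The key observation simplifying this enumeration is that any overlap must look like two cup–line–cap configurations sharing a strand, and combinatorially there are only a handful of such pictures.

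The main obstacle will be Case~(iii) together with the compatibility of the zero outcomes across the two orders, i.e.\ showing that if $D\circ D'(x)=0$ then also $D'\circ D(x)=0$. For this I plan to use \cref{surgproczero}, which identifies the two topological local pictures that force a zero. A non-orientable feature (a closed circle or a non-propagating line with both endpoints on the same side of $0$) produced after one surgery is a consequence of the global topology of the component being modified, and this topology is symmetric in $D$ and $D'$: the set of strands belonging to the combined component, together with the pattern of how endpoints connect, is independent of the order in which one performs the two local replacements. Hence the non-orientability criterion from \cref{defiorientable} triggers for $D\circ D'$ if and only if it triggers for $D'\circ D$, and the non-zero outcomes are identical as stated circle diagrams because they are topologically the same endpoint-pairing.

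Thus, granting the enumeration of overlap configurations and invoking \cref{surgproczero} to align the zero cases, we conclude $D\circ D'=D'\circ D$ in all cases.
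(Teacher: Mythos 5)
There is a genuine gap, and it sits exactly where the theorem's real content lies. Recall that the multiplication is defined so that the result is declared to be $0$ as soon as \emph{any intermediate} diagram fails to be orientable. The final diagram obtained after performing both replacements is indeed independent of the order — that part of your argument is fine and accounts for the non-zero outcomes. But the zero outcomes are governed by the two \emph{intermediate} diagrams: $D\circ D'$ tests orientability of the diagram with only the $D'$-window replaced, while $D'\circ D$ tests orientability of the diagram with only the $D$-window replaced. These two intermediate diagrams are topologically different objects (each has one window collapsed and the other intact), so your appeal to "the topology of the combined component being symmetric in $D$ and $D'$" does not apply to them; it only controls the common final diagram. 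The assertion that the non-orientability criterion of \cref{defiorientable} "triggers for $D\circ D'$ if and only if it triggers for $D'\circ D$" is therefore not a formal symmetry — it is precisely the statement to be proved, and in general it holds for non-obvious reasons: in some configurations one order produces a non-orientable diagram already at the intermediate stage while the other order only becomes non-orientable at the final stage, and in one configuration the paper even needs a counting argument (no three consecutive rays of the six involved can end on the same half, yet six rays force three consecutive ones to do so) to show that \emph{some} diagram in the chain must fail orientability.

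Concretely, the paper's proof starts from \cref{surgproczero} exactly as you propose, but then tracks how the offending cup or ray from the zero-producing surgery connects into the \emph{other} surgery's window, and runs a three-by-three case distinction (further subdivided) in which each case is settled by one of several distinct mechanisms: the other surgery merely straightens a kink and manifestly commutes; both orders die by \cref{surgproczero}; the final diagram is itself non-orientable; or the counting argument above. Your outline would need to supply this analysis (or a genuinely new argument) for the cases where the two surgeries lie on the same connected component; as written, Case~(ii) and the "main obstacle" paragraph assume the conclusion. A secondary, more minor issue: distinct surgeries never literally share endpoints of their windows (they occur at different middle sections or different horizontal positions), so the trichotomy by "overlapping supports" is not quite the right organizing principle — the relevant dichotomy is whether the strands emanating from the zero-producing local picture of \cref{surgproczero} enter the other surgery's window.
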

\begin{proof}
	Assume we are given two potential surgeries.
	If both surgeries produce in the first step orientable diagrams, the surgery procedures commute.
	So we may assume that one of them produces a non-orientable diagram.
    \Cref{surgproczero} describes how this has to look locally.
	But now note that if the dashed cup does not interact with the other surgery procedure, then the overall result will be zero, independent of the order. 
	So we may assume that the dashed cup does interact with the other surgery. 
	We will make a case distinction on how this cup connects to the other surgery.
	By rotating the bottom surgery, we can reduce to three different main cases as follows:
	\begin{equation*}	
		\begin{array}{m{0.25\textwidth}m{0.25\textwidth}m{0.25\textwidth}}
			\begin{tikzpicture}[scale=0.4]
				\CAP{-1}
				\SETCOORD{-1}{0}
				\LINE{2}{1.5}
				\SETCOORD{-1}{0}
				\CUP{-1}
				\LINE[dashed]{0}{1.5}
				\LINE{2}{1.5}
				\SETCOORD{-2}{0}
				\CUP{1}
				\SETCOORD{0}{-1.5}
				\CAP{1}
			\end{tikzpicture}&
			\begin{tikzpicture}[scale=0.4]
				\CAP{-1}
				\SETCOORD{0}{1.5}
				\CUP{-1}
				\SETCOORD{0}{-1.5}
				\LINE{2}{1.5}
				\LINE[dashed]{-2}{1.5}
				\LINE{2}{1.5}
				\SETCOORD{-2}{0}
				\CUP{1}
				\SETCOORD{0}{-1.5}
				\CAP{1}
			\end{tikzpicture}&
			\begin{tikzpicture}[scale=0.4]
				\CAP{-1}
				\SETCOORD{-1}{0}
				\LINE{2}{1.5}
				\SETCOORD{-2}{0}
				\CUP{1}
				\LINE[dashed]{-1}{1.5}
				\LINE{2}{1.5}
				\SETCOORD{-2}{0}
				\CUP{1}
				\SETCOORD{0}{-1.5}
				\CAP{1}
			\end{tikzpicture}
		\end{array}
	\end{equation*}
	We make a further case distinction on how the other end of the dashed cup connects to the other surgery.
	As these need to be connected in the total picture, we are left with three more cases for each main case (see \cref{assocfirstdistinctionfigure} for this).
	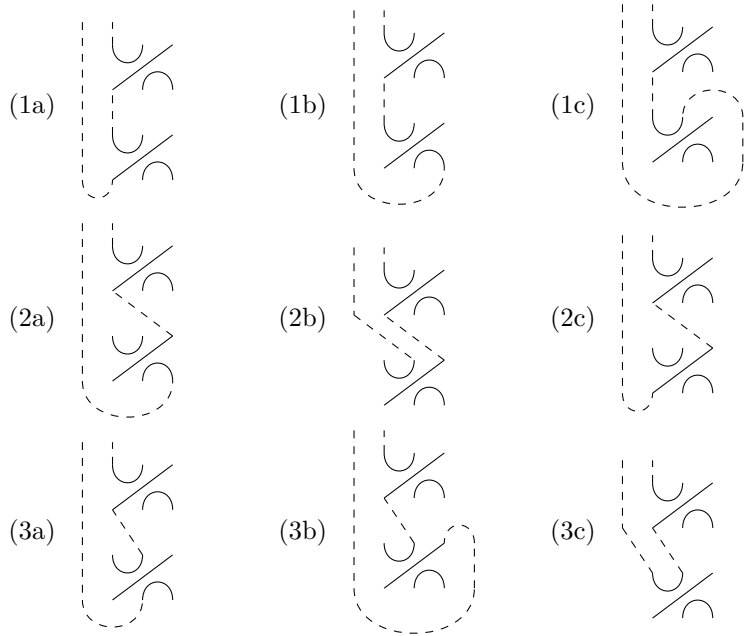
\begin{figure}\centering
		\setcounter{enumi}{0}
		\setcounter{enumii}{0}
		\renewcommand\theenumii{\alph{enumii}}
		\begin{tabularx}{0.8\textwidth}{lXlXll}
			\refstepcounter{enumi}\refstepcounter{enumii}(\theenumi\theenumii)&
			\begin{tikzpicture}[scale=0.4]
				\LINE[dashed]{0}{-5.25}
				\CUP[dashed]{1}
				\SETCOORD{2}{0}
				\CAP{-1}
				\SETCOORD{-1}{0}
				\LINE{2}{1.5}
				\SETCOORD{-1}{0}
				\CUP{-1}
				\LINE[dashed]{0}{1.5}
				\LINE{2}{1.5}
				\SETCOORD{0}{-1.5}
				\CAP{-1}
				\SETCOORD{0}{1.5}
				\CUP{-1}
				\LINE[dashed]{0}{0.75}
			\end{tikzpicture}
			&\refstepcounter{enumii}(\theenumi\theenumii)&
			\begin{tikzpicture}[scale=0.4]
				\LINE[dashed]{0}{-5.25}
				\CUP[dashed]{3}
				\CAP{-1}
				\SETCOORD{-1}{0}
				\LINE{2}{1.5}
				\SETCOORD{-1}{0}
				\CUP{-1}
				\LINE[dashed]{0}{1.5}
				\LINE{2}{1.5}
				\SETCOORD{0}{-1.5}
				\CAP{-1}
				\SETCOORD{0}{1.5}
				\CUP{-1}
				\LINE[dashed]{0}{0.75}
			\end{tikzpicture}
			&\refstepcounter{enumii}(\theenumi\theenumii)&
			\begin{tikzpicture}[scale=0.4]	
				\LINE[dashed]{0}{-5.25}
				\CUP[dashed]{4}
				\LINE[dashed]{0}{1.5}
				\CAP[dashed]{-2}
				\SETCOORD{1}{-1.5}
				\CAP{-1}
				\SETCOORD{-1}{0}
				\LINE{2}{1.5}
				\SETCOORD{-1}{0}
				\CUP{-1}
				\LINE[dashed]{0}{1.5}
				\LINE{2}{1.5}
				\SETCOORD{0}{-1.5}
				\CAP{-1}
				\SETCOORD{0}{1.5}
				\CUP{-1}
				\LINE[dashed]{0}{0.75}
			\end{tikzpicture}\\
			\refstepcounter{enumi}\setcounter{enumii}{0}\refstepcounter{enumii}(\theenumi\theenumii)&
			\begin{tikzpicture}[scale=0.4]
				\LINE[dashed]{0}{-5.25}
				\CUP[dashed]{3}
				\CAP{-1}
				\SETCOORD{0}{1.5}
				\CUP{-1}
				\SETCOORD{0}{-1.5}
				\LINE{2}{1.5}
				\LINE[dashed]{-2}{1.5}
				\LINE{2}{1.5}
				\SETCOORD{0}{-1.5}
				\CAP{-1}
				\SETCOORD{0}{1.5}
				\CUP{-1}
				\LINE[dashed]{0}{0.75}
			\end{tikzpicture}
			&\refstepcounter{enumii}(\theenumi\theenumii)&
			\begin{tikzpicture}[scale=0.4]
				\LINE[dashed]{0}{-2.25}
				\LINE[dashed]{2}{-1.5}
				\SETCOORD{1}{-1.5}
				\CAP{-1}
				\SETCOORD{0}{1.5}
				\CUP{-1}
				\SETCOORD{0}{-1.5}
				\LINE{2}{1.5}
				\LINE[dashed]{-2}{1.5}
				\LINE{2}{1.5}
				\SETCOORD{0}{-1.5}
				\CAP{-1}
				\SETCOORD{0}{1.5}
				\CUP{-1}
				\LINE[dashed]{0}{0.75}
			\end{tikzpicture}
			&\refstepcounter{enumii}(\theenumi\theenumii)&
			\begin{tikzpicture}[scale=0.4]
				\LINE[dashed]{0}{-5.25}
				\CUP[dashed]{1}
				\SETCOORD{2}{0}
				\CAP{-1}
				\SETCOORD{0}{1.5}
				\CUP{-1}
				\SETCOORD{0}{-1.5}
				\LINE{2}{1.5}
				\LINE[dashed]{-2}{1.5}
				\LINE{2}{1.5}
				\SETCOORD{0}{-1.5}
				\CAP{-1}
				\SETCOORD{0}{1.5}
				\CUP{-1}
				\LINE[dashed]{0}{0.75}
			\end{tikzpicture}\\
			\refstepcounter{enumi}\setcounter{enumii}{0}\refstepcounter{enumii}(\theenumi\theenumii)&
			\begin{tikzpicture}[scale=0.4]
				\LINE[dashed]{0}{-5.25}
				\CUP[dashed]{2}
				\SETCOORD{1}{0}
				\CAP{-1}
				\SETCOORD{-1}{0}
				\LINE{2}{1.5}
				\SETCOORD{-2}{0}
				\CUP{1}
				\LINE[dashed]{-1}{1.5}
				\LINE{2}{1.5}
				\SETCOORD{0}{-1.5}
				\CAP{-1}
				\SETCOORD{0}{1.5}
				\CUP{-1}
				\LINE[dashed]{0}{0.75}
			\end{tikzpicture}
			&\refstepcounter{enumii}(\theenumi\theenumii)&
			\begin{tikzpicture}[scale=0.4]
				\LINE[dashed]{0}{-5.25}
				\CUP[dashed]{4}
				\LINE[dashed]{0}{1.5}
				\CAP[dashed]{-1}
				\SETCOORD{0}{-1.5}
				\CAP{-1}
				\SETCOORD{-1}{0}
				\LINE{2}{1.5}
				\SETCOORD{-2}{0}
				\CUP{1}
				\LINE[dashed]{-1}{1.5}
				\LINE{2}{1.5}
				\SETCOORD{0}{-1.5}
				\CAP{-1}
				\SETCOORD{0}{1.5}
				\CUP{-1}
				\LINE[dashed]{0}{0.75}
			\end{tikzpicture}
			&\refstepcounter{enumii}(\theenumi\theenumii)&
			\begin{tikzpicture}[scale=0.4]
				\LINE[dashed]{0}{-2.25}
				\LINE[dashed]{1}{-1.5}
				\SETCOORD{2}{-1.5}
				\CAP{-1}
				\SETCOORD{-1}{0}
				\LINE{2}{1.5}
				\SETCOORD{-2}{0}
				\CUP{1}
				\LINE[dashed]{-1}{1.5}
				\LINE{2}{1.5}
				\SETCOORD{0}{-1.5}
				\CAP{-1}
				\SETCOORD{0}{1.5}
				\CUP{-1}
				\LINE[dashed]{0}{0.75}
			\end{tikzpicture}
		\end{tabularx}
		\caption{First case distinction in the proof of associativity}\label{assocfirstdistinctionfigure}
	\end{figure}
	Observe that in each of these cases the rays at the top may also end on the bottom or may be joined to form a cap. 
	If they are not joined to a cap, then the two endpoints lie on the same side of $0$.
	\begin{enumerate}[leftmargin=*, widest=(1--3a)]
		\item[(1--3a)] In these three cases note that after performing both surgeries the result is not orientable as we have either a circle or a non-propagating line with two endpoints at the same side of $0$.
		\item[(1b)]\label{1b} By assumption the dashed lines have to be connected, so it has to look like the following:
		\begin{equation*}
			\begin{tikzpicture}[scale=0.4]
				\LINE[dashed]{0}{-5.25}
				\CUP[dotted]{3}
				\CAP{-1}
				\CUP[dashed]{-1}
				\LINE{2}{1.5}
				\SETCOORD{-1}{0}
				\CUP{-1}
				\LINE[dashed]{0}{1.5}
				\LINE{2}{1.5}
				\SETCOORD{0}{-1.5}
				\CAP{-1}
				\SETCOORD{0}{1.5}
				\CUP{-1}
				\LINE[dashed]{0}{0.75}
			\end{tikzpicture}
		\end{equation*}
		But in this case, the bottom surgery procedure only \enquote{straightens} the cup-cap at the bottom and this commutes with the other surgery procedure.
		\item[(1--2c)] These diagrams are not orientable as the components inside the dashed lines will either give circle or non-propagating lines.
		As these non-prop\-a\-ga\-ting lines would have both endpoints between the dashed ones, they would lie on the same side of $0$ by assumption on the dashed lines.
		\item[(2b)] Observe that by our assumption the two dashed lines have to be connected.
			For this we have two possibilities.
			\begin{equation*}
				\begin{tikzpicture}[scale=0.4]
					\LINE[dashed]{0}{-2.25}
					\LINE[dashed]{2}{-1.5}
					\SETCOORD{1}{-1.5}
					\CAP{-1}
					\SETCOORD{0}{1.5}
					\CUP{-1}
					\CAP[dotted]{-1}
					\LINE[dotted]{0}{-1.5}
					\CUP[dotted]{1}
					\LINE{2}{1.5}
					\LINE[dashed]{-2}{1.5}
					\LINE{2}{1.5}
					\SETCOORD{0}{-1.5}
					\CAP{-1}
					\SETCOORD{0}{1.5}
					\CUP{-1}
					\LINE[dashed]{0}{0.75}
				\end{tikzpicture}\qquad\qquad\qquad
				\begin{tikzpicture}[scale=0.4]
					\LINE[dashed]{0}{-2.25}
					\LINE[dashed]{2}{-1.5}
					\CUP{-1}
					\CAP[dotted]{-1}
					\LINE[dotted]{0}{-1.5}
					\CUP[dotted]{3}
					\CAP{-1}
					\CUP[dotted]{-1}
					\LINE{2}{1.5}
					\LINE[dashed]{-2}{1.5}
					\LINE{2}{1.5}
					\SETCOORD{0}{-1.5}
					\CAP{-1}
					\SETCOORD{0}{1.5}
					\CUP{-1}
					\LINE[dashed]{0}{0.75}
				\end{tikzpicture}
			\end{equation*}
			In the left diagram, the second surgery procedure produces $0$ as well by \cref{surgproczero}, so they commute.
			For the second picture note that the second surgery procedure \enquote{straightens} the cup-cap at the bottom, which commutes with the other surgery procedure.
		\item[(3b)] Look at the left endpoint of the bottom cap. 
					It cannot be a ray or form a circle as then the diagram would not be orientable, so we are in the following situation.
			\begin{equation*}
				\begin{tikzpicture}[scale=0.4]
					\LINE[dashed]{0}{-5.25}
					\CUP[dashed]{4}
					\LINE[dashed]{0}{1.5}
					\CAP[dashed]{-1}
					\SETCOORD{0}{-1.5}
					\CAP{-1}
					\CUP[dotted]{-1}
					\LINE{2}{1.5}
					\SETCOORD{-2}{0}
					\CUP{1}
					\LINE[dashed]{-1}{1.5}
					\LINE{2}{1.5}
					\SETCOORD{0}{-1.5}
					\CAP{-1}
					\SETCOORD{0}{1.5}
					\CUP{-1}
					\LINE[dashed]{0}{0.75}
				\end{tikzpicture}
			\end{equation*}
			Similarly, to before, the bottom surgery only \enquote{straightens} the cup-cap at the bottom, and thus commutes with the other surgery.
		\item[(3c)] This is the hardest case of all. 
			In \cref{assocseconddistinctionfigure} we make another case distinction depending on how the bottom right vertex is connected to the rest.
			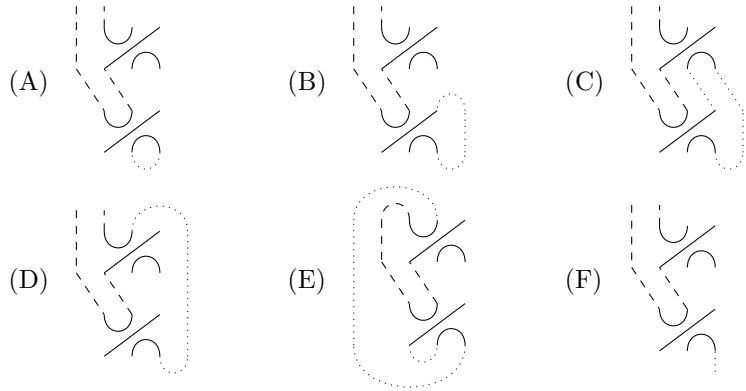
\begin{figure}
				\centering
				\setcounter{enumi}{0}
				\renewcommand\theenumi{\Alph{enumi}}
				\begin{tabularx}{0.8\textwidth}{lXlXll}
					\refstepcounter{enumi}(\theenumi)&
					\begin{tikzpicture}[scale=0.37]
						\LINE[dashed]{0}{-2.25}
						\LINE[dashed]{1}{-1.5}
						\SETCOORD{1}{-1.5}
						\CAP{1}
						\CUP[dotted]{-1}
						\SETCOORD{-1}{0}
						\LINE{2}{1.5}
						\SETCOORD{-2}{0}
						\CUP{1}
						\LINE[dashed]{-1}{1.5}
						\LINE{2}{1.5}
						\SETCOORD{0}{-1.5}
						\CAP{-1}
						\SETCOORD{0}{1.5}
						\CUP{-1}
						\LINE[dashed]{0}{0.75}
					\end{tikzpicture}
					&\refstepcounter{enumi}(\theenumi)&
					\begin{tikzpicture}[scale=0.37]
						\LINE[dashed]{0}{-2.25}
						\LINE[dashed]{1}{-1.5}
						\SETCOORD{1}{-1.5}
						\CAP{1}
						\CUP[dotted]{1}
						\LINE[dotted]{0}{1.5}
						\CAP[dotted]{-1}
						\SETCOORD{-2}{-1.5}
						\LINE{2}{1.5}
						\SETCOORD{-2}{0}
						\CUP{1}
						\LINE[dashed]{-1}{1.5}
						\LINE{2}{1.5}
						\SETCOORD{0}{-1.5}
						\CAP{-1}
						\SETCOORD{0}{1.5}
						\CUP{-1}
						\LINE[dashed]{0}{0.75}
					\end{tikzpicture}
					&\refstepcounter{enumi}(\theenumi)&
					\begin{tikzpicture}[scale=0.37]
						\LINE[dashed]{0}{-2.25}
						\LINE[dashed]{1}{-1.5}
						\SETCOORD{1}{-1.5}
						\CAP{1}
						\CUP[dotted]{1}
						\LINE[dotted]{0}{1.5}
						\LINE[dotted]{-1}{1.5}
						\SETCOORD{-2}{-3}
						\LINE{2}{1.5}
						\LINE[dotted]{-1}{1.5}
						\SETCOORD{-1}{-1.5}
						\CUP{1}
						\LINE[dashed]{-1}{1.5}
						\LINE{2}{1.5}
						\SETCOORD{0}{-1.5}
						\CAP{-1}
						\SETCOORD{0}{1.5}
						\CUP{-1}
						\LINE[dashed]{0}{0.75}
					\end{tikzpicture}\\
					\refstepcounter{enumi}(\theenumi)&
					\begin{tikzpicture}[scale=0.37]
						\LINE[dashed]{0}{-2.25}
						\LINE[dashed]{1}{-1.5}
						\SETCOORD{1}{-1.5}
						\CAP{1}
						\CUP[dotted]{1}
						\LINE[dotted]{0}{4.5}
						\CAP[dotted]{-2}
						\SETCOORD{-1}{-4.5}
						\LINE{2}{1.5}
						\SETCOORD{-2}{0}
						\CUP{1}
						\LINE[dashed]{-1}{1.5}
						\LINE{2}{1.5}
						\SETCOORD{0}{-1.5}
						\CAP{-1}
						\SETCOORD{0}{1.5}
						\CUP{-1}
						\LINE[dashed]{0}{0.75}
					\end{tikzpicture}
					&\refstepcounter{enumi}(\theenumi)&
					\begin{tikzpicture}[scale=0.37]
						\CAP[dashed]{-1}
						\LINE[dashed]{0}{-1.5}
						\LINE[dashed]{1}{-1.5}
						\SETCOORD{0}{-1.5}
						\CUP[dotted]{1}
						\CAP{1}
						\CUP[dotted]{-4}
						\LINE[dotted]{0}{4.5}
						\CAP[dotted]{3}
						\SETCOORD{-1}{-4.5}
						\LINE{2}{1.5}
						\SETCOORD{-2}{0}
						\CUP{1}
						\LINE[dashed]{-1}{1.5}
						\LINE{2}{1.5}
						\SETCOORD{0}{-1.5}
						\CAP{-1}
						\SETCOORD{0}{1.5}
						\CUP{-1}
					\end{tikzpicture}
					&\refstepcounter{enumi}(\theenumi)&
					\begin{tikzpicture}[scale=0.37]
						\LINE[dashed]{0}{-2.25}
						\LINE[dashed]{1}{-1.5}
						\SETCOORD{2}{-2.25}
						\LINE[dotted]{0}{0.75}
						\CAP{-1}
						\SETCOORD{-1}{0}
						\LINE{2}{1.5}
						\SETCOORD{-2}{0}
						\CUP{1}
						\LINE[dashed]{-1}{1.5}
						\LINE{2}{1.5}
						\SETCOORD{0}{-1.5}
						\CAP{-1}
						\SETCOORD{0}{1.5}
						\CUP{-1}
						\LINE[dashed]{0}{0.75}
					\end{tikzpicture}
				\end{tabularx}
				\caption{Subdivision of the case (3c) in the proof of associativity}\label{assocseconddistinctionfigure}
			\end{figure}
			\begin{enumerate}[label=(\Alph*), leftmargin=*]
				\item This diagram is not orientable.
				\item The bottom surgery procedure gives $0$ as well by \cref{surgproczero}.
				\item Again both surgery procedures give $0$, so they commute.
				\item We distinguish two cases
				\begin{equation*}
					\begin{tikzpicture}[scale=0.4]
						\LINE[dashed]{0}{-2.25}
						\LINE[dashed]{1}{-1.5}
						\SETCOORD{1}{-1.5}
						\CAP{1}
						\CUP[dotted]{1}
						\LINE[dotted]{0}{4.5}
						\CAP[dotted]{-2}
						\SETCOORD{-1}{-4.5}
						\LINE{2}{1.5}
						\SETCOORD{-2}{0}
						\CUP{1}
						\LINE[dashed]{-1}{1.5}
						\LINE{2}{1.5}
						\SETCOORD{0}{-1.5}
						\CAP{-1}
						\CUP[dotted]{1}
						\SETCOORD{-1}{1.5}
						\CUP{-1}
						\LINE[dashed]{0}{0.75}
					\end{tikzpicture}
					\qquad\qquad\qquad
					\begin{tikzpicture}[scale=0.4]
						\LINE[dashed]{0}{-2.25}
						\LINE[dashed]{1}{-1.5}
						\SETCOORD{1}{-1.5}
						\CAP{1}
						\CUP[dotted]{2}
						\LINE[dotted]{0}{4.5}
						\CAP[dotted]{-3}
						\SETCOORD{-1}{-4.5}
						\LINE{2}{1.5}
						\SETCOORD{-2}{0}
						\CUP{1}
						\LINE[dashed]{-1}{1.5}
						\LINE{2}{1.5}
						\CAP[dotted]{1}
						\LINE[dotted]{0}{-1.5}
						\CUP[dotted]{-1}
						\CAP{-1}
						\LINE[dotted]{1}{-1.5}
						\SETCOORD{-1}{3}
						\CUP{-1}
						\LINE[dashed]{0}{0.75}
					\end{tikzpicture}
				\end{equation*}
				The left picture is not orientable, whereas the right one gives $0$ after applying both.
				\item The bottom surgery \enquote{straightens} only the cup-cap, so it commutes with the other one.
				\item We have two different possibilities.
				\begin{align*}
					&\begin{tikzpicture}[scale=0.4]
						\LINE[dashed]{0}{-2.25}
						\LINE[dashed]{1}{-1.5}
						\SETCOORD{2}{-2.25}
						\LINE[dotted]{0}{0.75}
						\CAP{-1}
						\CUP[dotted]{-1}
						\LINE{2}{1.5}
						\SETCOORD{-2}{0}
						\CUP{1}
						\LINE[dashed]{-1}{1.5}
						\LINE{2}{1.5}
						\SETCOORD{0}{-1.5}
						\CAP{-1}
						\SETCOORD{0}{1.5}
						\CUP{-1}
						\LINE[dashed]{0}{0.75}
					\end{tikzpicture}&
					\begin{tikzpicture}[scale=0.4]
						\LINE[dashed]{0}{-2.25}
						\LINE[dashed]{1}{-1.5}
						\SETCOORD{2}{-2.25}
						\LINE[dotted]{0}{0.75}
						\CAP{-1}
						\LINE[dotted]{0}{-0.75}
						\SETCOORD{-1}{0}
						\LINE[dotted]{0}{0.75}
						\LINE{2}{1.5}
						\SETCOORD{-2}{0}
						\CUP{1}
						\LINE[dashed]{-1}{1.5}
						\LINE{2}{1.5}
						\SETCOORD{0}{-1.5}
						\CAP{-1}
						\SETCOORD{0}{1.5}
						\CUP{-1}
						\LINE[dashed]{0}{0.75}
					\end{tikzpicture}
				\end{align*}
				In the left diagram, the bottom surgery only \enquote{straightens} the cup-cap, and thus commutes with the other one.
				For the right diagram, we distinguish how the right endpoint of the upper cap is connected. 
				All possibilities for this are listed in \cref{assocthirddistinctionfigure}.
				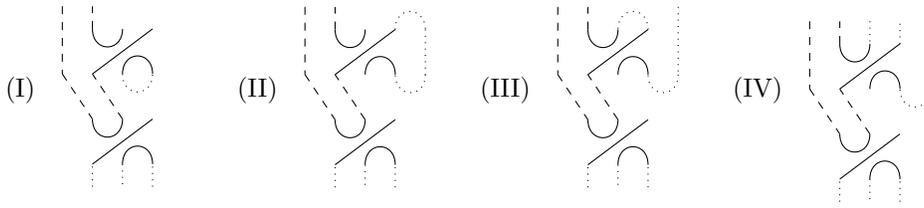
\begin{figure}\centering
					\setcounter{enumi}{0}
					\renewcommand\theenumi{\Roman{enumi}}
					\begin{tabularx}{\textwidth}{lXlXlXll}
						\refstepcounter{enumi}(\theenumi)&
						\begin{tikzpicture}[scale=0.4]
						\LINE[dashed]{0}{-2.25}
						\LINE[dashed]{1}{-1.5}
						\SETCOORD{2}{-2.25}
						\LINE[dotted]{0}{0.75}
						\CAP{-1}
						\LINE[dotted]{0}{-0.75}
						\SETCOORD{-1}{0}
						\LINE[dotted]{0}{0.75}
						\LINE{2}{1.5}
						\SETCOORD{-2}{0}
						\CUP{1}
						\LINE[dashed]{-1}{1.5}
						\LINE{2}{1.5}
						\SETCOORD{0}{-1.5}
						\CAP{-1}
						\CUP[dotted]{1}
						\SETCOORD{-1}{1.5}
						\CUP{-1}
						\LINE[dashed]{0}{0.75}
					\end{tikzpicture}
					&\refstepcounter{enumi}(\theenumi)&
					\begin{tikzpicture}[scale=0.4]
						\LINE[dashed]{0}{-2.25}
						\LINE[dashed]{1}{-1.5}
						\SETCOORD{2}{-2.25}
						\LINE[dotted]{0}{0.75}
						\CAP{-1}
						\LINE[dotted]{0}{-0.75}
						\SETCOORD{-1}{0}
						\LINE[dotted]{0}{0.75}
						\LINE{2}{1.5}
						\SETCOORD{-2}{0}
						\CUP{1}
						\LINE[dashed]{-1}{1.5}
						\LINE{2}{1.5}
						\CAP[dotted]{1}
						\LINE[dotted]{0}{-1.5}
						\CUP[dotted]{-1}
						\CAP{-1}
						\SETCOORD{0}{1.5}
						\CUP{-1}
						\LINE[dashed]{0}{0.75}
					\end{tikzpicture}
					&\refstepcounter{enumi}(\theenumi)&
					\begin{tikzpicture}[scale=0.4]
						\LINE[dashed]{0}{-2.25}
						\LINE[dashed]{1}{-1.5}
						\SETCOORD{2}{-2.25}
						\LINE[dotted]{0}{0.75}
						\CAP{-1}
						\LINE[dotted]{0}{-0.75}
						\SETCOORD{-1}{0}
						\LINE[dotted]{0}{0.75}
						\LINE{2}{1.5}
						\SETCOORD{-2}{0}
						\CUP{1}
						\LINE[dashed]{-1}{1.5}
						\LINE{2}{1.5}
						\CAP[dotted]{-1}
						\SETCOORD{2}{0.75}
						\LINE[dotted]{0}{-2.25}
						\CUP[dotted]{-1}
						\CAP{-1}
						\SETCOORD{0}{1.5}
						\CUP{-1}
						\LINE[dashed]{0}{0.75}
					\end{tikzpicture}
					&\refstepcounter{enumi}(\theenumi)&
					\begin{tikzpicture}[scale=0.4]
						\LINE[dashed]{0}{-2.25}
						\LINE[dashed]{1}{-1.5}
						\SETCOORD{2}{-2.25}
						\LINE[dotted]{0}{0.75}
						\CAP{-1}
						\LINE[dotted]{0}{-0.75}
						\SETCOORD{-1}{0}
						\LINE[dotted]{0}{0.75}
						\LINE{2}{1.5}
						\SETCOORD{-2}{0}
						\CUP{1}
						\LINE[dashed]{-1}{1.5}
						\LINE{2}{1.5}
						\LINE[dotted]{0}{0.75}
						\SETCOORD{-1}{0}
						\LINE[dotted]{0}{-0.75}
						\SETCOORD{2}{0.75}
						\LINE[dotted]{0}{-2.25}
						\CUP[dotted]{-1}
						\CAP{-1}
						\SETCOORD{0}{1.5}
						\CUP{-1}
						\LINE[dashed]{0}{0.75}
						\draw[white] (coord)--+(0,1);
					\end{tikzpicture}
				\end{tabularx}
				\caption{Subdivision of case (f) in the proof of associativity}\label{assocthirddistinctionfigure}
			\end{figure}
			\begin{enumerate}[label=(\Roman*)]
				\item The diagram is not orientable.
				\item The upper surgery produces two circles, where one does not interact with the other one, so the overall product is $0$ and hence they commute.
				\item The diagram is not orientable by assumption on the dashed lines.
				\item In this case we claim that not all the following three pictures are orientable.
				\begin{equation*}
					\begin{array}{ccc}
						\begin{tikzpicture}[scale=0.4]
							\LINE[dashed]{0}{-2.25}
							\LINE[dashed]{1}{-1.5}
							\SETCOORD{2}{-2.25}
							\LINE[dotted]{0}{0.75}
							\CAP{-1}
							\LINE[dotted]{0}{-0.75}
							\SETCOORD{-1}{0}
							\LINE[dotted]{0}{0.75}
							\LINE{2}{1.5}
							\SETCOORD{-2}{0}
							\CUP{1}
							\LINE[dashed]{-1}{1.5}
							\LINE{2}{1.5}
							\LINE[dotted]{0}{0.75}
							\SETCOORD{-1}{0}
							\LINE[dotted]{0}{-0.75}
							\SETCOORD{2}{0.75}
							\LINE[dotted]{0}{-2.25}
							\CUP[dotted]{-1}
							\CAP{-1}
							\SETCOORD{0}{1.5}
							\CUP{-1}
							\LINE[dashed]{0}{0.75}
							\draw[white] (coord)--+(0,1);
							\node[anchor=south] at (0,0) {$a$};
							\node[anchor=south] at (1,0) {$b$};
							\node[anchor=south] at (2,0) {$c$};
							\node[anchor=south] at (3,0) {$d$};
							\node[anchor=south] at (4,0) {$e$};
							\node[anchor=north] at (1,-6) {$f$};
							\node[anchor=north] at (2,-6) {$g$};
							\node[anchor=north] at (3,-6) {$h$};
						\end{tikzpicture}&
						\begin{tikzpicture}[scale=0.4]
							\LINE[dashed]{0}{-2.25}
							\LINE[dashed]{1}{-1.5}
							\SETCOORD{2}{-2.25}
							\LINE[dotted]{0}{0.75}
							\LINE{0}{1.5}
							\SETCOORD{-1}{-2.25}
							\LINE[dotted]{0}{0.75}
							\LINE{0}{1.5}
							\SETCOORD{-1}{-2.25}
							\LINE[dotted]{0}{0.75}
							\LINE{0}{1.5}
							\SETCOORD{1}{0}
							\LINE[dashed]{-1}{1.5}
							\LINE{2}{1.5}
							\LINE[dotted]{0}{0.75}
							\SETCOORD{-1}{0}
							\LINE[dotted]{0}{-0.75}
							\SETCOORD{2}{0.75}
							\LINE[dotted]{0}{-2.25}
							\CUP[dotted]{-1}
							\CAP{-1}
							\SETCOORD{0}{1.5}
							\CUP{-1}
							\LINE[dashed]{0}{0.75}
							\draw[white] (coord)--+(0,1);
							\node[anchor=south] at (0,0) {$a$};
							\node[anchor=south] at (1,0) {$b$};
							\node[anchor=south] at (2,0) {$c$};
							\node[anchor=south] at (3,0) {$d$};
							\node[anchor=south] at (4,0) {$e$};
							\node[anchor=north] at (1,-6) {$f$};
							\node[anchor=north] at (2,-6) {$g$};
							\node[anchor=north] at (3,-6) {$h$};
						\end{tikzpicture}&
						\begin{tikzpicture}[scale=0.4]
							\LINE[dashed]{0}{-2.25}
							\LINE[dashed]{1}{-1.5}
							\SETCOORD{2}{-2.25}
							\LINE[dotted]{0}{0.75}
							\LINE{0}{1.5}
							\SETCOORD{-1}{-2.25}
							\LINE[dotted]{0}{0.75}
							\LINE{0}{1.5}
							\SETCOORD{-1}{-2.25}
							\LINE[dotted]{0}{0.75}
							\LINE{0}{1.5}
							\SETCOORD{1}{0}
							\LINE[dashed]{-1}{1.5}
							\SETCOORD{2}{1.5}
							\LINE[dotted]{0}{0.75}
							\SETCOORD{-1}{0}
							\LINE[dotted]{0}{-0.75}
							\SETCOORD{2}{0.75}
							\LINE[dotted]{0}{-2.25}
							\CUP[dotted]{-1}
							\LINE{0}{1.5}
							\SETCOORD{-1}{-1.5}
							\LINE{0}{1.5}
							\SETCOORD{-1}{-1.5}
							\LINE{0}{1.5}
							\LINE[dashed]{0}{0.75}
							\draw[white] (coord)--+(0,1);
							\node[anchor=south] at (0,0) {$a$};
							\node[anchor=south] at (1,0) {$b$};
							\node[anchor=south] at (2,0) {$c$};
							\node[anchor=south] at (3,0) {$d$};
							\node[anchor=south] at (4,0) {$e$};
							\node[anchor=north] at (1,-6) {$f$};
							\node[anchor=north] at (2,-6) {$g$};
							\node[anchor=north] at (3,-6) {$h$};
						\end{tikzpicture}
					\end{array}
				\end{equation*}
				If we can show this claim, this means that applying the second surgery procedure before the first will also give $0$, thus they commute.
				
				First note that not all of $\{b,c,d,e\}$ can end on the same side, as then one of the pictures would not be orientable (the non-propagating lines would not all wrap around the same spot).
				Similarly, not all of $\{a,f,g,h\}$ can end on the same side.				
				Therefore, $e$ and $h$ end on the same side.

				Now look at the middle picture.
				The two loose ends in the middle have to be now connected as otherwise $e$ and $h$ would contribute to two non-propagating lines next to one another, which is not orientable.

				So $\{e,g,h\}$ (resp.\ $\{d,e,h\}$) cannot end on the same side for the same reasons as before.
				But this means that $\{a,b,c,f\}$ end on the same side, which produces two non-propagating lines in the middle picture that are not nested.
				Thus, there always exists a non-orientable picture and the overall surgery procedure result is $0$.\qedhere
			\end{enumerate}
		\end{enumerate}
	\end{enumerate}
\end{proof}
\subsection{The functor \texorpdfstring{$\Phi$}{Phi} is well-defined}
\begin{lem}\label{phirespectsrest}
	The functor $\Phi$ respects the relations \cref{firstzero,twicezero,inverse,untwist,snake,idempotentone,idempotenttwo}.
\end{lem}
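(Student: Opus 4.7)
The strategy is to verify each of the listed relations by computing both sides of the equation in the subcategory $\mathcal{F}\subseteq\K\mmod$ and exploiting the explicit descriptions of the generating bimodule maps $\hat{\eta}_i$, $\hat{\epsilon}_i$, $\hat{\psi}_{i,j}$ from \cref{defibimodulemaps}. Each relation in $\sGp$ corresponds, after applying $\Phi$, to a statement about composites of these bimodule maps, which in turn reduce to orientability considerations of stretched circle diagrams and to applications of the surgery procedures analyzed in \cref{detailedanalysis}.

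I would handle the easy relations first. Relation \cref{firstzero} follows immediately from \cref{geombimodonproj}: if $a\neq0$ then $t^a\overline{\iota}$ has a cap with both endpoints on the same side of $0$, hence is not orientable, so $\hat{\theta}_a\hat{P}(\overline{\iota})=0$ and every morphism with a leftmost strand labelled $a$ factors through the zero object. Relation \cref{twicezero} follows because stacking two copies of $t^a$ creates a closed circle in $\red(t^at^a)$, which forces $\hat{G}_{t^at^a}=0$. Relation \cref{inverse} splits into two cases: for $|a-b|>1$ the map $\hat{\psi}_{a,b}$ merely reorders layers, so $\hat{\psi}_{b,a}\circ\hat{\psi}_{a,b}=\mathrm{id}$; for $|a-b|=1$ both sides are zero on non-orientable diagrams and equal the identity on orientable ones by inspection. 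Relation \cref{untwist} follows by observing that $\hat{\eta}_a\circ\hat{\psi}_{a+1,a}$, read as a rotated surgery procedure via \cref{figuremapsarerotationsofsurgproc}, produces a local split configuration (cf.\ \cref{figureSplit}), which annihilates any basis vector.

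For \cref{snake} I would invoke the abstract adjunction $(\hat{\theta}_{i+1},\hat{\theta}_i)$ from \cref{hatthetaadjunction}: the bimodule maps $\hat{\eta}_i$ and $\hat{\epsilon}_i$ realize the counit and unit of this adjunction, so the triangle identities reduce to checking the explicit composites on the diagrammatic basis. A direct computation on a single orientable stretched circle diagram confirms both snake identities with $\epsilon=+1$, matching the convention in $\sGp$. The sign tracking is straightforward here because, by design of $\Cp$ and $\sGp$ in \cref{action,KLRtopnp}, all morphisms in the image have even degree and the odd signs that appear in $\sGalgm$ are absorbed into the parity shifts.

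The main obstacle is verifying the periplectic relations \cref{idempotentone} and \cref{idempotenttwo}. For these I would analyze the diagrams stepwise: the left-hand side of \cref{idempotentone}, after applying $\Phi$, becomes the composite $\hat{\eta}_{a-1}\circ(\mathrm{id}\otimes\hat{\psi}_{a,a-1}\otimes\mathrm{id})\circ\hat{\epsilon}_a$ acting on $\hat{\theta}_a\hat{\theta}_{a+1}\hat{\theta}_a$. Interpreting each map as a surgery procedure and composing, the inner cup-cap sequence produces a straightening (cf.\ \cref{figureStraightening} of \cref{detailedanalysis}) on orientable diagrams and kills non-orientable ones; the net effect is the identity bimodule map on $\hat{G}_{t^a t^{a+1} t^a}$, matching the right-hand side. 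Relation \cref{idempotenttwo} is entirely analogous after replacing straightenings at the top with straightenings at the bottom. Once these are established, all listed relations hold and $\Phi$ is well-defined on $\sGp$.
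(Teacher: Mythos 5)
Your overall strategy (direct verification of each relation via the explicit bimodule maps and orientability of stretched circle diagrams) is the same as the paper's, and your treatment of \cref{firstzero}, \cref{twicezero} and the distant case of \cref{inverse} matches the actual proof. The genuine gap is in your argument for \cref{untwist}. The composite $\hat{\eta}_a\circ\hat{\psi}_{a,a+1}$ is \emph{not} a local split configuration: neither of the two rotated surgery procedures creates a circle locally, and each of them individually is nonzero on suitable diagrams. Whether the composite vanishes on a given basis vector depends entirely on how the six loose endpoints of the local picture are connected in the ambient diagram. The paper's proof has to rule out every such global completion: it first observes that no two of the six endpoints can be joined to each other (else some intermediate picture contains a circle), so all six are rays, and then shows that orientability of each of the three intermediate diagrams forbids any three cyclically consecutive rays from ending on the same side, while six rays force three consecutive ones to do so --- a contradiction. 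Without an argument of this kind your claim that the composite ``annihilates any basis vector'' is unsupported, and this is precisely the hardest part of the lemma.

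Two smaller points. First, your parsing of \cref{idempotentone} is off: the left-hand side of that relation is built entirely from cups and caps (it is a composite of $\eta$'s and $\epsilon$'s), so no $\hat{\psi}_{a,a-1}$ appears in its image under $\Phi$. The paper's route for \cref{snake}, \cref{idempotentone} and \cref{idempotenttwo} is to observe that the four one-sided composites $\hat{\theta}_a\eta_a$, $\eta_a\hat{\theta}_{a+1}$, $\hat{\theta}_a\epsilon_a$ and $\epsilon_a\hat{\theta}_{a-1}$ are all given by a kink that does not change orientability, hence are isomorphisms, after which all three relations follow formally; your appeal to the abstract adjunction of \cref{hatthetaadjunction} would additionally require identifying $\hat{\eta}_i$ and $\hat{\epsilon}_i$ with its counit and unit, which is not established anywhere. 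Second, for \cref{inverse} with $\abs{a-b}=1$ one of the two crossings does not exist as a generator (and no inverse map $\hat{\psi}_{a+1,a}$ is defined), so the case you add is vacuous rather than something to be checked ``by inspection.''
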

\begin{proof}
	\begin{itemize}[leftmargin=*]
		\item By definition of $\hat{\theta}_i$, we see that $\hat{\theta}_iP(\overline{\iota}) = G_ie_\iota$ has a basis given by all orientable generalized circle diagrams $a t^i\overline{\iota}$.
		But by assumption $\overline{\iota}$ has no caps, so this is only orientable if $i=0$, thus \eqref{firstzero} holds.
		\item We easily observe that $\hat{\theta}_i\circ\hat{\theta}_i=0$ as $\hat{G}_{t^it^i}=0$ because it contains a circle and so no generalized circle diagram of this form is orientable, hence \eqref{twicezero} also holds.
		\item For \eqref{inverse} observe that if $i$ and $j$ are distant $\Psi_{i,j}$ is by definition an isomorphism and its inverse is given by $\Psi_{j,i}$.
		\item For \cref{snake,idempotentone,idempotenttwo} observe that $\hat{\theta}_a\eta_a$, $\eta_a\hat{\theta}_{a+1}$, $\hat{\theta}_a\epsilon_a$ and $\epsilon_a\hat{\theta}_{a-1}$ all look as follows (up to vertical mirror image).
		\begin{equation*}
			\begin{tikzpicture}[scale=0.4]
				\CUP{-1}
				\SETCOORD{-1}{0}
				\LINE{0}{-1.5}
				\CUP{1}
				\CAP{1}
				\LINE{0}{-1.5}
				\CUP{-1}
				\CAP{-1}
				\LINE{0}{-1.5}
				\SETCOORD{1}{0}
				\CAP{1}
				\draw[<->] (0.5,-2.25)--(1.5,-2.25);
				\SETCOORD{2}{1.5}
				\LINE{0}{1.5}
				\SETCOORD{1}{0}
				\CUP{1}
				\SETCOORD{0}{-1.5}
				\CAP{-1}
			\end{tikzpicture}
		\end{equation*}
	Thus, these morphisms do not change orientability, and thus are isomorphisms.
	From this description it is then easy to check that \cref{snake,idempotentone,idempotenttwo} hold.
	\item Lastly we want to show \eqref{untwist}.
	Note that the left-hand side is locally given by
	\begin{equation*}
		\begin{tikzpicture}[scale=0.4]
			\node[anchor=south] at (0,0.3) {$a$};
			\node[anchor=south] at (1,0.3) {$b$};
			\node[anchor=south] at (2,0.3) {$c$};
			\node[anchor=north] at (0,-3.3) {$d$};
			\node[anchor=north] at (1,-3.3) {$e$};
			\node[anchor=north] at (2,-3.3) {$f$};
			
			\node[anchor=south] at (4,0.3) {$a$};
			\node[anchor=south] at (5,0.3) {$b$};
			\node[anchor=south] at (6,0.3) {$c$};
			\node[anchor=north] at (4,-3.3) {$d$};
			\node[anchor=north] at (5,-3.3) {$e$};
			\node[anchor=north] at (6,-3.3) {$f$};
			
			\node[anchor=south] at (8,0.3) {$a$};
			\node[anchor=south] at (9,0.3) {$b$};
			\node[anchor=south] at (10,0.3) {$c$};
			\node[anchor=north] at (8,-3.3) {$d$};
			\node[anchor=north] at (9,-3.3) {$e$};
			\node[anchor=north] at (10,-3.3) {$f$};
			\CUP{1}
			\SETCOORD{1}{0}
			\LINE{0}{-1.5}
			\CUP{-1}
			\CAP{-1}
			\LINE{0}{-1.5}
			\SETCOORD{1}{0}
			\CAP{1}
			\draw[->] (2.5,-1.5)--(3.5,-1.5);
			\SETCOORD{2}{3}
			\LINE{0}{-1.5}
			\CUP{1}
			\CAP{1}
			\LINE{0}{-1.5}
			\SETCOORD{-1}{0}
			\CAP{-1}
			\SETCOORD{1}{3}
			\CUP{1}
			\draw[->] (6.5,-1.5)--(7.5,-1.5);
			\SETCOORD{2}{0}
			\LINE{0}{-3}
			\SETCOORD{1}{0}
			\LINE{0}{3}
			\SETCOORD{1}{0}
			\LINE{0}{-3}
		\end{tikzpicture}.
	\end{equation*}
	The only possibility such that this composition is not $0$ is if we can find a diagram such that all of these local changes preserve its orientability.
	
	\noindent For this note that none of these six endpoints can be connected to another one, as there exists a picture that creates a circle (also observe that e.g.\ $a$ cannot be connected to $e$ as there is nothing left for $d$).
	So we may assume that there are only rays connected to every endpoint.
	If the first two pictures are orientable, then neither all of $a$, $b$ and $c$ (resp. $d$, $e$ and $f$) end on the top nor all three on the bottom as then one of these pictures has a non-propagating line, where both ends lie on the same side of $0$.
	Orientability of the first and third picture implies that not all of $a$, $b$ and $d$ (resp. $c$, $e$ and $f$) can end on the same half.
	Similarly, considering the second and third picture not all of $a$, $d$ and $e$ (resp. $b$, $c$ and $f$) can end on the same half.
	This means that if all three pictures are orientable, no three consecutive (considering the letters as lying on a circle) rays can end on the same half. 
	But we have six rays, and thus necessarily three consecutive ones have to end on the same half.
	This is a contradiction, and thus \eqref{untwist} holds.\qedhere
	\end{itemize}
\end{proof}
\begin{lem}\label{phirespectstangle}
	The functor $\Phi$ respects also \eqref{tangleone}.
\end{lem}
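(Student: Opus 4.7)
The plan is to show that $\Phi$ applied to both sides of \eqref{tangleone} defines the same bimodule map, by unpacking the images as compositions of the elementary maps $\hat{\eta}_b$ and $\hat{\psi}_{a,b}$ (from \cref{defibimodulemaps}) acting on a common stacked bimodule, and then invoking the topological nature of these maps together with \cref{surgproccommute}. Note that in \eqref{tangleone} the crossings involve the pairs $(a,b)$ and $(a,b+1)$, so the hypothesis that these crossings exist forces $a\notin\{b-1,b,b+1\}$; in particular both crossings on the two sides are \emph{distant} crossings.

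First I would unwind the images explicitly. Both sides are morphisms from $(b,a,b+1)$ to $(a)$ in $\sGp$, so under $\Phi$ they become bimodule maps $\hat{G}_{t^{b+1}t^at^b}\to \hat{G}_{t^a}$. For the LHS, $\Phi$ sends the bottom crossing to (a tensor product with) $\hat{\psi}_{a,b}$ and the top cup to $\hat{\eta}_b$; since $|a-b|>1$, the map $\hat{\psi}_{a,b}$ from \cref{defibimodulemaps} is the "rearrangement" map $at^at^bb\mapsto at^bt^ab$ and introduces no surgery. For the RHS the roles are analogous: the bottom crossing becomes a distant rearrangement $\hat{\psi}_{a,b+1}$ (again a pure relabeling) and the top cup becomes the same $\hat{\eta}_b$. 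Using \cref{multdescendstoiso} we can factor both images through a common $\hat{G}_{\bm{t}}$ for the generalized crossingless matching $\bm{t}=t^{b+1}t^at^b$, so that each side is realized as one rearrangement move followed by one surgery procedure.

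Second I would draw the two stacked diagrams: by \cref{figuremapsarerotationsofsurgproc} the map $\hat{\eta}_b$ is a rotated surgery procedure that acts locally on the $(b,b+1)$-cup strand, while the distant $\hat{\psi}$'s act trivially away from their own strands and simply permute the layers. Thus the LHS surgery is performed on the "right two" layers of the stacked diagram after swapping the first two, whereas for the RHS the surgery is performed on the "first and third" layers after swapping the last two. In each case the surgery procedure and the rearrangement act on disjoint portions of the stack, so they commute: this is a direct consequence of \cref{surgproccommute}, applied in the degenerate case where one of the two "surgeries" is the trivial rearrangement coming from a distant $\hat{\psi}$. Equivalently, by \cref{bimoduleisoreductiontosinglelayer} the two stacked configurations reduce to the same single-layer crossingless matching, up to isotopy.

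The main subtlety will be orientability bookkeeping: the maps $\hat{\eta}_b$ and certain $\hat{\psi}$'s produce $0$ on inputs whose output would not be orientable, so one must verify that both sides vanish on precisely the same basis diagrams. This, however, will follow because orientability of the output depends only on the topological configuration of the final diagram, which is identical on both sides (only the order of the rearrangement and the surgery differs). Putting everything together, $\Phi(\text{LHS})$ and $\Phi(\text{RHS})$ agree on every basis element $\underline{\lambda}t^{b+1}t^at^b\overline{\mu}$ and hence as bimodule maps, proving that $\Phi$ respects \eqref{tangleone}.
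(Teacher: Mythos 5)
There is a genuine gap, and it is located exactly where you dismiss it: your claim that the existence of the two crossings in \eqref{tangleone} forces $a\notin\{b-1,b,b+1\}$ is false. The crossing generator with bottom labels $(x,y)$ exists for $x\notin\{y,y-1\}$; applying this to the left-hand crossing (bottom $(b,a)$) and the right-hand crossing (bottom $(a,b+1)$) only excludes $a=b$ and $a=b+1$. In particular $a=b-1$ and $a=b+2$ are legitimate instances of \eqref{tangleone}, and in those cases the crossings are \emph{not} distant: under $\Phi$ they land on $\hat{\psi}_{i,i+1}$-type maps, which by \cref{defibimodulemaps} and \cref{figuremapsarerotationsofsurgproc} are rotated surgery procedures that kill non-orientable outputs, not pure relabelings of layers. (Note also that your stated hypothesis $a\notin\{b-1,b,b+1\}$ does not even exclude $a=b+2$, for which the right-hand crossing involves the adjacent labels $b+2$ and $b+1$, so your own conclusion "both crossings are distant" does not follow from it.) These non-distant cases are precisely the hard part of the lemma: one must check that a basis diagram killed by the neighbouring $\hat{\psi}$ on one side is also sent to zero by the composite on the other side, and this requires an analysis of the possible local configurations and their orientability — it is not covered by the "disjoint operations commute" reading of \cref{surgproccommute}.

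Relatedly, your closing assertion that "orientability of the output depends only on the topological configuration of the final diagram" is not available as a general principle here: the multiplication and the maps $\hat{\eta}_i$, $\hat{\psi}_{i,i+1}$ are defined to vanish whenever an \emph{intermediate} diagram fails to be orientable, so the order of operations matters a priori. That independence of order is exactly what has to be proved (it is the content of \cref{surgproccommute} for genuine surgeries, and of the case analysis in the present lemma for the rotated ones), so invoking it as an input is circular. Your argument for the distant case $a\neq b-1,\,b+2$ is essentially correct and matches the first half of the intended proof; the cases $a=b-1$ and $a=b+2$ still need to be treated.
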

\begin{proof}
	First assume that $a\neq b-1, b+2$.
	In this case the left-hand side of \eqref{tangleone} in terms of geometric bimodules looks like
	\begin{equation*}
		\begin{tikzpicture}[scale=0.4]
			\LINE{0}{-1.5}
			\CUP{1}
			\LINE{0}{1.5}
			\SETCOORD{1}{0}
			\CUP{1}
			\SETCOORD{1}{0}
			\LINE{0}{-3}
			\CUP{-1}
			\LINE{0}{1.5}
			\CAP{-1}
			\LINE{0}{-3}
			\SETCOORD{-2}{0}
			\LINE{0}{1.5}
			\CAP{1}
			\LINE{0}{-1.5}
			\SETCOORD{2}{0}
			\CAP{1}
			\draw[->] (4.5,-2.25)--(5.5,-2.25);
			\SETCOORD{2}{4.5}
			\CUP{1}
			\SETCOORD{1}{0}
			\LINE{0}{-1.5}
			\CUP{1}
			\LINE{0}{1.5}
			\SETCOORD{1}{0}
			\LINE{0}{-3}
			\CUP{-1}
			\CAP{-1}
			\LINE{0}{-1.5}
			\SETCOORD{-2}{0}
			\LINE{0}{3}
			\CAP{1}
			\LINE{0}{-3}
			\SETCOORD{2}{0}
			\CAP{1}
			\draw[->] (10.5,-2.25)--(11.5,-2.25);
			\SETCOORD{2}{3}
			\CUP{1}
			\SETCOORD{1}{0}
			\LINE{0}{-1.5}
			\SETCOORD{1}{0}
			\LINE{0}{1.5}
			\SETCOORD{1}{0}
			\LINE{0}{-1.5}
			\SETCOORD{-3}{0}
			\CAP{-1}
		\end{tikzpicture}
	\end{equation*}
	whereas the right-hand side is given by 
	\begin{equation*}
		\begin{tikzpicture}[scale=0.4]
			\LINE{0}{-1.5}
			\CUP{1}
			\LINE{0}{1.5}
			\SETCOORD{1}{0}
			\CUP{1}
			\SETCOORD{1}{0}
			\LINE{0}{-3}
			\CUP{-1}
			\LINE{0}{1.5}
			\CAP{-1}
			\LINE{0}{-3}
			\SETCOORD{-2}{0}
			\LINE{0}{1.5}
			\CAP{1}
			\LINE{0}{-1.5}
			\SETCOORD{2}{0}
			\CAP{1}
			\draw[->] (4.5,-2.25)--(5.5,-2.25);
			\SETCOORD{2}{4.5}
			\LINE{0}{-3}
			\CUP{1}
			\LINE{0}{3}
			\SETCOORD{1}{0}
			\CUP{1}
			\SETCOORD{1}{0}
			\LINE{0}{-1.5}
			\CUP{-1}
			\CAP{-1}
			\LINE{0}{-3}
			\SETCOORD{-2}{0}
			\CAP{1}
			\SETCOORD{2}{0}
			\LINE{0}{1.5}
			\CAP{1}
			\LINE{0}{-1.5}
			\draw[->] (10.5,-2.25)--(11.5,-2.25);
			\SETCOORD{2}{3}
			\CUP{1}
			\SETCOORD{1}{0}
			\LINE{0}{-1.5}
			\SETCOORD{1}{0}
			\LINE{0}{1.5}
			\SETCOORD{1}{0}
			\LINE{0}{-1.5}
			\SETCOORD{-3}{0}
			\CAP{-1}
		\end{tikzpicture},
	\end{equation*}
	and it is easy to see that the first step is an isomorphism and does not influence orientability for the second.

	Now assume that $a= b-1$ (the case $a=b+2$ is handled similarly).
	Now the situation looks like
	\begin{equation*}
		\begin{tikzpicture}[scale=0.4]
			\LINE{0}{-1.5}
			\CUP{1}
			\CAP{1}
			\LINE{0}{-1.5}
			\CUP{1}
			\LINE{0}{3}
			\SETCOORD{-1}{0}
			\CUP{-1}
			\SETCOORD{-1}{-4.5}
			\LINE{0}{1.5}
			\CAP{1}
			\LINE{0}{-1.5}
			\SETCOORD{1}{0}
			\CAP{1}
			\draw[->] (3.5,-1.5)--(5.5,0.75);
			\draw[->] (3.5,-3)--(5.5,-5.25);
			\SETCOORD{3}{7.5}
			\CUP{1}
			\SETCOORD{1}{0}
			\LINE{0}{-1.5}
			\CUP{-1}
			\CAP{-1}
			\LINE{0}{-3}
			\SETCOORD{1}{0}
			\LINE{0}{1.5}
			\CAP{1}
			\CUP{1}
			\LINE{0}{3}
			\SETCOORD{-1}{-4.5}
			\CAP{1}
			\SETCOORD{-3}{-1.5}
			\LINE{0}{-3}
			\CUP{1}
			\LINE{0}{1.5}
			\CAP{1}
			\CUP{1}
			\LINE{0}{1.5}
			\SETCOORD{-1}{0}
			\CUP{-1}
			\SETCOORD{-1}{-4.5}
			\CAP{1}
			\SETCOORD{1}{0}
			\LINE{0}{1.5}
			\CAP{1}
			\LINE{0}{-1.5}
			\draw[->] (9.5,0.75)--(11.5,-1.5);
			\draw[->] (9.5,-5.25)--(11.5,-3);
			\SETCOORD{3}{6}
			\CUP{1}
			\SETCOORD{1}{0}
			\LINE{0}{-1.5}
			\SETCOORD{1}{0}
			\LINE{0}{1.5}
			\SETCOORD{-2}{-1.5}
			\CAP{-1}
		\end{tikzpicture}
	\end{equation*}
	Now note that the bottom left map is an isomorphism.
	This means that this diagram cannot commute only if the top left map sends something to zero, which is nonzero under the bottom composition.
	By \cref{surgproczero} and \cref{figuremapsarerotationsofsurgproc} this means that we are in one of the following two cases
	\begin{equation*}
		\setlength{\arraycolsep}{50pt}
		\begin{array}{lr}
			\begin{tikzpicture}[scale=0.4]
				\LINE[dotted]{0}{-1}
				\SETCOORD{-1}{1}
				\LINE[dotted]{0}{-1}
				\LINE{0}{-1.5}
				\CUP{1}
				\CAP{1}
				\LINE{0}{-1.5}
				\CUP{1}
				\LINE{0}{3}
				\SETCOORD{-1}{0}
				\CUP{-1}
				\SETCOORD{-1}{-4.5}
				\LINE{0}{1.5}
				\CAP{1}
				\LINE{0}{-1.5}
				\SETCOORD{1}{0}
				\CAP{1}
			\end{tikzpicture}&
			\begin{tikzpicture}[scale=0.4]
				\LINE[dotted]{0}{5.5}
				\CAP[dotted]{-1}
				\SETCOORD{-3}{0}
				\LINE{0}{-1.5}
				\CUP{1}
				\CAP{1}
				\LINE{0}{-1.5}
				\CUP{1}
				\LINE{0}{3}
				\SETCOORD{-1}{0}
				\CUP{-1}
				\SETCOORD{-1}{-4.5}
				\LINE{0}{1.5}
				\CAP{1}
				\LINE{0}{-1.5}
				\SETCOORD{1}{0}
				\CAP{1}
				\SETCOORD{-2}{0}
				\LINE[dotted]{0}{-1}
			\end{tikzpicture}
		\end{array}
	\end{equation*}
	where the dotted lines are either joined or end on the same side of $0$.
	Now observe that the first case produces a not orientable diagram in the end.
	Furthermore, for the second case at least one of the dotted lines has to connect to the bottom right cap, as the picture is otherwise not orientable.
	If the right dotted line connects to this cap, then we create a circle in the end, so it is left to consider the following case.
	\begin{equation*}
		\begin{tikzpicture}[scale=0.4]
			\LINE[dotted]{0}{5.5}
			\CAP[dotted]{-1}
			\SETCOORD{-3}{0}
			\LINE{0}{-1.5}
			\CUP{1}
			\CAP{1}
			\LINE{0}{-1.5}
			\CUP{1}
			\LINE{0}{3}
			\SETCOORD{-1}{0}
			\CUP{-1}
			\SETCOORD{-1}{-4.5}
			\LINE{0}{1.5}
			\CAP{1}
			\LINE{0}{-1.5}
			\SETCOORD{1}{0}
			\CAP{1}
			\SETCOORD{-2}{0}
			\CUP[dotted]{1}
			\SETCOORD{1}{0}
			\LINE[dotted]{0}{-1}
		\end{tikzpicture}
	\end{equation*}
	But as both dotted endpoints either join or lie on the same side of $0$, this means that the resulting diagram in the end will also not be orientable.
\end{proof}
\begin{lem}\label{phirespectsbraid}
	The functor $\Phi$ respects also \eqref{braid}.
\end{lem}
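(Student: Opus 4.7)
The plan is to argue by case analysis on the relative positions of $a$, $b$, $c$, mirroring the strategy used in the proofs of \cref{phirespectsrest,phirespectstangle}. Since both sides of \eqref{braid} are built out of the bimodule maps $\hat{\psi}_{i,j}$ of \cref{defibimodulemaps}, it suffices to check, after applying \cref{multdescendstoiso}, that the induced $\K$-$\K$-bimodule homomorphisms $\hat{G}_{t^ct^bt^a}\to\hat{G}_{t^at^bt^c}$ agree.

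First I would dispose of the generic case: if no two of $a,b,c$ are equal or consecutive (and none of the degenerate cases $b=a-1$, $c=b-1$, $c=a-1$ from \cref{functorXiexplicit} occurs), then each $\hat{\psi}_{i,j}$ acts on the relevant stretched circle diagram purely as a relabelling of layers, with no orientability obstruction. Both compositions then reduce to the same reordering of the three crossingless matchings $t^a,t^b,t^c$, so equality is immediate.

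Next I would handle the case where exactly one pair among $\{a,b\},\{b,c\},\{a,c\}$ is adjacent (i.e.\ differs by $1$). The special crossing can kill a diagram due to orientability, but the other two crossings are pure relabellings and commute with whatever the special crossing does to the intermediate layer. Tracking orientability at each intermediate stage (as in the second half of \cref{phirespectstangle}), I would verify that both compositions simultaneously vanish or simultaneously produce the same oriented stretched circle diagram. This is routine but requires drawing out several subdiagrams.

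The main obstacle will be the case where two or three of the pairs are adjacent (e.g.\ $b=a+1$ and $c=b+1$, or $c=a-1$ together with $b=a+1$). Here each of the three crossings carries its own potential zero from \cref{surgproczero}, and the six endpoints of the three strands can connect to the rest of the ambient diagram in many ways. Following the blueprint of \cref{phirespectstangle}, I would display each side as a sequence of local replacements on the generalized stretched circle diagram and then argue endpoint-by-endpoint, using the parity obstruction that not all of any three consecutive endpoints can lie on the same side of $0$ in an orientable diagram. This obstruction should force that whenever one composition produces a non-orientable intermediate, so does the other, and that the two resulting orientations coincide when both are nonzero. I expect this combinatorial bookkeeping to be by far the most laborious step, but morally each subcase reduces to the same parity argument already used for \cref{phirespectstangle}, so no new ingredient beyond \cref{surgproccommute} and the explicit orientability criterion of \cref{defiorientable} is needed.
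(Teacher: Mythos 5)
Your proposal follows essentially the same route as the paper: reduce to the induced bimodule maps, dispose of all cases where some label is distant from the other two by observing the corresponding crossings are pure relabellings, and then attack the single remaining non-degenerate configuration (which, given that the crossings vanish when $b=a-1$, $c=b-1$, $c=a-1$ or labels coincide, is exactly $c=b+1=a+2$) by tracking orientability of intermediate diagrams via \cref{surgproczero} and parity-of-endpoints arguments. The paper's proof consists precisely of carrying out the endpoint-by-endpoint case analysis you outline for that hard case, using that the middle maps are isomorphisms so a discrepancy could only arise from one of the first maps killing an orientable diagram; your sketch correctly identifies all the needed ingredients, though the combinatorial verification itself is left unexecuted.
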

\begin{proof}
	First suppose that $\abs{a-b}>1$ and $\abs{a-c}>1$.
	This means in terms of generalized crossingless matchings that the cup cap pair corresponding to $a$ does not interact with $b$ and $c$.
	So swapping $a$ with $b$ (resp. $c$) is just given by changing the order of the cup cap pairs, which does not change anything regarding the orientability.
	Therefore, the left-hand side of \eqref{braid} is given by first swapping $b$ and $c$ and then moving $a$ to the top, whereas the right-hand side first moves $a$ to the top and then swapping $b$ and $c$.
	Because $a$ is distant to $b$ and $c$ both give the same result.

	A similar argumentation proves the cases, where $b$ (resp. $c$) is distant to $a$ and $c$ (resp. $a$ and $b$).

	So the only remaining case is that $c=b+1=a+2$.
	In this case the situation looks as follows.
	\begin{equation*}
		\begin{tikzpicture}[scale=0.4]
			\CUP{-1}
			\SETCOORD{-1}{0}
			\LINE{0}{-1.5}
			\CUP{1}
			\CAP{1}
			\LINE{0}{-3}
			\SETCOORD{-3}{0}
			\CAP{1}
			\SETCOORD{1}{0}
			\LINE{0}{1.5}
			\CAP{-1}
			\CUP{-1}
			\LINE{0}{3}
			\draw[->] (0.5, -1.5) -- (2.5,0.75) node[midway, anchor = south] {$f$};
			\draw[->] (0.5, -3) -- (2.5,-5.25);
			\SETCOORD{6}{-1.5}
			\CAP{1}
			\SETCOORD{1}{0}
			\LINE{0}{1.5}
			\CAP{1}
			\LINE{0}{-1.5}
			\SETCOORD{0}{4.5}
			\LINE{0}{-1.5}
			\CUP{-1}
			\CAP{-1}
			\LINE{0}{-1.5}
			\CUP{-1}
			\LINE{0}{3}
			\SETCOORD{1}{0}
			\CUP{1}
			\draw[->] (6.5, 0.75) -- (7.5,0.75);
			\SETCOORD{0}{-6}
			\CUP{1}
			\SETCOORD{-2}{0}
			\LINE{0}{-1.5}
			\CUP{-1}
			\LINE{0}{1.5}
			\SETCOORD{0}{-4.5}
			\LINE{0}{1.5}
			\CAP{1}
			\CUP{1}
			\LINE{0}{1.5}
			\CAP{1}
			\LINE{0}{-3}
			\SETCOORD{-2}{0}
			\CAP{1}
			\draw[->] (6.5, -5.25) -- (7.5,-5.25);
			\SETCOORD{3}{0}
			\LINE{0}{3}
			\CAP{1}
			\LINE{0}{-1.5}
			\CUP{1}
			\CAP{1}
			\LINE{0}{-1.5}
			\SETCOORD{-2}{0}
			\CAP{1}
			\SETCOORD{-2}{4.5}
			\CUP{1}
			\SETCOORD{1}{0}
			\LINE{0}{-1.5}
			\CUP{1}
			\LINE{0}{1.5}
			\draw[->] (11.5, -5.25) -- (13.5,-3);
			\SETCOORD{0}{1.5}
			\CAP{-1}
			\SETCOORD{-1}{0}
			\LINE{0}{1.5}
			\CAP{-1}
			\LINE{0}{-1.5}
			\SETCOORD{0}{4.5}
			\LINE{0}{-1.5}
			\CUP{1}
			\CAP{1}
			\LINE{0}{-1.5}
			\CUP{1}
			\LINE{0}{3}
			\SETCOORD{-2}{0}
			\CUP{1}
			\draw[->] (11.5, 0.75) -- (13.5,-1.5);
			\SETCOORD{4}{-3}
			\CUP{1}
			\SETCOORD{1}{0}
			\LINE{0}{-1.5}
			\CUP{-1}
			\CAP{-1}
			\LINE{0}{-3}
			\SETCOORD{3}{0}
			\CAP{-1}
			\SETCOORD{-1}{0}
			\LINE{0}{1.5}
			\CAP{1}
			\CUP{1}
			\LINE{0}{3}
		\end{tikzpicture}
	\end{equation*} 
	These two compositions do not agree only if there is an orientable generalized circle diagram that is mapped to $0$ under one composition and something nonzero via the other one.
	Now the two middle arrows are isomorphisms, thus this situation can only occur if one of the first maps sends such a generalized circle diagram to $0$.
	We may assume that $f$ maps such a generalized circle diagram to $0$ as the other case is obtained by rotational symmetry.

	Using \cref{figuremapsarerotationsofsurgproc} and \cref{surgproczero} we are in one of the following two cases
	\begin{equation*}
		\setlength{\arraycolsep}{50pt}
		\begin{array}{lr}
			\begin{tikzpicture}[scale=0.4]
				\CUP{-1}
				\SETCOORD{-1}{0}
				\LINE{0}{-1.5}
				\CUP{1}
				\CAP{1}
				\LINE{0}{-3}
				\LINE[dotted]{0}{-1}
				\SETCOORD{-3}{1}
				\CAP{1}
				\SETCOORD{1}{-1}
				\LINE[dotted]{0}{1}
				\LINE{0}{1.5}
				\CAP{-1}
				\CUP{-1}
				\LINE{0}{3}
			\end{tikzpicture}&
			\begin{tikzpicture}[scale=0.4]
				\CUP{-1}
				\LINE[dotted]{0}{1}
				\SETCOORD{-1}{0}
				\LINE[dotted]{0}{-1}
				\LINE{0}{-1.5}
				\CUP{1}
				\CAP{1}
				\LINE{0}{-3}
				\SETCOORD{-3}{0}
				\CAP{1}
				\SETCOORD{1}{0}
				\LINE{0}{1.5}
				\CAP{-1}
				\CUP{-1}
				\LINE{0}{3}
			\end{tikzpicture}
		\end{array}
	\end{equation*}
	and either the two dotted lines are connected or these are both rays ending on the same side of $0$.

	In the first case note that the diagram in the end is not orientable.
	Thus, either composition produces $0$.

	So we are left to look at the second case.
	We make a case distinction on how the top right endpoint of the cup connects to the rest of the diagram.
	We have essentially four different cases, which are presented in \cref{braidrelationcasedistinction}.
	\begin{figure}
		\setcounter{enumi}{0}
		\renewcommand\theenumi{\roman{enumi}}
		\begin{tabularx}{\textwidth}{lXlXlXll}
			\refstepcounter{enumi}(\theenumi)&
			\begin{tikzpicture}[scale=0.4]
				\CUP[dashed]{1}
				\LINE[dashed]{0}{4.5}
				\CAP[dashed]{-1}
				\CUP{-1}
				\LINE[dotted]{0}{1}
				\SETCOORD{-1}{0}
				\LINE[dotted]{0}{-1}
				\LINE{0}{-1.5}
				\CUP{1}
				\CAP{1}
				\LINE{0}{-3}
				\SETCOORD{-3}{0}
				\CAP{1}
				\SETCOORD{1}{0}
				\LINE{0}{1.5}
				\CAP{-1}
				\CUP{-1}
				\LINE{0}{3}
			\end{tikzpicture}
			&\refstepcounter{enumi}(\theenumi)&
			\begin{tikzpicture}[scale=0.4]
				\CUP[dashed]{1}
				\SETCOORD{-2}{0}
				\CUP[dashed]{3}
				\LINE[dashed]{0}{4.5}
				\CAP[dashed]{-1}
				\CUP{-1}
				\LINE[dotted]{0}{1}
				\SETCOORD{-1}{0}
				\LINE[dotted]{0}{-1}
				\LINE{0}{-1.5}
				\CUP{1}
				\CAP{1}
				\LINE{0}{-3}
				\SETCOORD{-3}{0}
				\CAP{1}
				\SETCOORD{1}{0}
				\LINE{0}{1.5}
				\CAP{-1}
				\CUP{-1}
				\LINE{0}{3}
			\end{tikzpicture}
			&\refstepcounter{enumi}(\theenumi)&
			\begin{tikzpicture}[scale=0.4]
				\CAP[dashed]{-1}
				\LINE[dashed]{0}{-4.5}
				\CUP[dashed]{5}
				\LINE[dashed]{0}{4.5}
				\CAP[dashed]{-1}
				\CUP{-1}
				\LINE[dotted]{0}{1}
				\SETCOORD{-1}{0}
				\LINE[dotted]{0}{-1}
				\LINE{0}{-1.5}
				\CUP{1}
				\CAP{1}
				\LINE{0}{-3}
				\SETCOORD{-3}{0}
				\CAP{1}
				\SETCOORD{1}{0}
				\LINE{0}{1.5}
				\CAP{-1}
				\CUP{-1}
				\LINE{0}{3}
			\end{tikzpicture}
			&\refstepcounter{enumi}(\theenumi)&
			\begin{tikzpicture}[scale=0.4]
				\LINE[dashed]{0}{-1}
				\CUP{-1}
				\LINE[dotted]{0}{1}
				\SETCOORD{-1}{0}
				\LINE[dotted]{0}{-1}
				\LINE{0}{-1.5}
				\CUP{1}
				\CAP{1}
				\LINE{0}{-3}
				\SETCOORD{-3}{0}
				\CAP{1}
				\SETCOORD{1}{0}
				\LINE{0}{1.5}
				\CAP{-1}
				\CUP{-1}
				\LINE{0}{3}
			\end{tikzpicture}
		\end{tabularx}
		\caption{Case distinction for the proof of the braid relation}\label{braidrelationcasedistinction}
	\end{figure}
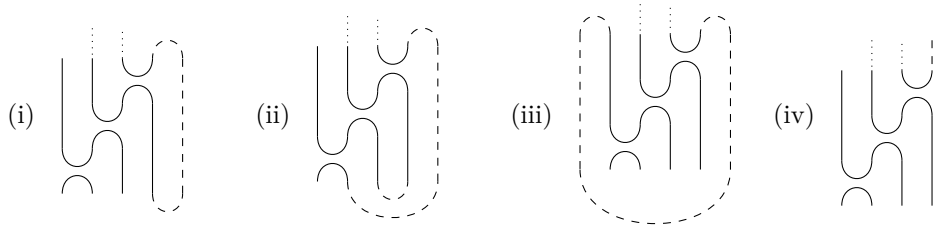
	\begin{enumerate}[leftmargin=*]
		\item This diagram is not orientable by assumption on the dotted lines.
		\item The small dashed cup forms a circle in the resulting diagram, thus both compositions have to be $0$.
		\item This diagram is not orientable as the two dotted lines will necessarily be connected.
		\item In this case we make a case distinction to where the top left endpoint connects.
		We distinguish another three cases.
		\begin{center}
			\setcounter{enumi}{0}
			\renewcommand\theenumi{\alph{enumi}}
			\begin{tabularx}{\linewidth}{lXlXll}	
				\refstepcounter{enumi}(\theenumi)&
				\begin{tikzpicture}[scale=0.4]
					\LINE[dashed]{0}{-1}
					\CUP{-1}
					\LINE[dotted]{0}{1}
					\SETCOORD{-1}{0}
					\LINE[dotted]{0}{-1}
					\LINE{0}{-1.5}
					\CUP{1}
					\CAP{1}
					\LINE{0}{-3}
					\SETCOORD{-3}{0}
					\CAP{1}
					\SETCOORD{1}{0}
					\LINE{0}{1.5}
					\CAP{-1}
					\CUP{-1}
					\LINE{0}{3}
					\CAP[dashdotted]{-1}
					\LINE[dashdotted]{0}{-4.5}
					\CUP[dashdotted]{3}
				\end{tikzpicture}	
				&\refstepcounter{enumi}(\theenumi)&
				\begin{tikzpicture}[scale=0.4]
					\LINE[dashed]{0}{-1}
					\CUP{-1}
					\LINE[dotted]{0}{1}
					\SETCOORD{-1}{0}
					\LINE[dotted]{0}{-1}
					\LINE{0}{-1.5}
					\CUP{1}
					\CAP{1}
					\LINE{0}{-3}
					\SETCOORD{-3}{0}
					\CAP{1}
					\SETCOORD{1}{0}
					\LINE{0}{1.5}
					\CAP{-1}
					\CUP{-1}
					\LINE{0}{3}
					\CAP[dashdotted]{-1}
					\LINE[dashdotted]{0}{-4.5}
					\CUP[dashdotted]{1}
				\end{tikzpicture}
				&\refstepcounter{enumi}(\theenumi)&
				\begin{tikzpicture}[scale=0.4]
					\LINE[dashed]{0}{-1}
					\CUP{-1}
					\LINE[dotted]{0}{1}
					\SETCOORD{-1}{0}
					\LINE[dotted]{0}{-1}
					\LINE{0}{-1.5}
					\CUP{1}
					\CAP{1}
					\LINE{0}{-3}
					\SETCOORD{-3}{0}
					\CAP{1}
					\SETCOORD{1}{0}
					\LINE{0}{1.5}
					\CAP{-1}
					\CUP{-1}
					\LINE{0}{3}
					\LINE[dashdotted]{0}{1}
				\end{tikzpicture}
			\end{tabularx}
		\end{center}
		\begin{enumerate}[label=(\alph*), leftmargin=*]
			\item This diagram is not orientable.
			\item In this case the end result would look like
			\begin{equation*}
				\begin{tikzpicture}[scale=0.4]
					\CUP{1}
					\LINE[dotted]{0}{1}
					\SETCOORD{1}{0}
					\LINE[dotted]{0}{-1}
					\LINE{0}{-1.5}
					\CUP{-1}
					\CAP{-1}
					\LINE{0}{-3}
					\CUP[dashdotted]{-1}
					\LINE[dashdotted]{0}{4.5}
					\CAP[dashdotted]{1}
					\SETCOORD{3}{-4.5}
					\CAP{-1}
					\SETCOORD{-1}{0}
					\LINE{0}{1.5}
					\CAP{1}
					\CUP{1}
					\LINE{0}{3}
					\LINE[dashed]{0}{1}
				\end{tikzpicture}
			\end{equation*}
			which is not orientable by the assumption on the dotted lines.	
			\item Note that any configuration of cups below this diagram produce a non-orientable diagram along the bottom composition.
			So in these cases, the compositions would both give $0$.
			And now we distinguish whether the dotted lines are connected or not.
			First assume that they are connected.
			\begin{equation*}
				\begin{tikzpicture}[scale=0.4]
					\LINE[dashed]{0}{-1}
					\CUP{-1}
					\CAP[dotted]{-1}
					\LINE{0}{-1.5}
					\CUP{1}
					\CAP{1}
					\LINE{0}{-3}\LINE[dashed]{0}{-1}
					\SETCOORD{-3}{0}
					\LINE[dashed]{0}{1}
					\CAP{1}
					\LINE[dashed]{0}{-1}
					\SETCOORD{1}{0}
					\LINE[dashed]{0}{1}
					\LINE{0}{1.5}
					\CAP{-1}
					\CUP{-1}
					\LINE{0}{3}
					\LINE[dashed]{0}{1}
					\draw[->] (0.5, -3) -- (1.5,-3);
					\SETCOORD{8}{0}
					\LINE[dashed]{0}{-1}
					\CUP{-1}
					\CAP[dotted]{-1}
					\LINE{0}{-1.5}
					\CUP{-1}
					\LINE{0}{1.5}
					\LINE[dashed]{0}{1}
					\SETCOORD{0}{-6.5}
					\LINE[dashed]{0}{1}
					\LINE{0}{1.5}
					\CAP{1}
					\CUP{1}
					\LINE{0}{1.5}
					\CAP{1}
					\LINE{0}{-3}
					\LINE[dashed]{0}{-1}
					\SETCOORD{-2}{0}
					\LINE[dashed]{0}{1}
					\CAP{1}
					\LINE[dashed]{0}{-1}
					\draw[->] (5.5, -3) -- (6.5,-3);
					\SETCOORD{3}{0}
					\LINE[dashed]{0}{1}
					\LINE{0}{3}
					\CAP{1}
					\LINE{0}{-1.5}
					\CUP{1}
					\CAP{1}
					\LINE{0}{-1.5}
					\LINE[dashed]{0}{-1}
					\SETCOORD{-2}{0}
					\LINE[dashed]{0}{1}
					\CAP{1}
					\LINE[dashed]{0}{-1}
					\SETCOORD{-2}{6.5}
					\LINE[dashed]{0}{-1}
					\CUP{1}
					\CAP[dotted]{1}
					\LINE{0}{-1.5}
					\CUP{1}
					\LINE{0}{1.5}
					\LINE[dashed]{0}{1}
					\draw[->] (10.5, -3) -- (11.5,-3);
					\SETCOORD{2}{0}
					\LINE[dashed]{0}{-1}
					\CUP{1}
					\CAP[dotted]{1}
					\LINE{0}{-1.5}
					\CUP{-1}
					\CAP{-1}
					\LINE{0}{-3}
					\LINE[dashed]{0}{-1}
					\SETCOORD{3}{0}
					\LINE[dashed]{0}{1}
					\CAP{-1}
					\LINE[dashed]{0}{-1}
					\SETCOORD{-1}{0}
					\LINE[dashed]{0}{1}
					\LINE{0}{1.5}
					\CAP{1}
					\CUP{1}
					\LINE{0}{3}
					\LINE[dashed]{0}{1}
				\end{tikzpicture}
			\end{equation*} 
			And now we have to show that not all of these picture can be orientable.
			Observe that for every two consecutive lines there is one diagram, where these are joined by a line.
			Assume there exists a configuration such that all four diagrams are orientable.
			There has to be a half where at least three strands end.
			But this means that two of these strands have to lie on the same side of $0$.
			So there is a diagram that has a non-propagating line, where both endpoints lie on the same side of $0$, in contradiction to orientability.

			The last case is that we only have rays at the top and bottom.
			\begin{equation*}
				\begin{tikzpicture}[scale=0.4]
					\LINE[dashed]{0}{-1}
					\CUP{-1}
					\LINE[dotted]{0}{1}
					\SETCOORD{-1}{0}
					\LINE[dotted]{0}{-1}
					\LINE{0}{-1.5}
					\CUP{1}
					\CAP{1}
					\LINE{0}{-3}\LINE[dashed]{0}{-1}
					\SETCOORD{-3}{0}
					\LINE[dashed]{0}{1}
					\CAP{1}
					\LINE[dashed]{0}{-1}
					\SETCOORD{1}{0}
					\LINE[dashed]{0}{1}
					\LINE{0}{1.5}
					\CAP{-1}
					\CUP{-1}
					\LINE{0}{3}
					\LINE[dashed]{0}{1}
					\draw[->] (0.5, -3.25) -- (1.5,-3.25);
					\SETCOORD{7}{0}
					\LINE[dotted]{0}{-1}
					\CUP{1}
					\LINE[dashed]{0}{1}
					\SETCOORD{-2}{0}
					\LINE[dotted]{0}{-1}
					\LINE{0}{-1.5}
					\CUP{-1}
					\LINE{0}{1.5}
					\LINE[dashed]{0}{1}
					\SETCOORD{0}{-6.5}
					\LINE[dashed]{0}{1}
					\LINE{0}{1.5}
					\CAP{1}
					\CUP{1}
					\LINE{0}{1.5}
					\CAP{1}
					\LINE{0}{-3}
					\LINE[dashed]{0}{-1}
					\SETCOORD{-2}{0}
					\LINE[dashed]{0}{1}
					\CAP{1}
					\LINE[dashed]{0}{-1}
					\draw[->] (5.5, -3.25) -- (6.5,-3.25);
					\SETCOORD{3}{0}
					\LINE[dashed]{0}{1}
					\LINE{0}{3}
					\CAP{1}
					\LINE{0}{-1.5}
					\CUP{1}
					\CAP{1}
					\LINE{0}{-1.5}
					\LINE[dashed]{0}{-1}
					\SETCOORD{-2}{0}
					\LINE[dashed]{0}{1}
					\CAP{1}
					\LINE[dashed]{0}{-1}
					\SETCOORD{-2}{6.5}
					\LINE[dashed]{0}{-1}
					\CUP{1}
					\LINE[dotted]{0}{1}
					\SETCOORD{1}{0}
					\LINE[dotted]{0}{-1}
					\LINE{0}{-1.5}
					\CUP{1}
					\LINE{0}{1.5}
					\LINE[dashed]{0}{1}
					\draw[->] (10.5, -3.25) -- (11.5,-3.25);
					\SETCOORD{2}{0}
					\LINE[dashed]{0}{-1}
					\CUP{1}
					\LINE[dotted]{0}{1}
					\SETCOORD{1}{0}
					\LINE[dotted]{0}{-1}
					\LINE{0}{-1.5}
					\CUP{-1}
					\CAP{-1}
					\LINE{0}{-3}
					\LINE[dashed]{0}{-1}
					\SETCOORD{3}{0}
					\LINE[dashed]{0}{1}
					\CAP{-1}
					\LINE[dashed]{0}{-1}
					\SETCOORD{-1}{0}
					\LINE[dashed]{0}{1}
					\LINE{0}{1.5}
					\CAP{1}
					\CUP{1}
					\LINE{0}{3}
					\LINE[dashed]{0}{1}
				\end{tikzpicture}
			\end{equation*} 
			Again we have to show that there is no possible configuration that all these pictures are orientable.
			Similar to before, there cannot be three of the bottom strands ending on the same half.
			Also, the four top rays cannot end on the same half using the same reasoning.
			But this means that the left two bottom rays end for example at the bottom and the right two bottom ones at the top.
			Also observe that there is a diagram such that each of these pairs are connected.
			Furthermore, there is a diagram such that the left two (resp.\ right two) top strands are connected.
			This means that the bottom pairs cannot be joined by two more rays and so there cannot be a configuration such that all four diagrams are orientable.
			So both compositions give $0$.\qedhere
		\end{enumerate}
	\end{enumerate}
\end{proof}
\bibliography{biblio}
\end{document}